\renewcommand\a{\alpha}
\renewcommand\b{\beta}
\newcommand\g{\gamma}
\renewcommand\d{\delta}
\renewcommand\th{\theta}
\newcommand\s{\sigma}
\newcommand\f{\phi}
\newcommand\vf{\varphi}
\renewcommand\r{\rho}
\newcommand\w{\omega}
\newcommand\vS{\varSigma}
\newcommand\vD{\varDelta}
\newcommand\ve{\varepsilon}
\newcommand\BA{\mathbf A}
\newcommand\BQ{\mathbf Q}
\newcommand\BF{\mathbf F}
\newcommand\BZ{\mathbf Z}
\newcommand\BN{\mathbf N}
\newcommand\bB{\mathbf B}
\newcommand\BU{\mathbf U}
\newcommand\BV{\mathbf V}
\newcommand\Bc{\mathbf c}
\newcommand\Bh{\mathbf h}
\newcommand\Bd{\mathbf d}
\newcommand\SC{\mathscr{C}}
\newcommand\SL{\mathscr{L}}
\newcommand\SP{\mathscr{P}}
\newcommand\SQ{\mathscr{Q}}
\newcommand\SH{\mathscr{H}}
\newcommand\SX{\mathscr{X}}
\newcommand\Fg{\mathfrak g}
\newcommand\iv{^{-1}}
\newcommand\wh{\widehat}
\newcommand\wt{\widetilde}
\newcommand\ck{^{\vee}}
\newcommand\ol{\overline}
\newcommand\ul{\underline}
\newcommand\scup{\sqcup}
\newcommand\lra{\leftrightarrow}
\newcommand\lv{\prec}
\newcommand\Ker{\operatorname{Ker}}
\newcommand\rk{\operatorname{rank}\,}
\renewcommand\Im{\operatorname{Im}}
\newcommand\cl{\operatorname{cl}}
\newcommand\weit{\operatorname{wt}}
\newcommand\re{\operatorname{re}}
\newcommand\im{\operatorname{im}}
\newcommand{\isom}{\,\raise2pt\hbox{$\underrightarrow{\sim}$}\,}
\numberwithin{equation}{section}
\newtheorem{thm}{Theorem}[section]
\newtheorem{lem}[thm]{Lemma}
\newtheorem{cor}[thm]{Corollary}
\newtheorem{prop}[thm]{Proposition}
\def \para#1{\par\medskip\textbf{#1}
              \addtocounter{thm}{1}}
\def \remark#1{\par\medskip\noindent
                \textbf{Remark #1}
                \addtocounter{thm}{1}}
\begin{document}
\setlength{\baselineskip}{4.9mm}
\setlength{\abovedisplayskip}{4.5mm}
\setlength{\belowdisplayskip}{4.5mm}
\renewcommand{\theenumi}{\roman{enumi}}
\renewcommand{\labelenumi}{(\theenumi)}
\renewcommand{\thefootnote}{\fnsymbol{footnote}}
\renewcommand{\thefootnote}{\fnsymbol{footnote}}
\allowdisplaybreaks[2]
\parindent=20pt
\medskip
\begin{center}
{\bf Diagram automorphisms and canonical bases \\ 
     for quantum affine algebras } 
\par
\vspace{1cm}
Toshiaki Shoji and Zhiping Zhou 
\\
\vspace{0.7cm}
\title{}
\end{center}

\begin{abstract}
Let $\BU_q^-$ be the negative part of 
the quantum  enveloping algebra $\BU_q(\Fg)$ associated to a simply laced 
Kac-Moody algebra $\Fg$. Let $\Fg^{\s}$ be 
the fixed point subalgebra of $\Fg$ obtained from a diagram automorphism 
$\s$, and put $\ul\BU_q = \BU_q(\Fg^{\s})$. 
Let $\bB$ (resp. $\ul\bB$) be the canonical basis of $\BU_q^-$ 
(resp. $\ul\BU_q^-$). 
Let $\bB^{\s}$ be the set of $\s$-fixed elements in $\bB$.  
Lusztig proved that there exists a natural bijection 
$\bB^{\s} \simeq \ul\bB$ based on his construction of canonical 
bases through the geometric theory of quivers.
In this paper, we prove (the signed bases version of) this fact, 
in the case where $\Fg$ is finite or affine type, 
in an elementary way, in the sense that we don't appeal to
the geometric theory of canonical bases nor Kashiwara's theory 
of crystal bases. 
We also discuss the correspondence between PBW-bases of $\BU_q^-$ and 
of $\ul\BU_q^-$, by using a new type of PBW-bases of $\BU_q^-$ 
obtained by Muthiah-Tingley, 
which is a generalization of the PBW-bases constructed 
by Beck-Nakanjima.  
\end{abstract}

\maketitle
\markboth{SHOJI - ZHIPING} {Diagram automorphisms }
\pagestyle{myheadings}


\par\bigskip 


\begin{center}
{\sc Introduction}
\end{center}

\par\medskip
Let $X$ be a symply laced Cartan datum, and 
$\Fg$ the Kac-Moody algebra associated to $X$.
Let $\BU_q^-$ be the negative part 
of the quantized enveloping algebra $\BU_q(\Fg)$ over $\BQ(q)$
associated to $\Fg$, where $q$ is an indeterminate.  
Let $\BA = \BZ[q,q\iv]$ be the subalgebra of $\BQ(q)$, and 
${}_{\BA}\BU_q^-$ Lusztig's integral form of $\BU_q^-$, which 
is an $\BA$-subalgebra of $\BU_q^-$. 
\par
Let $\s : \Fg \to \Fg$ be the automprhism of $\Fg$ 
obtained from a diagram  automorphism $\s$ on $X$. 
The fixed point subalgebra $\Fg^{\s}$ of $\Fg$ 
by $\s$ is the Kac-Moody algebra associated to 
the Cartan datum $\ul X$.  
$\s$ induces the algebra automorphism 
$\s: \BU_q^- \to \BU_q^-$. 
Let $\ul\BU_q^-$ be the negative part of the quantized 
enveloping algebra $\BU_q(\Fg^{\s})$. 
In [L3], Lusztig constructed the canonical basis $\bB$ of $\BU_q^-$ 
and $\ul \bB$ of $\ul\BU_q^-$. 
$\bB$ (resp. $\ul\bB$) gives an $\BA$-basis of ${}_{\BA}\BU_q^-$
(resp. ${}_{\BA}\ul\BU_q^-$). 
$\s$ acts on $\bB$ as a permutation, 
and we denote by $\bB^{\s}$ the set of $\s$-fixed elements in $\bB$.  
Assume that $\s$ is admissible (see 2.1 for the definition). 
Lusztig proved in [L3, Theorem 14.4.9] that there exists a canonical 
bijection $\ul\bB \isom \bB^{\s}$. 
\par
The rough idea of his proof 
is as follows;  let $Q$ be a quiver whose ambient graph is associated to 
$X$.  Lusztig constructed a certain set $\SP_{Q}$ of simple perverse 
sheaves on various representation spaces of $Q$, and defined a category
$\SQ_{Q}$, whose objects consist of complexes of the form 
$\bigoplus_{A,i}A[i]$, where $A[i]$ is the degree shift by $i \in \BZ$ 
for $A  \in \SP_{Q}$. 
He proved in [L1], [L2] that the Grothendieck group $K(\SQ_Q)$ has a structure
of $\BA$-algebra, and is isomorphic to ${}_{\BA}\BU_q^-$.  Under this 
isomorphism, the set $\SP_Q$ gives the canonical basis $\bB$.
On the other hand, we consider $\s : X \to X$, and choose an orientation of 
$Q$ compatible with $\s$. Then $\s$ induces a functor $\s^* : \SQ_{Q} \to \SQ_{Q}$, 
and one can consider the category $\wt\SQ_Q$, whose objects are the pairs 
$(A, \f)$, where $A \in \SQ_Q$ and $\f : \s^*A \isom A$ satisfies certain 
conditions. He proved in [L3, Theorem 13.2.11] that the Grothendieck group
$K(\wt\SQ_Q)$ is isomorphic to ${}_{\BA}\ul\BU_q^-$. Then 
the set of pairs $(A, \f)$ 
for $A \in \SP_Q$ gives the signed canonical basis 
$\wt{\ul\bB} = \ul\bB \scup -\ul\bB$ of $\ul\BU_q^-$.     
By considering the forgetful functor $(A, \f) \mapsto A$, one obtains a map 
$\wt{\ul\bB} \to \wt\bB$, which induces a bijection $\wt{\ul\bB} \isom \wt\bB^{\s}$ 
(here $\wt\bB$ is the signed canonical basis of $\bB$). 
The ambiguity of the sign is removed by applying the theory of Kashiwara operators, 
the discussion in this part is purely algebraic.
\par
On the other hand, it is known by [GL] that Lusztig's canonical bases 
coincide with Kashiwara's global crystal bases ([K1]). The approach from the
crystal bases theory for the 
proof $\ul\bB \isom  \bB^{\s}$ was done by Naito-Sagaki [NS] and Savage [S]. 
\par
As is seen from the above, the proof $\wt{\ul\bB} \isom \wt\bB^{\s}$ 
heavily depends on the geometric realization of quantized enveloping algebras 
in terms of the theory of perverse sheaves associated to quivers. 
In this paper, in the case where $X$ is affine type or finite type, 
we give an elementary proof for $\wt{\ul\bB} \isom \wt\bB^{\s}$, in the sense
that we don't appeal to the geometric theory of quivers, nor 
Kashiwara's theory of crystal bases. 
The main ingredient is the algebra $\BV_q$ introduced in [SZ].  
Let $\BA' = (\BZ/\ve\BZ)[q,q\iv]$, 
where $\ve$ is the order of $\s$ (here we assume that $\ve = 2$ or 3).   
$\BV_q$ is a quotient algebra of 
${}_{\BA'}\BU_q^{-,\s} = {}_{\BA}\BU_q^-\otimes_{\BA}\BA'$, 
where
$\BU_q^{-,\s}$ is the subalgebra of $\BU_q^-$ 
consisting of $\s$-fixed elements. 
By using the PBW-bases $\SH_{\Bh}$ of $\BU_q^-$ 
constructed in [BN], one can define 
the signed canonical basis $\wt\bB$ of $\BU_q^-$  
(note that this does not work for the non-simply laced case, see 1.9).    
By making use of $\wt\bB$ and PBW-bases $\ul\SX_{\ul\Bh}$ 
on $\ul\BU_q^-$, we can prove 
that $\BV_q \simeq {}_{\BA'}\ul\BU_q^-$ (Theorem 2.4).
Then one can construct the signed canonical basis $\wt{\ul\bB}$ of 
$\ul\BU_q^-$ from those PBW-bases of $\ul\BU_q^-$.  
The above isomorphism induces a surjective homomorphism 
$\f : {}_{\BA'}\BU_q^{-,\s} \to {}_{\BA'}\ul\BU_q^-$. It is proved 
that $\wt{\ul\bB}$ coincides with the image of $\wt\bB^{\s}$ under $\f$,
and that it gives the required bijection (Theorem 3.18).
\par
In the case where $X$ is finite type or affine type with twisted $\ul X$, 
it was proved in [SZ]
that for a given PBW-basis $\ul\SX_{\ul\Bh}$ of $\ul\BU_q^-$, 
one can find a PBW-basis $\SX_{\Bh}$ of $\BU_q^-$ satisfying the property that 
$\s$ acts on $\SX_{\Bh}$ as a permutation, and the set of  
$\s$-fixed elements $\SX_{\Bh}^{\s}$ in $\SX_{\Bh}$ corresponds bijectively to 
$\ul\SX_{\ul\Bh}$ through $\f$, 
which is compatible with the bijection $\wt\bB^{\s} \simeq \wt{\ul\bB}$.    
This fact no longer holds for the affine case with untwistd $\ul X$. 
In this case, the PBW-bases $\SX_{\Bh}$ given by [BN] does not fit
well to our setting.  Instead of those bases, 
by applying the theory of PBW-bases associated to convex orders
due to Muthiah-Tingley [MT], we construct a new type of PBW-basis 
$\SX_{\lv}$ of $\BU_q^-$, starting from the PBW-basis $\ul\SX_{\ul\Bh}$ 
of $\ul\BU_q^-$. We can show that, even though $\s$ does not preserve 
$\SX_{\lv}$, the set $\SX_{\lv}^{\s}$ of $\s$-fixed elements in $\SX_{\lv}$
is in bijection with $\ul\SX_{\ul\Bh}$ through $\f$, which is compatible with 
the bijection $\wt\bB^{\s} \simeq \wt{\ul\bB}$ (Theorem 6.15).
\par
The algebra $\BV_q$ can be defined for the Kac-Moody type $X$  
in general.  In that case, the PBW-bases of $\BU_q^-$ 
are not known.  But as noticed in Remark 2.14, if we assume 
the existence of the canonical basis of $\BU_q^-$ and of $\ul\BU_q^-$
(this certainly holds by Lusztig's result), one can prove 
that $\BV_q \simeq {}_{\BA'}\ul\BU_q$. 
In this way, the bijection $\wt\bB^{\s} \simeq \wt{\ul\bB}$ can be realized 
through an algebra homomorphism. 

\par\medskip
  
\section { Preliminaries }

\para{1.1.}
Let $X = (I, (\ , \ ))$ be a Cartan datum, where 
$(\ ,\ )$ is a symmetric bilinear form 
on $\bigoplus_{i \in I}\BQ\a_i$ (a finite dimensional vector space 
over $\BQ$ with the basis $\{ \a_i\}$ indexed by the vertex set $I$) such that 
$(\a_i, \a_j) \in \BZ$, satisfying the property
\par
\begin{itemize}
\item
$(\a_i,\a_i) \in 2\BZ_{> 0}$ for any $i \in I$, 
\item
$\frac{2(\a_i,\a_j)}{(\a_i,\a_i)} \in \BZ_{\le 0}$ for any $i \ne j$ in $I$.
\end{itemize}

The Cartan datum $X$ is called simply laced if $(\a_i, \a_j) \in \{ 0,-1\}$ for 
any $i \ne j$ in $I$, and $(\a_i,\a_i) = 2$ for any $i \in I$.   
The Cartan datum  $X$ determines a graph with the vertex set $I$.  
Put $a_{ij} = 2(\a_i, \a_j)/(\a_i,\a_i)$ for any $i,j \in I$. 
The matrix $A = (a_{ij})$ is called the Cartan matrix.
If $X$ is simply laced, then $A$ is a symmetric matrix.   
Let $Q = \bigoplus_{i \in I}\BZ\a_i$ be the root lattice, 
and put $Q_+ = \sum_{i \in I}\BN \a_i$. 
\para{1.2.}
In the rest of this paper, we assume that $X$ is of finite type or 
affine type.  However, the discussion for the finite case is
easily obtained from that for the affine case by a suitable modification, 
we mainly consider the affine case, namely, the case where  
the associated graph of $X$ is a Euclidean diagram.  
Let $\Fg$ be the affine Kac-Moody algebra corresponding to the vertex 
set $I = \{ 0,1, \dots, n\}$, and $\Fg_0$ the subalgebra of $\Fg$ of 
finite type corresponding to $I_0 = I - \{ 0\}$. 
(Here we assume that $X$ is irreducible, and not of the type 
$A_{2n}^{(2)}$.  In this case, the vertex 0 is determined uniquely, 
up to the Diagram automorphism.) 
Let $\vD$ be the affine root system  for $\Fg$, and $\vD_0$ 
the root system of $\Fg_0$.
Let $\vD^+$ be the set of positive roots in $\vD$, and $\vD_0^+$ 
the set of positive roots in $\vD_0$. We also denote by 
$\Pi = \{ \a_i \mid i \in I\}$ the set of simple roots in $\vD^+$, 
$\Pi_0 = \{ \a_i \mid i \in I_0\}$ the set of simple roots in $\vD_0^+$.  
Let $\vD^{\re, +}$ be the set of positive real roots, and 
$\vD^{\im, +}$ the set of positive imaginary roots.  Then we have
$\vD^{\re,+} = \vD^{\re,+}_{>} \scup \vD^{\re,+}_{<}$, and 
$\vD^{\im,+} = \BZ_{>0}\d$.  If $X$ is irreducible of type $X_n^{(r)}$ 
in the notation of Kac [Ka], 
$\vD^{\re,+}_{>}$ and $\vD^{\re, +}_{<}$ are given by 
\begin{align*}
\tag{1.2.1}
\vD^{\re,+}_{>} &= \{ \a + m\d \mid \a \in (\vD^+_0)_s, m \in \BZ_{\ge 0} \}
                   \scup \{ \a + mr\d \mid \a \in (\vD^+_0)_l, m \in \BZ_{\ge 0} \}, \\
\vD^{\re,+}_{<} &= \{ \a + m\d \mid \a \in -(\vD^+_0)_s, m \in \BZ_{> 0} \}
                   \scup \{ \a + mr\d \mid \a \in -(\vD^+_0)_l, m \in \BZ_{>  0} \}, \\
\end{align*} 
where $\d$ is the minimal imaginary positive root, and 
$(\vD_0^+)_s$ (resp. $(\vD^+_0)_{l}$) is the set of positive short roots
(resp. positive long roots) in $\vD^+_0$. 
\par
Let $W$ be the Weyl group of $\Fg$ generated by simple reflections 
$\{ s_i \mid i \in I \}$, and $W_0$ the Weyl group of $\Fg_0$ generated by 
$\{ s_i \mid i \in I_0\}$. 

\para{1.3.}
Let $q$ be an indeterminate, and for an integer $n$, 
a positive integer $m $, put 
\begin{equation*}
[n]_q = \frac{q^n - q^{-n}}{q - q\iv}, \quad [m]_q^! = \prod_{i = 1}^m[i]_q, 
         \quad [0]!_q = 1. 
\end{equation*} 
For each $i \in I$, put $q_i = q^{(\a_i,\a_i)/2}$, and consider $[n]_{q_i}$, etc.  
by replacing $q$ by $q_i$ in the above formulas. 
Let $\BU_q^-$ be the negative part of the quantum enveloping algebra 
$\BU_q = \BU_q(\Fg)$  
associated to $\Fg$. Hence $\BU_q^-$ is 
an associative algebra over $\BQ(q)$ with generators $f_i$ ($i \in I$)
satisfying the fundamental relations 
\begin{equation*}
\tag{1.3.1}
\sum_{k = 0}^{1-a_{ij}}(-1)^kf_i^{(k)}f_jf_i^{(1- a_{ij} - k)} = 0
\end{equation*}
for any $i \ne j \in I$, where $f_i^{(n)} = f_i^n/[n]^!_{q_i}$ for 
$n \in \BN$. 
Put $\BA = \BZ[q,q\iv]$ and let ${}_{\BA}\BU_q^-$ be Lusztig's integral
form of $\BU_q^-$, namely the $\BA$-subalgebra of $\BU_q^-$ generated by
$f_i^{(n)}$ for $i \in I$ and $n \in \BN$. 
\par
We define a $\BQ$-algebra involutive automorphism 
$\ol{\phantom{*}} : \BU_q^- \to \BU_q^-$ by $\ol q = q\iv$,
$\ol{f_i} = f_i$ for $i \in I$. 
We define an antil-algebra automorphism  ${}^* : \BU_q^- \to \BU_q^-$
by $f^*_i = f_i$ for any $i \in I$. 

\para{1.4.}
We define a multiplication on $\BU_q^-\otimes \BU_q^-$ by
\begin{equation*}
(x_1\otimes x_2)\cdot (x_1'\otimes x_2') = 
      q^{-(\weit x_2, \weit x_1')}x_1x_1'\otimes x_2x_2',
\end{equation*}
where $x_1, x_1, x_2, x_2'$ are homogeneous elements in $\BU_q^-$ 
under the weight space decomposition $\BU_q^- = \bigoplus_{\nu \in Q}(\BU_q^-)_{\nu}$, 
for $x \in (\BU_q^-)_{\nu}$, here we put $\weit x = \nu$.  
Then $\BU_q^-\otimes \BU_q^-$ turns out to be an associative algebra
with respect to this product. 
One can define a homomorphism $r : \BU_q^- \to \BU_q^-\otimes \BU_q^-$
by $r(f_i) = f_i\otimes 1 + 1 \otimes f_i$. 
It is known that there exists a unique bilinear form $(\ ,\ )$ on 
$\BU_q^-$ satisfying the following properties;

\begin{equation*}
\tag{1.4.1}
\begin{aligned}
(f_i, f_j) &= \d_{ij}(1 - q_i^2)\iv, \\ 
(x, y'y'') &= (r(x), y'\otimes y''), \\ 
(x'x'', y) &= (x'\otimes x'', r(y)),
\end{aligned}
\end{equation*}
where the bilinear form on $\BU_q^-\otimes \BU_q^-$ 
is determined by $(x_1\otimes x_2, x_1'\otimes x_2') = (x_1,x_1')(x_2,x_2')$. 
This bilinear form is symmetric and non-degenerate.

\para{1.5.}
Let $\Bh = ( \dots, i_{-1}, i_0, i_1, \dots)$ 
be a doubly infinite sequence defined in [BN, 3.1] obtained from 
the  element $\xi = \wt\w_1 + \dots + \wt\w_n \in P\ck_{\cl}$, 
where $P\ck_{\cl}$ is the Langlands dual of the weight lattice of 
$\Fg_0$, and $\wt\w_i$ are fundamental coweights in $P\ck_{\cl}$. 
The sequence $\Bh$ satisfies the property 
that $s_{i_p}s_{i_{p+1}}\cdots s_{i_q}$ is a reduced expression in $W$ for 
any $p,q \in \BZ$ such that $p < q$.  
We define $\b_k \in \vD^+$ for $k \in \BZ$ by 
\begin{equation*}
\tag{1.5.1}
\b_k = \begin{cases}
          s_{i_0}s_{i_{-1}}\cdots s_{i_{k + 1}}(\a_{i_k})  &\text{ if $k \le 0$}, \\ 
          s_{i_1}s_{i_2} \cdots s_{i_{k-1}}(\a_{i_k})     &\text{ if $k > 0$}. 
       \end{cases}
\end{equation*}
Then, as in [BN, 3.1], $\b_k$ are all distinct, and 
\begin{align*}
\tag{1.5.2}
\vD^{\re,+}_{>} = \{ \b_k \mid k \in \BZ_{\le 0} \}, \qquad
\vD^{\re,+}_{<} = \{ \b_k \mid k \in \BZ_{> 0}  \}.
\end{align*}

For any $i \in I$, let $T_i : \BU_q \to \BU_q$ be the braid group action. 
For $k \in \BZ, c \in \BN$, we define root vectors $f^{(c)}_{\b_k} \in \BU_q^-$ by 

\begin{equation*}
\tag{1.5.3}
    f^{(c)}_{\b_k} = \begin{cases}
                  T_{i_0}T_{i_{-1}}\cdots T_{i_{k+1}}(f^{(c)}_{i_k}), 
                       &\quad\text{ if } k \le 0, \\
                  T_{i_1}\iv T_{i_2}\iv \cdots T_{i_{k-1}}\iv (f^{(c)}_{i_k}), 
                       &\quad\text{ if } k > 0. 
               \end{cases}  
\end{equation*}
We fix $p \in \BZ$, and let 
$\Bc_{+_p} = (c_p, c_{p-1}, \dots) \in \BN^{\BZ_{\le p}}, 
\Bc_{-_p} = (c_{p+1}, c_{p+2}, \dots) \in \BN^{\BZ_{> p}}$
be functions which are almost everywhere 0. 
We define $L(\Bc_{+_p}), L(\Bc_{-_p}) \in \BU_q^-$ by 
\begin{align*}
\tag{1.5.4}
L(\Bc_{+_p}) &= f_{i_p}^{(c_p)}T_{i_p}(f^{(c_{p-1})}_{i_{p-1}})
        T_{i_p}T_{i_{p-1}}(f^{(c_{p-2})}_{i_{p-2}})  \cdots \\ 
L(\Bc_{-_p}) &= \cdots T_{i_{p+1}}\iv T_{i_{p+2}}\iv(f_{i_{p+3}}^{(c_{p+3})}) 
         T_{i_{p+1}}\iv (f_{i_{p+2}}^{(c_{p+2})})f_{i_{p+1}}^{(c_{p+1})}.
\end{align*}
In the case where $p = 0$, 
we simply write $\Bc_{+_p}, \Bc_{-_p}$ as $\Bc_+, \Bc_-$. 
Thus $(\Bc_{+_p}, \Bc_{-_p})$ is obtained from $(\Bc_+, \Bc_-)$ by the shift by $p$. 
Note that $L(\Bc_+)$ (resp. $L(\Bc_-)$) coincides with 
$f_{\b_0}^{(c_0)}f_{\b_{-1}}^{(c_{-1})}f_{\b_{-2}}^{(c_{-2})}\cdots$ 
(resp. $\cdots f_{\b_3}^{(c_3)}f_{\b_2}^{(c_2)}f_{\b_1}^{(c_1)}$). 
\par
Next we define root vectors for imaginary roots.
For $\a \in \vD_0$, put 
\begin{equation*}
\tag{1.5.5}
d_{\a} = \begin{cases}
          1   &\quad\text{ if $\Fg$ is untwisted, }  \\
          (\a,\a)/2   &\quad\text{ if $\Fg$ is twisted. }
      \end{cases}
\end{equation*}
Put $d_i = d_{\a_i}$ for $i \in I_0$.  
By comparing it with (1.2.1), we see that 
$\a + m\d \in \vD^{\re,+}$ if and only if $d_i \mid m$. 
\par
For $k > 0, i \in I_0$, put
\begin{equation*}
\tag{1.5.6}
\wt\psi_{i, kd_i} = f_{kd_i\d - \a_i}f_i
                 - q^2_if_i f_{kd_i\d - \a_i}.
\end{equation*}

It is known that $\wt\psi_{i,kd_i}$ ($i \in I_0, k \in \BZ_{>0}$) 
are mutually commuting.
For each $i \in I_0, k \in \BZ_{>0}$, we define $\wt P_{i,kd_i} \in \BU_q^-$ 
by the following recursive identity;

\begin{equation*}
\tag{1.5.7}
\wt P_{i,kd_i} = \frac{1}{[k]_{q_i}}\sum_{s = 1}^kq_i^{s-k}\wt\psi_{i,sd_i}\wt P_{i,(k-s)d_i}. 
\end{equation*}  

For a fixed $i \in I_0$, regarding $\wt P_{i,k}$ ($k \in \BZ_{>0}$) as 
elementary symmetric polynomials, we define Schur polynomials by making use of 
the determinant formula; for each partition $\r^{(i)}$, put
\begin{equation*}
\tag{1.5.8}
S_{\r^{(i)}} = \det \bigl(\wt P_{i, (\r'_k - k + m)d_i}\bigr)_{1 \le k, m \le t}
\end{equation*}
where $(\r'_1, \dots, \r'_t)$ is the dual partition of $\r^{(i)}$. 
For an $I_0$-tuple of partitions 
$\Bc_0 = (\r^{(i)})_{i \in I_0}$, we define $S_{\Bc_0}$ by 
\begin{equation*}
\tag{1.5.9}
S_{\Bc_0} = \prod_{i \in I_0}S_{\r^{(i)}}.
\end{equation*}

It is known by [D, 6.3.2] (and [Be] for the untwisted case) that 
\begin{equation*}
\tag{1.5.10}
\begin{aligned}
&T_{i_{p+1}}\iv T_{i_{p+2}}\iv \cdots T_{i_0}\iv (S_{\Bc_0}) \in \BU_q^- 
&\quad\text{ if $p \le 0$ }, \\
&T_{i_p}\cdots T_{i_2}T_{i_1}(S_{\Bc_0}) \in \BU_q^-
&\quad\text{ if $p > 0$. }  
\end{aligned}
\end{equation*}
\par
We denote by $\SC$ the set of triples $\Bc = (\Bc_+, \Bc_0, \Bc_-)$, 
where $\Bc_+ \in \BN^{\BZ_{\le 0}}, \Bc_- \in \BN^{\BZ_{> 0}}$ are 
functions almost everywhere 0,     
and $\Bc_0$ is an $I_0$-tuple of partitions. For each $\Bc \in \SC, 
p \in \BZ$, we define $L(\Bc,p) \in \BU_q^-$ by  
\begin{equation*}
\tag{1.5.11}
L(\Bc, p) = \begin{cases}
         L(\Bc_{+_p})\times \bigl(T\iv_{i_{p +1}}T\iv_{i_{p+2}}\cdots T\iv_{i_0}(S_{\Bc_0})\bigr)
               \times L(\Bc_{-_p}), &\quad\text{ if } p \le 0, \\
         L(\Bc_{+_p}) \times \bigl(T_{i_p}\cdots T_{i_2} T_{i_1} (S_{\Bc_0})\bigr) 
               \times L(\Bc_{-p}),  &\quad\text{ if } p > 0.          
               \end{cases}
\end{equation*}

Note that, by definition, 

\begin{align*}
\tag{1.5.12}
L(\Bc, p-1) &= T_{i_p}\iv L(\Bc, p) \quad\text{ if $p \le 0$ and $c_p = 0$, }  \\
L(\Bc, p+1) &= T_{i_{p+1}} L(\Bc, p)  \quad\text{ if $p > 0$ and $c_{p+1} = 0$.}
\end{align*}

The following results are known.

\begin{thm}[{[BN]}]  
We fix $\Bh$ and $p$ as before.
\begin{enumerate}
\item 
$L(\Bc, p) \in {}_{\BA}\BU_q^-$.
\item
For various $\Bc \in \SC$, $L(\Bc, p)$ are almost orthonormal with 
respect to the bilinear form given in 1.4, namely, 
\begin{equation*}
\tag{1.6.1}
(L(\Bc, p), L(\Bc', p)) \in \d_{\Bc,\Bc'} + (q\BZ[[q]] \cap \BQ(q)). 
\end{equation*}
In particular, for a fixed $\Bh, p$, $\{ L(\Bc, p) \mid \Bc \in \SC \}$ 
gives a basis of $\BU_q^-$.  
\item \
Assume that $\Fg$ is simply laced.  Then 
$\{ L(\Bc, p) \mid \Bc \in \SC\}$ gives an $\BA$-basis of ${}_{\BA}\BU_q^-$.   
\end{enumerate}
\end{thm}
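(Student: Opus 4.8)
The plan is to treat the three parts of the BN theorem as successively refined statements, each reduced to a computation involving the braid group action $T_i$ and the PBW root vectors $f_{\b_k}^{(c)}$. For part (i), the key point is that for real roots the root vectors are obtained from the divided powers $f_{i_k}^{(c)}$ by applying the automorphisms $T_j^{\pm 1}$, which by Lusztig's results preserve ${}_{\BA}\BU_q^-$ as soon as the relevant words are reduced; the sequence $\Bh$ has exactly this reducedness property, so each factor $L(\Bc_{+_p})$ and $L(\Bc_{-_p})$ lies in ${}_{\BA}\BU_q^-$. For the imaginary part, one must check that the Schur polynomials $S_{\Bc_0}$ built from the $\wt P_{i,kd_i}$ via the determinant formula (1.5.8)--(1.5.9) lie in ${}_{\BA}\BU_q^-$; this reduces to showing that the $\wt P_{i,kd_i}$ do, which follows from the recursion (1.5.7) once one knows the $\wt\psi_{i,kd_i}$ are integral (immediate from (1.5.6), as they are products of $f$'s and $q_i^2$) together with an induction handling the factor $1/[k]_{q_i}$ — the standard symmetric-function identities ensure the Schur polynomials come out integral even though the power sums do not. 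Finally (1.5.10) places the twisted versions back in $\BU_q^-$, and then $L(\Bc,p)$ as defined in (1.5.11) is a product of integral elements, hence integral.

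For part (ii), the plan is to invoke the non-degenerate symmetric bilinear form $(\ ,\ )$ from 1.4 and compute $(L(\Bc,p),L(\Bc',p))$. The multiplicativity properties (1.4.1) of the form with respect to $r$ let one expand the pairing of the products $L(\Bc_{+_p})\cdot(\text{imaginary part})\cdot L(\Bc_{-_p})$; the braid operators $T_i$ are, up to controllable scalars, compatible with the form, so one is reduced to a pairing of ordered monomials in the $f_{\b_k}^{(c)}$. The almost-orthonormality (1.6.1) then comes from the triangularity: distinct $\Bc$ give monomials supported on disjoint or incomparably-ordered collections of roots, so the pairing lands in $q\BZ[[q]]$, while the diagonal pairing is $1$ modulo $q\BZ[[q]]$ because each $(f_{\b_k}^{(c)},f_{\b_k}^{(c)})$ is $\equiv 1$ in $q\BZ[[q]]$ by the $q$-binomial normalization of divided powers. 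Non-degeneracy of the form plus almost-orthonormality forces linear independence, and since $\SC$ is indexed so that the weights exhaust $Q_+$ with the right graded dimensions (matching the PBW count of $\vD^{\re,+}\scup\vD^{\im,+}$), the $L(\Bc,p)$ span, giving a $\BQ(q)$-basis.

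For part (iii), assuming $\Fg$ simply laced, every $q_i = q$ and $d_i = 1$, so the formulas simplify, but more importantly one upgrades the $\BQ(q)$-basis of (ii) to an $\BA$-basis of ${}_{\BA}\BU_q^-$. The strategy is: by (i) the $L(\Bc,p)$ lie in ${}_{\BA}\BU_q^-$ and by (ii) they are $\BQ(q)$-linearly independent; it remains to see they generate ${}_{\BA}\BU_q^-$ over $\BA$. This follows from comparing with a known $\BA$-basis — e.g. the canonical basis $\bB$, or equivalently any PBW-type $\BA$-basis — via the almost-orthonormality: the transition matrix between $\{L(\Bc,p)\}$ and $\bB$ is, by (1.6.1) and the defining near-orthonormality of $\bB$, unitriangular with entries in $\BA$ and with $1$'s on the diagonal modulo $q\BA$, hence invertible over $\BA$. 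I expect the main obstacle to be the integrality of the imaginary root vectors: controlling the denominators $[k]_{q_i}$ in the recursion (1.5.7) and proving that the Schur-polynomial combination (1.5.8) clears them requires the precise $\BA$-form arguments of Beck and Damiani cited as [Be], [D, 6.3.2], rather than a soft manipulation; everything on the real-root side and the bilinear-form bookkeeping is comparatively routine given Lusztig's results on $T_i$.
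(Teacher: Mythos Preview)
The paper does not prove this theorem at all: immediately after the statement it simply records that (i) is Proposition~3.16, (ii) is Theorem~3.13(i), and (iii) is Lemma~3.39 of [BN], and moves on. So there is no argument in the paper to compare against; what you have written is a sketch of how the cited Beck--Nakajima results go, and as such it is broadly on target for (i) and (ii).

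For (iii), however, your proposed route has a logical issue in the context of this paper. You suggest proving that $\{L(\Bc,p)\}$ is an $\BA$-basis by comparing it with the canonical basis $\bB$ via almost-orthonormality. In [BN] this is legitimate because Lusztig's canonical basis already exists by geometric means, independently of PBW-bases. But in the present paper the whole point is to avoid the geometric construction: the canonical basis is built in \S1.9 \emph{from} the fact that $\SX_{\Bh,p}$ is an $\BA$-basis (Theorem~1.6(iii)), so invoking $\bB$ to prove (iii) would be circular here. This is precisely why the paper is content to cite [BN] for (iii) in the simply-laced case and then, for the non-simply-laced $\ul\BU_q^-$, has to work much harder (Lemma~3.2 and Theorem~3.3) to establish the analogous integrality without assuming a canonical basis. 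A second, smaller point: the transition matrix you describe is not unitriangular in any natural ordering; what almost-orthonormality plus $\BA$-integrality actually gives (cf.\ the argument in Proposition~4.23) is that the matrix is a signed permutation matrix modulo $q\BZ[q]$, hence invertible over $\BA$.
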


In fact, (i) was proved in Proposition 3.16, 
(ii) in Theorem 3.13 (i), and (iii) in Lemma 3.39 in [BN].  
We write $\SX_{\Bh, p} = \{ L(\Bc, p) \mid \Bc \in \SC \}$, and 
simply write as $\SX_{\Bh} = \SX_{\Bh,p} = \{ L(\Bc, 0) \mid \Bc \in \SC \}$ 
if $p = 0$. 

\para{1.7.}
For each $p \in \BZ$, we define a partial order $\le_p$ on $\SC$ by the 
condition; for $\Bc = (\Bc_+, \Bc_0, \Bc_-), \Bc' = (\Bc'_+, \Bc_0', \Bc'_-) \in \SC$,  
$\Bc <_p \Bc'$ if and only if 
\begin{equation*}
\tag{1.7.1}
\Bc_{+p} \le \Bc'_{+p}  \quad \text{ and }\quad 
       \Bc_{-p} \le \Bc'_{-p} \quad\text{ and one of these are strict,}
\end{equation*}
where both $\le $ are the lexicographic order from left to right, 
e.g., $(c_p, c_{p-1}, \dots) \le (c'_p, c'_{p-1}, \dots)$ 
if there exists $k$ such that 
$c_p = c'_p, \dots, c_{p-k+1} = c'_{p-k+1}$ and that $c_{p-k} < c'_{p-k}$.   
We have the following result.

\begin{prop}[{[BN, Prop. 3.36]}]  
Let $\Bc \in \SC$ and $p \in \BZ$.  Then
\begin{equation*}
\tag{1.8.1}
\ol{L(\Bc, p)} = L(\Bc,p) + \sum_{\Bc <_p \Bd}a_{\Bc, \Bd}L(\Bd, p),
\end{equation*}
where $a_{\Bc, \Bd} \in \BQ(q)$. 
\end{prop}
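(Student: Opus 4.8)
Since $\SX_{\Bh,p}$ is a $\BQ(q)$-basis of $\BU_q^-$ by Theorem 1.6(ii) and the bar involution is weight-preserving, one may write $\ol{L(\Bc,p)}=\sum_{\Bd}a_{\Bc,\Bd}L(\Bd,p)$ with the sum over $\Bd\in\SC$ of the same weight as $\Bc$; so the membership $a_{\Bc,\Bd}\in\BQ(q)$ is automatic, and the whole content of (1.8.1) is the triangularity $a_{\Bc,\Bc}=1$ and $a_{\Bc,\Bd}=0$ unless $\Bd=\Bc$ or $\Bc<_p\Bd$. The plan is to combine three ingredients: the shift relations (1.5.12), to move the base point $p$ and strip off the leading real factor; a rank-one/rank-two computation showing that the building blocks $f^{(c)}_{\b_k}$ and $\wt P_{i,kd_i}$ of $L(\Bc,p)$ are bar-invariant up to strictly higher PBW terms; and Levendorskii--Soibelman straightening, which rewrites an arbitrary product of root vectors in the standard order of (1.5.11) while controlling $<_p$.

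First I would reduce the base point $p$ and the size of $c_p$. By (1.5.12), when $c_p=0$ one has $L(\Bc,p)=T_{i_p}L(\Bc,p-1)$ for $p\le 0$ and $L(\Bc,p)=T_{i_{p+1}}\iv L(\Bc,p+1)$ for $p>0$, while for general $c_p$ the leftmost factor of $L(\Bc_{+_p})$ in (1.5.4) gives $L(\Bc,p)=f^{(c_p)}_{i_p}\cdot L(\wt\Bc,p)$, with $\wt\Bc$ obtained from $\Bc$ by setting its $p$-th component to $0$. Since $f^{(c_p)}_{i_p}$ is bar-fixed ($\ol{f_i^{(c)}}=f_i^{(c)}$) and left multiplication by it increments the top entry of the lexicographic order, and since $<_p$ is compatible with the shift by (1.7.1), the known interplay between the bar involution and the braid operators transports (1.8.1) between neighbouring $p$; this reduces the claim to $p=0$ and, further, to analysing $\ol{L(\Bc_+)}$, the imaginary factor $S_{\Bc_0}$, and $\ol{L(\Bc_-)}$ separately, since $\ol{\phantom{x}}$ is multiplicative and the product of their expansions is re-straightened as in the last step below.

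For the real part I would show $\ol{L(\Bc_+)}\in L(\Bc_+)+\sum_{\Bc_+<\Bc'_+}\BQ(q)\,L(\Bc'_+)$ for the left-to-right lexicographic order. Writing $L(\Bc_+)=f^{(c_0)}_{i_0}\cdot T_{i_0}\bigl(f^{(c_{-1})}_{i_{-1}}T_{i_{-1}}(\cdots)\bigr)$ and using that $f^{(c_0)}_{i_0}$ is bar-fixed, the bar--braid commutation rule reduces the computation inductively to expanding $\ol{T_i(u)}$ for $u$ in a rank-two subalgebra, which is the classical bar-triangular expansion with strictly larger leading exponent; carrying this back through the braid operators yields products of the $f_{\b_k}$ ($k\le 0$) which, after Levendorskii--Soibelman reordering of $f^{(b)}_{\b_l}f^{(a)}_{\b_k}$ ($k<l$) into $q^{\bullet}f^{(a)}_{\b_k}f^{(b)}_{\b_l}$ plus terms supported on intermediate roots of the same total weight, feed only strictly $<$-larger $\Bc'_+$; the case of $\ol{L(\Bc_-)}$ is identical with $T_i\iv$ in place of $T_i$. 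For the imaginary factor I would trace $\ol{\phantom{x}}$ through the definitions: the bar-expansion of the real root vectors $f_{kd_i\d-\a_i}$ controls $\wt\psi_{i,kd_i}$ in (1.5.6), which propagates through the recursion (1.5.7) for $\wt P_{i,kd_i}$ and the determinants (1.5.8)--(1.5.9) for $S_{\Bc_0}$, the almost-orthonormality (1.6.1) and the bar-invariance of the bilinear form fixing the diagonal coefficient to $1$. Multiplying the three expansions and straightening the result into the form $\sum_\Bd a_{\Bc,\Bd}L(\Bd,0)$, every $\Bd$ that occurs has $\Bd_{+0}\ge\Bc_{+0}$ and $\Bd_{-0}\ge\Bc_{-0}$ lexicographically and $\Bd=\Bc$ appears with coefficient $1$, which is (1.8.1).

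The main obstacle is the imaginary part. Since $<_p$ compares only the real tails $\Bc_{\pm p}$, one must verify that every transfer of imaginary weight onto the real root vectors $f_{\b_k}$ --- whether through $\ol{\phantom{x}}$ applied to $f_{kd_i\d-\a_i}$, or through the braid operators conjugating $S_{\Bc_0}$ when $p\ne 0$ --- strictly increases one of those tails, and that $\wt P_{i,kd_i}$ commutes past the $f_{\b_k}$ via straightening relations compatible with $<_p$. This needs the explicit description of the root vectors $f_{kd_i\d-\a_i}$ and of the commutation of $\wt\psi_{i,k}$ with the real root vectors from [BN, \S3]. A minor point, settled by a weight argument, is that although $\Bh$ is doubly infinite all the products and sums above are finite, since $\Bc$ and all corrections have finitely many nonzero components and a fixed total weight.
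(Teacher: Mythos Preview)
The paper does not give its own proof of this proposition; it is simply quoted from Beck--Nakajima as \textup{[BN, Prop.~3.36]}. So there is nothing to compare your argument against in the present paper, and the relevant benchmark is the proof in \textup{[BN]}. Your outline has the right architecture --- split $L(\Bc,p)$ into its real$_>$, imaginary, and real$_<$ blocks, bar each, and re-straighten with Levendorskii--Soibelman --- and this is indeed how \textup{[BN]} organizes the argument.

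There are, however, two genuine soft spots in your write-up. First, the phrase ``bar--braid commutation rule'' papers over the real difficulty: there is no simple identity of the form $\ol{T_i(u)}=T_i(\ol u)$ or $\ol{T_i(u)}=T_i\iv(\ol u)$, so one cannot literally ``transport'' the bar through $T_{i_p}$ as you suggest. What \textup{[BN]} actually uses is not a commutation of bar with $T_i$, but rather the explicit Levendorskii--Soibelman relations among the $f_{\b_k}$ and the Drinfeld-type relations between the real and imaginary generators, applied directly to $\ol{f_{\b_k}}$ and $\ol{\wt\psi_{i,k}}$; the triangularity then comes from tracking supports under these relations, not from an abstract bar--braid identity. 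Second, your appeal to almost-orthonormality and ``bar-invariance of the bilinear form'' to pin the diagonal coefficient to $1$ is not a complete argument: the form satisfies $(\ol x,\ol y)=\ol{(x,y)}$ only after composing with the anti-automorphism, and almost-orthonormality alone does not separate the diagonal term from the rest without first knowing the triangular shape. In \textup{[BN]} the diagonal coefficient $1$ for the imaginary block is obtained from an explicit computation of $\ol{\wt P_{i,k}}$ (and hence $\ol{S_{\Bc_0}}$) in terms of the same family, not from orthogonality. If you replace your bar--braid step by the direct computation of $\ol{f_{\b_k}}$ via Levendorskii--Soibelman, and supply the bar-expansion of $\wt P_{i,k}$ from the recursion (1.5.7), your outline becomes the \textup{[BN]} proof.
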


\para{1.9.}
Now assume that $\Fg$ is simply laced.  Then by Theorem 1.5 (iii), 
$\SX_{\Bh,p}$ gives an $\BA$-basis of ${}_{\BA}\BU_q^-$,  
hence the coefficients $a_{\Bc,\Bd}$ appeared in (1.8.1) 
are contained in $\BA$. 
Then one can construct the canonical basis $\bB_{\Bh, p} = \{ b(\Bc, p) \mid \Bc \in \SC \}$
of $\BU_q^-$, which is characterized by the following properties;
\begin{align*}
\tag{1.9.1}
\ol{b(\Bc,p)} &= b(\Bc, p), \\ 
\tag{1.9.2}
    b(\Bc, p) &= L(\Bc, p) + \sum_{\Bc <_p \Bd}a_{\Bc,\Bd}L(\Bd, p), 
        \quad (a_{\Bc,\Bd} \in q\BZ[q]).
\end{align*}

By the upper triangularity (1.9.2), $\bB_{\Bh, p}$ gives rise to 
an $\BA$-basis of ${}_{\BA}\BU_q^-$, and they are almost orthonormal. 
\para{1.10.}
Let $V$ be a subspace of $\BU_q^-$.  A set $B$ is called 
a signed basis of $V$ if there exists a basis $B'$ of $V$ such that 
$B = B' \cup -B'$.  Similarly, let ${}_{\BA}V$ be an $\BA$-submodule of 
${}_{\BA}\BU_q^-$, a set $B$ is called a signed basis of ${}_{\BA}V$ 
if there exists an $\BA$-basis $B'$ such that $B = B' \cup -B'$.  
\par
The following results are known (the discussion in the proof 
of [L3, Theorem 14.2.3] cn be applied).

\begin{prop} 
Let $\BU_q^-$ be as in 1.9. We define $\SL(\infty)$ as 
\begin{equation*}
\SL(\infty) = \{ x \in \BU_q^- \mid (x,x) \in \BA_0\}, 
\end{equation*}
where $\BA_0 = \BQ[[q]] \cap \BQ(q)$.

\begin{enumerate}
\item $\SL(\infty)$ is an $\BA_0$-submodule of $\BU_q^-$,
where $\{ L(\Bc, p) \mid \Bc \in \SC\}$ gives an $\BA_0$-basis of $\BU_q^-$. 
\item
Let $x$ be an element in  ${}_{\BA}\BU_q^-$ such that $\ol x = x$ and that  
$(x,x) - 1 \in q\BA_0$.  Then there exists $\Bc \in \SC$ such that 
$x \equiv \pm L(\Bc, p) \mod q\SL(\infty)$.  
In particular, $\bB_{\Bh, p} \scup -\bB_{\Bh,p}$ is independent of the choice of 
$\Bh$ and $p$, which is called the signed canonical basis, 
and is denoted by $\wt\bB$. 
\item  By {\rm (ii)}, $*$ preserves $\wt\bB$.  
\end{enumerate}
\end{prop}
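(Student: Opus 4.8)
The plan is to follow the strategy of [L3, Theorem 14.2.3], transplanted to the PBW-basis $\SX_{\Bh,p}$ of the simply laced quantum affine algebra. For (i), I would first observe that, by the almost-orthonormality (1.6.1), the Gram matrix of the form $(\ ,\ )$ with respect to the basis $\{L(\Bc,p)\}$ has entries in $\d_{\Bc,\Bc'} + q\BA_0$. Hence if $x = \sum_{\Bc} a_{\Bc} L(\Bc,p)$ with $a_{\Bc}\in\BQ(q)$, then $(x,x) = \sum_{\Bc,\Bc'} a_{\Bc}\ol{a_{\Bc'}}\,(L(\Bc,p),L(\Bc',p))$ lies in $\BA_0$ if and only if each $a_{\Bc}\in\BA_0$: the ``if'' direction is immediate since $\BA_0$ is a ring, and the ``only if'' direction follows by looking at the lowest-degree term in $q$ of a putative $a_{\Bc}\notin\BA_0$ and noting that no cancellation can occur because the diagonal contributions $a_{\Bc}\ol{a_{\Bc}}$ have even (hence nonnegative if minimal) leading order while the off-diagonal contributions are pushed up by the $q\BA_0$ factor. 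Here one uses that $\ol{\phantom{x}}$ sends $\BA_0$ to $\overline{\BA_0} = \BQ[[q\iv]]\cap\BQ(q)$ and that $(x,x)$ is fixed by $\ol{\phantom{x}}$; a clean way is to filter by the $q$-adic valuation and argue on the associated graded. This gives that $\SL(\infty)$ is exactly the $\BA_0$-span of $\{L(\Bc,p)\}$, which is in particular an $\BA_0$-module.

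For (ii), let $x\in {}_{\BA}\BU_q^-$ with $\ol x = x$ and $(x,x)-1\in q\BA_0$. By (i), write $x = \sum_{\Bc} a_{\Bc} L(\Bc,p)$ with $a_{\Bc}\in\BA_0$. The condition $(x,x)\equiv 1 \bmod q\BA_0$ together with the Gram matrix being $\equiv\mathrm{id} \bmod q\BA_0$ forces $\sum_{\Bc} a_{\Bc}(0)^2 = 1$ where $a_{\Bc}(0)$ denotes the value at $q=0$; since $a_{\Bc}(0)\in\BQ$ and the only way a sum of rational squares equals $1$ with all but finitely many zero is... — wait, that is not automatic over $\BQ$ — so instead I would use the bar-invariance more carefully. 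Here is the key point: since $x$ is bar-invariant and, by Proposition 1.8 (1.8.1) combined with the triangularity, the transition between $\{L(\Bc,p)\}$ and the canonical basis $\{b(\Bc,p)\}$ of 1.9 is bar-compatible and unitriangular with off-diagonal entries in $q\BZ[q]$, one may instead expand $x = \sum_{\Bc} \g_{\Bc}\, b(\Bc,p)$ with $\g_{\Bc}\in\BA$; bar-invariance of $x$ and of each $b(\Bc,p)$ gives $\ol{\g_{\Bc}} = \g_{\Bc}$, i.e. $\g_{\Bc}\in\BZ$. Then $(x,x)\in\BZ + q\BA_0$ forces $\sum_{\Bc}\g_{\Bc}^2 \equiv 1\bmod q\BA_0$; but the left side is an integer, so $\sum_{\Bc}\g_{\Bc}^2 = 1$, whence exactly one $\g_{\Bc} = \pm1$ and the rest vanish. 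Thus $x = \pm b(\Bc,p) \equiv \pm L(\Bc,p)\bmod q\SL(\infty)$, using (1.9.2). The independence of $\bB_{\Bh,p}\scup -\bB_{\Bh,p}$ from $(\Bh,p)$ then follows because each $b(\Bc',p')$ for another choice $(\Bh',p')$ is such an $x$ (it is bar-invariant and almost-orthonormal), hence equals $\pm b(\Bc,p) \bmod q\SL(\infty)$ for a unique $\Bc$; the map $b(\Bc',p')\mapsto \pm b(\Bc,p)$ so defined is a bijection on the signed sets because the congruence classes $\pm L(\Bc,p) + q\SL(\infty)$ are pairwise distinct and exhaust the relevant classes, and two bar-invariant almost-orthonormal elements congruent mod $q\SL(\infty)$ are equal (difference is bar-invariant, lies in $q\SL(\infty)$, and bar-invariance plus $q\SL(\infty)\cap\ol{q\SL(\infty)} = 0$ forces it to be $0$ — this last characterization is the standard rigidity argument).

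For (iii), recall from 1.3 that ${}^*$ is the bar-compatible anti-automorphism fixing the $f_i$; it is an isometry for $(\ ,\ )$ (this follows from the defining properties (1.4.1) since $r$ and ${}^*$ intertwine appropriately — I would cite or quickly verify $({}^*x, {}^*y) = (x,y)$) and it commutes with $\ol{\phantom{x}}$. Hence if $b\in\wt\bB$, then ${}^*b$ is again bar-invariant, lies in ${}_{\BA}\BU_q^-$, and satisfies $({}^*b, {}^*b) = (b,b)\in 1 + q\BA_0$; by (ii), ${}^*b\equiv\pm L(\Bc,p)\bmod q\SL(\infty)$ for some $\Bc$, and by the rigidity in the previous paragraph ${}^*b = \pm b(\Bc,p)\in\wt\bB$. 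So ${}^*$ preserves $\wt\bB$.

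The main obstacle I anticipate is entirely in part (ii): one must resist the temptation to argue over $\BQ$ with the PBW-coefficients $a_{\Bc}\in\BA_0$ directly (sums of rational squares equaling $1$ is not restrictive enough), and instead pass to the canonical basis to force the coefficients to be \emph{integers}, where $\sum\g_{\Bc}^2 = 1$ does pin things down. Everything else — the module structure in (i), the isometry property of ${}^*$ in (iii), and the rigidity lemma underlying the uniqueness and the independence from $(\Bh,p)$ — is a routine adaptation of the arguments in [L3, §14.2], using Theorem 1.5, Proposition 1.8, and the construction in 1.9 as the only inputs specific to the affine simply laced setting.
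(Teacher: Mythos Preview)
Your proposal follows exactly what the paper does, namely invoke the argument of [L3, Theorem~14.2.3]; the paper gives no separate proof. Two corrections are worth noting.

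In (i), the form $(\ ,\ )$ of 1.4 is $\BQ(q)$-\emph{bilinear}, not sesquilinear: one has $(x,x) = \sum_{\Bc,\Bc'} a_{\Bc}\, a_{\Bc'}\,(L(\Bc,p),L(\Bc',p))$ with no bar on $a_{\Bc'}$. With your sesquilinear formula the leading-order argument breaks down (the diagonal term $a_{\Bc}\ol{a_{\Bc}}$ need not have even lowest $q$-order, nor a nonzero leading coefficient), and your aside that $(x,x)$ is bar-fixed is not true in general and not needed. With the correct bilinear formula the standard argument goes through: if $-N<0$ is the minimal $q$-order occurring among the $a_{\Bc}$, the $q^{-2N}$-coefficient of $(x,x)$ is a nonzero sum of rational squares, so $(x,x)\notin\BA_0$.

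In (ii), your detour through the canonical basis is valid but unnecessary. Since $x\in{}_{\BA}\BU_q^-$ and $\{L(\Bc,p)\}$ is an $\BA$-basis by Theorem~1.6~(iii) (this is where simply-lacedness enters), the PBW-coefficients $a_{\Bc}$ already lie in $\BA\cap\BA_0=\BZ[q]$; hence $a_{\Bc}(0)\in\BZ$ and $\sum a_{\Bc}(0)^2=1$ gives the conclusion directly. Your route via $\{b(\Bc,p)\}$ yields the same integrality and in fact the sharper statement $x=\pm b(\Bc,p)$, which is precisely what one needs for the independence of $\wt\bB$ from $(\Bh,p)$; so the approach is fine. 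Parts (iii) and the rigidity step are correct as you sketch them.
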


\para{1.12.}
We assume that $\BU_q^-$ is of general type.  
Recall the map $r : \BU_q^- \to \BU_q^-\otimes \BU_q^-$ 
as in 1.4.
For any $i \in I$, define $\BQ(q)$-linear maps 
${}_ir, r_i : \BU_q^- \to \BU_q^-$ by 

\begin{equation*}
\tag{1.12.1}
r(x) = f_i\otimes {}_ir(x) + \cdots, \quad 
r(x) = r_i(x)\otimes f_i + \cdots,
\end{equation*}
where $x$ is a homogeneous element, and $\cdots$ denotes 
the terms $y \otimes \BU_q^-$ in the first formula, and the terms 
$\BU_q^-\otimes z$ in the second formula, 
where $y, z$ are homogeneous elements with weight not equal to $-\a_i$.  
It follows from the definition that

\begin{equation*}
\tag{1.12.2}
(f_iy, x) = (f_i, f_i)(y, {}_ir(x)), \quad (yf_i,x) = (f_i,f_i)(y, r_i(x)).
\end{equation*}

The following lemma holds. 

\begin{lem}  
$\bigcap_{i \in I}\Ker {}_ir = \BQ(q)1, \quad 
\bigcap_{i \in I}\Ker r_i = \BQ(q)1.$
\end{lem}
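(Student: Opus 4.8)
The plan is to prove the two statements by symmetry (the second follows from the first by applying the anti-automorphism $*$, since $*$ interchanges ${}_ir$ and $r_i$ in the sense that $(y, {}_ir(x^*))$ and $(y, r_i(x)^*)$ match up via (1.12.2) and the symmetry of the bilinear form). So I focus on showing $\bigcap_{i\in I}\Ker {}_ir = \BQ(q)1$. One inclusion is immediate: $r(1) = 1\otimes 1$ carries no $f_i\otimes(\cdot)$ term for any $i$, so $1 \in \Ker {}_ir$ for all $i$.

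For the reverse inclusion, let $x$ be homogeneous of weight $-\nu$ with $\nu \in Q_+$, and suppose ${}_ir(x) = 0$ for every $i \in I$. I want to show $\nu = 0$. The key tool is the non-degeneracy of the bilinear form $(\ ,\ )$ on each weight space $(\BU_q^-)_{-\nu}$, together with the identity (1.12.2): $(f_iy, x) = (f_i,f_i)(y, {}_ir(x))$ for all homogeneous $y$. If ${}_ir(x) = 0$ for all $i$, then $(f_iy, x) = 0$ for all $i \in I$ and all $y \in \BU_q^-$. Now if $\nu \neq 0$, then $(\BU_q^-)_{-\nu}$ is spanned by elements of the form $f_iy$ with $y$ of weight $-\nu + \a_i$ — indeed $\BU_q^-$ is generated by the $f_i$, so every element of positive "degree" $\nu$ is a sum of such products. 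Hence $(z, x) = 0$ for all $z \in (\BU_q^-)_{-\nu}$, and since $x \in (\BU_q^-)_{-\nu}$ and the form is non-degenerate on this space, $x = 0$. Therefore $\bigcap_{i}\Ker {}_ir$ is concentrated in weight $0$, where it equals $\BQ(q)1$.

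The step I expect to require the most care is the claim that $(\BU_q^-)_{-\nu}$ for $\nu \neq 0$ is spanned by products $f_iy$. This is where one uses that $\BU_q^-$ is the subalgebra generated by $\{f_i\}$ with no constant terms in positive weight: a PBW-type argument or simply induction on the height of $\nu$ gives that any monomial $f_{i_1}\cdots f_{i_k}$ of weight $-\nu$ ($k \geq 1$) is visibly of the form $f_{i_1}y$. One must also double-check that the non-degeneracy in (1.4) is genuinely weight-by-weight, i.e. that the pairing restricts to a non-degenerate pairing on each finite-dimensional space $(\BU_q^-)_{-\nu}$ — this follows because distinct weight spaces are orthogonal (a consequence of the defining properties (1.4.1), or of the explicit formula for the form). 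Once these two facts are in hand the argument is a short formal deduction, and the second equality $\bigcap_i \Ker r_i = \BQ(q)1$ follows by the same reasoning applied on the other side, or by transport through $*$.
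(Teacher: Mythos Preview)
The paper does not actually supply a proof of this lemma; it is stated without argument as a known fact (it is Lemma~1.2.15 in [L3]). Your argument is correct and is essentially the standard one: use the adjunction identity (1.12.2) together with the observation that for $\nu \neq 0$ the weight space $(\BU_q^-)_{-\nu}$ is spanned by left products $f_iy$, and then invoke non-degeneracy of the form restricted to each weight space. Your remark about transporting the ${}_ir$ statement to the $r_i$ statement via $*$ is a bit loose as written (the precise compatibility of $*$ with $r$ involves a flip of tensor factors and needs to be stated carefully), but your alternative---rerunning the same argument on the right using $(yf_i,x) = (f_i,f_i)(y, r_i(x))$ and the fact that $(\BU_q^-)_{-\nu}$ is equally spanned by right products $yf_i$---is clean and suffices.
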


The following results are proved by Lusztig.

\begin{thm}[{[L3, Prop. 38.1.6]}]  
For any $i \in I$, put 
\begin{align*}
\tag{1.14.1}
\BU_q^-[i] &= \{ x \in \BU_q^- \mid T_i\iv(x) \in \BU_q^- \},  \\ 
{}^*\BU_q^-[i] &= *(\BU_q^-[i]) = \{ x \in \BU_q^- \mid T_i(x) \in \BU_q^- \}.
\end{align*}
\begin{enumerate}
\item \ $\BU_q^-[i] = \Ker {}_ir, \quad {}^*\BU_q^-[i] = \Ker r_i$. 
\item \ 
$\BU_q^- = \bigoplus_{n \ge 0}f_i^n\BU_q^-[i] 
         = \bigoplus_{n \ge 0}{}^*\BU_q^-[i]f_i^n$ {\rm (}Orthogonal decompositions{\rm)}. 
\end{enumerate}
\end{thm}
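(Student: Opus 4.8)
I follow Lusztig, splitting the proof into three parts: the skew-derivation identities for ${}_ir$ and $r_i$; the orthogonal decomposition (ii); and the identification (i). First I record the identities. Since $r:\BU_q^-\to\BU_q^-\otimes\BU_q^-$ is an algebra homomorphism for the twisted product of \S1.4, with $r(f_j)=f_j\otimes1+1\otimes f_j$ and $r(x)=x\otimes1+1\otimes x+(\text{middle-weight terms})$ for homogeneous $x$, extracting from $r(xy)=r(x)r(y)$ the part whose first tensor factor has weight $-\a_i$ gives
\begin{equation*}
{}_ir(xy)={}_ir(x)\,y+q^{(\a_i,\weit x)}\,x\,{}_ir(y),\qquad{}_ir(f_j)=\d_{ij},\qquad{}_ir(1)=0,
\end{equation*}
and the analogous identity $r_i(xy)=q^{(\a_i,\weit y)}\,r_i(x)\,y+x\,r_i(y)$ for $r_i$. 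In particular $\Ker{}_ir$ and $\Ker r_i$ are subalgebras, and iterating shows that for $z\in\Ker{}_ir$ one has ${}_ir(f_i^{\,n}z)=q_i^{-(n-1)}[n]_{q_i}f_i^{\,n-1}z$, and more generally ${}_ir^{\,m}(f_i^{\,n}z)=\la_{n,m}f_i^{\,n-m}z$ with $\la_{n,m}\in\BQ(q)\setminus\{0\}$ for $m\le n$, while ${}_ir^{\,m}(f_i^{\,n}z)=0$ for $m>n$.

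\textbf{The decomposition (ii).} I would first prove $\BU_q^-=\sum_{n\ge0}f_i^{\,n}\Ker{}_ir$ by induction on $\operatorname{ht}(-\weit x)$ for homogeneous $x$: if ${}_ir(x)=0$ there is nothing to prove; otherwise $\operatorname{ht}(-\weit{}_ir(x))$ is strictly smaller, so by induction ${}_ir(x)=\sum_m f_i^{\,m}y_m$ with $y_m\in\Ker{}_ir$, and the element $z:=\sum_m q_i^{\,m}[m+1]_{q_i}^{-1}f_i^{\,m+1}y_m$ then satisfies ${}_ir(z)={}_ir(x)$ by the formulas above, so that $x=(x-z)+z$ with $x-z\in\Ker{}_ir$ and $z\in\sum_{n\ge1}f_i^{\,n}\Ker{}_ir$. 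Next, iterating (1.12.2) gives $(f_i^{\,m}y,f_i^{\,n}z)=(f_i,f_i)^m\bigl(y,{}_ir^{\,m}(f_i^{\,n}z)\bigr)$ for $y,z\in\Ker{}_ir$; combined with the formulas above (and, for $m<n$, with the symmetry of $(\ ,\ )$ and ${}_ir^{\,n-m}(y)=0$) this shows $(f_i^{\,m}y,f_i^{\,n}z)=0$ for $m\ne n$. Since $(\ ,\ )$ is non-degenerate on each finite-dimensional weight space, surjectivity together with this cross-orthogonality forces the sum $\sum_n f_i^{\,n}\Ker{}_ir$ to be direct with $(\ ,\ )$ non-degenerate on each summand; hence $\BU_q^-=\bigoplus_{n\ge0}f_i^{\,n}\Ker{}_ir$ is an orthogonal decomposition. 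Finally, $*$ is an isometric anti-automorphism fixing each $f_i$ and satisfying $r_i\circ *=*\circ{}_ir$ (verified on monomials in the $f_j$ from the Leibniz rules above), so $*(\Ker{}_ir)=\Ker r_i$ and $*(f_i^{\,n}\Ker{}_ir)=(\Ker r_i)f_i^{\,n}$; applying $*$ to the first decomposition gives the second, again orthogonally.

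\textbf{The identification (i), and the main obstacle.} It remains to prove $\Ker{}_ir=\BU_q^-[i]$; then $\Ker r_i=*(\Ker{}_ir)=*(\BU_q^-[i])={}^*\BU_q^-[i]$, the description of ${}^*\BU_q^-[i]$ through $T_i$ follows from $*\circ T_i=T_i\iv\circ *$, and together with the previous part this gives (i) and (ii). I would prove the inclusion $\Ker{}_ir\subseteq\BU_q^-[i]$, i.e.\ that $T_i\iv$ maps $\Ker{}_ir$ into $\BU_q^-$, by induction on $\operatorname{ht}(-\weit x)$, using the skew-derivation identities above to reduce to smaller weight and the explicit formulas for the braid operator $T_i$ on the rank-two subalgebras generated by $f_i$ and $f_j$ ($j\ne i$). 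For the reverse inclusion, given $x\in\BU_q^-$ with $T_i\iv(x)\in\BU_q^-$, write $x=\sum_n f_i^{\,n}x_n$ with $x_n\in\Ker{}_ir$ by the previous part; then $T_i\iv(x)=\sum_n T_i\iv(f_i)^n\,T_i\iv(x_n)$ with $T_i\iv(x_n)\in\BU_q^-$ (by the inclusion just proved), and since $T_i\iv(f_i)$ has a non-trivial component of positive $\BU_q^+$-degree in the triangular decomposition of $\BU_q$, a degree count forces $x_n=0$ for $n\ge1$, i.e.\ $x=x_0\in\Ker{}_ir$. The hard part is the first inclusion: ${}_ir$ is defined purely through the coproduct and the bilinear form, whereas $\BU_q^-[i]$ is defined through $T_i\iv$, which a priori carries $\BU_q^-$ only into the full algebra $\BU_q$, and matching these requires precisely the braid-operator bookkeeping showing that membership in $\Ker{}_ir$ is exactly what cancels the potential $\BU_q^+$-part of $T_i\iv(x)$.
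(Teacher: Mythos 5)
The paper offers no proof of this statement at all: it is quoted verbatim from [L3, Prop.\ 38.1.6] (together with the material of [L3, Ch.\ 16] for the orthogonal decomposition), so there is nothing in the paper to compare against. Your reconstruction follows Lusztig's own route and the details you do supply are correct: the twisted Leibniz rules ${}_ir(xy)={}_ir(x)y+q^{(\a_i,\weit x)}x\,{}_ir(y)$ and ${}_ir(f_i^nz)=q_i^{-(n-1)}[n]_{q_i}f_i^{n-1}z$ check out against the product on $\BU_q^-\otimes\BU_q^-$ of 1.4, the height induction gives $\BU_q^-=\sum_nf_i^n\Ker{}_ir$, and the adjunction (1.12.2) plus non-degeneracy of $(\ ,\ )$ yields directness and orthogonality; the $*$-twist and the degree count via $T_i\iv(f_i)\in\BU_q^{\ge0}$ for the reverse inclusion are also the standard arguments. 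The one place where you have an outline rather than a proof is the inclusion $\Ker{}_ir\subseteq\BU_q^-[i]$, which you correctly identify as the crux: "induction on height plus rank-two braid formulas" is the right idea, but making it work is exactly the content of [L3, 38.1.1--38.1.5] (one shows that $\Ker{}_ir$ is generated by certain iterated $q$-commutators $f_{i,j;m}$ on which $T_i\iv$ is computed explicitly), and that bookkeeping is not reproduced here. Since the paper itself treats the whole theorem as a black box from [L3], this level of detail is acceptable, but be aware that this step is where essentially all the work lives.
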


\para{1.15.}
We return to the case where $\Fg$ is simply laced.  
For $b \in \wt\bB$ and $i \in I$, define $\ve_i(b), \ve^*_i(b) \in \BN$ by 
the condition 
\begin{equation*}
\tag{1.15.1}
b \in f_i^{\ve_i(b)}\BU_q^- - f_i^{\ve_i(b) +1}\BU_q^-, 
\quad 
b \in \BU_q^-f_i^{\ve^*_i(b)} - \BU_q^-f_i^{\ve^*_i(b) + 1},
\end{equation*}
namely, $\ve^*_i(b) = \ve_i(b^*)$. 
\par
We fix $\Bh$ as in 1.5.   Then by (1.5.2), $i \in I$ appears in 
the sequence $\Bh$.  
It follows from  the construction in [BN], the infinite sequence $\Bh$ is periodic.
Thus one can choose $p \in \BZ_{\le 0}$ so that $i = i_p$. 
We can find $b(\Bc, p) \in \bB_{\Bh,p}$ such that $b = \pm b(\Bc, p)$, 
where $\Bc = (\Bc_{+p}, \Bc_0, \Bc_{-p}) \in \SC$.  
By (1.9.2), $b(\Bc, p)$ can be written as  a linear combination of 
$L(\Bd, p)$ for various $\Bd = (\Bd_{+p}, \Bd_0, \Bd_{-p})$ such that 
$\Bc \le_p \Bd$.  This implies that $c_p \le d_p$.   
We note that $L(\Bd, p) \in f_{i_p}^{d_p}\BU_q^-[i_p]$ since 
$L(\Bd', p) \in \BU_q^-[i_p]$ if $d'_p = 0$ by (1.5.12) and (1.14.1). 
Then by using Theorem 1.14 (ii), we see that 
\par\medskip\noindent
(1.15.2) \  Let $b = \pm b(\Bc, p)$.  
Assume that $i = i_p$ for $p \le 0$. 
Then  $\ve_i(b) = c_p$.
\par\medskip
The following results are known by Lusztig [L3, Theorem. 14.3.2], 
where he proved them for $\BU_q^-$ of Kac-Moody type, by applying 
the geometric theory of quivers. 
We obtain those results directly from (1.15.2), 
by applying the theory of PBW-bases.

\begin{thm}  
Let $i \in I$ and $a \in \BN$.
\begin{enumerate}  
\item $\{ b \in \wt\bB \mid \ve_i(b) \ge a \}$ 
gives a signed basis of the $\BQ(q)$-vector space $f_i^a\BU_q^-$, and 
a signed basis of the $\BA$-module  
$\sum_{a' \ge a}f_i^{(a')}{}_{\BA}\BU_q^-$. 
\item
$\{ b \in \wt\bB \mid \ve^*_i(b) \ge a \}$ gives a signed basis of 
$\BQ(q)$-vector space $\BU_q^-f_i^a$, and a signed basis of 
the $\BA$-module $\sum_{a' \ge a}{}_{\BA}\BU_q^-f_i^{(a')}$. 
\item
For $b \in \wt\bB$ such that $\ve_i(b) = 0$, there exists a unique 
$b' \in \wt\bB$ such that 
\begin{equation*}
\tag{1.16.1}
f_i^{(a)}b \equiv b' \mod f_i^{a+1}\BU_q.
\end{equation*}
Then $\ve_i(b') = a$.  The map $b \mapsto b'$ gives a bijective 
correspondence
\begin{equation*}
\tag{1.16.2}
\{ b \in \wt\bB \mid \ve_i(b) = 0 \}  \isom \{ b' \in \wt\bB \mid \ve_i(b') = a \}. 
\end{equation*}
\end{enumerate}
\end{thm}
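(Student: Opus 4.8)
The plan is to deduce Theorem 1.16 from the single fact (1.15.2) together with the orthogonal decomposition of Theorem 1.14. First I would fix $i \in I$ and, using the periodicity of $\Bh$ observed in 1.15, choose $p \le 0$ with $i = i_p$. For part (i), I claim that both $\{b \in \wt\bB \mid \ve_i(b) \ge a\}$ and $\{\pm b(\Bc,p) \mid \Bc \in \SC,\ c_p \ge a\}$ describe the same set, by (1.15.2); so it suffices to show that $\{L(\Bc,p) \mid c_p \ge a\}$ and $\{b(\Bc,p) \mid c_p \ge a\}$ each span $f_i^a\BU_q^-$ over $\BQ(q)$ (resp. $\sum_{a'\ge a}f_i^{(a')}{}_{\BA}\BU_q^-$ over $\BA$). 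The spanning for the PBW side follows from (1.5.12)/(1.5.4): $L(\Bc,p)$ has $f_{i_p}^{(c_p)}$ as its leftmost factor, so $L(\Bc,p) \in f_i^{(c_p)}\BU_q^-[i]$ and in fact $f_i^{(c_p)}\cdot(\text{a PBW monomial for }\BU_q^-[i])$; conversely Theorem 1.14(ii) says $f_i^{a}\BU_q^- = \bigoplus_{c_p \ge a} f_i^{c_p}\BU_q^-[i]$, and the $L$'s with prescribed $c_p$ give a basis of the relevant summand. The canonical-basis side then follows because the transition matrix between $\{b(\Bc,p)\}$ and $\{L(\Bc,p)\}$ is unitriangular for $\le_p$ (by (1.9.2)) and $\le_p$ only increases $c_p$, so the triangularity respects the filtration by powers of $f_i$. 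Part (ii) is identical after applying the anti-automorphism $*$, using $\ve_i^*(b) = \ve_i(b^*)$ and that $*$ preserves $\wt\bB$ (Proposition 1.11(iii)).

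For part (iii), I would argue as follows. Given $b \in \wt\bB$ with $\ve_i(b) = 0$, write $b = \pm b(\Bc,p)$ with $c_p = 0$ (by (1.15.2)), so $L(\Bc,p) \in \BU_q^-[i]$. Let $\Bc^{(a)}$ be the triple obtained from $\Bc$ by replacing $c_p = 0$ with $c_p = a$ and keeping all other entries; then by (1.5.4), $L(\Bc^{(a)},p) = f_i^{(a)}L(\Bc,p)$. I would then show $f_i^{(a)}b \equiv \pm b(\Bc^{(a)},p) \bmod f_i^{a+1}\BU_q^-$: write $b = \pm\bigl(L(\Bc,p) + \sum_{\Bc <_p \Bd} a_{\Bc,\Bd}L(\Bd,p)\bigr)$ with $a_{\Bc,\Bd} \in q\BZ[q]$; multiplying by $f_i^{(a)}$ on the left sends $L(\Bc,p) \mapsto L(\Bc^{(a)},p)$ and $L(\Bd,p) \mapsto f_i^{(a)}L(\Bd,p)$, and using the straightening of $f_i^{(a)}f_i^{(d_p)} = \qbinom{a+d_p}{a}_{q_i} f_i^{(a+d_p)}$ one checks each resulting term lies in the span of $L(\Be,p)$ with $e_p \ge a$ and, modulo $f_i^{a+1}\BU_q^-$, only the $L(\Be,p)$ with $e_p = a$ survive; those that do survive come from $\Bd$'s with $\Bc <_p \Bd$, hence correspond under the shift $\Bd \mapsto \Bd^{(a)}$ (lowering... rather, raising $d_p$ by $a$) to $\Be$ with $\Bc^{(a)} <_p \Be$, and carry coefficients still in $q\BZ[q]$ (the $q$-binomial $\qbinom{a}{a}_{q_i} = 1$ for the leading term). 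Since $\ol{\phantom{x}}$ fixes $b$ and fixes $f_i^{(a)}$, and preserves the subspace $f_i^{a+1}\BU_q^-$, the element $f_i^{(a)}b$ is bar-invariant modulo $f_i^{a+1}\BU_q^-$; combined with the just-established expansion and the characterization (1.9.1)--(1.9.2) of the canonical basis, $f_i^{(a)}b \equiv \pm b(\Bc^{(a)},p)$. Thus $b' := \pm b(\Bc^{(a)},p) \in \wt\bB$ works, and $\ve_i(b') = a$ by (1.15.2) again.

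Uniqueness of $b'$ and bijectivity of $b \mapsto b'$ I would get from part (i): the set $\{b \in \wt\bB \mid \ve_i(b) = a\}$ is a signed basis of $f_i^a\BU_q^-$ that is moreover \emph{linearly independent modulo} $f_i^{a+1}\BU_q^-$ (each such $b$ equals $\pm L(\Be,p) + (\text{higher in }\le_p)$ with $e_p = a$, and reducing mod $f_i^{a+1}\BU_q^-$ kills the terms with strictly larger $e_p$ while those with $e_p = a$ stay distinct), so at most one element of $\wt\bB$ can satisfy (1.16.1). For the inverse map, starting from $b' \in \wt\bB$ with $\ve_i(b') = a$, write $b' = \pm b(\Be,p)$ with $e_p = a$, and set $b$ to be the canonical basis element $\pm b(\Be^{(-a)},p)$ where $\Be^{(-a)}$ lowers $e_p$ to $0$; the same computation shows $f_i^{(a)}b \equiv b' \bmod f_i^{a+1}\BU_q^-$, so this is a genuine two-sided inverse. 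I expect the main obstacle to be the bookkeeping in part (iii): verifying carefully that after left-multiplication by $f_i^{(a)}$ and $q$-binomial straightening, all the ``error'' terms either land in $f_i^{a+1}\BU_q^-$ or retain coefficients in $q\BZ[q]$ with the correct $\le_p$-triangularity — this is where the interplay between the lexicographic order $\le_p$ on $\Bc_{+p}$ and the exponent $c_p$ must be tracked with some care, though it is purely combinatorial once the PBW relation $f_i^{(a)}f_i^{(d)} = \qbinom{a+d}{a}_{q_i}f_i^{(a+d)}$ is in hand.
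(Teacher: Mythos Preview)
Your proposal is correct and follows exactly the approach the paper indicates: the paper itself does not spell out a proof of Theorem 1.16 beyond the sentence ``We obtain those results directly from (1.15.2), by applying the theory of PBW-bases,'' and your argument is precisely such a derivation. The key ingredients you use --- the identification $\ve_i(b(\Bc,p)) = c_p$ from (1.15.2), the factorization $L(\Bc,p) = f_{i_p}^{(c_p)}L(\Bc^{(0)},p)$ with $L(\Bc^{(0)},p) \in \BU_q^-[i]$ from (1.5.4) and (1.5.12), the orthogonal decomposition of Theorem 1.14(ii), and the $\le_p$-unitriangularity of (1.9.2) together with the fact that $\Bc <_p \Bd$ forces $c_p \le d_p$ --- are exactly what the paper has set up, and your part (iii) argument (passing to the quotient $f_i^a\BU_q^-/f_i^{a+1}\BU_q^-$ and invoking the bar-invariance plus triangularity characterization) is in the same spirit as the projection $x \mapsto x_{[i]}$ and the reformulation (3.14.2) that the paper uses later.
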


\remark{1.17.}   
We write $f_i^a\BU_q^- \cap {}_{\BA}\BU_q^- = {}_{\BA}(f_i^a\BU_q^-)$.  
Then we have a relation 
\begin{equation*}
\tag{1.17.1}
{}_{\BA}(f_i^a\BU_q^-) = \sum_{a' \ge a}f_i^{(a')}{}_{\BA}\BU_q^-.
\end{equation*}
In fact, if $x \in {}_{\BA}(f_i^a\BU_q^-)$, $x$ can be written as a linear 
combination of $L(\Bc, p) \in \SX_{\Bh, p}$ with coefficients in $\BA$,   
where we may assume that $i = i_p$ for $p \le 0$.  Then $c_i \ge a$ by 
the discussion in 1.15.  Hence $x$ is contained in the right hand side of (1.17.1).
The opposite inclusion is obvious.  Thus (1.17.1) holds. 
Now by using (1.17.1), the condition (1.16.1) can be rewritten as 
the condition 
\begin{equation*}
\tag{1.17.2}
f_i^{(a)}b \equiv b' \mod \sum_{a' \ge a}f_i^{(a')}{}_{\BA}\BU_q^-.
\end{equation*}

\para{1.18.}
By using the bijection (1.16.2), 
we define a map $F_i$ as a composite of the bijections

\begin{equation*}
F_i : \{ b \in \wt\bB \mid \ve_i(b) = a \} \isom \{ b' \in \wt\bB \mid \ve(b') = 0 \}
     \isom  \{ b'' \in \wt\bB \mid \ve(b'') = a + 1 \}.
\end{equation*}

In the case where $\ve_i(b) > 0$, we define $E_i$ as the inverse of the map $F_i$, 
and put $E_i(b) = 0$ if $\ve(b) = 0$.  
The maps $E_i, F_i : \wt\bB  \to \wt\bB \cup \{ 0 \}$ are called the Kashiwara operators.
The following result holds.

\begin{thm} 
For any $b \in \wt\bB$, there exists a sequence 
$i_1, i_2, \dots, i_N \in I$ and $c_1, c_2, \dots, c_N \in \BZ_{> 0}$
such that $b = \pm F_{i_1}^{c_1}\cdots F_{i_N}^{c_N}1$. 
\end{thm}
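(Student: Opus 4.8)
The plan is to induct on the weight $\nu \in Q_+$ of $b$ (with respect to the partial order by $Q_+$), the base case $\nu = 0$ being $b = \pm 1$, which needs no operators. So assume $b \in \wt\bB$ has weight $\weit b = -\nu$ with $\nu \ne 0$, and suppose the statement holds for all elements of $\wt\bB$ of strictly smaller weight. First I would show that there exists $i \in I$ with $\ve_i(b) > 0$. Indeed, if $\ve_i(b) = 0$ for every $i \in I$, then by (1.15.1) $b \notin f_i\BU_q^-$ for all $i$; I want to deduce $b \in \BQ(q)1$, contradicting $\nu \ne 0$. For this, note that $\ve_i(b) = 0$ means, via Theorem 1.14 (ii) applied to the decomposition $\BU_q^- = \bigoplus_{n\ge 0} f_i^n \BU_q^-[i] = \bigoplus_{n\ge 0} f_i^n \Ker {}_ir$, that $b$ lies in $\Ker {}_ir$ (the $n=0$ component), so $b \in \bigcap_{i\in I}\Ker {}_ir = \BQ(q)1$ by Lemma 1.13. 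Hence some $i$ has $a := \ve_i(b) > 0$.

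Next, using the bijection (1.16.2) of Theorem 1.16 (iii) with this $i$ and this $a$, write $b$ (up to the sign that is already present, since $\wt\bB$ is a signed basis and the bijection is defined on $\wt\bB$) as the image of a unique $b' \in \wt\bB$ with $\ve_i(b') = 0$ under the map $b' \mapsto F_i^a b'$, where by definition $F_i^a b' \equiv f_i^{(a)} b' \bmod f_i^{a+1}\BU_q^-$ and $\ve_i(F_i^a b') = a$. Thus $b = \pm F_{i}^{a} b'$. The element $b'$ has weight $-\nu + a\a_i$ with $a \ge 1$, hence strictly smaller weight than $b$, so by the induction hypothesis there is a sequence $j_1,\dots,j_M \in I$ and $c_1,\dots,c_M \in \BZ_{>0}$ with $b' = \pm F_{j_1}^{c_1}\cdots F_{j_M}^{c_M} 1$. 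Prepending $F_i^a$ gives $b = \pm F_i^{a} F_{j_1}^{c_1}\cdots F_{j_M}^{c_M} 1$, which is of the required form (all exponents are positive since $a \ge 1$). This closes the induction.

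The main point to be careful about is the reduction "$\ve_i(b)=0$ for all $i$ $\Rightarrow$ $b \in \BQ(q)1$": one must check that the $n=0$ summand in the orthogonal decomposition $\BU_q^- = \bigoplus_{n\ge 0} f_i^n\BU_q^-[i]$ of Theorem 1.14 is exactly $\{x : \ve_i(x)=0\}\cup\{0\}$ together with $\BU_q^-[i]=\Ker{}_ir$, which is immediate from the definition (1.15.1) and part (i) of Theorem 1.14, but relies on $b$ being homogeneous (true, since $b \in \wt\bB$) so that its expansion in the direct sum has a single relevant component in each graded piece. Everything else is bookkeeping: tracking the sign (harmless, as $\wt\bB = \bB' \cup -\bB'$ is stable under each $F_i$ by the very construction in 1.18, so the ambiguous sign can be absorbed at the end) and verifying the strict decrease of weight under $E_i$ (equivalently, increase under applying one more $F_i$), which holds because $a = \ve_i(b) \ge 1$. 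No appeal to the geometry of quivers or to crystal bases is needed; the argument runs entirely on Theorems 1.14 and 1.16 and Lemma 1.13, exactly in the "elementary" spirit advertised in the introduction.
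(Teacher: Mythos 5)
Your argument is correct and follows essentially the same route as the paper: Lemma 1.13 together with Theorem 1.14 produces an index $i$ with $\ve_i(b)>0$, and one then inducts on the weight using the bijection (1.16.2) (the paper phrases the first step in the contrapositive and applies $E_i$ once rather than $F_i^{a}$ all at once, which is cosmetic). One caveat on the step you flag: the implication $\ve_i(b)=0\Rightarrow b\in\Ker {}_ir$ is not a consequence of homogeneity alone, since a homogeneous element can have nonzero components in several summands $f_i^n\BU_q^-[i]$ of the decomposition in Theorem 1.14 (ii); for $b\in\wt\bB$ it follows from (1.15.2) and the triangular expansion of $b(\Bc,p)$ in the PBW-basis (or from Theorem 1.16 (i)), and the paper asserts the contrapositive of this same step with the same brevity.
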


\begin{proof}
Take $b \in \wt\bB$.  We may assume that $b \ne \pm 1$.  
Then by Lemma 1.13, there exists $i \in I$ such that $b \notin \Ker {}_ir$.  
By Theorem 1.14 (i), $b \notin \BU_q^-[i]$.  
This implies that $\ve_i(b) > 0$ by Theorem 1 14 (ii).  
Then $b' = E_i(b)$ satisfies the condition that $\ve(b') > \ve(b)$.  
Thus the theorem follows by induction on the weight of $b$. 
\end{proof}

\para{1.20.}
For any $i \in I$, $a \in \BN$, put $\wt\bB_{i;a} = \{ b \in \wt\bB \mid \ve_i(b) = a \}$. 
Following Lusztig [L3, 14.4], we shall construct the canonical basis $\bB$ of $\BU_q^-$.
By the induction on the partial order on the weights $\nu \in -Q_+$, we define 
$\bB(\nu)$ as follows; 
\begin{equation*}
\tag{1.20.1}
  \bB(\nu) = \bigcup_{i \in I, a> 0}F_i^a(\bB(\nu + a\a_i) \cap \wt\bB_{i;0}). 
\end{equation*} 
For $\nu = 0$, put $\bB(\nu) = \{ 1 \}$.
Put $\bB = \scup_{\nu \in -Q_+}\bB(\nu) \subset \wt\bB$.  

\begin{thm}[{Lusztig [L3, Thm. 14.4.3]}]
$\bB$ satisfies the following properties. 
\begin{enumerate}
\item  $\wt\bB = \bB \cup -\bB$. 
\item $\bB \cap -\bB = \emptyset$.
\item $\bB$ is an $\BA$-basis of ${}_{\BA}\BU_q^-$, and a basis of 
$\BU_q^-$. 
\end{enumerate}
\end{thm}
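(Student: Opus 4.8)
The plan is to prove (i) by a direct induction on the weight; then to deduce that $\bB$ spans $\BU_q^-$ (resp.\ generates ${}_{\BA}\BU_q^-$) from the corresponding fact for $\wt\bB$; and finally to identify $\bB$ with one of the PBW-normalised canonical bases $\bB_{\Bh,p}$. Once that identification is made, (ii) and the basis assertions of (iii) are immediate: $\bB_{\Bh,p}$ is a $\BQ(q)$-basis of $\BU_q^-$ and an $\BA$-basis of ${}_{\BA}\BU_q^-$ by Theorem 1.6(iii) and 1.9, and a basis contains no pair $\{b,-b\}$.

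For (i) I would induct on $\nu\in -Q_+$, ordered so that $\mu$ precedes $\nu$ when $\nu-\mu\in Q_+\setminus\{0\}$, proving $\wt\bB\cap(\BU_q^-)_\nu\subseteq\bB(\nu)\cup-\bB(\nu)$. The case $\nu=0$ is $\{1,-1\}=\{1\}\cup\{-1\}$. If $\nu\neq0$ and $b\in\wt\bB\cap(\BU_q^-)_\nu$, then $b\neq\pm1$, so, as in the proof of Theorem 1.19 (using Lemma 1.13 and Theorem 1.14), there is $i$ with $a:=\ve_i(b)>0$; then $b_0:=E_i^a(b)\in\wt\bB$ has $\ve_i(b_0)=0$, weight $\nu+a\a_i$, and $F_i^a(b_0)=b$. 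By induction $b_0=\pm b_0'$ with $b_0'\in\bB(\nu+a\a_i)\cap\wt\bB_{i;0}$, so $F_i^a(b_0')\in\bB(\nu)$ by (1.20.1), whence $b=\pm F_i^a(b_0')\in\bB(\nu)\cup-\bB(\nu)$. As $\bB\subseteq\wt\bB$ is clear, this gives $\wt\bB=\bB\cup-\bB$; since $\wt\bB=\bB_{\Bh,p}\scup-\bB_{\Bh,p}$ spans $\BU_q^-$ and generates ${}_{\BA}\BU_q^-$, so does $\bB$.

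The heart of the argument is the stability of $\bB$ under the Kashiwara operators: for $b\in\bB$ with $\ve_i(b)=a>0$ one should have $E_i^a(b)\in\bB$, equivalently $\bB(\nu)\cap\wt\bB_{i;a}=F_i^a\bigl(\bB(\nu+a\a_i)\cap\wt\bB_{i;0}\bigr)$ for every $a>0$. To prove it I would pass to the PBW picture. Fix $i$ and a position $p\le0$ with $i_p=i$. The definitions in 1.5 yield a factorisation $L(\Bc,p)=f_{i_p}^{(c_p)}\,L(\wh\Bc,p)$, where $\wh\Bc$ is the sequence obtained from $\Bc$ by replacing $c_p$ with $0$; and $L(\wh\Bc,p)\in\BU_q^-[i_p]$ by (1.5.12) and (1.14.1). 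Combined with Proposition 1.8 and Theorem 1.16(iii), this forces the Kashiwara operator to act on the PBW picture by $F_{i_p}^{a}\,b(\wh\Bc,p)=b(\Bc,p)$ whenever $c_p=a>0$, while $\ve_{i_p}(b(\Bc,p))=c_p$ by (1.15.2). Using the orthogonal decomposition $\BU_q^-=\bigoplus_{n\ge0}f_{i_p}^n\BU_q^-[i_p]$ of Theorem 1.14 one concludes that $\bB_{\Bh,p}$ satisfies the $i_p$-instance of the recursion (1.20.1).

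Since every $i\in I$ occurs as $i_p$ for infinitely many $p$ (periodicity of $\Bh$), the previous step gives the $i$-instance of (1.20.1) for each $i\in I$ — provided one knows that the canonical basis attached to the PBW-basis $\SX_{\Bh,p}$ does not change under the shift $p\mapsto p\mp1$, so that the normalisations coming from the different indices are mutually compatible. This position-independence — equivalently, the compatibility of the constructions attached to the various indices — is the step I expect to be the main obstacle, and it is precisely where the PBW-theoretic input of [BN] is essential. Granting it, $\bB_{\Bh,p}$ satisfies the full recursion (1.20.1); as that recursion determines $\bB$ uniquely, $\bB=\bB_{\Bh,p}$, and (i), (ii) and (iii) follow as explained above.
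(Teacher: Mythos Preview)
Your argument for (i) is correct and is exactly what the paper indicates in Remark~1.22 (``(i) follows from Theorem~1.19'').  Your observation that $F_{i_p}^a\,b(\wh\Bc,p)=b(\Bc,p)$ (so that $\bB_{\Bh,p}$ is stable under $F_{i_p}$ and $E_{i_p}$) is also correct, and the deduction ``$\bB_{\Bh,p}$ position-independent $\Rightarrow$ $\bB=\bB_{\Bh,p}$ $\Rightarrow$ (ii) and (iii)'' is valid.

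The gap is precisely at the step you flag: the position-independence $\bB_{\Bh,p}=\bB_{\Bh,p'}$ as \emph{unsigned} bases.  Proposition~1.11 gives only $\bB_{\Bh,p}\cup(-\bB_{\Bh,p})=\bB_{\Bh,p'}\cup(-\bB_{\Bh,p'})$; removing the sign is exactly the content of (ii), so your reduction has not made the problem easier.  More importantly, your suggestion that this is ``where the PBW-theoretic input of [BN] is essential'' is not right.  The relevant result (see~1.23) is that every $b(\Bc,p)$ lies in $\bB$ with sign $+1$; in [BN, Theorem~3.13] this is proved by identifying $\bB_{\Bh,p}$ with Kashiwara's global crystal basis via the theory of extremal weight modules~[K2], not by PBW methods.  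In the paper's logical order the statement in~1.23 even presupposes Theorem~1.21, so citing it here would be circular.

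The paper's (i.e.\ Lusztig's) route to (ii) is different: it does not pass through any identification $\bB=\bB_{\Bh,p}$, but instead applies the Kashiwara-operator theory of [L3, Part~III] (Lusztig's reformulation of Kashiwara's crystal-basis theory~[K1]) directly to the recursively defined $\bB$, to show $\bB\cap(-\bB)=\emptyset$.  So both your route and the paper's ultimately rest on Kashiwara's theory, but your proposal replaces the step where that theory is invoked by a claim (PBW position-independence) that is at least as hard and whose known proofs use the same machinery.
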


\remark{1.22} (i) follows from Theorem 1.19.  The essential difficulty in 
the proof is 
to show that $\bB \cap -\bB = \emptyset$.  This fact was proved by applying 
the theory of Kashiwara operators discussed in [L3, Part III].    
Note that this theory is purely algebraic, and independent from  his geometric 
theory.  But the results are essentially due to Kashiwara [K1]. Lusztig reconstructed
Kashiwara's theory, assuming the existence of the
signed canonical basis at the starting point.
So this theory fits to our situation, but strictly speaking, the proof of
$\bB \cap -\bB = \emptyset$ uses Kashiwara's theory. 
$\bB$ coincides with Kashiwara's global crystal basis. 

\para{1.23.}
Since $\wt\bB = \bB \scup -\bB $,  for each $L(\Bc, p) \in \SX_{\Bh,p}$, 
there exists a unique $b \in \bB$ such that 
$b \equiv \pm L(\Bc, p) \mod q\SL_{\BZ}(\infty)$. 
It was shown that this signature is always 1, by [L4, Proposition 8.2] 
if $L(\Bc,p)$ does not
contain imaginary root vectors, and by [BN, Theorem 3.13] in general. 
They determined the signature for the root vectors corresponding to $S_{\Bc_0}$   
by using the theory of extremal weight modules due to 
Kashiwara [K2].   

\par\bigskip 
\section{ The algebra $\BV_q$ }

\para{2.1.}
Let $X = (I, (\ ,\ ))$ be a simply laced Cartan datum, and 
let $\s : I \to I$ be a permutation such that 
$(\s(\a_i), \s(\a_j)) = (\a_i,\a_j)$ for any $i, j \in I$.
Such a $\s$ is called a diagram automorphism.    
Let $\ul I$ be the set of orbits of $\s$ on $I$. 
We assume that $\s$ is admissible, namely for each orbit $\eta \in \ul I$, 
$(\a_i, \a_j) = 0$ for any $i \ne j$ in $\eta$.  
\par
We define a symmetric bilinear form $(\ ,\ )_1$ on 
$\bigoplus_{\eta \in \ul I}\BQ \a_{\eta}$ by 
\begin{equation*}
(\a_{\eta}, \a_{\eta'})_1 = \begin{cases}
                               2|\eta|  &\quad\text{ if } \eta = \eta', \\
               - |\{ (i, j) \in \eta \times \eta' \mid (\a_i, \a_j) \ne 0\}|
                           &\quad\text{ if } \eta \ne \eta'.
                             \end{cases}                               
\end{equation*}
Then $\ul X = (\ul I, (\ ,\ )_1)$ defines a Cartan datum. 
Let $\ul Q = \bigoplus_{\eta \in \ul I}\BZ \a_{\eta}$ be the 
root lattice, and put $\ul Q_+ = \sum_{\eta \in \ul I}\BN\a_{\eta}$. 
\par
Let $\BU_q$ be the quantum affine algebra associated to $X$ as in 1.3.
Since $X$ is simply laced, $[n]_{q_i} = [n]_q$ for any $i \in I$. 
$\s$ induces an algebra automorphism $\s : \BU_q^- \isom \BU_q^-$ 
by $f_i \to f_{\s(i)}$. 
We denote by $\BU_q^{-,\s}$ the subalgebra of $\BU_q^-$ consisting of $\s$-fixed 
elements. 
$\s$ stabilizes ${}_{\BA}\!\BU_q^-$, and we can define 
${}_{\BA}\!\BU_q^{-,\s} = \BU_q^{-,\s} \cap {}_{\BA}\!\BU_q^-$,  
the subalgebra of ${}_{\BA}\!\BU_q^-$ consisting of $\s$-fixed elements. 
\par
Let $\ul{\BU}_q$ the quantum affine algebra associated to $\ul X$, 
and $\ul{\BU}_q^-$ its negative part, 
namely, $\ul{\BU}_q^-$ is the $\BQ(q)$-algebra generated by 
$\ul f_{\eta}$ with $\eta \in \ul I$ satisfying a similar relation as in (1.3.1).

\para{2.2}
Let $\ve$ be the order of $\s$. We assume that $\ve = 2$ or 3
(note that if $X$ is irreducible, then $\ve = 2,3$ or 4.  We exclude 
the case where $\ve = 4$), and   
$\BF = \BZ/\ve\BZ$ be the finite field of $\ve$-elements. 
Put $\BA' = \BF[q,q\iv]$, and consider the $\BA'$-algebra
\begin{equation*}
\tag{2.2.1}
{}_{\BA'}\BU_q^{-,\s} = {}_{\BA}\BU_q^{-,\s}\otimes_{\BA}\BA'
                      \simeq {}_{\BA}\BU_q^{-,\s}/\ve({}_{\BA}\BU_q^{-,\s}).
\end{equation*}    
Let $J$ be the $\BA'$-submodule of ${}_{\BA'}\BU_q^{-,\s}$ consisting of 
elements of the form $\sum_{0 \le i < \ve}\s^i(x)$ for $x \in {}_{\BA'}\BU_q^-$.  
Then $J$ is a two-sided ideal of ${}_{\BA'}\BU_q^{-,\s}$, and we denote by 
$\BV_q$ the quotient algebra ${}_{\BA'}\BU_q^{-,\s}/J$. 
Let $\pi : {}_{\BA'}\BU_q^{-,\s} \to \BV_q$ be the natural map.  
\par
Let $\ul\BU_q^-$ be as before. 
We can define ${}_{\BA}\ul\BU_q^-$ and ${}_{\BA'}\ul\BU_q^-$ similarly to 
${}_{\BA}\BU_q^-$ and ${}_{\BA'}\BU_q^-$. 

\para{2.3.}
For each $\eta \in \ul I$ and $a \in \BN$, put 
$\wt f_{\eta}^{(a)} = \prod_{i \in \eta}f_i^{(a)}$. 
Since $f_i^{(a)}$ and $f_j^{(a)}$ commute each other for $i,j \in \eta$,  
we have $\wt f^{(a)}_{\eta} \in {}_{\BA}\BU_q^{-,\s}$. We denote its image 
in ${}_{\BA'}\BU_q^{-,\s}$ also by $\wt f^{(a)}_i$.  
Thus we can define $g_{\eta}^{(a)} \in \BV_q$ by 
\begin{equation*}
\tag{2.3.1}
g^{(a)}_{\eta} = \pi(\wt f_{\eta}^{(a)}). 
\end{equation*}
In the case where $a = 1$, we put 
$\wt f_{\eta}^{(1)} = \wt f_{\eta} = \prod_{i \in \eta}f_i$ 
and $g^{(1)}_{\eta} = g_{\eta}$. 
\par
Since the anti-algebra automorphism $*$ commutes with $\s$, $*$ preserves 
${}_{\BA}\BU_q^{-,\s}$, and acts on ${}_{\BA'}\BU_q^{-,\s}$, 
which induces an anti-algebra automorphism  $*$ on $\BV_q$. 
Note that $\wt f_{\eta}^{(a)}$ is $*$-invariant since $f_i$ and $f_j$ commute
for any $i, j \in \eta$. Thus $g^{(a)}_{\eta}$ is $*$-invariant for any $\eta$. 
\par
Recall that ${}_{\BA'}\ul\BU_q^-$ is generated by 
(the image of) $\ul f^{(a)}_{\eta}$ for $\eta \in \ul I$ and $a \in \BN$.
The anti-algebra automorphism $*$ on $_{\BA'}\ul\BU_q^-$ is inherited from 
$*$ on $\ul\BU_q^-$. 
We prove the following result. 

\begin{thm}  
The assignment $\ul f_{\eta}^{(a)} \mapsto g_{\eta}^{(a)}$ gives rise to 
an isomorphism  
$\Phi : {}_{\BA'}\ul\BU^-_q \isom \BV_q$ of $\BA'$-algebras, which is compatible 
with $*$-operations. . 
\end{thm}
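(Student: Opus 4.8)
The plan is to establish, in turn, that $\Phi$ is a well-defined homomorphism of $\BA'$-algebras compatible with $*$, that it is surjective, and that it is injective — the last being the substantive point, which I would settle by matching PBW bases on the two sides. For the first: since ${}_{\BA}\ul\BU_q^-$ is generated over $\BA$ by the $\ul f_\eta^{(a)}$ and is presented by the divided-power multiplication relations and the divided-power quantum Serre relations for $\ul X$, it suffices to check that the $g_\eta^{(a)}$ satisfy these in $\BV_q$. Each such relation is obtained from the corresponding relation among the $f_i^{(a)}$ in ${}_{\BA}\BU_q^-$ by taking a product over $i\in\eta$ of several copies of it — legitimate because admissibility forces the $f_i^{(a)}$ with $i\in\eta$ to commute pairwise — and then reducing modulo $\ve$ and modulo $J$; the manipulation of the quantum integers uses the simply-laced normalisation $q_i=q$ together with the Frobenius congruence $[n]_q^{\ve}=[n]_{q^{\ve}}$ in $\BF[q,q\iv]$ (expand $[n]_q$ as a sum of monomials with coefficients in $\{0,1\}$ and raise to the $\ve$-th power over $\BF$), which converts the $\ve$-th power of an $X$-relation into the $\ul X$-relation with parameter $\ul q_\eta=q^{|\eta|}$. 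This is essentially the analysis of the generators of $\BV_q$ carried out in [SZ], which I would quote; it is insensitive to whether $\ul X$ is of twisted or of untwisted type. Compatibility with $*$ is then immediate, since ${}_{\BA'}\ul\BU_q^-$ and $\BV_q$ are generated by the $*$-fixed elements $\ul f_\eta^{(a)}$, respectively $g_\eta^{(a)}$ (the latter recorded in 2.3), and $\Phi$ sends generators to generators.

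Next I would exhibit a convenient basis of $\BV_q$. Fix $\Bh$ as in 1.5; since the element $\xi$ there is $\s$-fixed, $\Bh$ may be chosen so that $\s$ permutes its terms within the orbits $\eta$. Because $\s(f_i^{(c)})=f_{\s(i)}^{(c)}$, $\s T_i=T_{\s(i)}\s$ and $\s(\d)=\d$, the automorphism $\s$ then permutes the PBW basis $\SX_{\Bh}=\{L(\Bc,0)\}$ of ${}_{\BA}\BU_q^-$, with $\s(L(\Bc,0))=L(\s(\Bc),0)$ for a permutation $\Bc\mapsto\s(\Bc)$ of $\SC$. Hence ${}_{\BA}\BU_q^{-,\s}$ is $\BA$-free on the $\s$-fixed vectors $L(\Bc,0)$ together with the orbit sums $\sum_{0\le j<\ve}L(\s^{j}(\Bc),0)$ over the non-fixed orbits; applying $\otimes_{\BA}\BA'$ and then passing to the quotient by $J$ — which, by the same elementary computation, is the $\BA'$-span of the $\s$-orbit sums of this $\s$-stable basis — one finds that $\BV_q$ is a free $\BA'$-module with basis $\{\pi(L(\Bc,0)):\Bc\in\SC^{\s}\}$, where $\SC^{\s}=\{\Bc\in\SC:\s(\Bc)=\Bc\}$.

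It then remains to show that $\Phi$ carries the PBW basis $\ul\SX_{\ul\Bh}=\{\ul L(\ul\Bc,0):\ul\Bc\in\ul\SC\}$ of ${}_{\BA'}\ul\BU_q^-$, up to units of $\BA'$, bijectively onto this basis of $\BV_q$. First, $\SC^{\s}\isom\ul\SC$: the real data of a $\s$-fixed $\Bc$ are $\s$-invariant functions on $\vD^{\re,+}$, i.e. functions on the $\s$-orbits of positive real roots, which are in bijection with the positive real roots of $\ul X$, while a $\s$-fixed imaginary datum $\Bc_0$ is an $\ul I_0$-tuple of partitions, i.e. an imaginary datum for $\ul X$; under this bijection weights correspond by the standard folding relation between the imaginary roots of $X$ and of $\ul X$. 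Second, for corresponding $\ul\Bc,\Bc$ one checks that $\Phi(\ul L(\ul\Bc,0))$ agrees with $\pi(L(\Bc,0))$ up to a unit of $\BA'$: writing $\ul L(\ul\Bc,0)$ through the braid operators $\ul T_\eta$ applied to the $\ul f_{\eta'}^{(c)}$ and to the imaginary factor $\ul S_{\ul\Bc_0}$, and replacing $\ul T_\eta$ by $\prod_{i\in\eta}T_i$ and $\ul f_{\eta'}^{(c)}$ by $\wt f_{\eta'}^{(c)}$, produces (since $\Phi$ is an algebra map) a $\s$-fixed element of ${}_{\BA}\BU_q^-$ whose image under $\pi$ is $\Phi(\ul L(\ul\Bc,0))$, and if $\Bh$ is chosen so that each $\s$-orbit of positive roots is an interval in the associated convex order, this element differs from $L(\Bc,0)$ only by a reordering of the factors attached to orbit-mates, which commute by admissibility, so it in fact equals $L(\Bc,0)$. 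Granting this, $\Phi$ maps an $\BA'$-basis onto an $\BA'$-basis and hence is an isomorphism, in particular both surjective and injective. (Alternatively, once the graded $\BA'$-ranks of the two sides are seen to coincide — the combinatorial identity $|\ul\SC_{\ul\nu}|=|\SC^{\s}_{\ul\nu}|$ just discussed — surjectivity of $\Phi$ together with the fact that $\BA'=\BF[q,q\iv]$ is a principal ideal domain already forces injectivity.)

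The genuinely hard part is the imaginary factor in the matching — identifying the $\s$-fixed Schur-polynomial vectors $S_{\Bc_0}$ with the imaginary root vectors $\ul S_{\ul\Bc_0}$ of $\ul\BU_q^-$ — because the relation between the imaginary roots $m\ul\d$ of $\ul X$ and those of $X$, together with the normalising integers $\ul d_\eta$, depends on whether $\ul X$ is of untwisted or of twisted type. For finite type and for affine type with twisted $\ul X$ this is what is carried out in [SZ] with the [BN] bases; for affine type with untwisted $\ul X$ the [BN] PBW basis does not fold correctly, and one must instead use the convex-order PBW bases constructed from [MT] in Section 6 — which is precisely why that construction is needed. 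A secondary technical point is that $\Phi$ is defined only on the negative part, so the substitution $\ul T_\eta\mapsto\prod_{i\in\eta}T_i$ has to be justified by checking, through the [BN]-type results for the $\s$-adapted $\Bh$, that the resulting elements stay inside $\BU_q^-$, rather than by an honest intertwining of braid group actions.
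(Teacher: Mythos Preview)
Your strategy --- match a $\s$-stable PBW basis of $\BU_q^-$ with a PBW basis of $\ul\BU_q^-$ --- is essentially the [SZ] argument, and it works when $\s$ preserves $I_0$ (finite type, or affine with $\ul X$ twisted). But the paper's point is precisely that this fails when $\s(I_0)\ne I_0$, i.e.\ in the untwisted affine cases (1)--(4) of Section~5. Your claim that ``$\xi$ is $\s$-fixed, so $\Bh$ may be chosen so that $\s$ permutes its terms'' breaks down there: $\xi=\wt\w_1+\cdots+\wt\w_n$ is a sum over $I_0$, which is not $\s$-stable, and no [BN]-type sequence $\Bh$ is $\s$-adapted. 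Consequently $\s$ does \emph{not} permute $\SX_{\Bh}$, your description of ${}_{\BA}\BU_q^{-,\s}$ as spanned by fixed vectors and orbit sums collapses, and you have no basis of $\BV_q$ to match against.

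Your proposed fix --- use the convex-order PBW bases of Section~6 --- is circular in this paper's logic: Theorem~6.13 (determining $\SX_{\lv}^{\s}$) invokes Proposition~2.13, which is a corollary of Theorem~2.4, and even then $\s$ does not permute $\SX_{\lv}$ (see 6.12). The paper's proof therefore takes a genuinely different route that avoids PBW bases of $\BU_q^-$ altogether: it uses the \emph{canonical} basis $\wt\bB$, which $\s$ does permute (Proposition~1.11), so that $\wt\bB^{\s}$ gives a signed $\BA'$-basis of $\BV_q$. Injectivity of $\Phi$ is then obtained by a direct computation showing that $\Phi$ respects the inner products (Proposition~2.8), and surjectivity by an induction on weights using Kashiwara operators $\wt F_{\eta}^a$ on $\wt\bB^{\s}$ (Proposition~2.10). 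Only \emph{after} Theorem~2.4 is established does the paper return to PBW bases, using the isomorphism to prove that $\ul\SX_{\ul\Bh}$ is an $\BA$-basis (Theorem~3.3) and to set up the Section~6 correspondence you wanted to invoke.
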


\remark{2.5.}
The algebra $\BV_q$ was introduced in [SZ].  The theorem was proved 
in [SZ] in the case where $X$ is finite type or affine type 
subject to the condition that $\s$ preserves $I_0$ (this condition is 
equivalent to that $\ul X$ is twisted type), 
through the precise study of PBW-bases of $\BU_q^-$ and of $\ul\BU_q^-$.
In the present proof, we basically use the canonical basis of $\BU_q^-$, 
and does not use a precise informtion on PBW-bases. 
The discussion here works for any $\s : X \to X$, 
including the case where $X$ is finite type. 
\par
Note that the correspondence for PBW-bases of $\BU_q^-$ and that of 
$\ul\BU_q^-$ under the map $\Phi$ was described in [SZ] in the case 
where $\s$ preserves $I_0$.  But this becomes much more complicated 
if $\s$ does not preserve $I_0$.  We will discuss this case 
later in Section 6, by applying Theorem 2.4. 
\par\medskip

The rest of this section is devoted to the proof of the theorem.
First we note that

\begin{prop}  
The assignment $\ul f_{\eta}^{(a)} \mapsto g^{(a)}_{\eta}$ 
gives a homomorphism $\Phi : {}_{\BA'}\ul\BU_q^- \to \BV_q$.   
\end{prop}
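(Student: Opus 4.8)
The plan is to verify that the quantum Serre relations for the generators $\ul f_\eta$ of $\ul\BU_q^-$ (and their divided-power versions) are satisfied by the elements $g_\eta^{(a)} = \pi(\wt f_\eta^{(a)})$ in $\BV_q$. Since ${}_{\BA'}\ul\BU_q^-$ is presented by generators $\ul f_\eta^{(a)}$ subject to the (divided-power form of the) relations analogous to (1.3.1) together with the standard relations expressing $\ul f_\eta^{(a)}\ul f_\eta^{(b)}$ in terms of $\ul f_\eta^{(a+b)}$, it suffices to check that the $g_\eta^{(a)}$ satisfy the same relations. First I would record the easy relations: for $i \in \eta$ the $f_i$ commute with one another, so $\wt f_\eta^{(a)}\wt f_\eta^{(b)} = \binom{a+b}{a}_q \wt f_\eta^{(a+b)}$ follows directly from the corresponding identity for each $f_i$ and the fact that $[a+b]_{q_i}^!/([a]_{q_i}^![b]_{q_i}^!) = \binom{a+b}{a}_q$ in the simply laced case; applying $\pi$ gives the desired relation among the $g_\eta^{(a)}$.

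The substantive point is the Serre relation. Fix distinct orbits $\eta, \eta' \in \ul I$ and set $\ul a_{\eta\eta'} = 2(\a_\eta,\a_{\eta'})_1/(\a_\eta,\a_\eta)_1$. Admissibility gives $(\a_i,\a_i)=2$ and $(\a_i,\a_j)=0$ for $i\neq j$ in a common orbit, so $(\a_\eta,\a_\eta)_1 = 2|\eta|$ and $(\a_\eta,\a_{\eta'})_1 = -m$ where $m = |\{(i,j)\in\eta\times\eta' : (\a_i,\a_j)\neq 0\}|$. One then checks case by case (using $\ve = 2$ or $3$ and the classification of admissible diagram automorphisms of simply laced affine diagrams) that $1 - \ul a_{\eta\eta'}$ takes one of a small number of values, and that the geometry of the bipartite graph between $\eta$ and $\eta'$ is correspondingly constrained: each vertex of one orbit is joined to at most one vertex of the other. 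The plan is then to expand $\sum_{k}(-1)^k (g_\eta^{(k)}) g_{\eta'} (g_\eta^{(1-\ul a_{\eta\eta'}-k)})$ by writing $g_\eta^{(k)} = \pi(\prod_{i\in\eta} f_i^{(k)})$, move the $f_j$ (for the single $j\in\eta'$ that does not commute past a given $i\in\eta$) using the genuine Serre relation (1.3.1) in $\BU_q^-$, and collect terms modulo the ideal $J$. The reduction modulo $J$ is what makes the $\s$-symmetrization collapse: terms that differ only by the $\s$-action get identified, and the combinatorial identity $\sum_k (-1)^k \binom{1-\ul a_{\eta\eta'}}{k} = 0$ emerges, possibly after using that in $\BF = \BZ/\ve\BZ$ certain binomial coefficients vanish.

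The main obstacle I anticipate is precisely this last bookkeeping step — organizing the expansion of the symmetrized Serre expression so that the passage to $\BV_q = {}_{\BA'}\BU_q^{-,\s}/J$ is transparent, and confirming that in every admissible case ($\ve=2$ with $\ul X$ of any finite or affine type, $\ve = 3$ for $D_4^{(1)}$-type situations) the residual coefficients actually lie in $\ve\BA + J$. It is conceivable that one does not want to check this by brute force; an alternative I would keep in reserve is to invoke the fact (already available from Theorem 1.5 and the theory of PBW-bases, or from Lusztig's construction) that $\wt f_\eta^{(a)}$ for $\eta\in\ul I$ generate a subalgebra of ${}_{\BA}\BU_q^{-,\s}$ whose structure constants, reduced mod $\ve$ and mod $J$, are dictated purely by the Cartan datum $\ul X$ — but since the point of Theorem 2.4 is to establish exactly that, for Proposition 2.6 the honest route is the direct Serre-relation check, and I expect that to occupy the bulk of the argument. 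Once the homomorphism $\Phi$ is constructed it is automatically compatible with $*$, since $\wt f_\eta^{(a)}$ is $*$-invariant (as noted in 2.3) and $\ul f_\eta^{(a)}$ is $*$-invariant in $\ul\BU_q^-$, so $\Phi$ intertwines the two $*$-operations on generators and hence everywhere; this will be deferred to the proof of Theorem 2.4 where surjectivity and injectivity are also addressed.
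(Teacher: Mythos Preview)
Your approach is the same as the paper's: both reduce the question to verifying the quantum Serre relations for a pair of orbits, which is a rank-2 finite-type computation; the paper simply cites [SZ, Proposition~1.10] for that computation while you sketch it directly.

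Two small corrections to your sketch. First, $\wt f_\eta^{(a)}\wt f_\eta^{(b)} = \binom{a+b}{a}_q^{|\eta|}\,\wt f_\eta^{(a+b)}$, with exponent $|\eta|$; to match $\ul f_\eta^{(a)}\ul f_\eta^{(b)} = \binom{a+b}{a}_{q_\eta}\ul f_\eta^{(a+b)}$ in $\BV_q$ you must invoke the congruence $[n]^!_{q_\eta} \equiv ([n]^!_q)^{|\eta|} \pmod{\ve}$ (this is exactly the identity used at the end of the proof of Proposition~2.8). Second, your bipartite claim ``each vertex of one orbit is joined to at most one vertex of the other'' is false as stated: in $D_n^{(1)}$ with $\eta_0 = \{0,1\}$ and $\eta' = \{2\}$ the vertex $2$ is adjacent to both $0$ and $1$, and in the $D_4 \to G_2$ fold the central vertex is adjacent to all three outer vertices. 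What is true is the one-sided statement that each vertex of the \emph{larger} orbit meets at most one vertex of the smaller one; the asymmetry matters, and the cases where $1 - \ul a_{\eta\eta'} = 3$ or $4$ (corresponding to $\ul X$ locally of type $B_2$ or $G_2$) need a more careful expansion than your outline indicates. None of this changes the strategy, but the bookkeeping you anticipate as the ``main obstacle'' is indeed where all the content lies.
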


\begin{proof}
The statement in the theorem is also formulated in the case 
where $X$ is of finite type, and the proof of the proposition is 
reduced to a similar property for the case where $X$ is of finite 
type with rank 2.   In [SZ, Proposition 1.10], this was proved 
in the case where $X$ is of finite type.  Thus the proportion holds. 
\end{proof}

\para{2.7.}
Since $\s$ commutes with $r$, the bilinear form $(\ ,\ )$ is $\s$-invariant 
by 1.4,  
namely, $(\s(x), \s(y)) = (x, y)$ for any $x, y \in \BU_q^-$. 
We have
\begin{equation*}
\tag{2.7.1}
(\sum_{0 \le i < \ve}\s^i(x), \sum_{0 \le i < \ve}\s^i(y)) 
           \in \ve(\BZ((q))) \cap \BQ(q) 
\end{equation*}
for any $x, y \in {}_{\BA}\BU_q^-$. 
Let $\BF(q)$ be the rational function field of $q$ over $\BF$ which is 
the quotient field of $\BA'$,  
and consider ${}_{\BF(q)}\BV_q = \BV_q \otimes_{\BA'}\BF(q)$. By (2.7.1), 
$(\ ,\ )$ induces a bilinear form on ${}_{\BF(q)}\BV_q$, which we denote 
also by $(\ ,\ )$. 
\par
Let $\wt\bB$ be the signed canonical bases of $\BU_q^-$. 
By Proposition 1.11 (ii),
$\s$ permutes $\wt\bB$.  We denote by $\wt\bB^{\s}$ the set of 
$\s$-fixed elements in $\wt\bB$.
Since $\wt\bB$ gives an $\BA$-basis of ${}_{\BA}\BU_q^-$, 
${}_{\BA}\BU_q^{-,\s}$ is spanned by $\wt\bB^{\s}$. 
Since $\wt\bB$ is almost orthonormal, 
the image of $\wt\bB^{\s}$ on $\BV_q$ is also almost orthonormal, 
namely satisfies the relation similar to (1.6.1), but by 
replacing $q\BZ[[q]] \cap \BQ(q)$ by $\BF[[q]] \cap \BF(q)$.
In particular, they are linearly independent, and 
$\wt\bB^{\s}$ gives a (signed) $\BA'$-basis of $V_q$.  
We show

\begin{prop}  
\begin{enumerate}
\item 
For any $x, y \in {}_{\BA'}\ul\BU_q^-$, 
$(\Phi(x), \Phi(y)) = (x,y)$.  
\item 
The map $\Phi : {}_{\BA'}\ul\BU_q^- \to \BV_q$ is injective. 
\end{enumerate}
\end{prop}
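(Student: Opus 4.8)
The plan is to prove (i) first, and then deduce (ii) from (i) together with the non-degeneracy of the bilinear form on ${}_{\BF(q)}\ul\BU_q^-$. For (i), since both sides are $\BA'$-bilinear and $\ul\BU_q^-$ is generated by the $\ul f_{\eta}^{(a)}$, it suffices to check compatibility on products of generators; concretely, one wants $(\Phi(u\ul f_\eta^{(a)}), \Phi(v)) = (\ul f_\eta^{(a)}u' \cdots$, matched against the corresponding identity downstairs. The natural mechanism is the adjunction formula: by (1.4.1) and (1.12.2), pairing against a left multiple $f_i y$ is governed by the operators ${}_ir$, and similarly $r_i$ for right multiples. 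So the first step is to establish that $\Phi$ intertwines the appropriate ``twisted derivation'' operators. Precisely, on $\BV_q$ one defines operators ${}_\eta r$ induced from the composite $\prod_{i\in\eta}{}_ir$ (which descends to $\BV_q$ because $\s$ commutes with $r$, hence with each ${}_ir$ up to the $\s$-permutation of $I$, and modulo $J$ the orbit-sum kills the cross terms), and one must check $\Phi \circ {}_\eta r_{\ul\BU} = {}_\eta r_{\BV_q} \circ \Phi$, where ${}_\eta r_{\ul\BU}$ is the corresponding operator on $\ul\BU_q^-$ attached to $\ul f_\eta$. This last identity is again checked on generators, reducing to the rank-$2$ (or rank-$1$) finite-type computation already invoked in the proof of Proposition 2.6, cf. [SZ].

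Granting the intertwining of the $r$-operators, step two is a standard induction on weight. One shows $(\Phi(x),\Phi(y)) = (x,y)$ for all homogeneous $x,y$ of a given weight $\nu \in -\ul Q_+$ by writing $x = \sum_\eta \ul f_\eta^{(a_\eta)} x_\eta + (\text{const})$ via the decomposition $\ul\BU_q^- = \bigoplus_{n\ge 0}\ul f_\eta^n\,\ul\BU_q^-[\eta]$ of Theorem 1.14(ii) (applied to $\ul\BU_q^-$), and using (1.12.2) plus the inductive hypothesis in lower weight. The base case $\nu = 0$ is $(\Phi(1),\Phi(1)) = (1,1) = 1$, which holds since $\Phi(1) = \pi(1)$ and $\pi(1)$ is the identity of $\BV_q$, not annihilated (as the image of $\wt\bB^\s$ is almost orthonormal by 2.7, $\pi(1)\ne 0$). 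One small wrinkle: $\Phi$ is so far only known to be a homomorphism (Proposition 2.6), so ``$\Phi(x) = \sum \ul f_\eta^{(a)}\cdots$'' must be pushed through $\Phi$ as an identity of images — but that is immediate since $\Phi$ is multiplicative and $\Phi(\ul f_\eta^{(a)}) = g_\eta^{(a)}$.

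For (ii): suppose $\Phi(x) = 0$ for some $0\ne x \in {}_{\BA'}\ul\BU_q^-$. By (i), $(x,y) = (\Phi(x),\Phi(y)) = 0$ for all $y \in {}_{\BA'}\ul\BU_q^-$, hence (extending scalars to $\BF(q)$) $x$ lies in the radical of the bilinear form on ${}_{\BF(q)}\ul\BU_q^- = \ul\BU_q^-(\BF)$. Here I use that the bilinear form of 1.4 on $\BU_q^-$ is non-degenerate, and the same construction and argument apply verbatim with $\BU_q^-$ replaced by $\ul\BU_q^-$ and $\BQ(q)$ by $\BF(q)$ — the defining properties (1.4.1) make sense over any field in which the $(1-q_i^2)$ are invertible, which holds in $\BF(q)$ since $\ve$ is prime (that is why $\ve = 4$ is excluded). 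Non-degeneracy over $\BF(q)$ follows, e.g., from the existence of the PBW-type basis $\ul\SX_{\ul\Bh}$ with the almost-orthonormality (1.6.1), whose Gram determinant is a unit in $\BF[[q]]$. Therefore $x = 0$, a contradiction.

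\textbf{Main obstacle.} The crux is step one: showing the twisted derivations ${}_\eta r$ (and $r_\eta$) are well-defined on $\BV_q$ and are intertwined by $\Phi$. The well-definedness requires that the ``mixed'' terms arising from $\prod_{i\in\eta}{}_ir$ applied to an orbit-sum $\sum_k\s^k(z)$ land back in $J$, which uses the admissibility hypothesis $(\a_i,\a_j)=0$ for distinct $i,j$ in the same orbit (so the $f_i$, $i\in\eta$, commute and the relevant $q$-powers in the twisted product of 1.4 are trivial); getting the bookkeeping right here, uniformly in $\ve\in\{2,3\}$, is the delicate part. The intertwining itself, once phrased on generators, is the same finite rank-$2$ identity underlying Proposition 2.6, so no new difficulty there.
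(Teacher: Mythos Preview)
Your argument for (ii) is correct and matches the paper's: both deduce injectivity from (i) together with the non-degeneracy of the form on ${}_{\BF(q)}\ul\BU_q^-$, established via the almost-orthonormality of the PBW-basis $\ul\SX_{\ul\Bh}$.

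For (i), your route is workable but genuinely different from the paper's. The paper does \emph{not} pass through the iterated partial derivations ${}_\eta r = \prod_{i\in\eta}{}_ir$. Instead it works with the full comultiplication $r$: it introduces the two-sided ideal $J_1$ of $({}_{\BA}\BU_q^- \otimes {}_{\BA}\BU_q^-)^\s$ spanned by orbit-sums, and proves directly by induction on the length of a monomial $x = \ul f_{\eta_1}\cdots\ul f_{\eta_k}$ that $r(\wt x) \equiv \wt{\ul r(x)} \pmod{J_1}$. From this and the adjunction $(x, y'y'') = (r(x), y'\otimes y'')$ of (1.4.1), a second induction on the length of $y$ reduces $(\wt x,\wt y)=(\ul x,\ul y)$ to the scalar case $s=t=1$, which is then checked by hand: one finds $(\wt f_\eta,\wt f_{\eta'}) = \delta_{\eta,\eta'}(1-q^2)^{-|\eta|}$ while $(\ul f_\eta,\ul f_{\eta'}) = \delta_{\eta,\eta'}(1-q^{2|\eta|})^{-1}$, and these agree in $\BF(q)$ because $(1-q^2)^{|\eta|}\equiv 1-q^{2|\eta|}\pmod\ve$. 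Your approach amounts to extracting the ``$\a_\eta$-component'' of this compatibility and packaging it as an operator identity; what you call the main obstacle (the cross terms in the Leibniz rule for $\prod_{i\in\eta}{}_ir$ lying in $J$) is precisely the content of the paper's congruence modulo $J_1$, just seen one graded piece at a time. The paper's formulation is a bit more economical because it handles all the cross terms at once inside the tensor square.

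One correction: your appeal to Proposition~2.6 (and [SZ]) for the intertwining of ${}_\eta r$ is off target. Proposition~2.6 is purely an \emph{algebra}-map statement (the $q$-Serre relations go to zero in $\BV_q$); it says nothing about compatibility with $r$ or the ${}_ir$. The intertwining you need is a \emph{coalgebra} statement, and it is not a consequence of $\Phi$ being a homomorphism---it is an independent computation of exactly the sort the paper carries out as (2.8.1)--(2.8.2). Also, you should make explicit the base-case identity $(1-q^2)^{-|\eta|}\equiv(1-q^{2|\eta|})^{-1}$ in $\BF(q)$; this is where working modulo $\ve$ is indispensable, and your write-up does not surface it.
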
  

\begin{proof}
We consider ${}_{\BF(q)}\ul\BU_q^- = {}_{\BA'}\ul\BU_q^-\otimes_{\BA'}\BF(q)$
and the induced bilinear form $(\ ,\ )$ on it.
Then the map $\Phi : {}_{\BA'}\ul\BU_q^- \to \BV_q$ is extended to the map
${}_{\BF(q)}\ul\BU_q^- \to {}_{\BF(q)}V_q$, which also denote by $\Phi$.
For the proof of (ii), it is enough to show the map on ${}_{\BF(q)}\ul\BU_q^-$ 
is injective. 
Let $\ul\SX_{\ul\Bh}$ be a PBW-basis of $\ul\BU_q^-$ associated to 
a sequence $\ul\Bh$. By Theorem 1.6, any element in $\ul\SX_{\ul\Bh}$ is 
contained in ${}_{\BA}\ul\BU_q^-$, but it is not certain whether $\ul\SX_{\ul\Bh}$
gives an $\BA$-basis of ${}_{\BA}\ul\BU_q^-$. Nevertheless the image of 
$\ul\SX_{\ul\Bh}$ to ${}_{\BF(q)}\ul\BU_q^-$ gives an $\BF(q)$-basis of 
${}_{\BF(q)}\ul\BU_q^-$, and those elements are almost orthonormal over $\BF(q)$.
In particular, the bilinear form $(\ ,\ )$ on ${}_{\BF(q)}\ul\BU_q^-$ is 
non-degenerate.  Now (ii) follows from (i). 
\par 
We show (i). 
Let $r: {}_{\BA}\BU_q^- \to {}_{\BA}\BU_q^-\otimes {}_{\BA}\BU_q^-$ be the map 
induced from the map $r$ defined in 1.4.
We define an action of $\s$ on $\BU_q^-\otimes \BU_q^-$ 
by $\s(x\otimes y) = \s(x)\otimes \s(y)$, which 
preserves the algebra structure, and so
$({}_{\BA}\BU_q^-\otimes {}_{\BA}\BU_q^-)^{\s}$ is 
a subalgebra of $\BU_q^-\otimes \BU_q^-$.  
Thus $r$ induces a homomorphism 
${}_{\BA}\BU_q^{-,\s} \to ({}_{\BA}\BU_q^-\otimes{}_{\BA}\BU_q^-)^{\s}$ of algebras. 
We denote by $J_1$ the $\BA$-submodule of $({}_{\BA}\BU_q^-\otimes {}_{\BA} \BU_q^-)^{\s}$ 
spanned by $\sum_{0 \le i < \ve}\s^i(z)$ for 
$z \in {}_{\BA}\BU_q^-\otimes {}_{\BA}\BU_q^-$.
Then $J_1$ is a two-sided ideal of $({}_{\BA}\BU_q^-\otimes {}_{\BA}\BU_q^-)^{\s}$.
Let $\ul r : {}_{\BA}\ul\BU_q^- \to {}_{\BA}\ul\BU_q^-\otimes {}_{\BA}\ul\BU_q^-$ be 
the map defined as in 1.4, by replacing $\BU_q^-$ by $\ul\BU_q^-$. 
\par
For $x = \ul f_{\eta_1}\cdots \ul f_{\eta_k} \in {}_{\BA}\ul\BU_q^-$, 
put $\wt x = \wt f_{\eta_1}\cdots \wt f_{\eta_k} \in {}_{\BA}\BU_q^{-,\s}$.
Here $\ul r(x) \in {}_{\BA}\ul\BU_q^-\otimes {}_{\BA}\ul\BU_q^-$ 
can be written, uniquely, as an $\BA$-linear 
combination of monomials of the form 
$\ul f_{\eta'_1}\cdots \ul f_{\eta'_s}\otimes \ul f_{\eta''_1}\cdots \ul f_{\eta''_t}$   
We define $\wt{\ul r(x)} \in {}_{\BA}\BU_q^{-,\s}\otimes {}_{\BA}\BU_q^{-,\s}$ 
as a linear combination of monomials of the form 
$\wt f_{\eta'_1}\cdots \wt f_{\eta'_s}\otimes \wt f_{\eta''_1}\cdots \wt f_{\eta''_t}$ 
with the same coefficients.  
We note that 
\begin{equation*}
\tag{2.8.1}
\wt{\ul r(x)} \equiv r(\wt x) \mod J_1.
\end{equation*}

We prove (2.8.1) by induction on $k$.  
We assume that $\ve = 2$, and let $\eta_1 = \{ i,i'\}$.
By noticing that $(\a_i, \a_{i'}) = 0$, we have
\begin{align*}
\tag{2.8.2}
r(\wt f_{\eta_1}) = r(f_if_{i'}) 
 &= (f_i \otimes 1 + 1 \otimes f_i)(f_{i'} \otimes 1 + 1 \otimes f_{i'}) \\
 &= f_if_{i'}\otimes 1 + 1 \otimes f_if_{i'}
       + f_i\otimes f_{i'} + f_{i'}\otimes f_i \\
 &\equiv f_if_{i'}\otimes 1 + 1\otimes f_if_{i'} \mod J_1. 
\end{align*}
Since $f_if_{i'}\otimes 1 + 1 \otimes f_if_{i'} = \wt{\ul r(\ul f_{\eta_1})}$,
(2.8.1) holds for $k = 1$.
Now consider $x = \ul f_{\eta_1}y$ 
with $y = \ul f_{\eta_2}\cdots \ul f_{\eta_{k}}$. 
Then $\wt x = \wt f_{\eta_1}\wt y$, and 
$r(\wt x) = r(\wt f_{\eta_1})r(\wt y)$. 
By induction, we may assume that 
$r(\wt y) \equiv \wt{\ul r(y)} \mod J_1$.  
Since $J_1$ is a two-sided ideal, we have
\begin{align*}
r(\wt x) \equiv \wt{\ul r(\ul f_{\eta_1})}\wt{\ul r(y)}  
          = \wt{\ul r(x)} \mod J_1.
\end{align*}  
Thus (2.8.1) holds. The case where $\ve = 3$ is similar. 
\par
Next we show 
\par\medskip\noindent
(2.8.3) \  For any sequence $\eta_1, \dots, \eta_s$ , 
$\eta'_1, \dots, \eta'_t$ in $\ul I$, we have the identity on $\BF(q)$,

\begin{equation*}
(\ul f_{\eta_1}\cdots\ul f_{\eta_s}, \ul f_{\eta'_1}\cdots \ul f_{\eta'_t})
  = (\wt f_{\eta_1} \cdots \wt f_{\eta_s}, \wt f_{\eta'_1}\cdots \wt f_{\eta'_t})
\end{equation*}

Write $x = \ul f_{\eta_1}\cdots \ul f_{\eta_s}$ and 
$y = \ul f_{\eta'_1}\cdots \ul f_{\eta'_t}$.  Assume that $t \ge 2$, and 
put $y' = \ul f_{\eta'_2}\cdots \ul f_{\eta'_t}$. 
Then by (1.4.1),
\begin{equation*}
(x,y) = (x, \ul f_{\eta'_1}y') = (\ul r(x), \ul f_{\eta'_1}\otimes y')
\end{equation*}
$\ul r(x)$ is written as a linear combination of the form 
$\ul f_{\xi_1}\cdots \ul f_{\xi_k}\otimes \ul f_{\zeta_1}\cdots \ul f_{\zeta_m}$.
Thus $(x,y)$ is written as a linear combination of 
\begin{equation*}
\tag{2.8.4}
(\ul f_{\xi_1}\cdots \ul f_{\xi_k}\otimes \ul f_{\zeta_1}\cdots \ul f_{\zeta_m}, 
              \ul f_{\eta'_1}\otimes y')
        = (\ul f_{\xi_1}\cdots \ul f_{\xi_k}, \ul f_{\eta'_1})
          (\ul f_{\zeta_1}\cdots \ul f_{\zeta_m}, y').
\end{equation*}
By induction on $t$, we have
\begin{align*}
\tag{2.8.5}
(\ul f_{\xi_1}\cdots \ul f_{\xi_k}, \ul f_{\eta'_1}) &=
   (\wt f_{\xi_1}\cdots \wt f_{\xi_k}, \wt f_{\eta'_1}), \\
(\ul f_{\zeta_1}\cdots \ul f_{\zeta_m}, y')  &=
   (\wt f_{\zeta_1}\cdots \wt f_{\zeta_m}, \wt y'), 
\end{align*}
where $\wt y' = \wt f_{\eta'_2}\cdots \wt f_{\eta'_t}$. 

On the other hand, if we put $\wt x = \wt f_{\eta_1}\cdots \wt f_{\eta_s}$, 
$\wt y = \wt f_{\eta'_1}\cdots \wt f_{\eta'_t}$, we have

\begin{equation*}
(\wt x, \wt y) = (\wt x, \wt f_{\eta'_1}\wt y') = (r(\wt x), \wt f_{\eta'_1}\otimes \wt y').
\end{equation*}
By (2.8.1), $r(\wt x) = \wt{\ul r(x)} + z$ with $z \in J_1$. 
Here $\wt{\ul r(x)}$ can be written as a linear combination of 
$\wt f_{\xi_1}\cdots \wt f_{\xi_k}\otimes \wt f_{\zeta_1}\cdots \wt f_{\zeta_m}$
with the same coefficients for $\ul f_{\xi_1}\cdots \ul f_{\xi_k} \otimes 
         \ul f_{\zeta_1}\cdots \ul f_{\zeta_m}$ in $\ul r(x)$. 
Thus $(\wt x, \wt y)$ is a linear combination of 

\begin{equation*}
\tag{2.8.6}
(\wt f_{\xi_1}\cdots \wt f_{\xi_k}\otimes \wt f_{\zeta_1}\cdots \wt f_{\zeta_m}, 
                 \wt f_{\eta_1} \otimes \wt y')
   = (\wt f_{\xi_1}\cdots \wt f_{\xi_k}, \wt f_{\eta_1})
     (\wt f_{\zeta_1}\cdots \wt f_{\zeta_m}, \wt y') 
\end{equation*}
and of $(z, \wt f_{\eta_1}\otimes \wt y')$. 
Here $z \in J_1$ is a linear combination of $\sum_{0 \le i < \ve}\s^i(z_1)$
for some $z_1 \in \BU_q^-\otimes \BU_q^-$. Since 
$\wt f_{\eta'_1}, \wt y' \in \BU_q^{-,\s}$, we have
\begin{equation*}
(\sum_i \s^i(z_1),  \wt f_{\eta'_1}\otimes \wt y') 
       = \ve(z_1, \wt f_{\eta'_1}\otimes \wt y'). 
\end{equation*} 
Thus 
\begin{equation*}
\tag{2.8.7}
(z, \wt f_{\eta_1'}\otimes \wt y') = 0 
\end{equation*}
in $\BF(q)$. 
By comparing (2.8.4) with (2.8.6), together with (2.8.5) and (2.8.7), 
we obtain (2.8.3) if $t \ge 2$.   
\par
Since the form $(\ ,\ )$ is symmetric, the proof of (2.8.3) is reduced 
to the case where $s = t = 1$.  
In this case, assuming that $\ve = 2$, put $\eta = \{i,i'\}$ 
and $\eta' = \{ j, j'\}$.  By (2.8.2) and by (1.4.1) for $\BU_q^-$,  

\begin{align*}
(\wt f_{\eta}, \wt f_{\eta'}) = (f_if_{i'}, f_jf_{j'}) 
                &= (r(f_if_{i'}), f_j \otimes f_{j'})  \\
                &= (f_if_{i'}\otimes 1 + 1 \otimes f_if_{i'} 
                   + f_i \otimes f_{i'} + f_{i'}\otimes f_i,  f_j \otimes f_{j'})  \\
                &= (f_i, f_j)(f_{i'}, f_{j'}) + (f_i, f_{j'})(f_{i'}, f_j) \\
                &= \d_{i,j}(1 - q^2)^{-2} + \d_{i,j'}(1 - q^2)^{-2}. 
\end{align*}

A similar computation works also for the case where $\ve = 3$.
Hence we have

\begin{equation*}
(\wt f_{\eta}, \wt f_{\eta'}) = \begin{cases}
                          (1 - q^2)^{-|\eta|}   &\quad\text{ if $\eta = \eta'$, } \\
                           0                    &\quad\text{ if $\eta \ne \eta'$. }
                              \end{cases}                  
\end{equation*}
On the other hand, by (1.4.1) for $\ul\BU_q^-$, we have

\begin{equation*}
(\ul f_{\eta}, \ul f_{\eta'}) =  \d_{\eta, \eta'}(1 - q_{|\eta|})^{-2}. 
\end{equation*}
Thus (2.8.3) holds for $s = t = 1$.  (2.8.3) is now proved. 
\par
Now for $\ul f_{\eta}^c \in {}_{\BA'}\ul\BU_q^-$, 
$\ul f^{(a)}_{\eta} = ([a]_{q_{\eta}}^!)\iv \ul f^a_{\eta}$ gives a well-defined element 
in ${}_{\BF(q)}\ul\BU_q^-$, and similarly for $\wt f^{(a)}_{\eta}$.   
Note that $[a]^!_{q_{\eta}} \equiv ([a]^!_q)^{|\eta|} \pmod \ve$ for any $a \in \BN$. 
Hence the assertion (i) follows from (2.8.3).   
The proposition is proved. 
\end{proof}

\para{2.9.}
Next we shall show that $\Phi$ is surjective. 
For this, we need some preliminaries. 
Let $E_i, F_i$ be Kashiwara operators on $\BU_q^-$ defined in 1.18.
It is easy to see that 
\begin{equation*}
\s \circ E_i \circ \s\iv = E_{\s(i)}, \qquad \s\circ F_i\circ \s\iv = F_{\s(i)}.
\end{equation*}
Moreover, $E_i$ and $E_j$ commute each other for $i, j \in \eta$, and similarly 
for $F_i$ and $F_j$.  Hence we can define, for $\eta \in \ul I$, the Kashiwara operator
$\wt E_{\eta}$ on $\BU_q^-$ as the product of $E_i$ for $i \in \eta$, 
and similarly for $\wt F_{\eta}$. 
$\wt E_{\eta}, \wt F_{\eta}$ commute with the action of $\s$. 
If $\ve_i(b) = 0$ for $b \in \wt\bB^{\s}$, then $\ve_j(b) = 0$ for any $j \in \eta$.
Then $\wt F^{a}_{\eta}b = b'$ satisfies the condition that $b' \in \wt\bB^{\s}$
and that $\ve_i(b') = a$ for any $i \in \eta$.  Thus by using the bijection (1.16.2), 
we obtain a bijection  
\begin{equation*}
\tag{2.9.1}
\wt F^a_{\eta} : \{ b \in \wt\bB^{\s} \mid \ve_i(b) = 0 \text{ for $i \in \eta$ } \} 
                 \isom 
                 \{ b' \in \wt\bB^{\s} \mid \ve_i(b') = a \text{ for $i \in \eta$ } \}.
\end{equation*}
 
\par
Take $b \in \wt\bB^{\s}$, and assume that $\ve_{i}(b) = a$ for $i \in \eta$.  
We consider $(\wt E_{\eta})^a b = \prod_{j \in \eta}E_j^a b = b'$.
Then  $\ve_j(b') = 0$ for any $j \in \eta$.  Moreover $b' \in \wt\bB^{\s}$ 
by (2.9.1). By applying the formula in (1.17.2) (note that $f_i$ are mutually 
commuting for $i \in \eta$), we see that 
\begin{equation*}
\tag{2.9.2}
\wt f^{(a)}_\eta b' = \prod_{j \in \eta}f_j^{(a)} b' \equiv b \mod Z_{\eta; > a }, 
\end{equation*} 
where $Z_{\eta; > a} = \sum_{j \in \eta} \sum_{a' \ge a+1}f_j^{(a')}{}_{\BA}\BU_q^-$. 
Here $Z_{\eta; > a}$ is $\s$-stable,
and Theorem 1.16 (i) implies that, 
\begin{equation*}
\tag{2.9.3}
\wt\bB_{\eta, > a} = \bigcup_{j \in \eta}\{ b'' \in \wt\bB \mid \ve_j(b'') \ge a  + 1 \} 
\end{equation*}
gives an $\BA$-bases of $Z_{\eta; > a }$. 
In particular, $\s$ gives a permutation of $\wt\bB_{\eta; > a}$.
\par
We can now prove

\begin{prop}  
The map $\Phi : {}_{\BA'}\ul\BU_q^- \to \BV_q$ is surjective. 
\end{prop}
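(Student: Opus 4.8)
The strategy is to show that the $\BA'$-submodule $\Im\Phi\subseteq\BV_q$ contains $\pi(b)$ for every $b\in\wt\bB^{\s}$. Since $\wt\bB^{\s}$ is a (signed) $\BA'$-basis of $\BV_q$ by the discussion in 2.7, this forces $\Im\Phi=\BV_q$. As $\Phi$ is an algebra homomorphism by Proposition 2.6 with $\Phi(\ul f_{\eta}^{(a)})=g_{\eta}^{(a)}$, it suffices to express each $\pi(b)$ as an $\BA'$-combination of products $g_{\eta}^{(a)}\cdot(\text{an element already in }\Im\Phi)$ together with $\pi(b'')$'s that are already known to lie in $\Im\Phi$.

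I would run a double induction. The outer induction is on the height $\en(-\weit b)$ of $b\in\wt\bB^{\s}$; the base case $\weit b=0$ gives $b=\pm1$, whence $\pi(b)=\pm1=\pm\Phi(1)\in\Im\Phi$. For the inductive step, fix a weight $\nu\in-Q_+$ with $\nu\ne0$. The set of $b\in\wt\bB^{\s}$ of weight $\nu$ is finite and each $\ve_i(b)$ is bounded by the height, so I may perform a downward induction on $\ve^{\max}(b):=\max_{i\in I}\ve_i(b)$: namely I would establish, by downward induction on $a_0$, the claim that every $b\in\wt\bB^{\s}$ of weight $\nu$ with $\ve^{\max}(b)\ge a_0$ lies in $\Im\Phi$, the claim being vacuous once $a_0$ exceeds the height.

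For the main step, take $b\in\wt\bB^{\s}$ of weight $\nu\ne0$ with $\ve^{\max}(b)=a\ge1$ (positivity of $a$ follows from Lemma 1.13 and Theorem 1.14, as in the proof of Theorem 1.19). Choose $i\in I$ with $\ve_i(b)=a$ and let $\eta\in\ul I$ be its $\s$-orbit; since $b$ is $\s$-fixed and $\s\circ E_j\circ\s\iv=E_{\s(j)}$, we get $\ve_j(b)=a$ for all $j\in\eta$, so $b'=(\wt E_{\eta})^{a}b\in\wt\bB^{\s}$ with $\ve_j(b')=0$ for $j\in\eta$ by (2.9.1), and $\weit b'=\nu+a\sum_{j\in\eta}\a_j$ has strictly smaller height. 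By (2.9.2), $b-\wt f_{\eta}^{(a)}b'$ is a $\s$-invariant element of $Z_{\eta;>a}$, and by (2.9.3) together with the fact that $\s$ permutes $\wt\bB_{\eta;>a}$, it is an $\BA$-combination of $\s$-orbit sums $\sum_{b''\in O}b''$ with $O\subseteq\wt\bB_{\eta;>a}$. Applying $\pi$, the orbits of size $\ve$ are killed — their sums lie in $J$, using that $\ve\in\{2,3\}$ is prime, so orbits have size $1$ or $\ve$ — while each size-one orbit $\{b''\}$ contributes $\pi(b'')$ with $b''\in\wt\bB^{\s}$, $\weit b''=\nu$, and $\ve^{\max}(b'')\ge a+1$. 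Hence
\begin{equation*}
\pi(b)=g_{\eta}^{(a)}\,\pi(b')+\sum_{b''}c_{b''}\,\pi(b''),\qquad c_{b''}\in\BA',
\end{equation*}
where $\pi(b')\in\Im\Phi$ by the outer induction (so $g_{\eta}^{(a)}\pi(b')=\Phi(\ul f_{\eta}^{(a)})\pi(b')\in\Im\Phi$), and each $\pi(b'')\in\Im\Phi$ by the inner induction applied with $a_0=a+1$. Thus $\pi(b)\in\Im\Phi$, which closes both inductions.

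The main obstacle I anticipate is the ``error term'' $b-\wt f_{\eta}^{(a)}b'$: it has the same weight as $b$, so it cannot be absorbed by the height induction, and the downward induction on $\ve^{\max}$ is designed precisely to handle it — this works only because the error term lands in $Z_{\eta;>a}$, i.e. is supported on canonical basis elements of strictly larger $\ve$-value, a fact supplied by (2.9.2) and (2.9.3). A secondary delicate point is the $\s$-equivariance: one must know the error term is genuinely $\s$-fixed, so that it decomposes into orbit sums and the length-$\ve$ orbits vanish in $\BV_q$; this is exactly the place where $\ve$ being $2$ or $3$ (hence prime) is used.
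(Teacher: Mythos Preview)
Your proof is correct and follows essentially the same approach as the paper's own argument: double induction (outer on the weight, inner a descending induction on an $\ve$-value), then use (2.9.2)--(2.9.3) to write $b=\wt f_{\eta}^{(a)}b'+z$, handle $b'$ by the outer induction, and dispose of $z$ by decomposing it into $\s$-orbit sums, with the size-$\ve$ orbits vanishing in $\BV_q$ and the $\s$-fixed $b''$'s handled by the inner induction. The only cosmetic difference is that you run the inner induction on $\ve^{\max}(b)=\max_i\ve_i(b)$, whereas the paper fixes a single $i$ with $\ve_i(b)>0$ and inducts downward on $\ve_i$ with that $i$ kept throughout; both variants work for the same reason, since any $\s$-fixed $b''\in\wt\bB_{\eta;>a}$ has $\ve_j(b'')\ge a+1$ for every $j\in\eta$.
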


\begin{proof}
We know that the image of $\wt\bB^{\s}$ gives a (signed) bases of $V_q$.
Thus it is enough to show, for each $b \in \wt\bB^{\s}$, that  
\begin{equation*}
\tag{2.10.1}
\pi(b) \in \Im \Phi.
\end{equation*}
Take $b \in \wt\bB^{\s}$.  Let $\nu = \nu(b) \in -Q_+$ be the weight of $b$.
If $\nu = 0$, $b = 1$ and (2.10.1) certainly holds. So, assume that $\nu \ne 0$, 
and by induction, we may assume that $b'$ satisfies (2.10.1) if $\nu(b') > \nu$. 
There exists $i \in I$ such that 
$a = \ve_i(b) > 0$ (see the proof of Theorem 1.19).
We also assume that if $b'$ is such that 
$\nu(b') = \nu$ and that $\ve_i(b') > a$, then $b'$ satisfies (2.10.1).
Let $\eta$ be the $\s$-orbit containing $i$.  
Then $\ve_j(b) = a$ for any $j \in \eta$.  
We consider $(\wt E_{\eta})^a b = \prod_{j \in \eta}E_j^a b = b'$.
Then  $b' \in \wt\bB^{\s}$ by (2.9.1). 
By (2.9.2) there exists $z \in Z^{\s}_{\eta; > a}$ such that 
$b = \wt f_{\eta}^{(a)}b' + z$. 
By induction, $\pi(b') \in \Im \Phi$.  Hence
$\pi(\wt f^{(a)}_{\eta}b') = \Phi(\ul f^{(a)}_{\eta})\pi(b')  \in \Im \Phi$.
On the other hand, by (2.9.3) $z$ is written as an $\BA$-linear
combination of the $\s$-orbits of $b'' \in \wt\bB_{\eta;> a}$.  
If $b''$ is not $\s$-stable, its $\s$-orbit is contained in $J$, and its image on 
$\BV_q$ is zero.  If $\s(b'') = b''$, then $b'' \in \wt\bB^{\s}$, and it satisfies 
the property that 
$\nu (b'') = \nu$ and that $\ve_i(b'') > a$.  Hence by induction hypothesis, 
$\pi(b'') \in \Im \Phi$.  It follows that  $\pi(z) \in \Im \Phi$, and we conclude 
that $\pi(b) \in \Im \Phi$.  
(Note that if there does not exist $b'$ such that $\nu(b') = \nu$ and that 
$\ve_i(\nu)> a$, then $z = 0$, and $b$ satisfies (2.10.1).  Thus the induction 
argument works.)
The proposition is proved. 
\end{proof}

\para{2.11.}
Theorem 2.4 now follows from Propositions 2.6, 2.8 and 2.10.
Note the compatiblity with $*$-operation is clear since 
${}_{\BA'}\ul\BU_q^-$ is generated by $\ul f_{\eta}^{(a)}$, and 
$V_q$ is generated by $g^{(a)}_{\eta}$, both are $*$-stable. 

\para{2.12.}
$\s$ acts on $Q$ by $\a_i \mapsto \a_{\s(i)}$, and 
$Q^{\s} \simeq \ul Q$ under the map $\sum_{i \in \eta} \a_{\s(i)} \mapsto \a_{\eta}$. 
We denote by $\ul\nu \in \ul Q$ the weight corresponding to $\nu \in Q^{\s}$.
If $b \in \wt\bB^{\s}$, the weight $\nu = \nu(b)$ is contained in $Q^{\s}$.  
Let $\wt\bB^{\s}(\nu)$ be the set of $b \in \wt\bB^{\s}$ such that $\nu(b) = \nu$.  
The weight space decomposition of $\BU_q^-$ induces a weight space decomposition 
$\BV_q = \bigoplus_{\nu \in Q^{\s}}(\BV_q)_{\nu}$.  We know that 
$\{\pi(b) \mid b \in \wt\bB^{\s} \}$ gives a (signed) $\BA'$-basis of $\BV_q$.  
Then $\{ \pi(b) \mid b \in \wt\bB^{\s}(\nu)\}$ gives a basis of $(\BV_q)_{\nu}$.  
Hence $|\wt\bB^{\s}(\nu)|/2 = \rk (\BV_q)_{\nu}$.   
On the other hand, 
the weight space decomposition of $\ul\BU_q^-$ induces a weight space 
decomposition ${}_{\BA'}\ul\BU_q^- = 
   \bigoplus_{\ul\nu \in \ul Q}({}_{\BA'}\ul\BU_q^-)_{\ul\nu}$. 
The isomorphism $\Phi : {}_{\BA'}\ul\BU_q^- \isom \BV_q$ preserves the 
weight space decomposition, under the identification $Q^{\s} \simeq \ul Q$. 
Hence we have $\dim (\ul\BU_q^-)_{\ul\nu} = \rk (\BV_q)_{\nu}$. 
Thus the following holds. 

\begin{prop}  
For $\nu \in Q^{\s}$, we have 
$|\wt\bB^{\s}(\nu)|/2 = \rk (\BV_q)_{\nu} = \dim (\ul\BU^-_q)_{\ul\nu}$.  
\end{prop}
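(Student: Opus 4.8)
The proof is a direct synthesis of Theorem 2.4 with the structural facts recorded in 2.7 and 2.12, and I would present it as essentially bookkeeping around the weight grading; no new technical input is needed. The first step is to note that the weight decomposition $\BU_q^- = \bigoplus_{\nu\in Q}(\BU_q^-)_\nu$ is $\s$-stable, hence passes to ${}_{\BA'}\BU_q^{-,\s}$ and, since the ideal $J$ is spanned by $\s$-orbit sums of homogeneous elements, descends to a grading $\BV_q = \bigoplus_{\nu\in Q^{\s}}(\BV_q)_\nu$ indexed only by $\s$-fixed weights.

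Each $b \in \wt\bB^{\s}$ is homogeneous with $\nu(b) \in Q^{\s}$, so the (signed) $\BA'$-basis $\{\pi(b) \mid b \in \wt\bB^{\s}\}$ of $\BV_q$, which is almost orthonormal in the sense of 2.7 and hence genuinely $\BA'$-linearly independent once one representative of each sign is chosen, breaks up into signed $\BA'$-bases $\{\pi(b)\mid b\in\wt\bB^{\s}(\nu)\}$ of the modules $(\BV_q)_\nu$. This yields the first equality $|\wt\bB^{\s}(\nu)|/2 = \rk(\BV_q)_\nu$, the factor $1/2$ being exactly the passage from a signed basis $\wt\bB^{\s}(\nu) = B'\cup(-B')$ to the honest $\BA'$-basis $B'$.

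For the second equality I would invoke Theorem 2.4: $\Phi$ sends $\ul f_\eta^{(a)}$, homogeneous of weight $-a\a_\eta$, to $g_\eta^{(a)} = \pi(\prod_{i\in\eta}f_i^{(a)})$, which has weight $-a\sum_{i\in\eta}\a_i$, matching under $Q^{\s}\simeq\ul Q$; since $\Phi$ is an algebra isomorphism generated by such assignments, it is graded and restricts to $({}_{\BA'}\ul\BU_q^-)_{\ul\nu}\isom(\BV_q)_\nu$. Finally, $({}_{\BA'}\ul\BU_q^-)_{\ul\nu} = ({}_{\BA}\ul\BU_q^-)_{\ul\nu}\otimes_\BA\BA'$ has $\BA'$-rank equal to $\dim_{\BQ(q)}(\ul\BU_q^-)_{\ul\nu}$: one sees this by comparing with a PBW-basis $\ul\SX_{\ul\Bh}$ of $\ul\BU_q^-$, whose weight-$\ul\nu$ part has the same cardinality whether it is viewed as a $\BQ(q)$-basis of $(\ul\BU_q^-)_{\ul\nu}$ or, after tensoring to $\BF(q)$ as in the proof of Proposition 2.8, as an $\BF(q)$-basis of $({}_{\BF(q)}\ul\BU_q^-)_{\ul\nu}$. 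Hence $\rk(\BV_q)_\nu = \dim(\ul\BU_q^-)_{\ul\nu}$, completing the chain of equalities.

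The only point demanding care is the ``signed'' bookkeeping: one must be sure that $\pi$ does not identify $b$ with $-b$ on $\wt\bB^{\s}(\nu)$, so that $|\wt\bB^{\s}(\nu)|$ really is twice $\rk(\BV_q)_\nu$; this is precisely where the almost-orthonormality of 2.7 is used, rather than mere spanning. Beyond this there is no obstacle, since all the substance has already been placed inside Theorem 2.4.
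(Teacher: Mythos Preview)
Your proposal is correct and follows essentially the same approach as the paper: both deduce the proposition directly from the signed $\BA'$-basis of $\BV_q$ coming from $\wt\bB^{\s}$ (established in 2.7) together with the graded isomorphism $\Phi$ of Theorem~2.4. Your version is slightly more explicit about why $\rk({}_{\BA'}\ul\BU_q^-)_{\ul\nu} = \dim(\ul\BU_q^-)_{\ul\nu}$ and about the signed bookkeeping, but the paper treats these as evident and the argument is otherwise identical.
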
  

\remark{2.14.}
The map $\Phi : {}_{\BA'}\ul\BU_q^- \to \BV_q$ can be 
defined even if $\BU_q = \BU_q(\Fg)$ is the quantum 
enveloping algebra associated to the Kac-Moody algebra $\Fg$
as in Introduction.   In this case, PBW-bases on $\BU_q^-$ 
are not known.  But if we assume the existence of the canonical 
basis of $\BU_q^-$ and of $\ul\BU_q^-$ (this certainly holds by 
Lusztig's results),  by a suitable modifiation of the proof 
of Theorem 2.4, one can show that the isomorphism  
${}_{\BA'}\ul\BU_q^- \isom \BV_q$ still holds for the Kac-Moody case. 

\par\bigskip

\section{ Canonical basis for $\ul\BU_q^-$  }

\para{3.1.}
We keep the notation in Section 2.  
We fix a doubly infinite sequence 
$\Bh = (\dots, \eta_{-1}, \eta_0, \eta_1, \dots)$ of $\ul I$ 
as in 1.5.  By applying the discussion in 1.5 to $\ul\BU_q^-$, 
we can construct the PBW-basis 
$\ul\SX_{\Bh, p} = \{ \ul L(\Bc, p) \mid \Bc \in \ul\SC \}$ of 
$\ul\BU_q^-$, where $\ul\SC$ is defined similarly to $\SC$ in 1.5.
\par 
Let $\vf : {}_{\BA}\ul\BU_q^- \to {}_{\BA}\ul\BU_q^-/\ve({}_{\BA}\ul\BU_q^-) 
   = {}_{\BA'}\ul\BU_q^-$ be the natural map.   
We denote by $\ul L^{\bullet}(\Bc, p) \in {}_{\BA'}\ul\BU_q^-$ 
the image of $\ul L(\Bc,p)$ under the map $\vf$, and put 
$\ul\SX_{\Bh,p}^{\bullet} = \{ \ul L^{\bullet}(\Bc,p) \mid \Bc \in \ul\SC \}$.  
Note that 
$\ul\SX_{\Bh,p} \subset {}_{\BA}\ul\BU_q^-$ by Theorem 1.6 (i), 
but it is not known whether 
$\ul\SX_{\Bh,p}$ gives an $\BA$-basis of ${}_{\BA}\ul\BU_q^-$.  
In the discussion below, by applying the isomorphism ${}_{\BA'}\ul\BU_q^- \isom V_q$, we shall
prove that this certainly holds for any $(\Bh, p)$.
\par
As discussed in Proposition 2.8, we have a bilinear form 
$(\ ,\ )$ on ${}_{\BF(q)}\ul\BU_q^-$ with values in $\BF(q)$, and 
the elements in $\ul\SX_{\Bh, p}^{\bullet}$ are almost orthonormal,
namely 
\begin{equation*}
\tag{3.1.1}
(\ul L^{\bullet}(\Bc,p), \ul L^{\bullet}(\Bc',p)) \in 
        \d_{c,c'} + (q\BF[[q]] \cap \BF(q)).   
\end{equation*}

\par
The bar involution ${}^-$ on ${}_{\BA}\ul\BU_q^-$ induces a bar involution 
on ${}_{\BA'}\ul\BU_q^-$, which we also denote by ${}^-$.  
The map $\Phi$ commutes with bar involutions. 
Since $\Phi$ gives an isomorphism ${}_{\BA'}\ul\BU_q^- \isom \BV_q$
by Theorem 2.4, $\wt{\ul\bB'} = \Phi\iv(\pi(\wt\bB^{\s}))$ gives a signed basis of 
${}_{\BA'}\ul\BU_q^-$.  In the case where $\ve = 2$, $\wt{\ul\bB'}$ gives a unique 
basis $\ul\bB'$.  In the case where $\ve = 3$, we choose a basis $\ul\bB'$ such that 
$\wt{\ul\bB'} = \ul\bB' \cup -\ul\bB'$. Since $\wt\bB$ is almost orthonormal, and 
the bilinear forms on $\BV_q$ and ${}_{\BA'}\ul\BU_q^-$ are compatible with 
$\Phi$ by Proposition 2.8, we see that 

\par\medskip\noindent
(3.1.2) \ $\ul\bB'$ is almost orthonormal, and bar-invariant.
\par\medskip\noindent

We show the following.
\begin{lem}  
The set $\ul\SX^{\bullet}_{\Bh,p} = 
\{ \ul L^{\bullet}(\Bc, p) \mid \Bc \in \ul\SC \}$ gives an $\BA'$-basis 
of ${}_{\BA'}\ul\BU_q^-$. 
\end{lem}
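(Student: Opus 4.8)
The plan is to compare $\ul\SX^\bullet_{\Bh,p}$ with the $\BA'$-basis $\ul\bB'$ of ${}_{\BA'}\ul\BU_q^-$ produced, via Theorem 2.4, from the signed canonical basis $\wt\bB^\s$, and to show that, weight by weight, the transition matrix between the two has determinant a unit of $\BA'$; the behaviour of this determinant at $q=0$ is controlled by almost orthonormality, and its behaviour at $q=\infty$ by the bar involution.

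First I would check that $\ul\SX^\bullet_{\Bh,p}$ is an $\BF(q)$-basis of ${}_{\BF(q)}\ul\BU_q^-$. Linear independence over $\BF(q)$, indeed over $\BF((q))$, is immediate from (3.1.1), the Gram matrix being congruent to the identity modulo $q$. For the count, $\Phi$ identifies ${}_{\BF(q)}\ul\BU_q^-$ with $\BV_q\otimes_{\BA'}\BF(q)$, so Proposition 2.13 gives $\dim_{\BF(q)}({}_{\BF(q)}\ul\BU_q^-)_{\ul\nu}=\rk(\BV_q)_{\nu}=\dim_{\BQ(q)}(\ul\BU_q^-)_{\ul\nu}$; the last number equals the number of $\ul L(\Bc,p)$ of weight $\ul\nu$, since $\ul\SX_{\Bh,p}$ is a $\BQ(q)$-basis of $\ul\BU_q^-$ by Theorem 1.6 (ii), hence also the number of $\ul L^\bullet(\Bc,p)$ of weight $\ul\nu$. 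So the counts agree in every weight and $\ul\SX^\bullet_{\Bh,p}$ is a basis. As ${}_{\BA'}\ul\BU_q^-$ is free over $\BA'$ with $\BA'$-basis $\ul\bB'$, each $\ul L^\bullet(\Bc,p)\in{}_{\BA'}\ul\BU_q^-$ expands as $\ul L^\bullet(\Bc,p)=\sum_b T_{\Bc,b}\,b$ with $T_{\Bc,b}\in\BA'$; so in each weight $T$ is a square matrix over $\BA'$, and it remains to prove that $\det T$ is a unit of $\BA'$, i.e. that $\det T\in\BF^{\times}q^{\BZ}$.

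Put $\BA'_0=\BF[[q]]\cap\BF(q)$ and $\ul\SL(\infty)=\{x\in{}_{\BF(q)}\ul\BU_q^-\mid (x,x)\in\BA'_0\}$. By (3.1.1), (3.1.2) and the argument of Proposition 1.11 (i), both $\ul\SX^\bullet_{\Bh,p}$ and $\ul\bB'$ are $\BA'_0$-bases of $\ul\SL(\infty)$; hence $\det T$ is a unit of $\BA'_0$, so $\det T$, being a Laurent polynomial with neither zero nor pole at $q=0$, lies in $\BF[q]$ with nonzero constant term. Next, applying Proposition 1.8 to $\ul\BU_q^-$ and passing to ${}_{\BF(q)}\ul\BU_q^-$, define the matrix $N$ by $\ol{\ul L^\bullet(\Bc,p)}=\sum_{\Bd}N_{\Bc,\Bd}\,\ul L^\bullet(\Bd,p)$; I claim $N_{\Bc,\Bd}=0$ unless $\Bc\le_p\Bd$ and $N_{\Bc,\Bc}=1$, so that $\det N=1$. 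Granting this claim, since $\vf$ commutes with the bar involution and $\ol b=b$ for $b\in\ul\bB'$ by (3.1.2), evaluating $\ol{\ul L^\bullet(\Bc,p)}=\sum_b\ol{T_{\Bc,b}}\,b$ in two ways gives $\ol T=NT$, whence $\ol{\det T}=\det T$. But a bar-invariant element of $\BF[q]$ can have no term of positive degree, as its bar would then involve a negative power of $q$; so a bar-invariant Laurent polynomial that lies in $\BF[q]$ and has nonzero constant term is a nonzero scalar. Therefore $\det T\in\BF^{\times}$, $T$ is invertible over $\BA'$, and $\ul\SX^\bullet_{\Bh,p}$ is an $\BA'$-basis of ${}_{\BA'}\ul\BU_q^-$.

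The one substantial point is the triangularity of $N$. Proposition 1.8 gives $\ol{\ul L(\Bc,p)}=\ul L(\Bc,p)+\sum_{\Bc<_p\Bd}\ul a_{\Bc,\Bd}\,\ul L(\Bd,p)$ only with $\ul a_{\Bc,\Bd}\in\BQ(q)$; unlike the simply laced case (see 1.9), $\ul\SX_{\Bh,p}$ is not yet known to be an $\BA$-basis of ${}_{\BA}\ul\BU_q^-$, so these coefficients need not be $\ve$-integral and the expansion cannot simply be reduced modulo $\ve$. What I would prove instead is that the order filtration $\ul\BU_q^-[\ge_p\Bc]:=\sum_{\Bc\le_p\Bd}\BQ(q)\,\ul L(\Bd,p)$ is compatible with $\vf$, in the sense that $\vf\bigl({}_{\BA}\ul\BU_q^-\cap\ul\BU_q^-[\ge_p\Bc]\bigr)\subseteq\sum_{\Bc\le_p\Bd}\BF(q)\,\ul L^\bullet(\Bd,p)$; applied to $\ol{\ul L(\Bc,p)}-\ul L(\Bc,p)$, which lies in ${}_{\BA}\ul\BU_q^-$ and in the strict part $\sum_{\Bc<_p\Bd}\BQ(q)\,\ul L(\Bd,p)$, this gives at once both the triangularity of $N$ and $N_{\Bc,\Bc}=1$. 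To establish the compatibility I would describe $\ul\BU_q^-[\ge_p\Bc]\cap{}_{\BA}\ul\BU_q^-$ recursively along the lexicographic coordinates of $(\Bc_{+p},\Bc_{-p})$ as an iterated sum and intersection of $\s$-stable $\BA$-submodules of the form $\sum_{a'\ge a}f_i^{(a')}{}_{\BA}\ul\BU_q^-$ — using the description of PBW vectors in 1.15, the identity (1.17.1) and Theorem 1.16 (i), all available for $\ul\BU_q^-$ — and then reduce each such module modulo $\ve$ exactly as $Z_{\eta;>a}$ was treated in the proof of Proposition 2.10. This filtration bookkeeping is where I expect the main work to lie.
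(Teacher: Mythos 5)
Your overall strategy is the same as the paper's: expand $\ul\SX^{\bullet}_{\Bh,p}$ against the basis $\ul\bB'=\Phi\iv(\pi(\wt\bB^{\s}))$ supplied by Theorem 2.4 and show the transition matrix $T$ is invertible over $\BA'$ by controlling it at $q=0$ and under the bar involution; your determinant formulation is a clean repackaging of the paper's block-upper-triangularity argument. However, two steps are genuinely problematic. First, the claim that $\ul\SX^{\bullet}_{\Bh,p}$ and $\ul\bB'$ are both $(\BF[[q]]\cap\BF(q))$-bases of $\ul\SL(\infty)$ ``by the argument of Proposition 1.11(i)'' fails: that argument needs $\sum_i \bar c_i^2=0$ to force all $\bar c_i=0$, which is false over $\BF=\BZ/\ve\BZ$ --- exactly the difficulty the paper flags in the parenthetical remark inside its own proof (``But this does not hold since $p_b\in\BF$''). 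The conclusion you need is still recoverable without the lattice: writing $G_1=T G_2 {}^{t}T$ for the Gram matrices of the two families, (3.1.1) and (3.1.2) give $\det G_i\in 1+q\BF[[q]]$, hence $(\det T)^2$, and therefore $\det T$, has $q$-adic valuation $0$. With that substitution the $q=0$ half of your argument is sound.

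The second and more serious gap is that your proposed proof of the unitriangularity of $N$ is circular. The tools you invoke --- the discussion in 1.15, the identity (1.17.1), and Theorem 1.16(i) --- are established in the paper only for simply laced $\Fg$, since they rest on $\SX_{\Bh,p}$ being an $\BA$-basis of ${}_{\BA}\BU_q^-$ (Theorem 1.6(iii)) and on the signed canonical basis $\wt\bB$. For $\ul\BU_q^-$ these statements become available only in 3.4, as consequences of Theorem 3.3, which is itself deduced from the lemma you are proving; likewise the treatment of $Z_{\eta;>a}$ in Proposition 2.10 uses Theorem 1.16(i) for the simply laced algebra $\BU_q^-$, not for $\ul\BU_q^-$. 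So the ``filtration bookkeeping'' to which you defer the main work cannot be carried out with what is available at this stage. What must actually be shown is that $\ol{\ul L(\Bc,p)}-\ul L(\Bc,p)$, which lies in ${}_{\BA}\ul\BU_q^-$ and in the $\BQ(q)$-span of the $\ul L(\Bd,p)$ with $\Bc<_p\Bd$, is sent by $\vf$ into the $\BF(q)$-span of the corresponding $\ul L^{\bullet}(\Bd,p)$ --- i.e.\ that the triangular expansion of Proposition 1.8 can be reduced modulo $\ve$ even though its coefficients are a priori only in $\BQ(q)$; this is the point the paper's proof relies on when it writes $\ol x = x+z$, and it is the point any correct completion of your argument has to address directly.
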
  

\begin{proof}
Put $x = \ul L^{\bullet}(\Bc, p)$.  
Since $\ul\bB'$ is an $\BA'$-basis of ${}_{\BA'}\ul\BU_q^-$, 
$x$ can be written as $x = \sum_{b \in \ul\bB'}a_b b$ with $a_b \in \BA'$. 
One can find $t \in \BZ$ such that $a_b = p_bq^t + $ higher terms of $q$
for any $a_b$ appearing in the sum, where $p_b \in \BF$ and $p_b \ne 0$ 
for at least one $b$.
Then by (3.1.2), we have 
\begin{equation*}
(x,x) \equiv q^{2t}\sum_bp_b^2 \mod q^{2t + 1}(\BF[[q]] \cap \BF(q)). 
\end{equation*}  
Since $(x, x) \equiv 1 \mod (q\BZ[[q]] \cap \BF(q))$ by (3.1.1),
this implies that $t = 0$ and $\sum_b p_b^2 = 1$.  
(If $p_b \in \BZ$, we can deduce from this that 
$p_{b_0} = \pm 1$ for a unique $b_0$ and $p_b = 0$ for $b \ne b_0$. 
But this does not hold since $p_b \in \BF$.  We need a further 
consideration.)  
We can write
$x = \sum_bp_bb + y$, where $y$ is a linear combination of 
$b' \in \ul\bB'$ with coefficients in $q\BF[q]$. 
By Proposition 1.8, $\ol x$ can be written as 
$\ol x = x + z$, where $z$ is a linear combination of various 
$\ul L^{\bullet}(\Bd, p)$ for $\Bc <_p \Bd$, with coefficients in $\BF(q)$. 
Hence
\begin{equation*}
\ol x = \sum p_bb + \ol y  = \sum p_bb + y + z, 
\end{equation*}  
and we have $\ol y - y = z$. 
In particular, $z \in {}_{\BA'}\ul\BU_q^-$, and $y$ is uniquely determined 
from $z$, (namely, if we write $z = \sum a_{b}b$ with $a_{b} \in \BA'$, 
then $a_{b}$ is written as $a_{b} = c_{b,+} + c_{b,0} + c_{b,-}$ with 
$c_{b,+} \in q\BF[q], c_{b,-} \in q\iv \BF[q\iv], c_{b,0} \in \BF$.  
Since $a_{b}$ is anti bar invariant, we must have 
$c_{b,0} = 0, c_{b,-} = -\ol c_{b,+}$. 
Then $y = \sum_{b}c_{b,+}b$). 
For any $\Bd$, we denote by $\ul\bB'_{\Bd}$ the subset of $\ul\bB'$ consisting of 
$b$ appearing in the expansion of $\ul L^{\bullet}(\Bd, p)$. 
Put $\ul\bB'_{\Bc,0} = \ul\bB'_{\Bc} - \bigcup_{\Bc <_p \Bd}\ul\bB'_{\Bd}$. 
Thus we ca write 

\begin{equation*}
\tag{3.2.1}
x = \ul L^{\bullet}(\Bc,p) = \sum_{b \in \ul\bB'_{\Bc,0}}p_bb
                         + \sum_{\Bc <_p \Bd}\sum_{b \in \ul\bB'_{\Bd,0}}a_bb,
\end{equation*}
where $p_b \in \BF$ and $a_b \in \BF[q]$. 
We fix a weight $\nu \in \ul Q$, and consider the set $\ul\bB'(\nu)$ of weight 
$\nu$. $\ul\bB'(\nu)$ gives an $\BA'$-basis of $({}_{\BA'}\ul\BU_q^-)_{\nu}$.  
Similarly, we consider the set $\ul\SX(\nu)$ of PBW-basis of 
weight $\nu$ for $\ul\SX = \ul\SX_{\Bh,p}$. $\ul\SX(\nu)$ gives a 
$\BQ(q)$-basis of $(\ul\BU_q^-)_{\nu}$.
We denote by $\ul\SX^{\bullet}(\nu)$ the image of $\ul\SX (\nu)$ 
on ${}_{\BA'}\ul\BU_q^-$.  Then $\ul\SX^{\bullet}(\nu)$ gives an $\BF(q)$-basis
of $({}_{\BF(q)}\ul\BU_q^-)_{\nu}$. 
Note that $\ul\bB'(\nu)$ is a finite set, and it is partitioned as
$\ul\bB'(\nu) = \bigsqcup_{\Bc}\ul\bB'_{\Bc,0}(\nu)$, 
where $\ul\bB'_{\Bc,0}(\nu) = \ul\bB'_{\Bc,0} \cap \ul\bB'(\nu)$.
We fix a total order on $\ul\bB'(\nu)$ compatible with this partition, 
and consider the transition matrix $T$ between $\ul\bB'(\nu)$ and 
$\ul\SX^{\bullet}(\nu)$
with respect to this order. We regard $T$ as a block matrix with respect to this 
partition.  
Then (3.2.1) shows that $T$ is an upper triangular block matrix with coefficients 
in $\BF[q]$, where the diagonal blocks are matrices with coefficients in $\BF$.
In particular, $\det T \in \BF$.  Here $\ul\bB'(\nu)$ and $\ul\SX(\nu)^{\bullet}$ are both
$\BF(q)$-basis of $({}_{\BF(q)}\ul\BU_q^-)_{\nu}$, 
the matrix $T$ is non-singular.  Thus $\det T \in \BF^*$, 
and the inverse matrix $T\iv$ is a matrix with coefficients in $\BF[q]$.   
This shows that $\ul\SX^{\bullet}_{\Bh,p}$ gives an $\BA'$-basis of ${}_{\BA'}\BU_q^-$.
The lemma is proved. 
\end{proof}

As a corollary to Lemma 3.2, we can prove the following result. 

\begin{thm}  
The set $\ul\SX_{\Bh,p} = \{ \ul L(\Bc,p) \mid \Bc \in \ul\SC \}$ 
gives an $A$-basis of ${}_{\BA}\ul\BU_q$. 
\end{thm}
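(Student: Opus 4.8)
The plan is to prove the statement weight space by weight space, adapting to the coefficient ring $\BA$ the argument used over $\BA'$ in the proof of Lemma 3.2. Fix a weight $\nu\in\ul Q$, put $N_\nu=({}_{\BA}\ul\BU_q^-)_\nu$, and let $M_\nu\subseteq N_\nu$ be the $\BA$-submodule spanned by $\ul\SX_{\Bh,p}(\nu)=\{\,\ul L(\Bc,p)\mid\Bc\in\ul\SC(\nu)\,\}$; the inclusion holds by Theorem 1.6 (i) applied to $\ul\BU_q^-$, and by Theorem 1.6 (ii) the set $\ul\SX_{\Bh,p}(\nu)$ is a $\BQ(q)$-basis of $(\ul\BU_q^-)_\nu$, so $M_\nu$ and $N_\nu$ are free $\BA$-lattices of the same finite rank $d=\dim(\ul\BU_q^-)_\nu$. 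It is then enough to show $M_\nu=N_\nu$. As in Remark 2.14 I would take for granted the canonical basis $\ul\bB(\nu)$ of $\ul\BU_q^-$ of weight $\nu$ coming from Lusztig's results: it is an $\BA$-basis of $N_\nu$, it is bar invariant, and it is almost orthonormal.

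The first step is to pin down the transition matrix $C$ from $\ul\bB(\nu)$ to $\ul\SX_{\Bh,p}(\nu)$. Write $\ul L(\Bc,p)=\sum_{\ul b\in\ul\bB(\nu)}a_{\ul b}\,\ul b$ with $a_{\ul b}\in\BA$, let $t$ be the least exponent of $q$ occurring among the $a_{\ul b}$, and let $p_{\ul b}\in\BZ$ be the coefficient of $q^{t}$ in $a_{\ul b}$. Since both $\ul\bB(\nu)$ and $\ul\SX_{\Bh,p}(\nu)$ are almost orthonormal, one computes $(\ul L(\Bc,p),\ul L(\Bc,p))\equiv q^{2t}\sum_{\ul b}p_{\ul b}^{\,2}\pmod{q^{2t+1}\BZ[[q]]}$; comparing with $(\ul L(\Bc,p),\ul L(\Bc,p))\equiv 1\pmod{q\BZ[[q]]}$ forces $t=0$ and $\sum_{\ul b}p_{\ul b}^{\,2}=1$. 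This is exactly the place where, over $\BF$, the proof of Lemma 3.2 had to pass to block matrices; over $\BZ$ the integrality of the $p_{\ul b}$ yields a unique index $\ul b=\ul b(\Bc)$ with $p_{\ul b(\Bc)}=\pm1$ and $p_{\ul b}=0$ for $\ul b\neq\ul b(\Bc)$. Because $t=0$, all entries of $C$ then lie in $\BZ[q]$, with $C_{\ul b(\Bc),\Bc}\in\pm1+q\BZ[q]$ and $C_{\ul b,\Bc}\in q\BZ[q]$ for $\ul b\neq\ul b(\Bc)$; expanding $(\ul L(\Bc,p),\ul L(\Bc',p))$ in $\ul\bB(\nu)$ and reading it modulo $q$ shows that $\Bc\mapsto\ul b(\Bc)$ is injective (otherwise some $(\ul L(\Bc,p),\ul L(\Bc',p))$ with $\Bc\neq\Bc'$ would be $\equiv\pm1\pmod q$, against almost orthonormality), hence, since $|\ul\SC(\nu)|=d=|\ul\bB(\nu)|$, a bijection $\ul\SC(\nu)\isom\ul\bB(\nu)$. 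After ordering the two bases compatibly with this bijection, $C$ is a matrix over $\BZ[q]$ whose diagonal entries lie in $\pm1+q\BZ[q]$ and whose off-diagonal entries lie in $q\BZ[q]$.

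It then remains to check $\det C\in\BA^{\times}=\{\,\pm q^{k}\,\}$, which I would deduce from the bar involution. Each $\ul b\in\ul\bB(\nu)$ is bar invariant, while by Proposition 1.8 applied to $\ul\BU_q^-$ the bar involution acts on $\ul\SX_{\Bh,p}(\nu)$ by a matrix that is unitriangular for the order $<_p$, hence of determinant $1$; from $\ul L(\Bc,p)=\sum_{\ul b}C_{\ul b,\Bc}\,\ul b$ one gets $\ol{\det C}=\det C$. But by the previous step $\det C\in\BZ[q]$ with constant term $\pm1$, and a bar invariant element of $\BZ[q]$ is a constant, so $\det C=\pm1$. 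Hence $C\in\operatorname{GL}_{d}(\BA)$, so $\ul\SX_{\Bh,p}(\nu)$ is an $\BA$-basis of $N_\nu=({}_{\BA}\ul\BU_q^-)_\nu$; letting $\nu$ range over $\ul Q$ gives the theorem.

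The main obstacle is the second step: extracting in characteristic zero a single unit leading coefficient from $\sum_{\ul b}p_{\ul b}^{\,2}=1$, which is precisely what passing from $\BF$ to $\BZ$ gains over Lemma 3.2 and what upgrades the $\BF$-level block triangularity there to honest $\BA$-triangularity here. Lemma 3.2 — and behind it the isomorphism ${}_{\BA'}\ul\BU_q^-\isom\BV_q$ of Theorem 2.4 — is what makes this viable: it records that modulo $\ve$ the PBW elements of $\ul\BU_q^-$ behave like the already understood ones of $\BU_q^-$, so that the only arithmetic still to be controlled is the characteristic-zero bookkeeping above, in which Lusztig's canonical basis of $\ul\BU_q^-$ enters only as a bar invariant, almost orthonormal $\BA$-lattice and not through its geometric construction.
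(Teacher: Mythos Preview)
Your argument is correct, but it takes a genuinely different route from the paper's and, in the context of this paper, relies on an input that the paper is deliberately trying to avoid.

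The paper's proof of Theorem 3.3 goes through Lemma 3.2 and an $\ve$-adic lifting: knowing that $\ul\SX^{\bullet}_{\Bh,p}$ is an $\BA'$-basis of ${}_{\BA'}\ul\BU_q^-$, one passes to the inverse limit $\varprojlim {}_{\BA}\ul\BU_q^-/\ve^n({}_{\BA}\ul\BU_q^-)$, writes any $x\in{}_{\BA}\ul\BU_q^-$ as a $\BZ_\ve[q,q\iv]$-combination of $\ul\SX_{\Bh,p}$, and then intersects with $\BQ(q)$ to get coefficients in $\BA$. The only canonical basis used is that of the simply laced $\BU_q^-$, which was built from PBW bases in Section~1; so the argument stays within the paper's ``elementary'' framework.

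Your argument instead feeds in Lusztig's canonical basis $\ul\bB$ of $\ul\BU_q^-$ as a bar-invariant, almost orthonormal $\BA$-basis, and runs the integrality-plus-bar-invariance computation (essentially the proof of Proposition~4.23 transplanted to $\ul\BU_q^-$) to force $\det C=\pm1$. This is cleaner and avoids the $\ve$-adic completion, but it is circular relative to the paper's internal logic: paragraph~3.4 constructs $\ul\bB_{\Bh,p}$ \emph{from} Theorem~3.3, so within the paper one cannot take $\ul\bB$ for granted without appealing to the geometric construction the paper set out to bypass. Your closing paragraph also overstates the role of Lemma~3.2: your proof never actually invokes it --- once you assume $\ul\bB$, the argument is self-contained and Lemma~3.2 is irrelevant. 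What your approach buys is directness and independence from the $\BV_q$ machinery; what the paper's approach buys is that it produces $\ul\bB$ rather than consuming it.
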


\begin{proof}
The proof is given in a similar way as in [SZ, Cor.1.21].
Let ${}_{\BA}\wh{\ul\BU}_q^-$ be the inverse limit of 
${}_{\BA}\ul\BU_q^-/\ve^n({}_{\BA}\ul\BU_q^-)$.  Then ${}_{\BA}\wh{\ul\BU}_q^-$ 
has a natural structure of the module over 
$\BZ_{\ve}[q,q\iv] = \varprojlim \BA/\ve^n\BA$, 
where $\BZ_{\ve}$ is the ring of $\ve$-adic integers. 
We have a natural embedding ${}_{\BA}\ul\BU_q^- \subset {}_{\BA}\wh{\ul\BU}_q^-$. 
For the proof, it is enough to show that any element $x \in {}_{\BA}\ul\BU_q^-$ 
is written as a linear combination of $\ul\SX_{\Bh,p}$. 
Since $\ul\SX^{\bullet}_{\Bh,p}$ is 
an $\BA'$-basis of ${}_{\BA'}\ul\BU_q^-$ by Lemma 3.2, $x$ can be written as 
a linear combination of $\ul\SX_{\Bh,p}$ with coefficients in 
$\BA$ modulo $\ve({}_{\BA}\ul\BU_q^-)$. 
We regard $x$ as an element in ${}_{\BA}\wh{\ul\BU}_q^-$.  Then 
$x$ can be written as a linear combination of $\ul\SX_{\Bh,p}$ with coefficients 
in $\BZ_{\ve}[q,q\iv]$.   
On the other hand, $x$ is a linear combination of $\ul\SX_{\Bh,p}$ with 
coefficients in $\BQ(q)$. 
Thus those coefficients belong to 
$\BZ_{\ve}[q,q\iv] \cap \BQ(q) = \BZ[q,q\iv]$. The theorem  is proved. 
\end{proof}

\para{3.4.}
Since $\ul\SX_{\Bh,p}$ gives an $\BA$-basis of ${}_{\BA}\ul\BU_q^-$, 
the canonical basis $\ul\bB_{\Bh,p}$ of ${}_{\BA}\ul\BU_q^-$ can be constructed
as in 1.9.  Thus by Proposition 1.11, we obtain the signed canonical basis 
$\wt{\ul\bB} = \ul\bB_{\Bh,p} \cup -\ul\bB_{\Bh,p}$, which is independent from 
the choice of $\Bh$ and $p$. 
For $b \in \wt{\ul\bB}$, and $\eta \in \ul I$, 
$\ve_{\eta}(b), \ve^*_{\eta}(b)$ can be defined similarly to 1.15.
Then Theorem 1.16 holds, by replacing $i \in I, \wt\bB$ by 
$\eta \in \ul I, \wt{\ul\bB}$.    
Now Kashiwara operators $E_{\eta}, F_{\eta} : \wt{\ul\bB} \to \wt{\ul\bB} \cup \{ 0 \}$
can be defined in a similar way as in 1.18.  Theorem 1.19 also holds 
for $\ul\BU_q^-$. 

\para{3.5.}
We shall consider the relationship between $\pi(\wt\bB^{\s})$
and $\Phi(\wt{\ul\bB})$.  For this, we need some preliminary. 
By Theorem 1.14, we have an orthogonal decomposition
$\BU_q^- = \BU_q[i] \oplus f_i\BU_q^-$.
The following results are a $\s$-version of this decomposition.
First we consider the case where $\ve = 2$.  

\begin{lem}  
Assume that $\eta = \{i, j\}$.  Then we have an orthogonal decomposition 
\begin{equation*}
\tag{3.6.1}
\BU_q^- = (\BU_q^-[i] \cap \BU_q^-[j]) \oplus 
             (f_i\BU_q^-[j] \oplus f_j\BU_q^-[i]) \oplus f_if_j\BU_q^-. 
\end{equation*}
\end{lem}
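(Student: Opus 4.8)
The plan is to build the decomposition (3.6.1) by iterating the single-index orthogonal decomposition of Theorem 1.14, using that $i$ and $j$ lie in a common $\s$-orbit $\eta$, so by admissibility $(\a_i,\a_j)=0$ and the braid actions $T_i, T_j$ commute, as do $f_i$ and $f_j$. First I would apply Theorem 1.14 (ii) with the index $j$ to write $\BU_q^- = \BU_q^-[j] \oplus f_j\BU_q^-$, an orthogonal decomposition. Then I would apply Theorem 1.14 (ii) with the index $i$ to each summand separately: on $\BU_q^-$ it gives $\BU_q^-[i]\oplus f_i\BU_q^-$, and the point is that this decomposition is compatible with the $j$-decomposition. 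Concretely, I would show $\BU_q^-[j] = (\BU_q^-[i]\cap\BU_q^-[j]) \oplus (f_i\BU_q^- \cap \BU_q^-[j])$ and that $f_i\BU_q^- \cap \BU_q^-[j] = f_i\BU_q^-[j]$ (using $f_i$-multiplication commutes with the conditions defining $\BU_q^-[j]=\Ker{}_jr$, since $(\a_i,\a_j)=0$ forces ${}_jr(f_ix) = f_i\,{}_jr(x)$). Symmetrically, $f_j\BU_q^- = f_j\BU_q^-[i] \oplus f_jf_i\BU_q^-$, and $f_jf_i\BU_q^- = f_if_j\BU_q^-$ since $f_i,f_j$ commute. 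Assembling these four pieces gives (3.6.1).

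The verification that the decomposition is a direct sum with the claimed summands rests on the interaction formulas for ${}_ir$ (and symmetrically $r_i$, though here we only need the left versions since all factors are on the left). The key computational input is that for $x$ homogeneous and $(\a_i,\a_j)=0$, one has ${}_jr(f_ix) = f_i\,{}_jr(x)$ — this follows from the definition of $r$ as an algebra homomorphism into the twisted tensor product $\BU_q^-\otimes\BU_q^-$ together with the twist factor $q^{-(\weit x_2,\weit x_1')}$ vanishing in the relevant terms because $(\a_i,\a_j)=0$. Granting this, $\BU_q^-[j]=\Ker{}_jr$ is stable under left multiplication by $f_i$, and dually $\Ker r_i$ issues are not needed. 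The orthogonality of all four summands then follows from Theorem 1.14 (ii): the decomposition $\BU_q^-=\BU_q^-[j]\oplus f_j\BU_q^-$ is orthogonal, each of these is further split orthogonally by the $i$-decomposition, and $\BU_q^-[i]$ is by definition $\Ker{}_ir$, orthogonal to $f_i\BU_q^-$ by (1.12.2).

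**The main obstacle** I anticipate is checking carefully that the two single-index decompositions genuinely refine each other — i.e., that applying the $i$-decomposition to the summand $\BU_q^-[j]$ lands inside $\BU_q^-[j]$ in each piece, so that $f_i\BU_q^-\cap\BU_q^-[j]$ really equals $f_i\BU_q^-[j]$ rather than something larger. This is exactly where the commutation ${}_jr\circ(f_i\cdot) = (f_i\cdot)\circ{}_jr$ is used, and it hinges on the admissibility hypothesis $(\a_i,\a_j)=0$; without it the twist factor would not vanish and the two decompositions would not be compatible. I would also need to note that $f_i\BU_q^-[j] \cap f_j\BU_q^-[i] = 0$ and that their sum is direct, which is immediate once one observes these sit inside the orthogonal pieces $f_i\BU_q^-$ (intersected with $\BU_q^-[j]$) and $f_j\BU_q^-$ respectively; but one should double-check there is no overlap with $\BU_q^-[i]\cap\BU_q^-[j]$, which again follows from orthogonality. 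Everything else is bookkeeping: once the compatibility of the two $\Ker$-decompositions is established, (3.6.1) is just the four-term refinement, and orthogonality of all summands is inherited from Theorem 1.14.
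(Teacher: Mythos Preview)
Your approach is sound and will yield (3.6.1), but the justification you give for the key compatibility step is incomplete. The commutation ${}_jr\circ(f_i\cdot)=(f_i\cdot)\circ{}_jr$ (which is correct when $(\a_i,\a_j)=0$) gives you $f_i\BU_q^-[j]\subset\BU_q^-[j]$ and, together with injectivity of left multiplication by $f_i$ (as $\BU_q^-$ is a domain), the equality $f_i\BU_q^-\cap\BU_q^-[j]=f_i\BU_q^-[j]$. But to conclude that the $i$-decomposition of an element of $\BU_q^-[j]$ has \emph{both} components in $\BU_q^-[j]$, you also need ${}_jr(\BU_q^-[i])\subset\BU_q^-[i]$, i.e., that ${}_jr$ preserves $\Ker{}_ir$. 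This does not follow from the $f_i$-linearity of ${}_jr$ alone. What you need is ${}_ir\circ{}_jr={}_jr\circ{}_ir$, which is immediate from (1.12.2) and $f_if_j=f_jf_i$: for all $y$ one has
\[
(y,{}_ir{}_jr(x))(f_i,f_i)(f_j,f_j)=(f_jf_iy,x)=(f_if_jy,x)=(y,{}_jr{}_ir(x))(f_i,f_i)(f_j,f_j).
\]
Once both ${}_jr(f_i\BU_q^-)\subset f_i\BU_q^-$ and ${}_jr(\BU_q^-[i])\subset\BU_q^-[i]$ are in hand, $\Ker{}_jr$ splits along $\BU_q^-[i]\oplus f_i\BU_q^-$ and the rest of your argument goes through.

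The paper takes a somewhat different route that sidesteps this second commutation. It first shows $f_i\BU_q^-=f_i\BU_q^-[j]\oplus f_if_j\BU_q^-$ (via $T_j(f_i)=f_i$ rather than ${}_jr$), giving the three-term orthogonal decomposition $\BU_q^-=\BU_q^-[i]\oplus f_i\BU_q^-[j]\oplus f_if_j\BU_q^-$; by the $i\leftrightarrow j$ symmetry also $\BU_q^-=\BU_q^-[j]\oplus f_j\BU_q^-[i]\oplus f_if_j\BU_q^-$. Comparing orthogonal complements of $f_if_j\BU_q^-$ yields $\BU_q^-[i]\oplus f_i\BU_q^-[j]=\BU_q^-[j]\oplus f_j\BU_q^-[i]$, and an orthogonal-complement argument inside this common space then identifies the remaining piece with $\BU_q^-[i]\cap\BU_q^-[j]$. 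Your direct iteration is arguably cleaner once the extra commutation ${}_ir{}_jr={}_jr{}_ir$ is supplied; the paper's symmetry argument avoids it but is a bit more roundabout.
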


\begin{proof}
First we note that there exists an orthogonal decomposition
\begin{equation*}
\tag{3.6.2}
f_i\BU_q^- =  f_i(\BU_q^-[j] \oplus f_{j}\BU_q^-) 
           =  f_i\BU_q^-[j] \oplus f_if_j\BU_q^-.
\end{equation*}
In fact, since $T_j(f_i) = f_i$, we have 
$f_i(\BU_q^-[j]) \subset \BU_q^-[j]$ by (1.14.1), 
and $f_if_j\BU_q^- \subset f_j\BU_q^-$.
Thus $f_i\BU_q^-[j] \cap f_if_j\BU_q^- = \{ 0\}$.  
(3.6.2) follows from this. 
\par
It follows from (3.6.2) that we have orthogonal decompositions 
\begin{equation*}
\tag{3.6.3}
\begin{aligned}
\BU_q^- &= \BU_q^-[i] \oplus f_i\BU_q^- = \bigl(\BU_q^-[i] \oplus 
                 f_i\BU_q^-[j]\bigr)  \oplus f_if_j\BU_q^-, \\
\BU_q^- &= \BU_q^-[j] \oplus f_j\BU_q^- = 
        \bigl(\BU_q^-[j] \oplus f_j\BU_q^-[i]\bigr) \oplus f_if_j\BU_q^-.
\end{aligned}
\end{equation*}
The second formula is obtained from the first by applying $\s$. 
By comparing two orthogonal decompositions in (3.6.3), we have

\begin{equation*}
\tag{3.6.4}
\BU_q^-[i] \oplus f_i\BU_q^-[j] = \BU_q^-[j] \oplus f_j\BU_q^-[i]
                 \subset \BU_q^-[i] + \BU_q^-[j].
\end{equation*}
We show that 
\begin{equation*}
\tag{3.6.5}
\BU_q^-[i] \oplus f_i\BU_q^-[j] = (\BU_q^-[i] \cap \BU_q^-[j]) \oplus 
            (f_i\BU_q^-[j] \oplus f_j\BU_q^-[i]). 
\end{equation*}
\par
Consider the orthogonal decompositions 
\begin{equation*}
\BU_q^-[i] = f_j\BU_q^-[i] \oplus A, \quad \BU_q^-[j] = f_i\BU_q^-[j] \oplus B
\end{equation*}
with $A \subset \BU_q^-[i]$, $B \subset \BU_q^-[j]$. 
Then we have orthogonal decompositions 

\begin{align*}
\BU_q^-[i] \oplus f_i\BU_q^-[j] &=  f_j\BU_q^-[i] \oplus f_i\BU_q^-[j] \oplus A, \\
\BU_q^i[j] \oplus f_j\BU_q^-[i] &=  f_i\BU_q^-[j] \oplus f_j\BU_q^-[i] \oplus B.  
\end{align*}
By (3.6.4), this implies that $A = B \subset \BU_q^-[i] \cap \BU_q^-[j]$, 
and we have 
\begin{equation*}
f_i\BU_q^-[j] \oplus \BU_q^-[i] \subset 
       f_i\BU_q^-[j] \oplus f_j\BU_q^-[i] \oplus (\BU_q^-[i] \cap \BU_q^-[j]).  
\end{equation*} 
The other inclusion is obvious, hence (3.6.5) holds.
\par
Now (3.6.1) follows from (3.6.3) and (3.6.5). 
The lemma is proved.
\end{proof}

As a corollary, the following generalization is obtained easily. 
\begin{lem}  
Let $\eta = \{i,j\}$.  For any integer $a \ge 0$, we have
\begin{align*}
\tag{3.7.1}
f_i^af_j^a\BU_q^- = f_i^a&f_j^a\bigl(\BU^-_q[i] \cap \BU^-_q[j]\bigr)  \\
           &\oplus \bigl(f_j^{a+1}f_i^a\BU_q^-[i]\oplus f_i^{a+1}f_j^a\BU_q^-[j]\bigr)
           \oplus f_i^{a+1}f_j^{a+1}\BU_q^-. 
\end{align*}
\end{lem}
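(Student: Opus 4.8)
The plan is to derive (3.7.1) from a bi-graded orthogonal decomposition of $\BU_q^-$, obtaining (3.7.1) itself just by regrouping homogeneous pieces. Write $C=\BU_q^-[i]\cap\BU_q^-[j]$. I would first record what the proof of Lemma 3.6 essentially already provides: $\BU_q^-[i]$ is stable under left multiplication by $f_j$ (since $T_j(f_i)=f_i$, because $(\a_i,\a_j)=0$); $f_i$ and $f_j$ commute in $\BU_q^-$ (the Serre relation with $a_{ij}=0$); and the splitting used there can be sharpened to the orthogonal decompositions $\BU_q^-[i]=f_j\BU_q^-[i]\oplus C$ and $\BU_q^-[j]=f_i\BU_q^-[j]\oplus C$ (the identification of the complement with $C$ is an easy addition). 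Iterating the first of these, and using $\bigcap_{n\ge 0}f_j^n\BU_q^-=0$ for weight reasons, gives the orthogonal decomposition $\BU_q^-[i]=\bigoplus_{n\ge 0}f_j^nC$; symmetrically $\BU_q^-[j]=\bigoplus_{m\ge 0}f_i^mC$. Feeding the first into $\BU_q^-=\bigoplus_{m\ge 0}f_i^m\BU_q^-[i]$ (Theorem 1.14(ii)) yields
\[
\BU_q^- \;=\; \bigoplus_{m,n\ge 0} f_i^m f_j^n C .
\]

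The second step is to verify that this bi-graded decomposition is orthogonal. If $m\ne m'$, the pieces $f_i^m f_j^n C$ and $f_i^{m'}f_j^{n'}C$ lie in the orthogonal summands $f_i^m\BU_q^-[i]$, $f_i^{m'}\BU_q^-[i]$ of Theorem 1.14(ii) (using $f_j^nC\subseteq\BU_q^-[i]$, by the $f_j$-stability of $\BU_q^-[i]$). If $m=m'$ but $n\ne n'$, both lie in $f_i^m\BU_q^-[i]$, and inside $\BU_q^-[i]$ the subspaces $f_j^nC$, $f_j^{n'}C$ are orthogonal (they sit in the orthogonal layers $f_j^n\BU_q^-[j]$, $f_j^{n'}\BU_q^-[j]$); so it suffices that left multiplication by $f_i^m$ carries orthogonal pairs of subspaces of $\BU_q^-[i]$ to orthogonal pairs. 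That is the one genuine computation: from (1.12.2), for $v,w\in\BU_q^-[i]=\Ker {}_ir$ one has $(f_i^m v,f_i^m w)=(f_i,f_i)^m\,\bigl(v,\,({}_ir)^m(f_i^m w)\bigr)$, and from ${}_ir(f_i z)=z+q_i^{-2}f_i\,{}_ir(z)$ an easy induction shows $({}_ir)^m(f_i^m w)$ is a nonzero scalar multiple of $w$; hence $(f_i^m v,f_i^m w)$ is a nonzero multiple of $(v,w)$.

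Finally I would deduce (3.7.1). Since $f_i$ and $f_j$ commute, $f_i^a f_j^a\BU_q^-=\bigoplus_{m\ge a,\,n\ge a}f_i^m f_j^n C$. Partition $\{(m,n):m\ge a,\,n\ge a\}$ into the blocks $\{(a,a)\}$, $\{(a,n):n>a\}$, $\{(m,a):m>a\}$ and $\{(m,n):m>a,\,n>a\}$; summing the corresponding pieces and using again $\BU_q^-[i]=\bigoplus_{n\ge 0}f_j^nC$, $\BU_q^-[j]=\bigoplus_{m\ge 0}f_i^mC$ and $\BU_q^-=\bigoplus_{m,n\ge 0}f_i^m f_j^n C$ gives respectively $f_i^a f_j^a C$, $f_j^{a+1}f_i^a\BU_q^-[i]$, $f_i^{a+1}f_j^a\BU_q^-[j]$ and $f_i^{a+1}f_j^{a+1}\BU_q^-$ — precisely the summands of (3.7.1) — and the decomposition is orthogonal, being a regrouping of the orthogonal decomposition displayed above.

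I do not expect a real obstacle: apart from the short rescaling computation for $f_i^m$ on $\BU_q^-[i]$, everything is index bookkeeping resting on the proof of Lemma 3.6 and on Theorem 1.14, so the label "obtained easily" is accurate. In fact, if only the direct-sum form of (3.7.1) is needed, it drops out in one line: the map $x\mapsto f_i^a f_j^a x$ is injective, since multiplication by $f_i$ and by $f_j$ is injective on $\BU_q^-$ (immediate from Theorem 1.14(ii)), so applying it to the direct sum (3.6.1) of Lemma 3.6 and rewriting the four images via $f_if_j=f_jf_i$ yields (3.7.1) at once.
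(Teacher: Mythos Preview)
Your proposal is correct. The paper gives no proof at all beyond the phrase ``obtained easily'' from Lemma~3.6, and your final paragraph is exactly what that phrase means: left-multiply the orthogonal decomposition (3.6.1) by the injective operator $x\mapsto f_i^a f_j^a x$ and use $f_if_j=f_jf_i$ to rewrite the four images; this yields (3.7.1) immediately. So in that sense your alternative one-line argument coincides with the paper's intended proof.

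Your main route is a genuinely different and more structural argument. Instead of pushing (3.6.1) forward by a single map, you refine it to the bi-graded decomposition $\BU_q^-=\bigoplus_{m,n\ge 0}f_i^m f_j^n C$ (with $C=\BU_q^-[i]\cap\BU_q^-[j]$), prove this is \emph{orthogonal}, and then regroup. This buys you more than the lemma asks: the statement of Lemma~3.7 only claims a direct sum, not an orthogonal one, and the one-line argument via injectivity does not obviously transport orthogonality (multiplication by $f_i^a f_j^a$ rescales inner products only on $\BU_q^-[i]$, not on all of $\BU_q^-$). So your longer route gives a strict strengthening. The only point that deserves a word of care is the passage from $\BU_q^-[i]=f_j\BU_q^-[i]\oplus C$ to $\BU_q^-[i]=\bigoplus_{n\ge0}f_j^nC$: this uses not just that $f_j$ preserves $\BU_q^-[i]$ but also that $f_j\BU_q^-\cap\BU_q^-[i]=f_j\BU_q^-[i]$ (equivalently, that ${}_ir(f_j z)=f_j\,{}_ir(z)$ together with injectivity of $f_j$), which you should state explicitly; once that is in place the iteration is clean.
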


\para{3.8.}
We shall consider the integral version of (3.7.1).
For any subspace $Z$ of $\BU_q^-$, put ${}_{\BA}Z = Z \cap {}_{\BA}\BU_q^-$.
The following formula holds. 
\begin{equation*}
\tag{3.8.1}
{}_{\BA}(f_i^a\BU_q^-) = {}_{\BA}(f_i^a\BU_q^-[i]) \oplus {}_{\BA}(f_i^{a+1}\BU_q^-).
\end{equation*}
In fact, such a decomposition holds on $\BQ(q)$ by Theorem 1.14.
By (1.17.1) and Theorem 1.16 (i), $\{ b \in \wt\bB \mid \ve_i(b) \ge a \}$
is a (signed) $\BA$-basis of ${}_{\BA}(f_i^a\BU_q^-)$.  
Hence in order to prove (3.8.1), it is enough 
to see that for any $b \in \wt\bB$ such that $\ve_i(b) \ge a$, 
if we write $b = x + y$ with 
$x \in f_i^a\BU_q^-[i], y \in f_i^{a+1}\BU_q^-$, then $x, y  \in {}_{\BA}\BU_q^-$. 
We choose $p \le 0$ so that $i = i_p$, and write $b = \pm b(\Bc, p)$.   
Then 
\begin{equation*}
b(\Bc, p) = L(\Bc, p) + \sum_{\Bc <_p \Bd}a_{\Bc, \Bd}L(\Bd, p).
\end{equation*}
If $\ve_i(b) > a$, then $b(c, p) \in f_i^{a+1}\BU_q^-$, hence $x = 0$,
and $y = \pm b(\Bc, p)$.  Thus the assertion holds.   So assume that
$\ve_i(b) = a$.  In this case, $b(\Bc, p)$ can be decomposed as 
$b(\Bc, p) = x + y$, where 
$x = L(\Bc, p) + \sum_{d_p = a}a_{\Bd}L(\Bd, p)$, and  
$y = \sum_{d_p > a}a_{\Bd}L(\Bd, p)$, where $\Bd_{+p} = (d_p, ...)$.
We have $x \in f_i^a\BU_q^-[i] \cap {}_{\BA}\BU_q^-$, and 
$y \in f^{a+1}_i\BU_q^- \cap {}_{\BA}\BU_q^-$.  Hence (3.8.1) holds. 

By making use of (3.8.1), we obtain the $\BA$-version of the formula (3.6.1).

\begin{lem}  
Let $\eta = \{i,j\}$.  For any integer $a \ge 0$, we have
\begin{align*}
\tag{3.9.1}
{}_{\BA}(f_i^af_j^a\BU_q^-) = \bigl({}_{\BA}&(f_i^af_j^a\BU^-_q[i]) 
                   \cap {}_{\BA}(f_i^af_j^a\BU^-_q[j])\bigr)  \\
           &\oplus \bigl({}_{\BA}(f_j^{a+1}f_i^a\BU_q^-[i])
                      \oplus {}_{\BA}(f_i^{a+1}f_j^a\BU_q^-[j])\bigr)  \\
           &\oplus {}_{\BA}(f_i^{a+1}f_j^{a+1}\BU_q^-), 
\end{align*}
where 
\begin{align*}
\tag{3.9.2}
{}_{\BA}(f_i^af_j^a\BU_q^-) &= \sum_{b \ge a}\sum_{b' \ge a}
                 f_i^{(b)}f_j^{(b')}{}_{\BA}\BU_q^-, \\ 
\tag{3.9.3}
{}_{\BA}(f_i^af_j^a\BU_q^-[i]) &= \sum_{b \ge a}\sum_{b' \ge a}
                 f_i^{(b)}f_j^{(b')}{}_{\BA}(\BU_q^-[i]). \\ 
\end{align*}
\end{lem}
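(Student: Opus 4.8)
The plan is to transport the $\BQ(q)$-decomposition (3.7.1) of Lemma 3.7 to the integral form by iterating the decomposition (3.8.1), in exact parallel with the way Lemma 3.6 was deduced from the orthogonal decomposition of Theorem 1.14(ii). Throughout I use that, since $i\ne j$ lie in the common orbit $\eta$, admissibility gives $(\a_i,\a_j)=0$; hence $f_i$ and $f_j$ commute, $T_i(f_j)=f_j$, $T_j(f_i)=f_i$, and consequently $f_i^{(m)}\BU_q^-[j]\subseteq\BU_q^-[j]$ and $f_j^{(m)}\BU_q^-[i]\subseteq\BU_q^-[i]$ for all $m$, exactly as in the proof of Lemma 3.6.

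The first ingredient is a list of rational identities, valid for all $c,d\ge0$:
\[
\begin{aligned}
f_i^c\BU_q^-\cap f_j^d\BU_q^- &=f_i^cf_j^d\BU_q^-, \\
f_i^c\BU_q^-[i]\cap f_j^d\BU_q^- &=f_j^df_i^c\BU_q^-[i], \\
f_i^c\BU_q^-[i]\cap f_j^c\BU_q^-[j] &=f_i^cf_j^c(\BU_q^-[i]\cap\BU_q^-[j]),
\end{aligned}
\]
together with their $i\leftrightarrow j$ variants. Each is proved from the orthogonal decomposition $\BU_q^-=\bigoplus_{m\ge0}f_j^m\BU_q^-[j]$ of Theorem 1.14(ii) — and its restriction to the $f_j$-stable subspaces $\BU_q^-[i]$ and $\BU_q^-[i]\cap\BU_q^-[j]$ — together with the injectivity of left multiplication by powers of $f_i$: for the first, one writes $f_i^c\BU_q^-=\bigoplus_m f_i^cf_j^m\BU_q^-[j]$ with $f_i^cf_j^m\BU_q^-[j]\subseteq f_j^m\BU_q^-[j]$ and intersects with $f_j^d\BU_q^-=\bigoplus_{m\ge d}f_j^m\BU_q^-[j]$ by comparison of components, and the other two are the same argument run inside $\BU_q^-[i]$.

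Next I iterate (3.8.1). From ${}_{\BA}(f_i^af_j^a\BU_q^-)={}_{\BA}(f_i^a\BU_q^-)\cap{}_{\BA}(f_j^a\BU_q^-)$ (the first rational identity intersected with ${}_{\BA}\BU_q^-$), I intersect the splitting ${}_{\BA}(f_i^a\BU_q^-)={}_{\BA}(f_i^a\BU_q^-[i])\oplus{}_{\BA}(f_i^{a+1}\BU_q^-)$ of (3.8.1) with ${}_{\BA}(f_j^a\BU_q^-)$. The key point is that this intersection distributes over the direct sum: for $v\in{}_{\BA}(f_i^af_j^a\BU_q^-)$, the two components of $v$ relative to the $\BQ(q)$-splitting $f_i^a\BU_q^-=f_i^a\BU_q^-[i]\oplus f_i^{a+1}\BU_q^-$ are integral — this is exactly what (3.8.1) gives — and they remain in $f_j^a\BU_q^-$, because $f_j^a\BU_q^-$ is $f_i$-stable with $i$-string grading induced from that of $\BU_q^-$ (so $f_i^af_j^a\BU_q^-=f_j^af_i^a\BU_q^-[i]\oplus f_i^{a+1}f_j^a\BU_q^-$ is the induced splitting). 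Identifying the summands with the rational identities yields
\[
{}_{\BA}(f_i^af_j^a\BU_q^-)={}_{\BA}(f_i^af_j^a\BU_q^-[i])\oplus{}_{\BA}(f_i^{a+1}f_j^a\BU_q^-).
\]
Running the same device twice more — applying (3.8.1) for $f_j$, once inside the $f_j$-stable space $\BU_q^-[i]$ to split ${}_{\BA}(f_i^af_j^a\BU_q^-[i])$, and once inside the $f_j$-stable space $f_i^{a+1}\BU_q^-$ to split ${}_{\BA}(f_i^{a+1}f_j^a\BU_q^-)$ — gives
\[
\begin{aligned}
{}_{\BA}(f_i^af_j^a\BU_q^-[i]) &={}_{\BA}\bigl(f_i^af_j^a(\BU_q^-[i]\cap\BU_q^-[j])\bigr)\oplus{}_{\BA}(f_j^{a+1}f_i^a\BU_q^-[i]), \\
{}_{\BA}(f_i^{a+1}f_j^a\BU_q^-) &={}_{\BA}(f_i^{a+1}f_j^a\BU_q^-[j])\oplus{}_{\BA}(f_i^{a+1}f_j^{a+1}\BU_q^-),
\end{aligned}
\]
and assembling the three displayed identities produces (3.9.1). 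The explicit formulas (3.9.2) and (3.9.3) then follow by feeding in the integral orthogonal decomposition ${}_{\BA}\BU_q^-=\bigoplus_{m\ge0}f_i^{(m)}{}_{\BA}(\BU_q^-[i])$ — whence ${}_{\BA}(f_i^a\BU_q^-)=\bigoplus_{m\ge a}f_i^{(m)}{}_{\BA}(\BU_q^-[i])$ — and its counterpart for ${}_{\BA}(\BU_q^-[i])$ along $f_j$, and using the commutativity of $f_i$ and $f_j$.

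The main obstacle is exactly the passage from the rational identities to their integral shadows: the distributivity of an intersection over a direct sum of $\BA$-modules, and the integrality of the components of a divided-power element. As in the proof of (3.8.1), this cannot be obtained by merely quoting the $\BQ(q)$-statements; it rests on the PBW/Kashiwara-operator description of ${}_{\BA}\BU_q^-$ underlying (3.8.1), equivalently on the integral orthogonal decomposition and its restrictions to $\BU_q^-[i]$, $\BU_q^-[j]$ and $\BU_q^-[i]\cap\BU_q^-[j]$. Once that is in place, the rational identities and the assembling of the pieces are routine.
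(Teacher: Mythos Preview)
Your proposal is correct and follows exactly the route the paper indicates: the paper offers no detailed proof of this lemma, merely remarking ``By making use of (3.8.1), we obtain the $\BA$-version,'' and your iteration of (3.8.1) together with the rational intersection identities is precisely how one fills that in. Your key observation---that the rational $j$-splitting of $f_i^af_j^a\BU_q^-[i]$ coming from Lemma~3.7 is the restriction of the ambient $j$-splitting of $f_j^a\BU_q^-$, so that the integrality of components furnished by (3.8.1) transfers automatically---is the substantive step, and you have identified it correctly.
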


\para{3.10.}
Next we consider the case of $\ve = 3$. 
Assume that $\eta = \{ i,j,k\}$ with $\s : i \mapsto j \mapsto k \mapsto i$. 
By a similar discussion as in the case where $\ve = 2$, we have an orthogonal
decomposition 
\begin{align*}
\tag{3.10.1}
\BU_q^- = (&\BU_q[i] \cap \BU_q[j] \cap \BU_q[k])   \\
        &\oplus f_i(\BU_q^-[j] \cap \BU_q^-[k]) \oplus f_j(\BU_q^-[k] \cap \BU_q^-[i])
         \oplus f_k(\BU_q^-[i] \cap \BU_q^-[j]) \\
         &\oplus (f_if_j\BU_q^-[k] \oplus f_jf_k\BU_q^-[i] \oplus f_kf_i\BU_q^-[j])   \\        
         &\oplus f_if_jf_k\BU_q^-.
 \end{align*}

Thus as in Lemma 3.7 and Lemma 3.9, we have

\begin{lem}  
Assume that $\eta = \{i,j,k\}$ as above. Then for any integer $a \ge 0$, we have
\begin{align*}
\tag{3.11.1}
f_i^af_j^af_k^a\BU_q^- &= f_i^af_j^af_k^a(\BU_q^-[i] \cap \BU_q^-[j] \cap \BU_q^-[k]) \\
         &\oplus \sum_{0 \le s <3}\s^s\bigr(f_i^{a+1}f_j^af_k^a(\BU_q^-[j] 
              \cap \BU_q^-[k])\bigr)   \\
           &\oplus \sum_{0 \le s < 3}\s^s\bigl(f_i^{a+1}f_j^{a+1}f_k^a\BU_q^-[k]\bigr)   \\
           &\oplus f_i^{a+1}f_j^{a+1}f_k^{a+1}\BU_q^-.
\end{align*}
Moreover, the $\BA$-version of (3.11.1) also holds for ${}_{\BA}(f_i^af_j^af_k^a\BU_q^-)$, 
where
\begin{align*}
\tag{3.11.2}
{}_{\BA}(f_i^af_j^af_k^a\BU_q^-) &= \sum_{b \ge a}\sum_{b' \ge a}\sum_{b'' \ge a}
                                    f_i^{(b)}f_j^{(b')}f_k^{(b'')}{}_{\BA}\BU_q^-,  \\ 
\tag{3.11.3}
{}_{\BA}(f_i^af_j^af_k^a\BU_q^-[i]) &= \sum_{b \ge a}\sum_{b' \ge a}\sum_{b'' \ge a}
                                    f_i^{(b)}f_j^{(b')}f_k^{(b'')}{}_{\BA}(\BU_q^-[i]). 
\end{align*}
\end{lem}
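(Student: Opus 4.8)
The plan is to mirror, for $\ve = 3$, the passage from the $\ve = 2$ decomposition (3.6.1) to Lemmas 3.7 and 3.9; the one genuinely new ingredient, the orthogonal decomposition (3.10.1), is already at hand. I would proceed in three steps.

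First I would derive the $\BQ(q)$-decomposition (3.11.1) from (3.10.1) by left multiplication by $f_i^af_j^af_k^a$. Since $\eta$ is admissible, the roots $\a_i,\a_j,\a_k$ are mutually orthogonal, so $f_i,f_j,f_k$ commute pairwise; hence $f_i^af_j^af_k^a$ carries the four kinds of summands of (3.10.1) onto the four kinds in (3.11.1) --- for instance $f_i^af_j^af_k^a\cdot f_i(\BU_q^-[j]\cap\BU_q^-[k]) = f_i^{a+1}f_j^af_k^a(\BU_q^-[j]\cap\BU_q^-[k])$ and its two $\s$-rotations, while the top term becomes $f_i^{a+1}f_j^{a+1}f_k^{a+1}\BU_q^-$. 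The substance is that the sum stays direct and orthogonal. For that I would iterate the orthogonal decomposition $\BU_q^- = \bigoplus_{n\ge0}f_l^n\BU_q^-[l]$ of Theorem 1.14 (ii) successively for $l = i$, then $j$, then $k$: orthogonality of the roots gives $T_l(f_{l'}) = f_{l'}$ for $l\ne l'$ in $\eta$, so $\BU_q^-[l']$ is stable under left multiplication by $f_l$, and that decomposition restricts to $\BU_q^-[l'] = \bigoplus_n f_l^n(\BU_q^-[l]\cap\BU_q^-[l'])$ (using that ${}_{l'}r$ commutes with left multiplication by $f_l$ since $(\a_l,\a_{l'}) = 0$, and that $f_l$ is not a zero divisor). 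Combining the three gradings produces an orthogonal refinement of both (3.10.1) and (3.11.1), exactly as in the proof of Lemma 3.6, whence directness and orthogonality of (3.11.1).

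Next I would pass to the $\BA$-form, following the proof of Lemma 3.9. Its engine is (3.8.1), ${}_{\BA}(f_l^a\BU_q^-) = {}_{\BA}(f_l^a\BU_q^-[l])\oplus{}_{\BA}(f_l^{a+1}\BU_q^-)$, which rests on Theorem 1.16 (i) together with (1.17.1): $\{b\in\wt\bB\mid\ve_l(b)\ge a\}$ is a signed $\BA$-basis of ${}_{\BA}(f_l^a\BU_q^-)$, and each such $b$ splits integrally along the orthogonal decomposition via its PBW-expansion. I would iterate (3.8.1) over $l = i,j,k$, applying at each stage the analogous statement with $\BU_q^-$ replaced by the stable subspaces $\BU_q^-[i]$ and $\BU_q^-[i]\cap\BU_q^-[j]$ occurring in (3.11.1); these variants hold by the same argument. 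This yields the $\BA$-version of (3.11.1). The explicit formulas (3.11.2) and (3.11.3) then come from applying (1.17.1) once for each of $f_i,f_j,f_k$ (and once more with $\BU_q^-[i]$ in place of $\BU_q^-$), rewriting ${}_{\BA}(f_i^af_j^af_k^a\BU_q^-)$ as $\sum_{b,b',b''\ge a}f_i^{(b)}f_j^{(b')}f_k^{(b'')}{}_{\BA}\BU_q^-$.

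The real obstacle is confined to the last two steps: checking that the three $f_l$-gradings refine one another \emph{compatibly over $\BA$} --- the triple analogue of (3.8.1) for the subspaces in (3.11.1). But all the inputs needed are already present from the $\ve = 2$ treatment --- the orthogonal decomposition of Theorem 1.14 (ii), the $f_{l'}$-stability of $\BU_q^-[l]$ coming from $T_l(f_{l'}) = f_{l'}$, and the $\BA$-basis statement of Theorem 1.16 (i) --- so this is bookkeeping rather than a new idea.
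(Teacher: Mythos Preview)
Your proposal is correct and follows essentially the same route as the paper: the paper simply states that (3.10.1) is obtained ``by a similar discussion as in the case where $\ve = 2$'' and that Lemma 3.11 then follows ``as in Lemma 3.7 and Lemma 3.9'', and you have accurately unpacked what that means --- multiply (3.10.1) on the left by $f_i^af_j^af_k^a$ using the commutativity of $f_i,f_j,f_k$, verify directness via the iterated $f_l$-gradings of Theorem 1.14 (ii) (exploiting $T_{l'}(f_l)=f_l$ and the commutation of ${}_{l'}r$ with left multiplication by $f_l$), and then pass to ${}_{\BA}\BU_q^-$ by iterating (3.8.1) and (1.17.1). Your level of detail in fact exceeds what the paper provides.
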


\para{3.12.}
Assume that $\eta = \{ i, j\}$.  Then (3.9.1) induces a formula
\begin{align*}
\tag{3.12.1}
{}_{\BA'}(f_i^af_j^a\BU_q^-)^{\s} = \bigl({}_{\BA'}&(f_i^af_j^a\BU^-_q[i]) 
                   \cap {}_{\BA'}(f_i^af_j^a\BU^-_q[j])\bigr)^{\s}  \\
           &\oplus \bigl({}_{\BA'}(f_j^{a+1}f_i^a\BU_q^-[i])
                      \oplus {}_{\BA'}(f_i^{a+1}f_j^a\BU_q^-[j])\bigr)^{\s}  \\
           &\oplus {}_{\BA'}(f_i^{a+1}f_j^{a+1}\BU_q^-)^{\s}. 
\end{align*}
If we consider the image of this formula by $\pi : {}_{\BA'}\BU_q^{-,\s} \to \BV_q$, 
the second term of the right hand side vanishes. Similarly, if we assume that  
$\eta = \{ i,j,k\}$, in the $\BA'$-version of (3.11.), the second term and 
the third term of the right hand side vanish.  
\par
Now for any $\eta \in \ul I$, we define $\BV_q[\eta]$ as the image of 
$\bigl(\bigcap_{i \in \eta}{}_{\BA'}(\BU_q^-[i])\bigr)^{\s}$ under $\pi$, and put 
\begin{align*}
\tag{3.12.2}
{}_{\BA'}(\wt f^a_{\eta}\BV_q[\eta]) = \sum_{a' \ge a}
                 \wt f_{\eta}^{(a')}\BV_q[\eta]. 
\end{align*}
Then by (3.12.1) and by the corresponding formula obtained from (3.11.1), 
we have

\begin{equation*}
\tag{3.12.3}
\sum_{a' \ge a}\wt f_{\eta}^{(a')}\BV_q = {}_{\BA'}(\wt f^a_{\eta}\BV_q[\eta])
                   \oplus \sum _{a'' > a} \wt f^{(a'')}_{\eta}\BV_q. 
\end{equation*}

If we consider $\ul\BU_q^-$, we have a similar decomposition

\begin{equation*}
\tag{3.12.4}
\sum_{a' \ge a}\ul f^{(a')}_{\eta}{}_{\BA'}\ul\BU_q^- = 
      {}_{\BA'}(\ul f^a_{\eta}\ul\BU_q^-[\eta]) \oplus 
             \sum_{a'' > a}\ul f^{(a'')}_{\eta}{}_{\BA'}\ul\BU_q^-. 
\end{equation*}
By comparing (3.12.3) and (3.12.4), we have the following.

\begin{lem}  
The isomorphism $\Phi: {}_{\BA'}\ul\BU_q^- \isom \BV_q$ induces isomorphisms

\begin{align*}
\sum_{a' \ge a}\ul f^{(a')}_{\eta}{}_{\BA'}\ul\BU_q^- &\isom 
                              \sum_{a' \ge a}\wt f_{\eta}^{(a')}\BV_q, \\
{}_{\BA'}(\ul f^a_{\eta}\ul\BU_q^-[\eta]) &\isom 
                              {}_{\BA'}(\wt f^a_{\eta}V_q[\eta]).        
\end{align*}
\end{lem}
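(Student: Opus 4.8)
The plan is to derive both isomorphisms from Theorem 2.4 together with the decompositions (3.12.3) and (3.12.4), the one input being that $\Phi$ is an isomorphism of $\BA'$-algebras carrying $\ul f_\eta^{(a')}$ to $g_\eta^{(a')} = \pi(\wt f_\eta^{(a')})$. First I would dispose of the isomorphism $\sum_{a'\ge a}\ul f_\eta^{(a')}{}_{\BA'}\ul\BU_q^- \isom \sum_{a'\ge a}\wt f_\eta^{(a')}\BV_q$: since $\Phi$ is surjective we have $\Phi({}_{\BA'}\ul\BU_q^-) = \BV_q$, so applying the ring homomorphism $\Phi$ to the left-hand module gives exactly $\sum_{a'\ge a}g_\eta^{(a')}\BV_q = \sum_{a'\ge a}\wt f_\eta^{(a')}\BV_q$, and injectivity of $\Phi$ turns this into a module isomorphism. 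There is no difficulty here.

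For the second isomorphism I would apply $\Phi$ to the direct sum decomposition (3.12.4). Using the first isomorphism (once for the index $a$ and once for $a+1$) to recognise the ambient module $\sum_{a'\ge a}\wt f_\eta^{(a')}\BV_q$ and the second summand $\sum_{a''>a}\wt f_\eta^{(a'')}\BV_q$, I obtain a direct sum decomposition of $\sum_{a'\ge a}\wt f_\eta^{(a')}\BV_q$ whose first summand is $\Phi\bigl({}_{\BA'}(\ul f_\eta^a\ul\BU_q^-[\eta])\bigr)$. Comparing this with (3.12.3), the whole matter reduces to showing that two $\BA'$-module complements of the \emph{same} submodule $\sum_{a''>a}\wt f_\eta^{(a'')}\BV_q$ inside $\sum_{a'\ge a}\wt f_\eta^{(a')}\BV_q$ --- namely $\Phi\bigl({}_{\BA'}(\ul f_\eta^a\ul\BU_q^-[\eta])\bigr)$ and ${}_{\BA'}(\wt f_\eta^a\BV_q[\eta])$ --- must coincide.

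This last point is where I expect the real work to lie, since a direct summand need not be unique; the remedy is that both decompositions are \emph{orthogonal}. I would invoke that the bilinear form descends to ${}_{\BF(q)}\BV_q$ (as in 2.7), is non-degenerate there by Proposition 2.8, and is preserved by $\Phi$; that the decomposition (3.12.4) is orthogonal, being the $\ul\BU_q^-$-analogue of the orthogonal decomposition in Theorem 1.14, so that its $\Phi$-image is orthogonal as well; and that (3.12.3) is orthogonal, being inherited through the $\s$-fixed subspace and the projection $\pi$ from the orthogonal decompositions (3.6.1) and (3.10.1) of $\BU_q^-$. After extending scalars to $\BF(q)$, an orthogonal complement in a non-degenerate space is unique, so the $\BF(q)$-spans of the two first summands agree. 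Finally I would descend to $\BA'$: if $x$ lies in one of the two $\BA'$-submodules, writing $x = y + z$ with $y$ in the other and $z \in \sum_{a''>a}\wt f_\eta^{(a'')}\BV_q$ forces $z$ to lie in the common $\BF(q)$-span as well as in the complement, hence $z = 0$ by torsion-freeness and $x = y$; so the two submodules are equal, and since $\Phi$ is injective this yields the second isomorphism. One auxiliary point to verify carefully en route is that $\pi$ really transports orthogonality, i.e. that $(J, {}_{\BA'}\BU_q^{-,\s}) = 0$ in $\BF(q)$; this is the identity $(\sum_i \s^i(z_1), w) = \ve(z_1, w)$ for $\s$-fixed $w$, exactly as in (2.8.7).
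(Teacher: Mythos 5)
Your proposal is correct and follows the same route as the paper, which justifies the lemma only by the phrase ``by comparing (3.12.3) and (3.12.4)'': the first isomorphism is immediate from $\Phi(\ul f_\eta^{(a')}) = g_\eta^{(a')}$ together with bijectivity of $\Phi$, and the second reduces to matching the first summands of the two direct sum decompositions. Your added justification --- that both candidate summands are orthogonal complements of the same submodule, that the form is non-degenerate on the relevant subspaces after extension to $\BF(q)$ (including the pairing $(J,\ {}_{\BA'}\BU_q^{-,\s})=0$ needed for $\pi$ to transport orthogonality), and the torsion-freeness descent back to $\BA'$ --- is exactly the content the paper leaves implicit, and it is sound.
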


\para{3.14.}
We consider the decomposition 
${}_{\BA}(f_i^a\BU_q^-) = {}_{\BA}(f_i^a\BU_q^-[i]) \oplus {}_{\BA}(f_i^{a+1}\BU_q^-)$.  
For any $x \in {}_{\BA}(f_i^a\BU_q^-)$, let $x_{[i]}$ be the projection of $x$
onto ${}_{\BA}(f_i^a\BU_q^-[i])$. 
Take $b \in \wt\bB$ such that 
$\ve_i(b) = 0$.  Let $b' = F_i^ab$ be the unique element in $\wt\bB$ such that
$b' \equiv f_i^{(a)}b \mod f_i^{a+1}\BU_q^-$.  Then $b'$ is written as 
$b' = \pm b(\Bc, p)$ for some $p \le 0$ with $i = i_p$ such that $c_p = a$.  
We write $b(\Bc, p) = L(\Bc, p) + \sum_{\Bc <_p \Bd}a_{\Bd}L(\Bd, p)$ 
with $a_{\Bd} \in q\BZ[q]$. 
By Theorem 1.16, $b(\Bc,p) \in {}_{\BA}(f_i^a\BU_q^-)$. 
It is easy to see that 
\begin{equation*}
\tag{3.14.1}
b(\Bc,p)_{[i]} =  
L(\Bc,p) + \sum_{\Bc <_p \Bd, d_p = a}a_{\Bd}L(\Bd,p).
\end{equation*} 
Note that $f_i^{(a)}b \in {}_{\BA}(f_i^a\BU_q^-)$. 
Since $b' \equiv f_i^{(a)}b \mod f_i^{a+1}\BU_q^-$, we see that 
$b(\Bc,p)_{[i]}$ coincides with $(f_i^{(a)}b)_{[i]}$. 
Let $\SL_{\BZ}(\infty)$ be the $\BZ$-submodule of $\SL(\infty)$ spanned 
by $\wt\bB$, which coincides with the $\BZ$-submodule spanned by 
$\SX_{\Bh,p}$.   
Here note that 
$L(\Bc, p) \equiv b(\Bc, p)_{[i]} \mod q\SL_{\BZ}(\infty)$. 
Thus in the characterization of the canonical basis $b(\Bc, p)$, 
one can replace the role of the PBW-basis $L(\Bc, p)$ by 
$b(\Bc,p)_{[i]} = (f_i^{(a)}b)_{[i]}$, 
in the following way.

\par\medskip\noindent
(3.14.2) \ Let $b \in \wt\bB$ be such that $\ve_i(b) = 0$. 
Then $b' = F_i^ab$ is the unique element in $\BU_q^-$ satisfying the following.

\begin{align*}
\tag{i}
&\ol b' = b', \\
\tag{ii}
&b' \equiv (f_i^{(a)}b)_{[i]} \mod q\SL_{\BZ}(\infty). 
\end{align*}

\para{3.15.}
We consider a generalization of (3.14.2) to the $\s$-setup.
For each $\eta \in \ul I$, we define an $\BA$-submodule of ${}_{\BA}\BU_q^-$
by 
\begin{equation*}
\tag{3.15.1}
{}_{\BA}(f^a_{\eta}\BU_q^-[\eta]) 
       = \bigcap_{i \in \eta}{}_{\BA}(\wt f^a_{\eta}\BU_q^-[i]),
\end{equation*}
where $\wt f_{\eta} = \prod_{i \in \eta}f_i$.   
We consider the decomposition of $\BA$-modules
\begin{equation*}
\tag{3.15.2}
{}_{\BA}(\wt f_{\eta}^a\BU_q^-) = 
    {}_{\BA}(\wt f^{a}_{\eta}\BU_q^-[\eta]) \oplus ... 
\end{equation*}
obtained from (3.9.1) and (3.11.1),   
For each $x \in {}_{\BA}(\wt f_{\eta}^a\BU_q^-)$, we denote by 
$x_{[\eta]}$ the projection of $x$ onto the first factor in (3.15.2).  
\par
Take $b \in \wt\bB^{\s}$ such that $\ve_i(b) = 0$ for any $i \in \eta$, 
and put $b' = \wt F^a_{\eta}b \in \wt\bB^{\s}$.  
Then by (2.9.1), $\ve_i(b') = a$ for 
any $i \in \eta$. We also consider $\wt f^{(a)}_{\eta}b$.  Since 
$\wt f^{(a)}_{\eta}b \in {}_{\BA}(\wt f^a_{\eta}\BU_q^-)$, one can 
define $(\wt f^{(a)}_{\eta}b)_{[\eta]}$. We have the following lemma. 

\begin{lem}  
Take $b \in \wt\bB^{\s}$ such that $\ve_i(b) = 0$ for any $i \in \eta$. 
Put $b'  = \wt F^a_{\eta}b$.  Then $b'$ is the unique element in $\BU_q^-$ 
satisfying the following properties;
\begin{align*}
\tag{i}
\ol b' &= b',  \\
\tag{ii}
b' &\equiv (\wt f_{\eta}^{(a)}b)_{[\eta]}  \mod q\SL_{\BZ}(\infty). 
\end{align*}
\end{lem}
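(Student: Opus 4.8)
The plan is to prove the statement as the $\s$-orbit version of the characterisation (3.14.2), taking $b' = \wt F^a_\eta b$ as the candidate. Uniqueness follows exactly as for (3.14.2): the difference of two solutions is bar-invariant and lies in $q\SL_\BZ(\infty)\cap{}_\BA\BU_q^-$, and expanding it in the $\BA$-basis $\bB$ forces every coefficient into $\BZ[q,q\iv]\cap q\BZ[[q]]=q\BZ[q]$ and bar-invariant, hence zero. Property (i) for $b'=\wt F^a_\eta b$ is immediate, since $b'\in\wt\bB^\s\subset\wt\bB$ by (2.9.1) and every element of $\wt\bB$ is bar-invariant. So the content is property (ii), namely $b'\equiv(\wt f_\eta^{(a)}b)_{[\eta]}\pmod{q\SL_\BZ(\infty)}$, which I would split as $b'\equiv b'_{[\eta]}\equiv(\wt f_\eta^{(a)}b)_{[\eta]}$.

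First I would establish the equality $(\wt f_\eta^{(a)}b)_{[\eta]}=b'_{[\eta]}$. Since $\ve_i(b)=0$ for all $i\in\eta$, the element $b'=\wt F^a_\eta b$ satisfies $\ve_i(b')=a$ for all $i\in\eta$ by (2.9.1); applying (2.9.2) with $b'$ in place of its ``$b$'' and $b=\wt E^a_\eta b'$ in place of its ``$b'$'' gives $\wt f_\eta^{(a)}b\equiv b'\pmod{Z_{\eta;>a}}$. Both sides lie in ${}_\BA(\wt f_\eta^a\BU_q^-)$, the left one by (3.9.2)/(3.11.2) and the right one because $\ve_i(b')=a$ for every $i\in\eta$ (together with Lemmas 3.7/3.9/3.11). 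Inspecting the orthogonal decomposition (3.9.1) (resp. (3.11.1)) one checks that $Z_{\eta;>a}\cap{}_\BA(\wt f_\eta^a\BU_q^-)$ is precisely the sum of the ``mixed'' summands and of ${}_\BA(\wt f_\eta^{a+1}\BU_q^-)$, i.e. the kernel of $x\mapsto x_{[\eta]}$: those summands visibly lie in $Z_{\eta;>a}$, whereas ${}_\BA(\wt f_\eta^a\BU_q^-[\eta])=\bigcap_{i\in\eta}{}_\BA(\wt f_\eta^a\BU_q^-[i])$ is of pure $f_i$-degree $a$ for each $i\in\eta$ (using $T_i(f_j)=f_j$ for $i,j\in\eta$) and so meets $Z_{\eta;>a}$ in $0$. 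Applying $(\cdot)_{[\eta]}$ to $\wt f_\eta^{(a)}b\equiv b'$ then gives $(\wt f_\eta^{(a)}b)_{[\eta]}=b'_{[\eta]}$.

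For the remaining congruence $b'\equiv b'_{[\eta]}\pmod{q\SL_\BZ(\infty)}$ I would rerun the computation of 3.14 along the whole orbit at once. Because $(\a_i,\a_j)=0$ for $i\ne j$ in $\eta$, the reflections $s_i$, $i\in\eta$, commute, so one may choose the sequence $\Bh$ (equivalently a convex order, as in Section 6) and $p\le 0$ so that the members of $\eta$ occupy consecutive positions $p,p-1,\dots,p-|\eta|+1$; then, using $T_i(f_j)=f_j$ for $i,j\in\eta$, the PBW monomial $L(\Bc,p)$ factors as $\wt f_\eta^{(a)}$ times an element of $\bigcap_{i\in\eta}\BU_q^-[i]$, the exponents at those positions being forced to equal $a$ by $\ve_i(b')=a$, and each $L(\Bd,p)$ with $\Bc<_p\Bd$ either factors through $\wt f_\eta^{(a)}\bigl(\bigcap_{i\in\eta}\BU_q^-[i]\bigr)$, when its first $|\eta|$ exponents are all $a$, or lies in $f_i^{a+1}\BU_q^-$ for some $i\in\eta$, hence in $\ker(\cdot)_{[\eta]}$. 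The triangular expansion $b'=\pm\bigl(L(\Bc,p)+\sum_{\Bc<_p\Bd}a_\Bd L(\Bd,p)\bigr)$ with $a_\Bd\in q\BZ[q]$, cf. (1.9.2), then splits $b'$ as $b'_{[\eta]}$ plus a sum of terms each lying in $q\SL_\BZ(\infty)$ — exactly the splitting of $b(\Bc,p)$ into its $f_i^a\BU_q^-[i]$-part and $f_i^{a+1}\BU_q^-$-part used in 3.14, now carried out simultaneously for all $i\in\eta$. Combining with the previous paragraph gives (ii).

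I expect this third step to be the crux: justifying the freedom to choose $\Bh$ (or a convex order) that makes the orbit $\eta$ consecutive, with the attendant clean factorisation of the PBW monomials through $\wt f_\eta^{(a)}$, and then the bookkeeping that makes the several one-index orthogonal decompositions of Lemmas 3.7, 3.9 and 3.11 fit together so that their off-diagonal parts lie in $\ker(\cdot)_{[\eta]}$ and, where they carry coefficients in $q\BZ[q]$, inside $q\SL_\BZ(\infty)$. An alternative would be to iterate (3.14.2) over the $i\in\eta$ one at a time, but that requires the (true, though not recorded in the text) fact that $F_i^a$ leaves $\ve_j$ unchanged for $j\in\eta\setminus\{i\}$, itself a consequence of $f_i$ commuting with $f_j$ and preserving $\BU_q^-[j]$. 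By contrast, uniqueness and the bar-invariance of $b'$ are routine.
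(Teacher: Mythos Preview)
Your approach is sound but takes a genuinely different route from the paper, which does precisely what you list as the ``alternative'': it iterates (3.14.2) over the elements of $\eta$, writing (for $|\eta|=2$) $b''=F_i^ab$, $b'=F_j^ab''$, and then proves the identity $\bigl(f_j^{(a)}(f_i^{(a)}b)_{[i]}\bigr)_{[j]}=(\wt f_\eta^{(a)}b)_{[\eta]}$ together with $(f_j^{(a)}x)_{[j]}\in q\SL_\BZ(\infty)$ for the intermediate error $x$, the latter via a single-variable PBW expansion at $j$. The fact $\ve_j(F_i^ab)=0$ that you flag as unrecorded is indeed used implicitly, but it follows at once from (2.9.1) and the definition of $F_j$ in 1.18: since $F_j$ raises $\ve_j$ by one, $\ve_j(F_i^ab)=\ve_j(F_j^aF_i^ab)-a=\ve_j(\wt F_\eta^ab)-a=0$. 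Your direct route buys a cleaner two-step separation (first $b'_{[\eta]}=(\wt f_\eta^{(a)}b)_{[\eta]}$, then $b'\equiv b'_{[\eta]}$) at the cost of two points needing more care. First, the claim ${}_\BA(\wt f_\eta^a\BU_q^-[\eta])\cap Z_{\eta;>a}=0$ does not follow merely from ``pure $f_i$-degree $a$ for each $i$'', since $Z_{\eta;>a}$ is a \emph{sum} $\sum_{i\in\eta}f_i^{a+1}\BU_q^-$ and not a union; the claim is true, but one must argue further --- e.g.\ if $x\in f_i^a\BU_q^-[i]\cap f_j^a\BU_q^-[j]$ decomposes as $u+v$ with $u\in f_i^{a+1}\BU_q^-$, $v\in f_j^{a+1}\BU_q^-$, then projecting to $f_j^a\BU_q^-[j]$ and using $f_i^{a+1}(\BU_q^-[j])\subset\BU_q^-[j]$ forces $x\in f_i^{a+1}f_j^a\BU_q^-[j]$, which meets $f_i^a\BU_q^-[i]$ only in $0$. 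Second, the existence of a PBW basis with $\eta$ at consecutive positions is not supplied by Section~6, whose two-row construction is specific to the untwisted-$\ul X$ setting; you would have to invoke Section~4's general $\SX_\lv$ (Proposition~4.23) and separately produce a convex order with the $\a_i$, $i\in\eta$, as its first $|\eta|$ elements. The paper's iteration simply sidesteps both issues by recycling the single-$i$ machinery of~3.14.
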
  

\begin{proof}
We prove the lemma in the case where $\ve = 2$.  A  similar 
argument works also for the case where $\ve = 3$.
Assume that $\eta = \{ i,j \}$. 
Take $b \in \wt\bB^{\s}$ such that $\ve_i(b) = \ve_j(b) = 0$.
Put $b'' = F^a_ib$, then $b' = F_j^ab''$.  
From (3.14.2), we have

\begin{align*}
\tag{3.16.1}
b'' &\equiv (f_i^{(a)}b)_{[i]} \mod f_i^{a+1}\BU_q^- \cap q\SL_{\BZ}(\infty)  \\
b' &\equiv (f_j^{(a)}b'')_{[j]} \mod f_j^{a+1}\BU_q^- \cap q\SL_{\BZ}(\infty). 
\end{align*}
Thus $b''$ can be written as $b'' = (f_i^{(a)}b)_{[i]} + x$ with 
$x \in q\SL_{\BZ}(\infty)$. 
Here 

\begin{equation*}
\bigl(f_j^{(a)}(f_i^{(a)}b\bigr)_{[i]})_{[j]} \in f_i^af_j^a\BU_q^-[j].
\end{equation*}
In turn, $f_j^{(a)}(f_i^{(a)}b)_{[i]} \in f_i^af_j^a\BU_q^-[i]$. 
Since $f_j(\BU_q^-[i]) \subset \BU_q^-[i]$, we have 
$(\BU_q^-[i])_{[j]} \subset \BU_q^-[i]$. It follows that
\begin{equation*}
\bigl(f_j^{(a)}(f_i^{(a)}b\bigr)_{[i]})_{[j]} \in f_i^af_j^a\BU_q^-[i]
\end{equation*}
and so 
\begin{equation*}
\bigl(f_j^{(a)}(f_i^{(a)}b\bigr)_{[i]})_{[j]} \in f_i^af_j^a(\BU_q^-[i] \cap \BU_q^-[j]). 
\end{equation*}
This implies, by using the decomposition in Lemma 3.9 and (3.16.1), that
\begin{equation*}
\tag{3.16.2}
\bigl(f_j^{(a)}(f_i^{(a)}b\bigr)_{[i]})_{[j]} = (\wt f^{(a)}_{\eta}b)_{[\eta]}.
\end{equation*}
\par
On the other hand, 
we choose $p' \le 0$ such that $i_{p'} = j$, and consider the PBW-basis
$\{ L(\Bc, p') \mid \Bc \in \SC\}$.  $x$ can be written as
$x = \sum_{\Bc}a_{\Bc}L(\Bc, p')$ with 
$a_{\Bc} \in q\BZ[q]$. We have

\begin{align*}
f_j^{(a)}x &= \sum_{\Bc, c_{p'} = 0}f_j^{(a)}a_{\Bc}L(\Bc, p') + 
                \sum_{\Bc, c_{p'} > 0}f_j^{(a)}a_{\Bc}L(\Bc, p'). 
\end{align*} 
The former part of the right hand side of (3.16.3) is contained in $f_j^a\BU_q^-[j]$, 
and the latter part is contained in $f_j^{a+1}\BU_q^-$. 
It follows that 
\begin{equation*}
\tag{3.16.3}
(f_j^{(a)}x)_{[j]} = \sum_{\Bc, c_{p'} = 0}
          a_{\Bc}f_j^{(a)}L(\Bc, p') \in q\SL_{\BZ}(\infty).
\end{equation*}

Summing up the above computation, we have 

\begin{align*}
\tag{3.16.4}
(f_j^{(a)}b'')_{[j]} &= \bigl(f_j^{(a)}(f_i^{(a)}b)_{[i]}\bigr)_{[j]}
                       + (f_j^{(a)}x)_{[j]}  \\
         &\equiv \bigl(f_j^{(a)}(f_i^{(a)}b)_{[i]}\bigr)_{[j]}  \\
         &= (\wt f^{(a)}b)_{[\eta]} \mod q\SL_{\BZ}(\infty).     
\end{align*}
Thus by (3.16.1), we see that 
$b' \equiv (\wt f^{(a)}_{\eta}b)_{[\eta]} \mod q\SL_{\BZ}(\infty)$, 
and $b'$ satisfies th condition (i) and (ii).  It is clear 
that the condition (i) and (ii) determines $b'$ uniquely.  
The lemma is proved.  
\end{proof}

\para{3.17.}
Recall the definition of Kashiwara operators 
$\wt E_{\eta}, \wt F_{\eta} : \wt\bB^{\s} \to \wt\bB^{\s} \cup \{ 0 \}$
on $\BU_q^-$ for $\eta \in \ul I$ in 2.9.  Those operators induce the operators 
on $\pi(\wt\bB^{\s}) \to \pi(\wt\bB^{\s}) \cup \{ 0 \}$, which we denote by 
the same symbol.  On the other hand, the Kashiwara operators 
$E_{\eta}, F_{\eta}$ on $\ul\BU_q^-$ defined in 3.4 induce operators 
$\Phi(\wt{\ul\bB}) \to \Phi(\wt{\ul\bB}) \cup \{ 0 \}$, which we denote 
by the same symbol. 
We are now ready to prove the following result. 

\begin{thm}   
\begin{enumerate}
\item
$\Phi(\wt{\ul\bB}) = \pi(\wt\bB^{\s})$. 
In particular, the natural map

\begin{equation*}
\begin{CD}
{}_{\BA'}\BU_q^{-,\s} @>\pi >>  \BV_q   @>\Phi\iv >> {}_{\BA'}\ul\BU_q^-
\end{CD}
\end{equation*}
gives a bijection $\f : \wt \bB^{\s} \isom \wt{\ul\bB}$, $b \mapsto \Phi\iv(\pi(b))$. 
\item 
The bijection $\f$ is compatible with Kashiwara operators. 
\item 
The bijection $\f$ is compatible with $*$-operation, up to sign. 
\end{enumerate}
\end{thm}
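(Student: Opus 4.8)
The plan is to prove the three parts essentially simultaneously, by induction on the weight $\nu \in Q^{\s} \simeq \ul Q$, using Lemma 3.16 as the bridge between the two sides. The key point is that Lemma 3.16 characterizes $\wt F^a_{\eta}b$ (for $b \in \wt\bB^{\s}$ with $\ve_i(b) = 0$ for $i \in \eta$) purely in terms of the bar involution and the congruence $b' \equiv (\wt f^{(a)}_{\eta}b)_{[\eta]} \bmod q\SL_{\BZ}(\infty)$; and on the $\ul\BU_q^-$ side, (3.14.2) applied to $\ul\BU_q^-$ gives the analogous characterization of $F_{\eta}^a(\ul b)$ in terms of $\ol{\phantom x}$ and $(\ul f^{(a)}_{\eta}\ul b)_{[\eta]}$. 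Since $\Phi$ (equivalently, $\Phi\iv\circ\pi$) intertwines the bar involutions and, by Lemma 3.13, carries the decomposition (3.12.4) to (3.12.3) — in particular carries $\ul f^{(a)}_{\eta}$ and the projection $(\ )_{[\eta]}$ on ${}_{\BA'}\ul\BU_q^-$ to $\wt f^{(a)}_{\eta}$ and the projection $(\ )_{[\eta]}$ on $\BV_q$ — the two characterizations match under $\f$.

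Concretely, first I would set up the induction: for $\nu = 0$ both $\wt\bB^{\s}(0)$ and $\wt{\ul\bB}(0)$ are $\{\pm 1\}$, and $\Phi\iv(\pi(\pm 1)) = \pm 1$. For the inductive step, take $\nu \ne 0$ and $b \in \wt\bB^{\s}(\nu)$; I claim $\pi(b) \in \Phi(\wt{\ul\bB})$. Exactly as in the proof of Theorem 1.19 and Proposition 2.10, choose $i \in I$ with $a = \ve_i(b) > 0$, let $\eta$ be its $\s$-orbit, so $\ve_j(b) = a$ for all $j \in \eta$, and set $b_0 = \wt E^a_{\eta}b \in \wt\bB^{\s}$ with $\ve_j(b_0) = 0$ for all $j \in \eta$. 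The weight of $b_0$ is $\nu + a\sum_{i\in\eta}\a_i$, strictly larger, so by induction $\pi(b_0) = \Phi(\ul b_0)$ for a (unique up to sign in the $\ve=3$ case) $\ul b_0 \in \wt{\ul\bB}$, and necessarily $\ve_{\eta}(\ul b_0) = 0$ by the weight-space count of Proposition 2.13 together with Theorem 1.16(i)/(3.12.4). Now apply Lemma 3.16 to $b_0$: $b = \wt F^a_{\eta}b_0$ is the unique bar-invariant element of $\BU_q^-$ congruent to $(\wt f^{(a)}_{\eta}b_0)_{[\eta]} \bmod q\SL_{\BZ}(\infty)$. Applying $\pi$ and then $\Phi\iv$, and using Lemma 3.13 to transport the projection and $\wt f^{(a)}_{\eta}$, the element $\Phi\iv(\pi(b))$ is bar-invariant and congruent to $(\ul f^{(a)}_{\eta}\ul b_0)_{[\eta]}$ modulo $q$ times the integral lattice of $\ul\BU_q^-$; by (3.14.2) for $\ul\BU_q^-$ this forces $\Phi\iv(\pi(b)) = F^a_{\eta}(\ul b_0) \in \wt{\ul\bB}$. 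This simultaneously shows $\pi(\wt\bB^{\s}) \subseteq \Phi(\wt{\ul\bB})$ and gives $\f(\wt F^a_{\eta}b_0) = F^a_{\eta}\f(b_0)$, i.e. part (ii) for the generating operators $\wt F_{\eta}$; the reverse inclusion $\Phi(\wt{\ul\bB}) \subseteq \pi(\wt\bB^{\s})$ follows by the symmetric argument (or by a dimension count: both sets have cardinality $|\wt\bB^{\s}(\nu)| = 2\dim(\ul\BU_q^-)_{\ul\nu}$ in each weight by Proposition 2.13, and the inclusion of finite sets of equal size is a bijection). Compatibility with $\wt E_{\eta}$, $E_{\eta}$ is then automatic since these are the partial inverses of $\wt F_{\eta}$, $F_{\eta}$. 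For part (iii): $\Phi$ and $\pi$ commute with $*$ by Theorem 2.4 and the construction in 2.3, and $*$ preserves $\wt\bB$ up to sign by Proposition 1.11(iii) and likewise $\wt{\ul\bB}$; hence $\f(b^*) = \pm\f(b)^*$.

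The main obstacle I expect is the bookkeeping around the sign ambiguity when $\ve = 3$: there $\wt{\ul\bB'} = \ul\bB' \cup -\ul\bB'$ is only a signed basis and the lift of $\pi(b)$ to ${}_{\BA'}\ul\BU_q^-$ is defined only up to sign, so one must check that the inductive construction of $\ul b_0$ and then $F^a_{\eta}\ul b_0$ is consistent with a fixed choice of $\ul\bB_{\Bh,p}$ — this is why part (i) is stated as a bijection of the \emph{signed} bases $\wt\bB^{\s} \isom \wt{\ul\bB}$ and part (iii) only claims compatibility up to sign. A secondary technical point is verifying that Lemma 3.13's isomorphism really does carry the projection $x \mapsto x_{[\eta]}$ on the $\ul\BU_q^-$-side to the one on the $\BV_q$-side; this should follow formally once one observes that both projections are defined by the \emph{same} direct-sum decomposition — (3.12.4) on one side and (3.12.3) on the other — and $\Phi$ matches the summands term by term by Lemma 3.13, but it needs to be spelled out. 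Everything else — the congruences modulo $q\SL_{\BZ}(\infty)$, the compatibility of bar involutions, the reduction to the case $i = i_p$ with $p \le 0$ — is routine given the results already established in Sections 1–3.
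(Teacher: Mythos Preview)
Your proposal is correct and takes essentially the same approach as the paper: both arguments inductively reduce to showing $\pi(\wt F^a_{\eta}b_0) = \Phi(F^a_{\eta}\ul b_0)$ by combining Lemma 3.16 on the $\BU_q^-$ side, (3.14.2) on the $\ul\BU_q^-$ side, and Lemma 3.13 to match the projections $(\ )_{[\eta]}$ through $\Phi$, then invoking uniqueness of the bar-invariant lift. The only cosmetic difference is that you frame the induction explicitly on the weight (descending via $\wt E_{\eta}$), whereas the paper phrases it as an induction along the generating tree of Theorem 1.19 (ascending via $\wt F_{\eta}$); your dimension-count for the reverse inclusion is not needed in the paper's formulation since Theorem 1.19 applied on the $\ul\BU_q^-$ side already shows every $\ul b \in \wt{\ul\bB}$ is reached.
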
   

\begin{proof}
Take $b \in \wt\bB^{\s}$ such that $\ve_i(b) = 0$ for $i \in \eta$, and consider 
$b' = \wt F^a_{\eta}b \in \wt\bB^{\s}$.  We assume that there exists 
$\ul b \in \wt{\ul\bB}$ such that $\Phi(\ul b) = \pi(b)$. 
Put $\ul b' = F^a_{\eta}\ul b \in \wt{\ul\bB}$.  By Theorem 1.19, 
in order to prove (i), 
it is enough to see that $\Phi(\ul b') = \pi(b')$. 
Since $\Phi$ commutes with the action of $\ul f_{\eta}^{(a)}$ and 
that of $\wt f_{\eta}^{(a)}$, we have 
$\Phi(\ul f_{\eta}^{(a)}\ul b) = \pi(\wt f_{\eta}^{(a)}b)$.  
Then by using the isomorphisms in Lemma 3.13, we see that 
\begin{equation*}
\tag{3.18.1}
\pi((\wt f_{\eta}^{(a)}b)_{[\eta]}) = \Phi\bigl((\ul f_{\eta}^{(a)}\ul b)_{[\eta]}\bigr). 
\end{equation*}
\par
Now $b'$ satisfies the properties given in Lemma 3.16.
Then $\pi(b')$ also satisfies the corresponding properties in $\BV_q$, 
namely 
\begin{equation*}
\tag{3.18.2}
\begin{aligned}
 \ol{\pi(b')} &= \pi(b'),  \\
\pi(b') &\equiv \pi\bigl((\wt f_{\eta}^{(a)}b)_{[\eta]}\bigr) \mod q\ul\SL_{\BZ}(\infty), 
\end{aligned}
\end{equation*}
where $\ul\SL_{\BZ}(\infty)$ is the $\BF[q]$-submodule of $\BV_q$ spanned by 
$\pi(\wt\bB^{\s})$, which is also the $\BF[q]$-submodule spanned by 
$\{ \Phi(\ul L(\Bc, p)) \}$.  
\par
By applying (3.14.2) to the case ${}_{\BA'}\ul\BU_q^-$, $\ul b'$ 
satisfies similar properties, hence by applying $\Phi$, $\Phi(\ul b')$
satisfies the properties, 

\begin{equation*}
\tag{3.18.3}
\begin{aligned}
\ol{\Phi(\ul b')} &= \Phi(\ul  b'), \\
\Phi(\ul b') &\equiv \Phi\bigl((\ul f_{\eta}^{(a)}\ul b)_{[\eta]}\bigr)
               \mod q\ul\SL_{\BZ}(\infty). 
\end{aligned}
\end{equation*}

Now the uniqueness property in Lemma 3.16 also holds for $\BV_q$ 
by replacing $q\SL_{\BZ}(\infty)$ by $q\ul\SL_{\BZ}(\infty)$.
Thus by (3.18.1), we conclude that $\pi(b') = \Phi(\ul b')$.  
This proves (i). (ii) also follows from this argument.  
By Theorem 2.4, $\Phi$ is compatible with $*$-operation.  
(iii) follows from this. 
The theorem is proved. 
\end{proof}

\par\bigskip
\section {PBW-bases associated to convex orders }

\para{4.1.}
Let $\BU_q^-$ be the quantum affine algebra of general type. 
In Section 1, we have discussed the PBW-basis 
$\SX_{\Bh,p} = \{ L(\Bc, p) \mid \Bc \in \SC \}$ constructed in [BN], associated to 
$\Bh$ and $p$. 
Note that in the case where $\BU_q^-$ is of finite type, the PBW-basis
can be constructed for any choice of a reduced expression of the longest element 
in the corresponding Weyl group.  In view of this fact, 
the choice of $\Bh$ in the affine case seems to be rather special, where 
a reduced expression of an infinite length of $W$ is fixed. 
In [MT], Muthiah and Tingley constructed the PBW-basis associated to any convex order 
of the affine root system, which includes $\SX_{\Bh, p}$ of [BN] as a special case. 
In this section, we review the results in [MT]. 

\para{4.2}
We follow the notation in Section 1. 
For any real root $\b = \a + m\d \in \vD^{\re}$, we denote by 
$\ol\b = \a \in \vD_0$.
Put $\vD^{\min}_+ = \vD^{\re,+} \scup \{ \d \}$.
For any subset $X$ of $\vD^{\min}_+$, we define a total order 
$\a \lv \b$ on $X$, called a convex order,  by the following two conditions 
(cf. [I2, Definition 3.3]);
\par\medskip
\noindent
(4.2.1) \  
If $\a, \b \in X$ with $\a + \b \in X$, then $\a + \b$ is in between $\a$ and $\b$. 
\par\medskip
\noindent
(4.2.2) \
If $\a \in X, \b \in \vD^{\min}_+ - X$ with $\a + \b \in X$, then 
$\a \lv \a + \b$. 
\par
\medskip
We define a reverse convex order $\lv$ on $X$ by the condition that 
(4.2.1) and (4.2.$2'$), where
\par\medskip\noindent
(4.2.$2'$) \ 
If $\a \in X, \b \in \vD^{\min}_+ - X$ with $\a + \b \in X$, then
$\a + \b \lv \a$.
\par\medskip
The following gives a typical example of the convex order.
The proof is standard. 

\begin{lem}
Let $w = s_{i_1}\cdots s_{i_N}$ be a reduced expression of $w \in W$.
Define $\b_1, \dots, \b_N \in \vD^{\re,+}$ by 
$\b_k = s_{i_1}\cdots s_{i_{k-1}}(\a_{i_k})$
for $k = 1, \dots N$. 
Put $X = \{ \b_1, \b_2, \dots, \b_N \}$, with the order 
$\b_1 \lv \b_2 \lv \cdots \lv \b_N$. 
Then the order $\lv$ is a convex order on $X$.
\end{lem}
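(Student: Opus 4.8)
The plan is to identify $X$ with the inversion set of $w$ and to use the fact that inversion sets are ``biclosed''. For $0 \le k \le N$ set $w_k = s_{i_1}\cdots s_{i_k}$ (so $w_0 = 1$, $w_N = w$), and for $v \in W$ put $N(v) = \{ \a \in \vD^+ \mid v\iv(\a) \in -\vD^+ \} = \vD^+ \cap v(-\vD^+)$. First I would recall the standard facts that, since $s_{i_1}\cdots s_{i_N}$ is reduced, the $\b_k$ are distinct positive real roots and
\begin{equation*}
N(w_k) = \{ \b_1, \dots, \b_k \} \qquad (0 \le k \le N),
\end{equation*}
so that $N(w_0) \subset N(w_1) \subset \cdots \subset N(w_N) = N(w) = X$, and the order on $X$ is recovered from these by $\b_j \lv \b_k$ if and only if $j < k$, while $\b_j \in N(w_k)$ if and only if $j \le k$.

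The one structural input is that both $N(v)$ and its complement $\vD^+ \setminus N(v) = \vD^+ \cap v(\vD^+)$ are \emph{closed}, i.e. stable under addition of roots: if $\g, \g'$ lie in one of these sets and $\g + \g' \in \vD$, then $\g + \g'$ lies in the same set. This is immediate, since $\vD^+$ is closed (a sum of two positive roots which happens to be a root is positive) and $v$ permutes $\vD$. Moreover $W$ fixes $\vD^{\im,+}$, so no $N(v)$ meets $\vD^{\im,+}$; this is exactly the point that lets the argument include the case $\b = \d$ in (4.2.2). These are standard facts about (affine) Kac--Moody root systems, which I would cite rather than reprove.

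To verify (4.2.1): suppose $\a, \b \in X$ with $\a + \b \in X$; write $\a = \b_a$, $\b = \b_b$, $\a + \b = \b_c$, note $a = b$ is impossible (then $\a + \b = 2\b_a \notin \vD$), and assume $a < b$ (the case $b < a$ being symmetric). From $\b_a, \b_b \in N(w_b)$ and closedness of $N(w_b)$ we get $\b_c \in N(w_b)$, i.e. $c \le b$; from $\b_a, \b_b \notin N(w_{a-1})$ (here $b > a - 1$) and closedness of $\vD^+ \setminus N(w_{a-1})$ we get $\b_c \notin N(w_{a-1})$, i.e. $c \ge a$. Since $\b_c \ne \b_a, \b_b$, we conclude $a < c < b$, that is $\a \lv \a + \b \lv \b$. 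To verify (4.2.2): given $\a = \b_a \in X$, $\b \in \vD^{\min}_+ \setminus X$ and $\a + \b = \b_c \in X$, we have $\b_a \notin N(w_{a-1})$ and also $\b \notin N(w_{a-1})$ --- the latter because either $\b$ is real and $\b \notin X \supseteq N(w_{a-1})$, or $\b = \d$ is imaginary and meets no $N(w_{a-1})$. Closedness of $\vD^+ \setminus N(w_{a-1})$ then forces $\b_c \notin N(w_{a-1})$, so $c \ge a$; and $c \ne a$ since $\b \ne 0$, whence $a < c$, i.e. $\a \lv \a + \b$.

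I expect no genuine obstacle: once the biclosedness of inversion sets and the identification $N(w_k) = \{\b_1, \dots, \b_k\}$ are in hand, both conditions reduce to bookkeeping with the nested sets $N(w_k)$, which is why the lemma is ``standard''. The only point requiring a little care is that a sum of two real roots may be imaginary (for instance $\a + (\d - \a) = \d$), so the closed/co-closed properties must be phrased for sums that happen to lie in $\vD$; here the hypotheses $\a + \b \in X$ in (4.2.1) and (4.2.2) conveniently guarantee that the relevant sum is in fact a (real) root, so this subtlety never intervenes.
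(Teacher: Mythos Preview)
Your argument is correct and is precisely the standard proof the paper has in mind: the paper itself gives no proof beyond the remark ``The proof is standard.'' Your use of the nested inversion sets $N(w_k)=\{\b_1,\dots,\b_k\}$ together with the biclosedness of $N(v)$ in $\vD^+$ (and the observation that imaginary roots never lie in any $N(v)$, handling the case $\b=\d$ in (4.2.2)) is exactly the expected verification.
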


\para{4.4.}
We consider an infinite sequence 
$\Bh = (\dots, i_{-2},i_{-1}, i_0, i_1, i_2, \dots)$ of $I$ 
such that any finite segment $i_k, \dots, i_{\ell}$ gives a
reduced expression $s_{i_k}\cdots s_{i_{\ell}}$. 
In that case, we say that $\Bh$ is an infinite reduced word. 
The infinite reduced word is a generalization of $\Bh$ in 1.5.
We define, for any $k \in \BZ$, $\b_k \in \vD^{\re,+}$ by 
\begin{equation*}
\tag{4.4.1}
\b_k = \begin{cases}
               s_{i_0}s_{i_{-1}}\cdots s_{i_{k+1}}(\a_{i_k})
                    &\quad\text{ if } k \le 0, \\
               s_{i_1}s_{i_2}\cdots s_{i_{k-1}}(\a_{i_k}) 
                    &\quad\text{ if } k > 0.

       \end{cases}
\end{equation*} 
Then by Lemma 4.3, 
$\b_0 \lv \b_{-1} \lv \b_{-2} \lv \cdots \lv \b_{-k} \lv \cdots$ 
is a convex order on $\{ \b_k \mid k \le 0\}$, and 
$\cdots \lv \b_k \lv \cdots \lv \b_2 \lv \b_1$ 
is a reverse convex order on $\{ \b_k \mid k > 0\}$.    
\par
We choose $\Bh$ as in [BN] (see 1.5).  Then by (1.5.2), 
$\{ \b _k \mid k \in \BZ \} \scup \{ \d \} = \vD^{\min}_+$, and   
by Lemma 4.3, we see that 
\par\medskip\noindent
(4.4.2) \ The total order on $\vD^{\min}_+$ defined by 
\begin{equation*}
\b_0 \lv \b_{-1} \lv \cdots \lv \d 
           \lv \cdots \lv \b_2 \lv \b_1
\end{equation*}
gives a convex order, which we denote by $\lv_0$. . 

\para{4.5.}
A convex order $\lv$ on $\vD^{\min}_+$ is called a one-row order
if every positive real root is finitely far from one end or the order.
The total order $\lv_0$ in (4.4.2) is a one-row order. 
The following result, which is in some sense the converse of (4.4.2), 
is known by [MT]. 

\begin{prop}[{[MT, Prop. 2.3], [I2]}] 
For a given one-row order $\lv$ on $\vD^{\min}_+$, there exists an 
infinite reduced word $\Bh = (\dots, i_{-1}, i_0, i_1, \dots)$ satisfying 
the following. For any $k \in \BZ$, define $\b_k \in \vD^{\re,+}$ by 
$(4.4.1)$.  
Then the order $\lv$ on $\vD^{\min}_+$ is given by  
$\b_0 \lv \b_{-1} \lv \cdots \lv \d \lv \dots \lv \b_2 \lv \b_1$.  
\end{prop}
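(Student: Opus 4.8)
The plan is to reconstruct $\Bh$ by peeling off, one term at a time, the extreme roots of the order from each end; this is in essence Ito's argument in [I2], and below I indicate its shape and where the real work lies.

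First I would draw out the structural consequences of convexity that make the one-row hypothesis usable. Applying (4.2.1) with $\b = \d$: if $\a \in \vD^{\re,+}$ satisfies $\a \lv \d$, then for every $m \ge 1$ the root $\a + m\d$ again lies in $\vD^{\min}_+$ and is squeezed between $\a + (m-1)\d$ and $\d$, whence $\a + m\d \lv \d$; dually, $\d \lv \a$ forces $\d \lv m\d + \a$ for all $m \ge 1$. Since $\d$ admits at least one decomposition $\d = \a + \b$ into positive real roots (e.g. $\a_0$ together with the highest root of $\vD_0$ in the untwisted case; a parallel statement holds in the twisted cases retained in the paper), convexity places one of $\a, \b$ strictly below $\d$ and one strictly above, so $\d$ has infinitely many predecessors and infinitely many successors in $\vD^{\min}_+$. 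Combined with the one-row condition this forces every real root $\lv \d$ to have only finitely many predecessors; hence $\{\a \in \vD^{\re,+} : \a \lv \d\}$ is a chain of order type $\omega$, say $\b_0 \lv \b_{-1} \lv \b_{-2} \lv \cdots$, and symmetrically $\{\a \in \vD^{\re,+} : \d \lv \a\}$ is a chain of order type $\omega^{*}$, say $\cdots \lv \b_2 \lv \b_1$. Together with $\d$ these exhaust $\vD^{\min}_+$, so $\lv$ is exactly $\b_0 \lv \b_{-1} \lv \cdots \lv \d \lv \cdots \lv \b_2 \lv \b_1$, and the problem is reduced to producing $\Bh$ for which (4.4.1) returns these $\b_k$.

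Next I would treat each half by the peeling induction. The minimum $\b_0$ is a simple root: a non-simple positive root is a sum $\g_1 + \g_2$ of two positive roots, and if both summands are real they lie in $\vD^{\min}_+$ and (4.2.1) places one of them strictly below $\b_0$; if one summand is imaginary, $\g_2 \in \ZZ_{>0}\d$, one rewrites $\b_0$ as $(\g_1 + (m-1)\d) + \d$ with $\g_1 + (m-1)\d \in \vD^{\re,+} \cap \vD^{\min}_+$ and argues as before — each case contradicting minimality. Writing $\b_0 = \a_{i_0}$, the automorphism $s_{i_0}$ carries the chain $\b_{-1} \lv \b_{-2} \lv \cdots$ to a chain $s_{i_0}(\b_{-1}) \lv_{1} s_{i_0}(\b_{-2}) \lv_{1} \cdots$ of positive real roots (order transported along $s_{i_0}$) which, one checks, again satisfies the convexity axioms — this is the delicate step, exactly as in its finite-type analogue. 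Its minimum is then again a simple root $\a_{i_{-1}}$, and iterating produces $i_0, i_{-1}, i_{-2}, \dots$ with $\b_k = s_{i_0}s_{i_{-1}}\cdots s_{i_{k+1}}(\a_{i_k})$ for $k \le 0$; since $\b_0, \b_{-1}, \dots, \b_k$ are pairwise distinct and form the inversion set of $s_{i_0}s_{i_{-1}}\cdots s_{i_k}$, each finite segment $(i_k, \dots, i_{-1}, i_0)$ is reduced. The mirror argument applied to the reverse convex order carried by the right chain yields $i_1, i_2, \dots$ with $\b_k = s_{i_1}\cdots s_{i_{k-1}}(\a_{i_k})$ for $k > 0$ and $(i_1, \dots, i_\ell)$ reduced for every $\ell$.

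Finally I would splice the two halves into $\Bh = (\dots, i_{-1}, i_0, i_1, \dots)$ and check that every finite segment, in particular one straddling the index $0$, is a reduced word; I expect this to be the main obstacle. Equivalently, writing $w_- = s_{i_0}s_{i_{-1}}\cdots s_{i_k}$ and $w_+ = s_{i_1}s_{i_2}\cdots s_{i_\ell}$, one must show $\ell(w_- w_+) = \ell(w_-) + \ell(w_+)$, i.e. that $w_+$ sends none of the positive roots $\b_0, \dots, \b_k$ to a negative root, and symmetrically. The leverage is that all inversion roots of $w_-$ lie $\lv \d$ while all those of $w_+$ lie $\gv \d$: a cancellation would, via the exchange condition, exhibit one of the $\b_j$ ($j \le 0$), which is $\lv \d$, forced by a positive-root identity to be $\gv$ some inversion root of $w_+$, which is $\gv \d$ — contradicting (4.2.1)--(4.2.2) for $\lv$. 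Making this separation argument precise is the substance of [I2] and [MT, Prop. 2.3]; once it is in place, $\Bh$ is an infinite reduced word, and (4.4.1) recovers the given order $\lv$ by the very construction of the $i_k$.
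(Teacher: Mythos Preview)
The paper does not give a proof of this proposition: it is quoted, without argument, as a result of Muthiah--Tingley [MT, Prop.~2.3] and Ito [I2], so there is no proof in the paper to compare against. Your sketch is precisely the standard peeling/reflection argument that those references carry out, and you correctly flag the two places where genuine work is needed --- verifying that the reflected order is again convex, and that segments straddling the index $0$ are reduced --- so as an outline of the cited proof it is accurate.

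One small comment on the first paragraph: to conclude that the left half has order type $\omega$ you do not actually need to argue that $\d$ has infinitely many predecessors. The one-row hypothesis already says each real root is finitely far from an end, and the partition $\vD^{\re,+} = \{\a \lv \d\} \sqcup \{\d \lv \a\}$ then forces one of these sets to be the left tail and the other the right tail; that both are infinite is immediate from the description (1.2.1) once you observe (as you do) that $\a \lv \d$ implies $\a + m\d \lv \d$ for all $m$. Also, in your argument that the minimum $\b_0$ is simple, the imaginary-summand case needs a touch more care in the twisted types, since not every $\g_1 + (m-1)\d$ with $\g_1 \in \vD_0$ is an affine root; but one can always arrange a decomposition of $\b_0$ into two elements of $\vD^{\min}_+$, which is all convexity requires.
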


\para{4.7.}
Let $\lv$ be a convex order on $\vD_+^{\min}$ such that 
$\b_0$ is minimal, i.e., $\b_0 \lv \b_{-1} \lv \b_{i_-2}\cdots$.
We assume further that $\b_0 = \a_i$ is a simple root, 
Let $s_i = s_{\a_i}$ be the simple reflection.
Then we can define a new convex order $\lv^{s_i}$ on $\vD_+^{\min}$ by 
\begin{equation*}
s_i(\b_{-1}) \lv^{s_i} s_i(\b_{-2}) \lv^{s_i}  \cdots \lv^{s_i} \b_0,
\end{equation*}
where $\b_0$ is maximal in $\lv^{s_i}$. 
Similarly, if $\b_1 = \a_i$ is maximal in $\vD^{\min}_+$, i.e., 
$\cdots \lv \b_2 \lv \b_1$, we can define a new convex order $\lv^{s_i}$
in a similar way, by moving $\b_1$  to the left end. 
If $\b_0 \lv \cdots \lv \d \lv \cdots \lv \b_1$ is a one-row order 
associated the reduced word $\Bh = (\dots, i_{-1}, i_0, i_1, \dots)$, 
then $\b_0 = \a_{i_0}, \b_1 = \a_{i_1}$, and for 
$w = s_{i_0}s_{i_{-1}}\cdots s_{i_k}$, 
$\lv^w = (\cdots ((\lv^{s_{i_0}})^{s_{i_{-1}}})\cdots )^{s_{i_k}}$ 
defines a convex order on $\vD_+^{\min}$.
Similar definition also works for $w = s_{i_1}s_{i_2}\cdots s_{i_k}$. 

\para{4.8.}
We fix a convex order $\lv$ on $\vD^{\min}_+$.
(Note that in this case, the condition (4.2.2) is unnecessary.)  
If $\g \in \vD_0^+$, then $\g, \d - \g \in \vD^{\min}_+$.  
Since $\g + (\d - \g) = \d$, we have $\d- \g \lv \d \lv \g$ or 
$\g \lv \d \lv \d - \g$. In the former case, we have
\begin{equation*}
\d - \g \lv 2\d - \g \lv 3\d - \g \lv \cdots \lv \d
         \lv \cdots \lv 2\d + \g \lv \d + \g \lv \g.
\end{equation*}  
In the latter case, we have
\begin{equation*}
\g \lv \d + \g \lv 2\d + \g \lv \cdots \lv \d
         \lv \cdots \lv 3\d - \g \lv 2\d - \g \lv \d - \g.
\end{equation*}  

Thus for any $\b \in \vD^{\min}_+$ such that $\b \lv \d$, 
the condition whether $\ol\b \in \vD_0^+$ or 
$\ol\b \in -\vD_0^+$ does not depend on the expression 
$\b = m\d + \ol\b$.  We define a subset $Z$ of $\vD_0$ by 
\begin{equation*}
\tag{4.8.1}
Z = \{ \ol\b \in \vD_0 \mid \b \lv \d \}.
\end{equation*}
Then we have $\vD_0 = Z \scup -Z$.  Moreover, $Z$ is a convex set 
in the sense that if $\a, \b \in Z$ with $\a + \b \in \vD_0$, 
then we must have $\a + \b \in Z$. It follows 
that there exists a unique $\ol w \in W_0$ such that 
$Z = \ol w(\vD_0^+)$. The coarse type of the convex order is defined 
by this $\ol w \in W_0$.  If $\ol w = \ol e$ , the identity element in $W_0$, 
this coarse type is called the standard type.
\par
Let $\lv $ be a convex order of standard type.  
Take $\ol w \in W_0$.  Then $\ol w$ acts on the set $\vD^{\min}_+$, and we can define 
a total order $\lv_{\ol w}$ on $\vD^{\min}_+$ by 
the condition $\b \lv \b'$ if and only if $\ol w(\b) \lv_{\ol w} \ol w(\b')$.
Then $\lv_{\ol w}$ is a convex order with coarse type $\ol w$.  

\para{4.9.}
For a fixed one-row order on $\vD^{\min}_+$, we choose 
an infinite reduced word $\Bh = (\dots, i_{-1}, i_0, i_1, \dots)$ as in 
Proposition 4.6. We define root vectors $f_{\b_k}^{(c)}$ by 
\begin{equation*}
\tag{4.9.1}
f_{\b_k}^{(c)} = \begin{cases}
            T_{i_0}T_{i_{-1}}\cdots T_{i_{k+1}}f^{(c)}_{i_k}
                           &\quad\text{ if $k \le 0$, }  \\            
            T\iv_{i_1}T\iv_{i_2}\cdots T\iv_{i_{k-1}}f^{(c)}_{i_k} 
                           &\quad\text{ if $k > 0$. }  
                  \end{cases}
\end{equation*}
In the case where $c = 1$, we simply write them as $f_{\b_k}$. 
By the following remark, $f_{\b_k}$ does not depend on the choice of 
$\Bh$ for a given one-row order (note that $\Bh$ may not be unique 
for a given one-row order).

\begin{prop}[{[MT, Prop. 4.2]}] 
Fix $\b \in \vD^{\min}_+$, and put 
$M_{\b} = \{ (\b_1, \b_2) \in \vD^+ \times \vD^+ \mid \b = \b_1 + \b_2 \}$.
Let $\lv, \lv'$ be two one-row  orders, and 
$f^{\lv}_{\b_k}, f^{\lv'}_{\b_k}$ the corresponding root vectors. 
If the restriction of $\lv, \lv'$  
to all the pairs in $M_{\b}$ agree, then $f^{\lv}_{\b_k} = f^{\lv'}_{\b_k}$. 
\end{prop}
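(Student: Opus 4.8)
The plan is to realise the two one-row orders by infinite reduced words, to join those words by a finite chain of elementary braid moves, and to show that each move in the chain can be chosen so that it does not move the single root vector $f_{\b}$; the hypothesis that $\lv$ and $\lv'$ agree on $M_{\b}$ will be exactly what makes such a chain available. Throughout I write $f^{\lv}_{\b}$ for the root vector attached to the root $\b$ in the order $\lv$, defined by $(4.9.1)$.

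First I would use Proposition 4.6 to fix infinite reduced words $\Bh=(\dots,i_{-1},i_0,i_1,\dots)$ and $\Bh'$ realising $\lv$ and $\lv'$. Since $X$ is simply laced, no braid relations of order $>3$ occur, so passing between such words uses only two kinds of elementary move: a length-$2$ commutation $s_is_j\leftrightarrow s_js_i$ (legal exactly when $(\a_i,\a_j)=0$) and a length-$3$ move $s_is_js_i\leftrightarrow s_js_is_j$ (legal when $(\a_i,\a_j)=-1$). A length-$2$ commutation changes no root vector at all: in $(4.9.1)$ it replaces a consecutive pair $T_iT_j$ by $T_jT_i$, and $T_iT_j=T_jT_i$ together with $T_i(f_j)=f_j$ (valid since $(\a_i,\a_j)=0$) shows every $f^{(c)}_{\b_k}$ is literally unchanged. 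Hence only length-$3$ moves are at issue.

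Consider a length-$3$ move at a window of three consecutive positions. The three roots of the convex order sitting in that window are, up to a common twist by a product of braid operators, the positive roots $\a_i,\ \a_i+\a_j,\ \a_j$ of an $A_2$-subsystem, and the move reverses their relative order. If $\b$ lies outside the window, the only change to the relevant composite of braid operators is $T_iT_jT_i\leftrightarrow T_jT_iT_j$, so $f^{\lv}_{\b}$ is unchanged. If $\b$ is one of the two extreme roots of the window, it is a simple root of that $A_2$-subsystem, and the rank-two identity $T_iT_j(f_i)=f_j$ (equivalently $T_jT_i(f_j)=f_i$) shows that its root vector equals the corresponding generator $f_\bullet$ both before and after the move; again $f^{\lv}_{\b}$ is unchanged. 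The only remaining case is $\b$ = the middle root $\a_i+\a_j$ of the window: there the two extreme roots form a pair belonging to $M_{\b}$, the move reverses their order, and the rank-two computation for Lusztig's braid operators gives root vector $T_i(f_j)$ or $T_j(f_i)$ according to which of the two summands precedes the other. Summarising: a single elementary braid move alters $f^{\lv}_{\b}$ only if it alters the restriction of the order to some pair in $M_{\b}$.

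What remains — and this I expect to be the crux — is the combinatorial statement that one-row orders $\lv,\lv'$ agreeing on $M_{\b}$ can be connected through one-row orders all of which agree on $M_{\b}$, by elementary moves as above. Two subtleties feed into this. First, one-row orders of different coarse type (the element $\ol w\in W_0$ of $(4.8)$) are not linked by any finite chain of finite-window moves, since that would require dragging a real root across $\d$; but the $M_{\b}$ hypothesis constrains the coarse type (for $\b=\d$ it pins it down completely, since agreeing on $M_{\d}$ is agreeing on the set $Z$ of $(4.8)$), and in general one first twists by $\lv\mapsto\lv_{\ol w}$ as in $(4.8)$ to reduce to a common coarse type before comparing. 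Second, once the coarse types agree, one must navigate the braid-move graph of one-row orders without ever transposing a pair summing to $\b$; for this I would use the structure theory of convex orders on the affine root system (Ito [I2], which also underlies Proposition 4.6), reducing to an analysis of the finitely many roots that can appear in an $A_2$-window together with a summand of $\b$ and showing that any required rearrangement can be routed around them. Granting this routing, every move in the chain preserves $f_{\b}$, whence $f^{\lv}_{\b}=f^{\lv'}_{\b}$.
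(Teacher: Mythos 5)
This proposition is quoted verbatim from Muthiah--Tingley [MT, Prop.\ 4.2]; the paper gives no proof of it, so there is nothing internal to compare against, and your proposal must stand on its own. Its local core is sound: your analysis of a single length-$3$ braid move (the two extreme roots of the $A_2$-window keep their root vectors because $T_iT_j(f_i)=f_j$, and only the middle root $\a_i+\a_j$ can change, precisely when the pair of extreme roots --- an element of $M_{\a_i+\a_j}$ --- is transposed) is correct, as is the observation that length-$2$ commutations change nothing.

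However, the argument has genuine gaps. Most seriously, two one-row orders typically differ in infinitely many places, so no finite chain of elementary braid moves connects the corresponding infinite reduced words; and even after cutting down to the finite initial segments that actually enter $(4.9.1)$ for $f_\b$, those segments are reduced words for two \emph{different} elements of $W$ (the sets of roots preceding $\b$ in $\lv$ and in $\lv'$ need not coincide --- for $\b$ a simple root, $M_\b=\emptyset$ and the two orders are unconstrained), so Matsumoto's theorem does not apply and some mechanism for passing to a common refinement, or for sliding $\b$ toward the front of the word without disturbing $f_\b$, is required. You then explicitly grant the routing lemma --- that the two orders can be joined through one-row orders all agreeing on $M_\b$ --- which is essentially the whole content of the proposition, so the proof is incomplete at its crux. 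Two further points: the proposition is stated in 4.1 for the affine algebra of general type, whereas your reduction to length-$2$ and length-$3$ moves assumes $X$ simply laced (in general one must also treat $B_2$- and $G_2$-windows, where several interior root vectors change at once); and the coarse-type reduction via $\lv\mapsto\lv_{\ol w}$ is not innocuous, since that twist replaces $\b$ by $\ol w(\b)$ and conjugates all root vectors by $T_w$, so it does not directly reduce the comparison of $f^{\lv}_{\b}$ with $f^{\lv'}_{\b}$ to a common coarse type.
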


\begin{lem}[{[MT, Lemma 2.8]}]  
Assume given a convex order $\lv$ on $\vD^{\min}_+$.
Then for any finite collection $\b_1, \dots, \b_N$ in $\vD^{\min}_+$, 
there exists a one-row order on $\vD^{\min}_+$ such that 
its restriction to $\{ \b_1, \cdots, \b_N\}$ coincides with $\lv$ on it.
\end{lem}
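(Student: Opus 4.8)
The plan is to build the desired one-row order by a greedy, finite-stage construction that repeatedly applies the rotation operation $\lv \mapsto \lv^{s_i}$ from 4.7, peeling off the current minimal (or maximal) element when it is a simple root, so that after finitely many steps all of $\b_1,\dots,\b_N$ have been placed in their $\lv$-order near one end. First I would reduce to the case where the finitely many roots $\b_1,\dots,\b_N$ are listed so that $\b_1 \lv \b_2 \lv \cdots \lv \b_N$; I also enlarge this finite set, if necessary, by closing it under the operation ``if $\g,\d-\g$ both appear and $\g\lv\d$, include the string $\d-\g\lv 2\d-\g\lv\cdots\lv\d\lv\cdots\lv\d+\g\lv\g$ up to the relevant height'' — this costs nothing and lets me treat imaginary-direction roots uniformly with the description in 4.8. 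The point of the enlargement is that the combinatorics of 4.7--4.8 only lets us rotate past a root that is currently an \emph{extreme} element and a simple root; so I want the finite target set to be an order ideal (a ``prefix'') of some one-row order.

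The key step is the following claim: given any convex order $\lv$ on $\vD^{\min}_+$ and any $\b\in\vD^{\min}_+$ with $\b$ not $\lv$-minimal, there is a finite sequence of rotations bringing $\b$ to within finite distance of the minimal end. Here I would use convexity: the set $\{\g : \g\lv\b\}$, if infinite, must contain a simple root that is $\lv$-minimal — this is the standard fact that the ``beginning'' of a convex order, read off as in Proposition 4.6, is an infinite reduced word, so its first letter $\b_0$ is a simple root. Rotating by $s_{\b_0}$ (operation $\lv^{s_{\b_0}}$ of 4.7) strictly decreases the (finite or cofinite) set of roots lying strictly below $\b$, or moves $\b_0$ past the other end; iterating, after finitely many rotations the initial segment below $\b$ becomes exactly the prefix I want, and at each stage the root vectors $f_{\b_k}$ for $\b_k$ in the already-stabilized prefix are unchanged because $T_i^{\pm1}$ applied to the remaining reduced word reproduces the same root vectors (this is where Proposition 4.10 is invoked: the restriction of the order to each decomposition set $M_{\b_k}$ is preserved, so the root vectors agree). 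Doing this for $\b_1$, then for $\b_2$ relative to the new order, and so on up to $\b_N$, after finitely many stages I obtain an order whose restriction to $\{\b_1,\dots,\b_N\}$ agrees with $\lv$ and whose initial segment containing all the $\b_i$ is finite; a one-row order refining this (e.g. by Proposition 4.6 applied to the stabilized infinite reduced word) is the output.

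The main obstacle I expect is the bookkeeping that guarantees the process \emph{terminates} and that the rotations never disturb the already-placed roots: one has to argue that after finitely many rotations the prefix below $\b_{i}$ stabilizes, and that each rotation past a simple root $\a_j$ leaves the relative order of $\b_1,\dots,\b_i$ intact (because $s_{\a_j}$ acts on $\vD^{\min}_+\setminus\{\a_j\}$ and, once $\a_j$ is past the $\b_i$'s, it fixes the sub-order on them). A clean way to organize this is to pass through Proposition 4.6: a convex order is one-row iff it comes from an infinite reduced word, and the ``defect from one-row'' can be measured by how many real roots lie strictly between the two growing ends; each correctly chosen rotation reduces an appropriate finite statistic attached to $\{\b_1,\dots,\b_N\}$. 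Once termination is in hand, the compatibility of root vectors is immediate from Proposition 4.10, and the lemma follows. I would also remark that for the degenerate case $N$ small or all $\b_i\lv\d$ the construction collapses to the situation already handled in Section 1, so no separate argument is needed there.
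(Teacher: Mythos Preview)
The paper does not prove this lemma; it is quoted verbatim from [MT, Lemma 2.8] and used as a black box, so there is no in-paper argument to compare against. Your proposal, however, has a genuine gap.

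The central claim --- that for any convex order $\lv$ and any $\b$, a finite sequence of rotations $\lv\mapsto\lv^{s_i}$ brings $\b$ to within finite distance of the minimal end --- is false for multi-row convex orders (and such orders exist; the paper itself uses a two-row order, see Remark~6.5). The rotation of 4.7 does not ``peel off $\a_i$ and leave everything else in place'': by its definition, for $\g\ne\a_i$ one has $\g\lv^{s_i}\g'$ if and only if $s_i(\g)\lv s_i(\g')$. Hence the number of roots strictly below $\b$ in the rotated order equals the number strictly below $s_i(\b)$ in the old order, minus one --- not the number below $\b$ minus one. Iterating rotations from the minimal end therefore reads off only the \emph{first row} of $\lv$ as an infinite reduced word; a root $\b$ lying in a later row has infinitely many predecessors and continues to have infinitely many predecessors after every finite string of rotations, so no ``decreasing finite statistic'' exists and your greedy procedure never terminates. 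Your appeal to Proposition~4.6 is likewise misplaced: that proposition is a statement about one-row orders, and applying it to the initial segment of a general $\lv$ recovers only the first row, not the whole order. Finally, Proposition~4.10 concerns root vectors and is irrelevant here, since the lemma is a purely combinatorial assertion about total orders on $\vD^{\min}_+$.

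A correct argument must confront the multi-row structure directly. One standard route (in the spirit of [I2] and [MT]) is to note that the initial segments $\{\g:\g\lve\b_k\}$ of $\lv$ are biconvex subsets of $\vD^{\min}_+$; the finite chain of these for $k=1,\dots,N$ can then be refined to a maximal chain of biconvex subsets, and maximal chains correspond exactly to one-row orders. The finiteness of the target set is used in the refinement step, not in any rotation count.
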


\para{4.12.}
Fix a convex order $\lv$ on $\vD^{\min}_+$. For a given real root $\b \in \vD^+$, 
let $M_{\b}$ be as before. Note that $|M_{\b}| < \infty$.  Thus by Lemma 4.11, 
one can find a one-row  order on $\vD^{\min}_+$ whose restriction 
to all the pairs in $M_{\b}$ coincides with $\lv$. 
We define a root vector $f_{\b}^{(c)}$ as in (4.9.1) by using this one-row order. 
By Proposition 4.10, $f_{\b}^{(c)}$ does not depend on the choice 
of the one-row order, is determined uniquely by the given convex order $\lv$. 
We write this root vector $f_{\b}$ as $f_{\b}^{\lv}$. 
\par
If $\a_i$ is a simple root, $\a_i$ cannot be written as a sum of 
two positive roots.  Thus for defining $f_{\a_i}^{\lv}$, we can find 
a one-row order such that $\a_i$ is minimal or maximal 
(for example, the infinite reduced word $\Bh$ in [BN] contains 
the term $i = i_p$ for some $p\le 0$. Let $\lv$ be the one-row order 
associated to $\Bh$.  Then $\lv^{w}$ for $w = s_{i_0}\cdots s_{i_p}$ 
is the one-row order whose minimal element is $\a_i$).    
As a corollary to Proposition 4.10, we have 

\begin{cor}[{[MT, Cor. 4.3]}]  
For any simple root $\a_i$ and for any choice of the convex order $\lv$, 
$f^{\lv}_{\a_i} = f_i$.   
\end{cor}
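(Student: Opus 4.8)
The plan is to reduce the corollary to a single one-line computation, exploiting the fact that a simple root cannot be split. First I would note that since $\a_i$ is simple, it is not a sum of two positive roots, so the set $M_{\a_i} = \{(\b_1,\b_2) \in \vD^+\times \vD^+ \mid \a_i = \b_1 + \b_2\}$ used in 4.12 is empty. Therefore the hypothesis of Proposition 4.10 — that two one-row orders restrict to the same order on all pairs of $M_{\a_i}$ — holds vacuously for any two one-row orders. Consequently the recipe of 4.12 (choose, by Lemma 4.11, a one-row order whose restriction to $M_{\a_i}$ agrees with the given convex order $\lv$, then read off the root vector via (4.9.1)) produces a vector $f^{\lv}_{\a_i}$ that in fact does not depend on $\lv$ at all: for every convex order $\lv$ and every one-row order $\lv'$ one has $f^{\lv}_{\a_i} = f^{\lv'}_{\a_i}$.

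It then suffices to compute $f_{\a_i}$ for one convenient one-row order, and the natural choice is the one in which $\a_i$ is the minimal real root, as described in the parenthetical remark of 4.12: starting from the [BN] word $\Bh$, which contains $i = i_p$ for some $p \le 0$, the order $\lv^{w}$ with $w = s_{i_0}s_{i_{-1}}\cdots s_{i_p}$ is a one-row order whose minimal element is $\a_i$. By Proposition 4.6, $\lv^w$ is realized by some infinite reduced word $\Bh' = (\dots, i'_{-1}, i'_0, i'_1, \dots)$, and by (4.4.1) the associated root $\b'_0$ equals $\a_{i'_0}$. Since $\b'_0$ is the minimal element of $\lv^w$, i.e. $\a_i$, linear independence of the simple roots forces $i'_0 = i$. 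Evaluating (4.9.1) at $k = 0$, where the product of braid operators is empty, gives $f_{\b'_0} = f_{i'_0} = f_i$, and combining this with the first paragraph yields $f^{\lv}_{\a_i} = f^{\lv^w}_{\a_i} = f_{\b'_0} = f_i$ for an arbitrary convex order $\lv$.

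I do not expect a genuine obstacle, since the whole argument is bookkeeping with the definitions of 4.9--4.12 together with the cited results of [MT]. The one point that merits care is the implication ``$\a_i$ minimal in a one-row order $\Rightarrow$ the initial letter of every infinite reduced word realizing it equals $i$'', which is precisely the linear-independence observation above and relies on Proposition 4.6; one should likewise keep in mind Proposition 4.10 to ensure that $f_{\a_i}$ computed from $\lv^w$ is insensitive to the particular realizing word $\Bh'$.
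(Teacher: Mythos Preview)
Your proposal is correct and follows essentially the same approach as the paper: the paper's remark in 4.12 already observes that $\a_i$, being simple, is not a sum of two positive roots (so $M_{\a_i}$ is empty and Proposition 4.10 applies vacuously), and then exhibits a one-row order with $\a_i$ at an extremal position, whence (4.9.1) with an empty braid-operator product yields $f_i$. Your write-up merely makes the bookkeeping more explicit by invoking Proposition 4.6 and the linear-independence argument for $i'_0 = i$; the paper leaves this implicit since the shifted word for $\lv^w$ manifestly has $i_p = i$ in the initial slot.
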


\para{4.14.}
We consider a convex order $\lv$ on $\vD^{\min}_+$ 
with a coarse type $\ol w \in W_0$. 
Thus $\ol w(\vD_0^+)$ is a positive system in $\vD_0$. 
Take $\b \in \vD^{+,\re}$ of the form $\b = n\d \pm \a$, where $n \ge 0$ and 
$\a \in \vD_0$ is a simple root in the positive system $\ol w(\vD_0^+)$.
If $\b$ can be written as $\b = \b_1 + \b_2$ with $\b_1, \b_2 \in \vD^{+,\re}$, 
then $\b_1 \lv \d \lv \b_2$ or $\b_2 \lv \d \lv \b_1$ since $\a$ is simple.  
The case $\b_1 \lv \d \lv \b_2$ occurs if 
$\ol\b_1 \in \ol w(\vD_0^+), \ol\b_2 \in -\ol w(\vD_0^+)$, 
and $\b_2 \lv \d \lv \b_1$ occurs if $\ol\b_2 \in \ol w(\vD_0^+), 
\ol\b_1 \in -\ol w(\vD_0^+)$.  
Thus those two cases are completely determined 
by the coarse type.  In particular, 
the restriction of $M_{\b}$ of $\lv$ is independent of the choice of 
the convex order as far as the coarse type is fixed.  Thus by Proposition 4.10, 
we have   

\begin{cor}[{[MT, Cor. 4.6]}]   
Let $\a \in \vD_0$ be a simple root in the positive system $\ol w(\vD_0^+) \subset \vD_0$.
Then for any $n \ge 0$, $f^{\lv}_{n\d  \pm \a}$ does not depend on the 
choice of $\lv$ with given coarse type $\ol w$. 
\end{cor}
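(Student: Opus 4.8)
The statement is a formalization of the discussion in 4.14, and the plan is to deduce it from Proposition 4.10, which asserts that $f_{\b}^{\lv}$ depends only on the restriction of $\lv$ to the finite set $M_{\b} = \{ (\b_1,\b_2) \in \vD^+ \times \vD^+ \mid \b = \b_1 + \b_2 \}$; this is in fact how $f_{\b}^{\lv}$ is defined for a general convex order in 4.12, by passing to a one-row order via Lemma 4.11. So it suffices to show that for $\b = n\d \pm \a$ with $\a$ a simple root of the positive system $\ol w(\vD_0^+) \subset \vD_0$, the restriction to $M_{\b}$ of any convex order $\lv$ of coarse type $\ol w$ depends only on $\ol w$.

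First I would analyze $M_{\b}$. Suppose $\b = \b_1 + \b_2$ with $\b_1, \b_2 \in \vD^{\re,+}$, and write $\b_i = n_i\d + \ol\b_i$ with $\ol\b_i \in \vD_0$; note $\ol\b_i \ne 0$ since $\b_i$ is a real root, by (1.2.1). Then $\ol\b_1 + \ol\b_2 = \pm\a$. Since $\pm\a$ is a simple root of the positive system $\pm\ol w(\vD_0^+)$, it is not a sum of two of its elements, so $\ol\b_1, \ol\b_2$ are not both in $\pm\ol w(\vD_0^+)$; and since $\pm\a \notin \mp\ol w(\vD_0^+)$, they are not both in $\mp\ol w(\vD_0^+)$ either. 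Together with $\ol\b_i \ne 0$ this forces exactly one of $\ol\b_1, \ol\b_2$ into $\ol w(\vD_0^+)$, say $\ol\b_1 \in \ol w(\vD_0^+)$ and $\ol\b_2 \in -\ol w(\vD_0^+)$. Now by the description in 4.8 with $Z = \ol w(\vD_0^+)$, a positive real root $\g$ satisfies $\g \lv \d$ exactly when $\ol\g \in Z$, and which alternative holds depends only on $\ol w$; hence $\b_1 \lv \d \lv \b_2$, so $\b_1 \lv \b_2$, and this is determined by $\ol w$. Thus the restriction of $\lv$ to $M_{\b}$ depends only on $\ol w$.

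To conclude, given two convex orders $\lv, \lv'$ of coarse type $\ol w$, I would use Lemma 4.11 to choose one-row orders agreeing with $\lv$, resp. $\lv'$, on $M_{\b}$; by the preceding paragraph these two one-row orders agree on $M_{\b}$, so Proposition 4.10 yields $f_{\b}^{\lv} = f_{\b}^{\lv'}$. I do not expect any genuine obstacle here: the only slightly delicate point is the root-combinatorial claim that no pair in $M_{\b}$ consists of two roots on the same side of $\d$, which is exactly where the simplicity of $\a$ in $\ol w(\vD_0^+)$ is used.
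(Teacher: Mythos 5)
Your proposal is correct and follows essentially the same route as the paper: the text preceding the corollary (4.14) argues exactly that for $\b = n\d \pm \a$ with $\a$ simple in $\ol w(\vD_0^+)$, any decomposition $\b = \b_1 + \b_2$ into positive real roots must have $\ol\b_1, \ol\b_2$ on opposite sides of $\ol w(\vD_0^+)$, so the relative order of each pair in $M_{\b}$ is forced by the coarse type, and Proposition 4.10 (via Lemma 4.11 and the definition in 4.12) then gives independence of $\lv$. Your write-up merely makes explicit the root-combinatorial step that the paper compresses into ``since $\a$ is simple.''
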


\remark{4.16.} \ 
If $\Bh$ is an infinite reduced word given in [BN], then 
$\{ \b_k \mid k \le 0 \} = \vD^{\re,+}_{>}$ and 
$\{ \b_k \mid k > 0 \} = \vD^{\re,+}_{<}$
(see (1.5.2)).
Thus the coarse type of the convex order $\lv_0$ 
is $\ol e \in W_0$, 
namely $\lv_0$ has the standard coarse type.  If $\lv$ has the standard 
coarse type, by Corollary 4.15, $f_{n\d \pm \a_i}^{\lv}$ coincides with 
$f_{n\d \pm \a_i}$ defined in 1.5. 

\para{4.17.}
We fix a convex order $\lv$ with coarse type $\ol w \in W_0$. 
For each $i \in I_0$, let $\ol\g_i = \ol w\a_i$ be the simple root 
in the positive system $\ol w(\vD^+_0)$, and define $\g_i \in \vD^+$ by 
$\g_i = \ol\g_i$ if $\g_i \in \vD^+_0$, and by $\g_i = \d + \ol\g_i$
if $\ol\g_i \in -\vD^+_0$. (Hence $\g_i$ is the minimal affine root
such that its projection to $\vD_0$ is $\ol\g_i$.) 
Then $f^{\lv}_{k\d \pm \g_i}$ does not depend on $\lv$, only depends on 
the coarse type $\ol w$ by Corollary 4.13, which 
we denote by $f_{k\d \pm \g_i}^{\ol w}$. 
Put
\begin{equation*}
\tag{4.17.1}
\Psi^{\ol w}_{i,kd_i} = f^{\ol w}_{kd_i\d - \g_i}f^{\ol w}_{\g_i} 
            - q_i^2f^{\ol w}_{\g_i}f^{\ol w}_{kd_i\d - \g_i}.
\end{equation*}
By Remark 4.16, $\Psi^{\ol e}_{i, kd_i}$ coincides with $\wt\psi_{i, kd_i}$
defined in (1.5.6). 
Note that $\Psi^{\ol w}_{i, kd_i}$ are mutually commuting by the theorem below. 
Thus $\wt P^{\ol w}_{i,kd_i}$ is defined similarly to (1.5.7) by replacing 
$\wt\psi_{i,kd_i}$ by $\Psi^{\ol w}_{i,kd_i}$.  
We denote by $S_{\r^{(i)}}^{\ol w}$ the function corresponding 
to $S_{\r^{(i)}}$ in (1.5.8), and define $S_{\Bc_0}^{\ol w}$ 
similarly to (1.5.9). 
We put $S_{\Bc_0}^{\lv} = S_{\Bc_0}^{\ol w}$. 
Thus in the case where $\ol w = \ol e$, namely, $\lv$ is of standard type,  
$S_{\Bc_0}^{\lv} = S_{\Bc_0}$.

\begin{thm}[{[MT, Thm. 4.13]}]  
Let $\lv$ be a convex order with coarse type $\ol w$.  
Then $\Psi^{\ol w}_{i,kd_i} = T_w\Psi^{\ol e}_{i,kd_i}$, 
where $w \in W$ is the minimal length lift of $\ol w \in W_0$
to $W$.  
\end{thm}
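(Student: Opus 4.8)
The plan is to compare a convex order of coarse type $\ol w$ with the standard one-row order $\lv_0$ of [BN] through the braid group action, and then to transport the defining relation (4.17.1) of $\Psi^{\ol e}_{i,kd_i}=\wt\psi_{i,kd_i}$ forward along the algebra automorphism $T_w$. First I would observe, using Corollary 4.15, that the root vectors $f^{\lv}_{k\d\pm\g_i}$ --- and hence $\Psi^{\ol w}_{i,kd_i}$ --- depend only on the coarse type, so that I am free to compute with any convex order $\lv$ of coarse type $\ol w$ that is convenient. I would then fix a reduced expression $\ol w=s_{j_1}\cdots s_{j_\ell}$ with all $j_t\in I_0$ (so that $w=\ol w$ is its own minimal length lift to $W$ and $T_w=T_{j_1}\cdots T_{j_\ell}$ is unambiguous) and set $\g_i^0=\a_i$ if $\ol w\a_i\in\vD_0^+$ and $\g_i^0=\d+\a_i$ if $\ol w\a_i\in-\vD_0^+$, so that $\g_i=\ol w(\g_i^0)$ is the minimal affine root projecting to $\ol w\a_i$. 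The target of the comparison step is the pair of identities
\[
f^{\ol w}_{\g_i}=T_w\bigl(f_{\g_i^0}\bigr),\qquad f^{\ol w}_{kd_i\d-\g_i}=T_w\bigl(f_{kd_i\d-\g_i^0}\bigr).
\]
Granting these, and noting that the $q_i$ occurring in (4.17.1) equals $q^{(\g_i,\g_i)/2}=q^{(\a_i,\a_i)/2}$ because $w$ is an isometry of the root lattice and $\d$ lies in the radical of the form, I would apply the algebra automorphism $T_w$ to (4.17.1) with $\ol w=\ol e$ to obtain $\Psi^{\ol w}_{i,kd_i}=T_w\bigl(f_{kd_i\d-\g_i^0}f_{\g_i^0}-q_i^2 f_{\g_i^0}f_{kd_i\d-\g_i^0}\bigr)$. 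When $\g_i^0=\a_i$ the bracket is exactly $\wt\psi_{i,kd_i}=\Psi^{\ol e}_{i,kd_i}$ (see 4.17), so the theorem holds in that case; the mutual commutativity of the $\Psi^{\ol w}_{i,kd_i}$ then follows from that of the $\wt\psi_{i,kd_i}$ (recorded after (1.5.6)) since $T_w$ is an automorphism.

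For the braid comparison I would realize the passage from $\lv_0$ to a convex order of coarse type $\ol w$ as a finite composition of the elementary moves of 4.7--4.8. A move $\lv\mapsto\lv^{s_j}$ with $j\in I_0$ (applicable when the simple root $\a_j$ is extremal for $\lv$) multiplies the coarse type by $s_j$ on the appropriate side, while on root vectors it sends $f^{\lv}_{\b}$ to $f^{\lv^{s_j}}_{s_j(\b)}=T_j^{\pm1}(f^{\lv}_{\b})$; this last point is just Corollary 4.13 read off at the new extremal end, as in the identity $T_j^{\mp1}(T_j^{\pm1}f_k)=f_k$. Composing $\ell$ such moves along the chosen reduced word for $\ol w$ turns the coarse type $\ol e$ into $\ol w$ and turns each standard root vector $f^{\lv_0}_{\b}$ (in the normalization of 1.5, cf.\ Remark 4.16) into $T_w(f^{\lv_0}_{\b})$ at the relabelled root $\ol w(\b)$; taking $\b=\g_i^0$ and $\b=kd_i\d-\g_i^0$ then yields the two displayed identities, at least whenever the latter root again lies in $\vD^{\min}_+$. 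To keep this rigorous without having to make sense of $\ol w$ acting on all of $\vD^{\min}_+$, I would work throughout with finite truncations: for the finitely many roots entering the sets $M_{\b}$ of $\g_i$ and of $kd_i\d-\g_i$, use Lemma 4.11 to replace the convex orders in play by one-row orders agreeing with them there, and invoke Proposition 4.10 so that the root vectors are unchanged. (One also has to keep track of which of the braid operators $T_w$, $T_w\iv$ actually appears; this is a normalization fixed in [MT] and is read off from the pattern of $T_j$ versus $T_j\iv$ in (1.5.3).)

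The step I expect to be the main obstacle is the case $\ol w\a_i\in-\vD_0^+$, i.e.\ $\g_i^0=\d+\a_i$. Here $\g_i^0$ sits on the opposite side of $\d$ from $\a_i$ in the standard order, $kd_i\d-\g_i^0=(kd_i-1)\d-\a_i$, and the computation above becomes
\[
\Psi^{\ol w}_{i,kd_i}=T_w\bigl(f_{(kd_i-1)\d-\a_i}f_{\d+\a_i}-q_i^2 f_{\d+\a_i}f_{(kd_i-1)\d-\a_i}\bigr),
\]
which is $T_w$ applied to a $\d$-shifted expression rather than to $\wt\psi_{i,kd_i}=f_{kd_i\d-\a_i}f_i-q_i^2 f_i f_{kd_i\d-\a_i}$; moreover for the smallest values of $k$ the root $kd_i\d-\g_i^0$ falls out of $\vD^{\min}_+$ altogether, so the naive pull-back through $T_w$ is not even available. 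Thus I would still need the identity, living entirely in the coarse type $\ol e$ setting, that these two presentations of the $i$-th degree-$kd_i$ imaginary root vector agree. This is exactly the sort of relation among the Beck--Damiani root vectors proved in [Be], [D] and used in [BN]; the plan is to isolate it as a lemma and derive it from the explicit braid-group formulas for the real and imaginary root vectors, in particular from the behaviour of those vectors under the braid-group operator that shifts the $\d$-grading by one. Once that lemma is in place the two cases merge, and the theorem follows.
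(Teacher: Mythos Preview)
The paper does not prove this statement: Theorem 4.18 is simply quoted from [MT, Thm.~4.13] and used as a black box in the construction of $S_{\Bc_0}^{\ol w}$ and of the PBW basis $\SX_{\lv}$. There is therefore no proof in the paper to compare your proposal against.

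That said, your outline is a reasonable reconstruction of how such a proof goes, and is close in spirit to the argument in [MT]. A small correction: the transformation law $f^{\lv^{s_j}}_{s_j(\b)}=T_j^{\pm1}(f^{\lv}_{\b})$ under an elementary move is not a consequence of Corollary~4.13 (which only says $f^{\lv}_{\a_i}=f_i$ for simple $\a_i$); rather it follows from the very definition (4.9.1) of the root vectors in a one-row order, combined with Proposition~4.10 and Lemma~4.11 to pass between the given convex order and a suitable one-row order. Your identification of the main obstacle --- the $\d$-shift in the case $\ol w\a_i\in-\vD_0^+$, where one must compare $f_{(kd_i-1)\d-\a_i}f_{\d+\a_i}-q_i^2 f_{\d+\a_i}f_{(kd_i-1)\d-\a_i}$ with $\wt\psi_{i,kd_i}$ --- is exactly right, and the resolution you propose (a relation among the Beck--Damiani imaginary root vectors, provable from the Drinfeld-type presentation as in [Be], [D], [BN, \S3]) is indeed what is needed. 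In [MT] this step is handled by a reduction to a rank-two affine subalgebra where the identity can be checked directly. Once that lemma is in place your argument closes.
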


\para{4.19.}
Let $\Bc = (\Bc_+, \Bc_0, \Bc_-) \in \SC$ be as in 1.5. 
For a convex order $\lv$ and $\Bc \in \SC$, we define an element
\begin{equation*}
\tag{4.19.1}
L(\Bc, \lv) = f_{\b_1}^{\lv,(c_{\b_1})}\cdots f_{\b_N}^{\lv, (c_{\b_N})}
                 S^{\lv}_{\Bc_0}
              f_{\g_M}^{\lv, (c_{\g_M})}\cdots f_{\g_1}^{\lv, (c_{\g_1})},
\end{equation*}
where $\Bc_+ = (c_{\b}), \Bc_- = (c_{\g})$, and  
$\b_1 \lv \cdots \lv \b_N \lv \d \lv \g_M \lv \cdots \lv \g_1$ 
are the real roots $\b,\g$ for which $c_{\b}, c_{\g} \ne 0$.  
It is shown in a similar way as in the proof of Theorem 1.6 (ii) that 
$\{ L(\Bc, \lv) \mid \Bc \in \SC\}$ is almost orthonormal. 
This implies that $\SX_{\lv} = \{ L(\Bc, \lv) \mid \Bc \in \SC\}$ gives 
a basis of $\BU_q^-$
for any convex order $\lv$, which is called the 
$PBW$-basis associated to $\lv$. 
\par
In the case of the convex order $\lv_0$, 
$L(\Bc, \lv_0)$ coincides with $L(\Bc, 0)$ with $p = 0$ in (1.5.11), 
and more generally, $L(\Bc, p)$ in (1.5.11) coincides with 
$L(\Bc, \lv_0^w)$, where $w = s_{i_0}s_{i_{-1}}\cdots s_{i_{p+1}}$ if $p \le 0$
and $w = s_{i_1}s_{i_2}\cdots s_{i_{p-1}}$ if $p > 0$. 
  
The followings are known. 

\begin{prop} [{[MT, Prop. 4.21]}] 
For any convex order $\lv$, $L(\Bc, \lv)$ is contained in ${}_{\BA}\BU_q^-$. 
\end{prop}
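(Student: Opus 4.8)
The plan is to use that ${}_{\BA}\BU_q^-$ is an $\BA$-subalgebra of $\BU_q^-$, so that it suffices to prove that every factor in the product $(4.19.1)$ defining $L(\Bc,\lv)$ lies in ${}_{\BA}\BU_q^-$; that is, every divided-power real root vector $f_{\b}^{\lv,(c)}$ that occurs, together with the imaginary factor $S_{\Bc_0}^{\lv}$.

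First I would treat the real root vectors. If $\b=\a_i$ is a simple root, this is immediate from Corollary 4.13 (choosing a one-row order in which $\a_i$ is minimal, $M_{\a_i}$ being empty, so $f_{\b}^{\lv,(c)}=f_i^{(c)}$). In general, recall from Proposition 4.10 and the definition in 4.12 that $f_{\b}^{\lv,(c)}$ depends only on the restriction of $\lv$ to the finite set $M_{\b}$ of pairs of positive roots summing to $\b$; hence, by Lemma 4.11, I may replace $\lv$ by a one-row order having the same restriction to $M_{\b}$, and so assume that $\lv$ is one-row. By Proposition 4.6 there is then an infinite reduced word $\Bh=(\dots,i_{-1},i_0,i_1,\dots)$ realising $\lv$, with $\b=\b_k$ for some $k$, and (cf. $(4.9.1)$) $f_{\b}^{\lv,(c)}=T_{i_0}T_{i_{-1}}\cdots T_{i_{k+1}}(f_{i_k}^{(c)})$ when $k\le 0$, the case $k>0$ being symmetric with inverse braid operators. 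This is the divided power of a real root vector attached to a reduced word: for the Beck--Nakajima word it is precisely the element $L(\Bc',0)$ of $(1.5.11)$ with $\Bc'$ supported in the single real slot of $\b$, hence lies in ${}_{\BA}\BU_q^-$ by Theorem 1.6(i); for a general reduced word it is Lusztig's integrality of such root vectors. Thus $f_{\b}^{\lv,(c)}\in{}_{\BA}\BU_q^-$.

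It remains to handle $S_{\Bc_0}^{\lv}$. Let $\ol w\in W_0$ be the coarse type of $\lv$, so that $S_{\Bc_0}^{\lv}=S_{\Bc_0}^{\ol w}$; when $\ol w=\ol e$ this equals $S_{\Bc_0}$, which by $(1.5.11)$ is the element $L(\cdot,0)$ whose real parts $\Bc_+,\Bc_-$ vanish, and so lies in ${}_{\BA}\BU_q^-$ by Theorem 1.6(i). For general $\ol w$, Theorem 4.18 gives $\Psi_{i,kd_i}^{\ol w}=T_w(\Psi_{i,kd_i}^{\ol e})=T_w(\wt\psi_{i,kd_i})$, where $w$ is the minimal-length lift of $\ol w$ to $W$; since the recursion $(1.5.7)$ and the determinantal and product formulas $(1.5.8)$--$(1.5.9)$ are algebraic operations that commute with the algebra automorphism $T_w$, this yields $S_{\Bc_0}^{\ol w}=T_w(S_{\Bc_0})$. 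The problem is thereby reduced to showing that $T_w$, for $w\in W_0$, carries the integral element $S_{\Bc_0}$ back into ${}_{\BA}\BU_q^-$, and this is the main obstacle, since $T_w$ does not preserve ${}_{\BA}\BU_q^-$ in general. I would resolve it by exhibiting $S_{\Bc_0}^{\ol w}$ as the imaginary factor of $L(\Bc',\lv')$ for a suitable convex order $\lv'$ of coarse type $\ol w$ — for instance the order obtained by twisting $\lv_0$ by $\ol w$ as in 4.8, or a Beck--Nakajima order whose projection to $W_0$ realises $\ol w$ — thereby reducing the claim to the integrality statement $(1.5.10)$ of [BN] and [D]; alternatively one argues directly, using that $S_{\Bc_0}^{\ol w}=\prod_{i\in I_0}S_{\r^{(i)}}^{\ol w}$ with each $S_{\r^{(i)}}^{\ol w}$ a Schur determinant — hence an integral polynomial — in the $\wt P_{i,kd_i}^{\ol w}$, together with the integrality of the real root vectors $f_{\g_i}^{\ol w}$ and $f_{kd_i\d-\g_i}^{\ol w}$ from the previous step, to absorb the denominators $[k]_{q_i}$ that appear in $(1.5.7)$. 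Reconciling a general convex order with the Beck--Nakajima construction at this point is exactly where the argument is delicate.
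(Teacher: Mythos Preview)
The paper does not supply its own proof of this proposition; it is quoted verbatim from [MT, Prop.~4.21] and used as a black box. So there is nothing in the paper to compare your argument against, and I assess it on its own terms.

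Your treatment of the real root vectors is fine: the reduction via Lemma~4.11 and Proposition~4.10 to a one-row order, followed by Lusztig's standard integrality of $T_{i_1}\cdots T_{i_{k-1}}(f_{i_k}^{(c)})$ for any reduced word, is correct.

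The imaginary part, however, is left genuinely incomplete, and you say so yourself. Neither of your two proposed routes closes the gap as stated. For route (a), note first that $(1.5.10)$ only places the braid twists of $S_{\Bc_0}$ in $\BU_q^-$, not in ${}_{\BA}\BU_q^-$; the integrality input is Theorem~1.6(i), which applies to the specific sequences $\Bh$ of~1.5 and their shifts by $p$. To use it you would have to realise $T_w(S_{\Bc_0})$, for $w$ the minimal lift of an \emph{arbitrary} $\ol w\in W_0$, as $T_{i_p}\cdots T_{i_1}(S_{\Bc_0})$ for some segment of some BN word. But the BN words are built from the fixed translation $\xi$, and an arbitrary $w\in W_0$ need not occur as such a segment; you would need an additional invariance statement (that these twisted imaginary vectors depend only on the coarse type of the resulting order) together with an existence statement, and that package is essentially what [MT] have to supply. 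For route (b), integrality of the real root vectors $f_{\g_i}^{\ol w}$ and $f_{kd_i\d-\g_i}^{\ol w}$ gives only $\Psi_{i,kd_i}^{\ol w}\in{}_{\BA}\BU_q^-$; the recursion $(1.5.7)$ then divides by $[k]_{q_i}$, and showing that $\wt P_{i,kd_i}$ is nonetheless integral is already a nontrivial step in the $\ol w=\ol e$ case (part of what [BN] establish). Saying the denominators can be ``absorbed'' is precisely the missing argument.

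In short: you have correctly isolated the one hard point, but the proposal does not resolve it. Filling this gap requires the substance of [MT]'s argument for the imaginary sector, not merely a reference to Theorem~4.18.
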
    

\begin{prop} [{[MT, Prop. 4.27]}]  
For any convex order $\lv$ and for any $\Bc \in \SC$, we have
\begin{equation*}
\ol{L(\Bc, \lv)} = L(\Bc, \lv) + \sum_{\Bc < \Bc'}a_{\Bc, \Bc'}L(\Bc', \lv) 
\end{equation*} 
with $a_{\Bc, \Bc'} \in \BQ(q)$, where $\Bc < \Bc'$ is the total order 
$\Bc <_p \Bc'$ on $\SC$ for $p = 0$.  
\end{prop}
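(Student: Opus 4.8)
The plan is to follow [MT] and reduce the assertion to the Beck--Nakajima convex order $\lv_0$ of (4.4.2), for which the bar-triangularity is already available (Proposition 1.8), the passage between the two orders being controlled by the braid operators and the Levendorskii--Soibelman straightening relations. The first point is that for a fixed $\Bc = (\Bc_+, \Bc_0, \Bc_-) \in \SC$ the element $L(\Bc,\lv)$ depends on only finitely much of the order $\lv$: only finitely many real roots carry a nonzero exponent, each root vector $f_\b^{\lv}$ depends only on the restriction of $\lv$ to the finite set $M_\b$ by Proposition 4.10 (and 4.12), and $S_{\Bc_0}^{\lv}$ is assembled from finitely many of the $\Psi_{i,kd_i}^{\ol w}$, each of which, via Theorem 4.18 and the argument behind Corollary 4.15, is governed by $\lv$ restricted to the finite sets $M_{kd_i\d - \g_i}$ and $M_{\g_i}$. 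Hence by Lemma 4.11 I may pick a one-row order $\lv'$ agreeing with $\lv$ on a large enough finite subset of $\vD^{\min}_+$ --- one containing $\d$, the support of $\Bc_\pm$, all the sets $M_\b$ above, and enough roots near $\d$ to pin down the simple roots $\g_i$ of the coarse type relevant to $\Bc_0$. Then $L(\Bc,\lv) = L(\Bc,\lv')$, and the order relation $\Bc < \Bc'$ (of 1.7 with $p=0$) agrees for $\lv$ and $\lv'$ among all $\Bc'$ with support in any prescribed finite set; so it suffices to prove the statement for one-row orders.

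For a one-row order $\lv'$ there is an infinite reduced word $\Bh'$, and $L(\Bc,\lv')$ is a product of braid-twisted divided powers $T_{i_0}\cdots T_{i_{k+1}}(f_{i_k}^{(c)})$ and $T_{i_1}^{-1}\cdots T_{i_{k-1}}^{-1}(f_{i_k}^{(c)})$ surrounding $S_{\Bc_0}^{\lv'}$. First I would show, by induction along $\Bh'$ using the standard compatibility of the bar involution with the braid operators $T_i$ (the bar image of $T_i(x)$ is obtained from $\ol x$ by the inverse-type braid operator, as in [L3]) together with the simple-root case $\ol{f_i} = f_i$ and Corollary 4.13, that $\ol{f_\b^{\lv'}}$ equals $f_\b^{\lv'}$ modulo a combination of products of the other root vectors $f_\g^{\lv'}$, in a way compatible with the order. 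For the imaginary part, Theorem 4.18 writes $\Psi_{i,kd_i}^{\ol w} = T_w\Psi_{i,kd_i}^{\ol e}$ with $\Psi_{i,kd_i}^{\ol e} = \wt\psi_{i,kd_i}$, so the bar-triangularity of the standard Schur functions $S_{\Bc_0}$ --- which is Proposition 1.8 applied to purely imaginary data with $p=0$ --- is transported to $S_{\Bc_0}^{\lv'}$ by applying $T_w$ and the same bar-versus-braid compatibility. Finally the Levendorskii--Soibelman relations for $\lv'$ (a product of root vectors taken out of $\lv'$-order re-expands into the PBW monomials $L(\Bd,\lv')$ with $\Bd$ strictly above in the order) let me reassemble $\ol{L(\Bc,\lv')}$ from the factorwise bar images and read off that it equals $L(\Bc,\lv')$ plus a $\BQ(q)$-combination of $L(\Bd,\lv')$ with $\Bc < \Bd$; membership of the coefficients in $\BA$, if one wanted it, would come from Proposition 4.20, but here only $\BQ(q)$ is claimed.

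The \emph{main obstacle} is the order bookkeeping in this final assembly: one must check that \emph{every} correction term --- whether it comes from a factorwise bar image $\ol{f_\b^{\lv'}}$ or $\ol{S_{\Bc_0}^{\lv'}}$, or from a single straightening move needed to return a non-convex product to PBW form --- lies strictly above $\Bc$ in the order, and that no term at or below $\Bc$ is ever produced beyond the leading $L(\Bc,\lv')$ with coefficient $1$. This requires a careful simultaneous induction on the real and imaginary parts of the index, arranged so that the leading exponent can only move upward; the delicate interaction to control is that between the real root vectors flanking $S_{\Bc_0}^{\lv'}$ and the imaginary part, and it is precisely here that the one-row hypothesis (each real root is finitely far from an end of the order) and the fixed coarse type enter.
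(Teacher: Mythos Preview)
The paper does not give its own proof of this proposition: it is quoted verbatim from [MT, Prop.~4.27] and used as a black box (see the displayed statement following Proposition~4.20). So there is nothing to compare your argument against here, and the relevant question is only whether your sketch stands on its own as an outline of the [MT] argument.

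Your reduction step is sound in spirit but needs one clarification you left implicit. The bar involution preserves weight, so $\ol{L(\Bc,\lv)}$ lies in the finite-dimensional weight space $(\BU_q^-)_\nu$; hence only finitely many $\Bc'$ can occur in its PBW expansion, and you may enlarge the finite subset of $\vD_+^{\min}$ on which you demand $\lv' = \lv$ so that it contains \emph{all} roots in the support of any $\Bc'$ of weight $\nu$ (and all the sets $M_\b$ for those roots). With that done, both $L(\Bc',\lv) = L(\Bc',\lv')$ for every relevant $\Bc'$ and the lexicographic order $\Bc < \Bc'$ is literally the same for $\lv$ and $\lv'$, so the reduction to one-row orders is clean.

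For the one-row case your plan is the right one, and your ``main obstacle'' paragraph correctly identifies where the work lies. One remark: you cannot simply invoke Proposition~1.8, since that is stated only for the specific Beck--Nakajima word $\Bh$; a general one-row order gives a different infinite reduced word $\Bh'$. What you need is the observation that the bar-triangularity for PBW monomials built from a reduced word is a \emph{local} statement --- it follows, for the real-root part, from the corresponding finite-type fact applied to arbitrarily long finite segments of $\Bh'$, together with the Levendorskii--Soibelman relations. The imaginary part is then handled, as you say, by transporting the standard $S_{\Bc_0}$ via $T_w$ using Theorem~4.18. This is indeed how [MT] proceeds, so your outline is faithful to the source; the paper under review simply takes the result as given.
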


\para{4.22.}
We now assume that $\Fg$ is simply laced.  
In this case, there exists the signed canonical basis 
$\wt\bB$ of $\BU_q^-$. Then we have the following. 

\begin{prop}  
Assume that $\Fg$ is simply laced.  Then 
\begin{enumerate}
\item 
$\SX_{\lv} = \{ L(\Bc, \lv) \mid \Bc \in \SC \}$ 
gives an $A$-basis of ${}_{\BA}\BU_q^-$.
\item 
For any $b \in \wt\bB$, there exists $\Bc \in \SC$ such that 
\begin{equation*}
b = \pm L(\Bc, \lv) + \sum_{\Bc < \Bc'}a_{\Bc,\Bc'}L(\Bc', \lv),  
\quad (a_{\Bc,\Bc'} \in q\BZ[q]).
\end{equation*} 
\end{enumerate}
\end{prop}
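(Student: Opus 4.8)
The plan is to reduce both parts to the special convex order $\lv_0$ of (4.4.2), for which $\SX_{\lv_0}=\SX_{\Bh,0}$ is already known to be an $\BA$-basis of ${}_{\BA}\BU_q^-$ (Theorem 1.6 (iii)) and to which the formalism of 1.9--1.11 applies directly. Fix a weight; all matrices below are the (finite) matrices acting on the corresponding finite-dimensional weight space of $\BU_q^-$. Let $P$ be the transition matrix, $L(\Bc,\lv)=\sum_{\Bc'}P_{\Bc,\Bc'}L(\Bc',\lv_0)$. By Proposition 4.20 each $L(\Bc,\lv)$ lies in ${}_{\BA}\BU_q^-$, so $P_{\Bc,\Bc'}\in\BA$ because $\SX_{\lv_0}$ is an $\BA$-basis of ${}_{\BA}\BU_q^-$; and $\det P\neq 0$ since $\SX_\lv$ is a $\BQ(q)$-basis of $\BU_q^-$ (4.19). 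For (i) it suffices to show $\det P=\pm 1$.

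The first step is to refine the entries of $P$. Running the argument of Proposition 1.11 (i) for $\SX_\lv$ (legitimate, since that argument uses only that the basis is almost orthonormal and contained in ${}_{\BA}\BU_q^-$), one sees that both $\SX_\lv$ and $\SX_{\lv_0}$ are $\BA_0$-bases of $\SL(\infty)$, where $\BA_0=\BQ[[q]]\cap\BQ(q)$. In particular $L(\Bc,\lv)\in\SL(\infty)$, so $P_{\Bc,\Bc'}\in\BA_0$, whence $P_{\Bc,\Bc'}\in\BA\cap\BA_0=\BZ[q]$. Thus $P$ has entries in $\BZ[q]$ and the evaluation $P(0)$ is well defined. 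Comparing Gram matrices for the bilinear form $(\ ,\ )$ gives $G^\lv=P\,G^{\lv_0}\,P^{t}$; since both Gram matrices are congruent to the identity modulo $q$ (almost orthonormality, 4.19 and Theorem 1.6 (ii)), evaluation at $q=0$ gives $P(0)P(0)^{t}=I$, so $P(0)$ is an integral orthogonal matrix and $\det P(0)=\pm 1$.

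The second step uses the bar involution, which preserves ${}_{\BA}\BU_q^-$. By Proposition 4.21 the matrix $A$ of the bar involution in the basis $\SX_\lv$ is upper unitriangular for the total order $<$ on $\SC$ (the order $<_0$ of 1.7), and by Proposition 1.8 the matrix $B$ of the bar involution in the basis $\SX_{\lv_0}$ is upper unitriangular for the same order; hence $\det A=\det B=1$. Applying the bar involution to $L(\Bc,\lv)=\sum_{\Bd}P_{\Bc,\Bd}L(\Bd,\lv_0)$ and re-expanding both sides in $\SX_{\lv_0}$ yields the matrix identity $\ol P\,B=A\,P$, where $\ol P$ denotes $P$ with $q$ replaced by $q^{-1}$; taking determinants gives $\det\ol P=\det P$, i.e. $\det P\in\BZ[q]$ is fixed by $q\mapsto q^{-1}$, hence is a constant. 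Combined with $\det P(0)=\pm 1$ this forces $\det P=\pm 1$, so $P$ is invertible over $\BZ[q]$, a fortiori over $\BA$, which proves (i).

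Granting (i), the coefficients $a_{\Bc,\Bc'}$ of Proposition 4.21 now lie in $\BA$, so $\SX_\lv$ enjoys exactly the properties of $\SX_{\Bh,p}$ that are used throughout 1.9--1.11: it is an almost orthonormal $\BA$-basis of ${}_{\BA}\BU_q^-$ on which the bar involution acts upper-unitriangularly, for a total order, with coefficients in $\BA$. The discussion of 1.9--1.11 then goes through verbatim with $\SX_{\Bh,p}$ replaced by $\SX_\lv$, producing a basis $\bB_\lv=\{b(\Bc,\lv)\}$ with $b(\Bc,\lv)=L(\Bc,\lv)+\sum_{\Bc<\Bc'}p_{\Bc,\Bc'}L(\Bc',\lv)$, $p_{\Bc,\Bc'}\in q\BZ[q]$, each $b(\Bc,\lv)$ bar-invariant, and, by the analogue of Proposition 1.11 (ii), $\bB_\lv\cup-\bB_\lv=\wt\bB$. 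Hence any $b\in\wt\bB$ equals $\pm b(\Bc,\lv)$ for a suitable $\Bc\in\SC$, which is the asserted expansion with $a_{\Bc,\Bc'}=\pm p_{\Bc,\Bc'}\in q\BZ[q]$. I expect the main obstacle to be the first step of (i): getting the entries of $P$ into $\BZ[q]$ rather than merely into $\BA$, so that evaluation at $q=0$ is legitimate and almost orthonormality there can force $\det P(0)=\pm 1$; once that and the bar-invariance of $\det P$ are in hand, the rest is the standard canonical-basis formalism transported from the case $\lv=\lv_0$.
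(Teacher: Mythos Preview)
Your argument is correct, but it takes a genuinely different route from the paper for part (i).

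The paper compares $\SX_\lv$ directly with the signed canonical basis $\wt\bB$ rather than with $\SX_{\lv_0}$. Since $\wt\bB$ is already known to be an $\BA$-basis, one writes $L(\Bc,\lv)=\sum_b c_b\,b$ with $c_b\in\BA$; almost orthonormality of $\SX_\lv$ and Proposition~1.11 force $c_b\in\BZ[q]$, and then the identity $\sum_b c_b(0)^2=1$ over $\BZ$ shows that exactly one $c_{b_0}(0)=\pm 1$ and the rest vanish. The bar-invariance of $b_0$ together with Proposition~4.21 forces this leading coefficient to be bar-invariant, hence $c_{b_0}=\pm 1$ exactly, and $c_b\in q\BZ[q]$ for $b\ne b_0$. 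Thus the transition matrix from $\SX_\lv$ to $\bB$ is upper triangular with diagonal $\pm 1$, giving (i) and (ii) at once.

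Your approach instead compares two PBW bases via the transition matrix $P$, and extracts (i) from the single scalar $\det P$ through a Gram-matrix evaluation at $q=0$ combined with a bar-invariance argument for $\det P$. This is a legitimate alternative with its own virtue: proving (i) uses only almost orthonormality, integrality, and the bar-triangularity of \emph{PBW} bases, without invoking $\wt\bB$ at all; the canonical basis enters only afterwards, to deduce (ii). The cost is that the argument is longer and less informative --- you obtain $\det P=\pm 1$ but not the finer statement that $P(0)$ is a signed permutation matrix, nor the explicit triangular shape of the transition to $\wt\bB$ that the paper's approach yields directly. Your step (ii), once (i) is in hand, is essentially the same as the paper's.
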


\begin{proof}
Since $\SX_{\lv}$ is a $\BQ(q)$-basis of $\BU_q^-$, $b$ can be written as 
a $\BQ(q)$-linear combination of elements in $\SX_{\lv}$.  By the upper 
triangularity of the bar involution for $\SX_{\lv}$ (Proposition 4.21), 
$b$ must be written as 
\begin{equation*}
b = a_{\Bc}L(\Bc, \lv) + \sum_{\Bc < \Bc'}a_{\Bc'}L(\Bc', \lv)
\end{equation*}
for some $\Bc \in \SC$, where $a_{\Bc}, a_{\Bc'} \in \BQ(q)$ 
with $\ol a_{\Bc} = a_{\Bc}$, $a_{\Bc} \ne 0$.
In particular, the transition matrix between $\wt\bB$ and $\SX_{\lv}$ 
is upper triangular.  Since $\wt\bB$ is an $\BA$-basis of ${}_{\BA}\BU_q^-$
and $L(\Bc, \lv) \in {}_{\BA}\BU_q^-$ for any $\Bc$ by Proposition 4.20, 
one can written as 

\begin{equation*}
L(\Bc,\lv) = \sum_{b}c_bb
\end{equation*}
with $c_b \in \BA$, 
where there exists $b_0 \in \wt\bB$ such that $c_{b_0} = a_{\Bc}\iv$, 
hence $\ol {c_{b_0}} = c_{b_0}$. 
Since $\wt\bB$ and $\SX_{\lv}$ satisfy the almost orthonormal property, 
we see that 
\begin{equation*}
\tag{4.23.1}
\sum_bc_b^2 \equiv 1 \mod q\BZ[q]
\end{equation*}
since $q\BZ[[q]] \cap \BA = q\BZ[q]$.
Let $t \ge 0$ be the smallest integer such that $q^tb \in \BZ[q]$ for any $b$.
Then $\sum_bc_b^2 \in q^{2t}\BZ[q]$, and by (4.23.1), we have $t = 0$.
It follows that $c_b \in \BZ[q]$, and $c_{b_0} \in \BZ$ since 
$\ol{c_{b_0}} = c_{b_0}$. This implies that $c_{b_0} = \pm 1$ and 
$c_b \in q\BZ[q]$ for $b \ne b_0$. 
In particular, the transition matrix between $\wt\bB$ and $\SX_{\lv}$ is upper
triangular, where the diagonal entries are $\pm 1$, and off-diagonal entries 
are contained in $q\BZ[q]$.  The proposition follows from this.  
\end{proof}

\para{4.24.}
Under the notation in Proposition 4.23, for each $\Bc \in \SC$, 
put $b(\Bc, \lv) = L(\Bc, \lv) + \sum_{\Bc < \Bc'}a_{\Bc,\Bc'}L(\Bc',\lv)$.
Then $\bB_{\lv} = \{ b(\Bc, \lv) \mid \Bc \in \SC \}$ gives the basis 
of $\BU_q^-$ 
such that $\bB_{\lv} \scup -\bB_{\lv} = \wt\bB$. 
In a similar way as in the proof of [BN, Thm. 3.13], 
it is shown that $\bB_{\lv}$ coincides with $\bB$ (see 1.23).   
\par\bigskip
\section{$\s$-action on root systems }

\para{5.1.}
Let $X = (I, (\ ,\ ))$ and $\ul X$ be as in 2.1.
The case where 
$\s$ preserves $I_0$, namely the case where $\ul X$ is twisted type, 
was discussed in [SZ].  So in this section, 
we consider the case where $\s(I_0) \ne I_0$.   
As is seen below, this covers all the cases where $\ul X$ is untwisted type.
Assume that $X$ is irreducible.  Then the pairs $(X, \ul X)$ are given 
as follows;

\par\medskip\noindent
(1) \ $X = D^{(1)}_{2n}$, \qquad $\ul X = B^{(1)}_n$, 
\par\noindent
(2) \ $X = D^{(1)}_n$, \qquad $\ul X = C^{(1)}_{n-2}$, 
\par\noindent
(3) \ $X = E^{(1)}_7$, \qquad $\ul X = F^{(1)}_4$, 
\par\noindent
(4) \ $X = E^{(1)}_6$, \qquad $\ul X = G^{(1)}_2$,  
\par\noindent
(5) \ $X = D^{(1)}_{2n+2}$, \quad\ $\ul X = A^{(2)}_{2n}$ \ ($n \ge 2$),
\par\noindent
(6) \ $X = D^{(1)}_4$, \qquad $\ul X = A^{(2)}_2$.  
\par\medskip
The order $\ve$ of $\s$ is 2 in  the cases (1) $\sim$ (3), $\ve = 3$ in the case (4), 
and $\ve = 4$ in the cases (5), (6).  
As in 2.2, we exclude the case where $\ve = 4$, hence we only consider 
the cases (1) $\sim$ (4). 

\para{5.2.}
Let $\eta_0$ be the $\s$-orbit in $I$ containing 0. 
Put $I_0' = I - \eta_0$ and let $\vD_0'$ be the subsystem of 
$\vD_0$ obtained from $I_0'$.  Then $I_0'$ and $\vD_0'$ are 
$\s$-stable. Let $\ul I_0'$ be the set of $\s$-orbits in $I_0'$.
Then we have $\ul I = \ul I_0' \scup \{ \eta_0 \}$, and $\ul I_0'$ 
corresponds to the Cartan datum $\ul X_0$ of finite type. 
For each $\b \in I$ let $O(\b) = \b + \s(\b) + \cdots \in Q^{\s}$
be the orbit sum of $\b$, and put 
$\ul\vD_0 = \{ O(\b) \mid \b \in \vD'_0\}$, which we consider 
as a subset of $\ul Q$ under the identification $Q^{\s} \simeq \ul Q$. 
Then it is known that $\ul\vD_0$ gives a root system of type $\ul X_0$
with simple system $\{ O_{\eta} \mid \eta \in \ul I_0'\}$, where
$O_{\eta} = O(\a_i)$ for $i \in \eta$. 
Let $\ul\vD \subset \ul Q$ be the root system for 
$\ul X$, and $\ul\vD^{\re,+}$ the set of positive real roots in $\ul\vD$.
Then $\ul\vD^{\re,+}$ can be identified with the subset of $Q^{\s}$ 
as follows; 
$\ul \vD^{\re,+} =  \ul\vD^{\re,+}_{>} \scup \ul\vD^{\re,+}_{<}$, where
\begin{equation*}
\ul\vD^{\re,+}_{>} = \{ m\d + \a \mid m \ge 0, \a \in \ul\vD^+_0 \}, \quad
\ul\vD^{\re,+}_{<} = \{ m\d - \a \mid m > 0, \a \in \ul\vD^+_0\}.
\end{equation*}    
Note that $\ul X$ is untwisted by 5.1, and so $r = 1$ in the notation of
(1.2.1). 

\par
Recall that $\a_0 = \d - \th$, where $\th$ is the highest root in $\vD_0^+$.
Let $\ul\th$ be the highest root in $\ul\vD^+_0$, and regard it as an 
element in $Q$. 
We consider $O(\a_0) \in Q$. 
The following relation can be verified by the case by case computation 
for the cases (1) $\sim$ (4). 
\begin{equation*}
\tag{5.2.1}
O(\a_0) = \d - \ul\th.
\end{equation*} 

(5.2.1) is equivalent to the formula 

\begin{equation*}
\tag{5.2.2}
O(\th) =  \begin{cases}
             \d + \ul\th &\quad\text{ if $\ve = 2$, } \\
            2\d + \ul\th &\quad\text{ if $\ve = 3$. }
          \end{cases} 
\end{equation*}

\para{5.3.}
Put $\vS^+_0 = \{ \a \in \vD^+_0 \mid O(\a) \in \BZ_{> 0}\d \}$.
We determine the set $\vS^+_0$ explicitly for the cases (1) $\sim$ (4). 
Take $\a = \sum_{i \in I_0}c_i\a_i \in \vD^+_0$.  Write $\th = \sum_{i \in I_0}a_i\a_i$.
First assume that $\ve = 2$ and that $\s : 0 \lra i_1$ for $\eta_0 = \{ 0, i_1\}$. 
We have $\s(\a_{i_1}) = \a_0 = \d - \th$, and   

\begin{align*}
\s(\a) &= \sum_{i \in I_0'}c_i\a_{\s(i)} + c_{i_1}(\d - \th) \\
       &= c_{i_1}\d - \sum_{i \in I_0'}(c_{i_1}a_i - c_{\s(i)})\a_i - c_{i_1}a_{i_1}\a_{i_1}.
\end{align*}
One can check, from the explicit information of the root system $D_n^{(1)}, E_7^{(1)}$, 
that $a_{i_1} = 1$ and $c_{i_1} = 1$ or 0. 
Hence in this case $\vS^+_0 = \{ \a \in \vD^+_0 \mid O(\a) = \d \}$, 
and the condition $\a \in \vS^+_0$ is given by 

\begin{equation*}
\tag{5.3.1}
c_i + c_{\s(i)} = a_i \quad (i \in I_0').
\end{equation*}

Next assume that $\ve = 3$, hence $X$ is of type $E_6^{(1)}$. 
Here $\s : 0 \to i_1 \to i_2 \to 0$ with $\eta_0 = \{ 0, i_1, i_2 \}$. 
In this case, $a_{i_1} = a_{i_2} = 1$.  Thus $c_{i_1}, c_{i_2} \in \{ 0, 1\}$. 
We have $\s(\a_{i_2}) = \a_0 = \d - \th$ with 

\begin{align*}
\tag{5.3.2}
\a + \s(\a) + \s^2(\a) =  &(c_{i_1}\a_{i_1} + c_{i_2}\a_{i_2})
               + (c_{i_1}\a_{i_2} + c_{i_2}\a_0) + (c_{i_1}\a_0 + c_{i_2}\a_{i_1})  \\
               &+ \sum_{i \in I_0'}(c_i + c_{\s(i)} + c_{\s^2(i)})\a_i
\end{align*} 
It follows from (5.3.2) that the condition $\a \in \vS^+_0$ is given by 

\begin{equation*}
\tag{5.3.3}
(c_{i_1} + c_{i_2})\a_i = c_i + c_{\s(i)} + c_{\s^2(i)} \quad (i \in I_0') 
\end{equation*}
and in that case, $\a + \s(\a) + \s^2(\a) = (c_{i_1} + c_{i_2})\d$. 
\par
We now consider each case separately.  
We fix a root system $\vD_0$ of type $D_n$ with 
vertex set $I_0 = \{ 1, 2, \dots, n\}$, where
$\vD^+_0 = \{ \ve_i \pm \ve_j \mid 1 \le i < j \le n \}$.
Here $\a_1 = \ve_1 - \ve_2, \dots, \a_{n-1} = \ve_{n-1} - \ve_n$,
$\a_n = \ve_{n-1} + \ve_n$ gives the set of simple roots. We have 
$\th = \a_1 + 2(\a_2 + \cdots + \a_{n-2}) + \a_{n-1} + \a_n$
\par\medskip
Case (1) : $X$ is of type $D_{2n}^{(1)}$ and $\ul X$ is of type $B_n^{(1)}$.
$\s : i \lra 2n-i$ for $i = 0, 1, \dots, 2n$.  Thus $\eta_0 = \{ 0, 2n \}$, 
$I_0' = \{ 1,2, \dots, 2n-1\}$ and $\vD_0'$ is of type $A_{2n-1}$. 
Moreover, $\ul I_0' = \{ \ul 1, \dots, \ul n\}$ and $\ul \vD_0$ is of type $B_n$
with $\a_{\ul n}$ short root, and  
$\ul\th = \a_{\ul 1} + 2\a_{\ul 2} + \cdots + 2\a_{\ul n}$.  
The condition (5.3.1) implies that $c_i + c_{\s(i)} = 2$ for $i = 2, \dots, n-1$,
and $c_n = 1, c_{2n} = 1$.  Thus we have
\begin{equation*}
\tag{5.3.4}
\vS^+_0 = 
 \{ \ve_1 + \ve_{2n}, \ve_2 + \ve_{2n-1}, \dots, \ve_n + \ve_{n+1} \}
\end{equation*}
In particular $\vS^+_0$ 
gives a positive subsystem of $\vD^+_0$ of type $nA_1$. 
\par\medskip
Case (2) : $X$ is of type $D_n^{(1)}$ and $\ul X$ is of type $C^{(1)}_{n-2}$. 
$\s : 0 \lra 1, n-1 \lra n$, and $\s(i) = i$ otherwise. Thus 
$\eta_0 = \{ 0, 1\}$, $I_0' = \{ 2,3, \dots, n\}$ and $\vD_0'$ is of type $D_{n-1}$.
Moreover, $\ul I_0' = \{ \ul 2, \dots, \ul {n-2}, \ul{n-1}\}$ and 
$\ul\vD_0$ is of type $C_{n-2}$ with $\a_{\ul{n-1}}$ long root. We have 
$\ul\th = 2\a_{\ul 2} + \cdots + 2\a_{\ul{n-2}} + \a_{\ul{n-1}}$.
(5.3.1) implies that $c_i = c_{\s(i)} = 1$ for $i = 2, \dots, n-2$
and $(c_{n-1}, c_n) = (1,0)$ or $(0,1)$.  Thus we have
\begin{equation*}
\tag{5.3.5}
\vS^+_0 = \{ \ve_1 \pm \ve_n \}.
\end{equation*} 
In particular $\vS^+_0$ 
gives a positive subsystem of type $2A_1$. 
\par\medskip
Case(3) : $X$ is of type $E^{(1)}_7$, and 
$\ul X$ is of type $F^{(1)}_4$. 
$I_0$ is of type $E_7$, and we fix $I_0 = \{ 1,2, \dots, 7\}$ as in the 
table of Kac [Ka, p.53].  Here $\s : I \to I$ is given by
$i \lra 6-i$ for $i = 0, \dots, 6$, and $\s(7) = 7$.  
Then $\eta_0 = \{ 0, 6\}$, and $I_0' = \{ 1, 2, \dots, 5, 7\}$. 
$\vD'_0$ is of type $E_6$. 
$\th = 2\a_1 + 3\a_2 + 4\a_3 + 3\a_4 + 2\a_5 + \a_6 + 2\a_7$. 
Moreover, $\ul I_0' = \{ \ul 1, \ul 2, \ul 3, \ul 7\}$, and $\ul\vD_0$ is of type
$F_4$ with $\a_{\ul 1}$ long root. 
Here $\ul \th = 2\a_{\ul 1} + 3\a_{\ul 2} + 4\a_{\ul 3} + 2\a_{\ul 7}$.   
In this case, $i_1 = 6$ and (5.3.1) implies that 
\begin{equation*}
\begin{cases}
c_3 &= 2, \\
c_6 &= 1, \\
c_7 &= 1, \\
c_1 + c_5 &= 2, \\
c_2 + c_4 &= 3.
\end{cases}
\end{equation*}
 
Thus by using the explicit description of positive roots in $E_7$ 
in [Bo], we have
\begin{equation*}
\tag{5.3.6}
\vS^+_0 = \biggl\{ \b_1 = \begin{matrix} 122111  \\
                                  1\phantom{*} 
                       \end{matrix}, \quad
                       \b_2 = \begin{matrix} 112211  \\
                                   1\phantom{*}
                       \end{matrix}, \quad 
                       \b_3 = \begin{matrix}  012221  \\
                                   1\phantom{*}
                       \end{matrix}  \biggr \}.              
\end{equation*}
Under the notation in [Bo], one can write as 
\begin{align*}
\b_1 &= \frac{1}{2}(\ve_8 - \ve_7 -\ve_1 + \ve_2 + \ve_3 - \ve_4 - \ve_5 + \ve_6), \\
\b_2 &= \frac{1}{2}(\ve_8 - \ve_7 + \ve_1 - \ve_2 - \ve_3 + \ve_4 - \ve_5  + \ve_6), \\
\b_3 &= \ve_5 + \ve_6.
\end{align*}
In particular, $(\b_i,\b_j) = 0$ for $i \ne j$. 
Thus $\vS^+_0$ is a positive subsystem of $\vD^+_0$ of type $3A_1$. 

\par\medskip
Case(4) : 
$X$ is of type $E_6^{(1)}$ and $\ul X$ is of type $G_2^{(1)}$.
$I_0$ is of type $E_6$, and we fix $I_0 = \{ 1,2, \dots, 6\}$ as in the table
of [K, p.53]. Here $\s : I \to I$ is given by 
$0 \to 5 \to 1 \to 0, 6 \to 4 \to 2 \to 6$, and $\s(3) = 3$.
Then $\eta_0 = \{ 0, 1, 5 \}$, and $I_0' = \{ 2,3,4,6 \}$. $\vD'_0$ is of type 
$D_4$. $\th = \a_1 + 2\a_2 + 3\a_3 + 2\a_4 + \a_5 + 2\a_6$. 
Moreover, $\ul I_0' = \{ \ul 2, \ul 3\}$, and $\ul\vD_0$ is of type $G_2$
with $\a_{\ul 2}$ long root.  Here $\ul\th = 2\a_{\ul 2} + 3\a_{\ul 3}$.

Thus the condition (5.3.3) can be written as
\begin{align*}
\tag{5.3.7}
c_2 + c_4 + c_6 &= 2(c_1 + c_5), \\
3c_3 &= 3(c_1 + c_5).
\end{align*}
In particular, $c_3 = c_1 + c_5$.
We must have $c_1 + c_5 > 0$.  Thus $(c_1, c_5) = (1,0), (0,1), (1,1)$.
Let $A_1, A_2, A_3$ be the set of $\a \in \vD^+_0$ satisfying (5.3.7) for 
$(c_1,c_5) = (1,0), (0,1)$ or $(1,1)$, respectively.  
By using the explicit description of the root system of type $E_6$ in [Bo], 
We have
\begin{align*}
A_1 &=  \biggl\{ 
         \b_1 = \begin{matrix}
    11110 \\
      0
          \end{matrix}, \quad
          \b_2 = \begin{matrix}
    11100  \\
      1     
          \end{matrix}\biggr\}, \\  
A_2 &= \biggl\{ 
          \b_3 = \begin{matrix}
             01111  \\
               0
                 \end{matrix}, \quad
          \b_4 = \begin{matrix}
             00111   \\
               1
           \end{matrix}\biggr\},  \\
A_3 &= \biggl\{ \b_5 = \begin{matrix}
        12211  \\
          1
\end{matrix}, \quad 
\b_6 = \begin{matrix}
        11221  \\
          1
\end{matrix} \biggr\}. 
\end{align*}
Summing up the above, we have 
\begin{align*}
\tag{5.3.8}
\vS^+_0 = \{ \b_1, \b_2, \b_3, \b_4, \b_5, \b_6 \}, 
\end{align*}
where 
\begin{align*}
\tag{5.3.9}
\b_1 &= \frac{1}{2}(-\ve_1 -\ve_2 -\ve_3 + \ve_4 - \ve_5 - \ve_6 - \ve_7 +\ve_8), \\
\b_2 &= \frac{1}{2}(\ve_1 + \ve_2 + \ve_3 -\ve_4 -\ve_5 -\ve_6 -\ve_7 +\ve_8), \\ 
\b_3 &= -\ve_1 + \ve_5, \\
\b_4 &= \ve_1 + \ve_5, \\
\b_5 &= \frac{1}{2}(-\ve_1 + \ve_2 + \ve_3 - \ve_4 + \ve_5 - \ve_6 -\ve_7 + \ve_8), \\
\b_6 &= \frac{1}{2}(\ve_1 -\ve_2 -\ve_3 +\ve_4 +\ve_5 -\ve_6 -\ve_7 + \ve_8).
\end{align*}

Then if we put $X_1 = \{ \b_1, \b_4, \b_6\}, X_2 = \{ \b_2, \b_3, \b_5\}$, 
we have $(\b, \b') = 0$ for $\b \in X_1, \b' \in X_2$, and 
$\b_1 + \b_4 = \b_6, \b_2 + \b_3 = \b_5$.  Thus $\vS^+_0 = X_1\scup X_2$ gives 
a positive subsystem  of type $2A_2$ of $\vD_0^+$.

\para{5.4.}
Let $\ul\vD_{0,l}$ (resp. $\ul\vD_{0,s}$) be the set of long roots 
(resp. short roots) in $\ul\vD_0$. 
We denote by 
$\ul\vD^+_{0,l}, \ul\vD^+_{0,s}$ the corresponding positive roots.  
\par
Put 
\begin{align*}
\tag{5.4.1}
\vS^+_s &= \{ \a \in \vD^+_0 \mid O(\a) \in \ul\vD_{0,s}^+ \}, \\
\vS^+_{l} &= \{ \a \in \vD^+_0 \mid O(\a) \in \ul\vD_{0,l}^+ \}, \\ 
\vS^+_{l'} &= \{ \a \in \vD^+_0 \mid O(\a) \in \d + \ul\vD^+_{0,l} \}, \\
\vS^+_{l''} &= \{ \a \in \vD^+_0 \mid O(\a) \in 2\d + \ul\vD^+_{0,l} \}, \\ 
\vS^+_0 &= \{ \a \in \vD^+_0 \mid O(\a) \in \BZ_{>0}\d \}.
\end{align*} 
($\vS^+_0$ is already defined in 5.3.) 
We also put
\begin{align*}
\tag{5.4.2}
\vS_{l'} &= \{ \a \in \vD^+_0 \mid O(\a) \in \d + \ul\vD_{0,l} \}, \\
\vS_{l''} &= \{ \a \in \vD^+_0 \mid O(\a) \in 2\d + \ul\vD_{0,l}\}.
\end{align*}
  
Note that $\vS^+_{l''}$ and $\vS_{l''}$ are empty sets if $\ve = 2$. 
Also note that 
$\vS^+_s$ and $\vS^+_l$ are $\s$-stable, and $\s$ acts trivially on $\vS^+_s$.
\par
Let $\ul W_0$ be the Weyl group associated to $\ul \vD_0$.  Then 
$\ul W_0$ is a subgroup of $\ul W$, and under the identification 
$\ul W \simeq W^{\s}$, $\ul W_0$ is regarded as a subgroup of $W$. 
We define a subset $E$ of $\vD_0$ as follows; in the case where $\ve = 2$, 
$E$ is the $\ul W_0$-orbit of $\s(\a_0)$ in $\vD_0$ 
(note that $\s(\a_0) \in \vD_0$), while in the case where $\ve = 3$, 
$E$ is the union of $\ul W_0$-orbit of $\s(\a_0)$ and of $\th$.
We put $E^+ = E \cap \vD_0^+$.
\par
The following lemma holds. 
  
\begin{lem}  
Under the notation above, 
\begin{enumerate}
\item 
$\vS_{l'} \scup \vS_{l''} = E^+$.
\item 
$\vD^+_0 = \vS^+_s \scup \vS^+_{l} \scup \vS_{l'} 
         \scup \vS_{l''}  \scup \vS^+_0$.  
\end{enumerate}
\end{lem}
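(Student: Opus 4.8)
The plan is to prove the two statements by reducing everything to the explicit combinatorial description of the $\s$-orbit sums $O(\a)$ for $\a \in \vD_0^+$ that was set up in 5.2 and 5.3. The key structural fact is (5.2.1), namely $O(\a_0) = \d - \ul\th$, together with its equivalent form (5.2.2), and the explicit lists (5.3.4)--(5.3.9) of $\vS_0^+$ in each of the four cases. First I would observe that for any $\a \in \vD_0^+$, $O(\a)$ lies in $Q^\s \simeq \ul Q$, and by writing $\a = \sum c_i \a_i$ and using $\s(\a_{i_1}) = \a_0 = \d - \th$ (when $\ve = 2$) or the analogous identity (5.3.2) (when $\ve = 3$), one sees that $O(\a)$ is of the form $m\d + \mu$ with $m \ge 0$ and $\mu \in \ul\vD_0 \cup \{0\}$; moreover $m$ is the sum of the coefficients of $\a$ at the non-zero-vertices of $\eta_0$, hence $m \in \{0, 1\}$ if $\ve = 2$ and $m \in \{0, 1, 2\}$ if $\ve = 3$ (this is exactly the bound on $a_{i_1}$, $a_{i_2}$ noted in 5.3). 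This immediately partitions $\vD_0^+$ according to the pair $(m, \text{type of }\mu)$, which is what (ii) asserts: $\mu = 0$ gives $\vS_0^+$; $m=0, \mu \in \ul\vD_{0,s}^+$ gives $\vS_s^+$ (note $O(\a)$ is then forced to be $\mu$ itself since $O(\a)$ has non-negative $\d$-coefficient and $\mu$ short forces $m=0$); $m=0, \mu \in \ul\vD_{0,l}^+$ gives $\vS_l^+$; and $m=1$ resp. $m=2$ with $\mu$ long give $\vS_{l'}$ resp. $\vS_{l''}$. The point that a short root $\mu$ cannot appear with $m \ge 1$ is where (5.2.1) enters: $\a_0$ itself has $O(\a_0) = \d - \ul\th$ with $\ul\th$ long, and more generally applying elements of $\ul W_0 \le W^\s$ to this relation shows the $\d$-shifted roots that occur are all long.

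For part (i), the plan is to identify $\vS_{l'} \cup \vS_{l''}$ with $E^+$ directly. By definition $E$ is the $\ul W_0$-orbit of $\s(\a_0) \in \vD_0$ when $\ve = 2$, and the union of the $\ul W_0$-orbit of $\s(\a_0)$ with that of $\th$ when $\ve = 3$. Since $O$ intertwines the $\s$-equivariant $W^\s$-action on $\vD_0$ (more precisely, on the span of $\vD_0'$, after suitable bookkeeping at $\eta_0$) with the $\ul W_0$-action on $\ul\vD_0 + \BZ\d$, and since $O(\s(\a_0)) = O(\a_0) = \d - \ul\th$ lies in $\d + \ul\vD_{0,l}$ while $O(\th) = \d + \ul\th$ (if $\ve=2$) or $2\d + \ul\th$ (if $\ve = 3$) by (5.2.2), applying $\ul W_0$ shows that $O$ carries $E$ into $(\d + \ul\vD_{0,l}) \cup (2\d + \ul\vD_{0,l})$, i.e. $E \subseteq \vS_{l'} \cup \vS_{l''}$ after intersecting with $\vD_0^+$. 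For the reverse inclusion: if $\a \in \vD_0^+$ with $O(\a) = m\d + \mu$, $m \in \{1,2\}$, $\mu \in \ul\vD_{0,l}$, then $\ul\th$ is the unique dominant long root, so there is $\ul w \in \ul W_0$ with $\ul w \cdot O(\a) = O(\s(\a_0))$ or $O(\th)$; pulling back through $O$ (which is injective on the relevant pieces, as $O$ is the orbit-sum map and distinct roots in a single $\s$-orbit have distinct sums only up to the $\s$-action, but $\vS_{l'}^+, \vS_{l''}$ consist of genuine roots) gives $\a \in E$.

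I would then cross-check everything against the explicit tables. In Case (1), (5.3.4) gives $\vS_0^+$ of type $nA_1$; the remaining long-root data $\ul\th = \a_{\ul 1} + 2\a_{\ul 2} + \cdots + 2\a_{\ul n}$ (with $\a_{\ul n}$ short) lets one list $\vS_s^+, \vS_l^+, \vS_{l'}$ explicitly and verify (i)--(ii) directly; similarly for Cases (2)--(4) using (5.3.5), (5.3.6), (5.3.8) and the stated $\ul\th$'s. The main obstacle I anticipate is part (i): making the identification $\vS_{l'} \cup \vS_{l''} = E^+$ clean rather than a brute case check requires being careful about how the orbit-sum map $O$ interacts with the $W^\s$-action near the special orbit $\eta_0$ (where $\s(\a_0) = \d - \th$ mixes the finite and affine parts), and about the subtlety in the $\ve = 3$ case that $E$ is a union of \emph{two} $\ul W_0$-orbits corresponding exactly to the $m=1$ and $m=2$ strata. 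Establishing that $O$ restricted to $\{\a : O(\a) \in m\d + \ul\vD_{0,l}, m \ge 1\}$ is a bijection onto $\vS_{l'} \cup \vS_{l''}$ intertwining $\ul W_0$ with the translated Weyl action is the heart of the argument; once that is in place, (i) follows from the uniqueness of the dominant long root, and the verification via the tables (5.3.4)--(5.3.9) serves as a safety net for each of the four cases.
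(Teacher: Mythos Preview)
Your setup and the easy inclusion $E^+ \subseteq \vS_{l'} \cup \vS_{l''}$ via (5.2.1), (5.2.2) and $\ul W_0$-conjugacy are exactly what the paper does. The gap is in the reverse direction and in the claim that a short $\mu$ cannot occur with $m \ge 1$.

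Your argument for both of these rests on ``pulling back through $O$'': given $\a \in \vD_0^+$ with $O(\a) = m\d + \mu$ and $\mu \in \ul\vD_{0,l}$, you choose $\ul w \in \ul W_0$ with $\ul w\mu = \pm\ul\th$, so $O(\ul w\a) = O(\s(\a_0))$ or $O(\th)$, and then conclude $\ul w\a = \s(\a_0)$ or $\th$. But $O$ is \emph{not} injective on $\vD_0$: any element of the anti-invariant sublattice $\sum_{i}\BZ(\a_i - \a_{\s(i)})$ lies in its kernel, so a priori several roots can have the same orbit sum. The injectivity you need on $\vS_{l'}\cup\vS_{l''}$ is essentially equivalent to the statement you are trying to prove, so the argument is circular. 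Likewise, applying $\ul W_0$ to $O(\a_0) = \d - \ul\th$ only shows that the $\ul W_0$-orbit of $\s(\a_0)$ lands in $\d + \ul\vD_{0,l}$; it does not show that \emph{every} $\a$ with $m\ge 1$ has long $\mu$, which is what (ii) requires.

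The paper closes this gap by a counting argument rather than a structural one. One first observes the further easy inclusion
\[
\vS_{l'} \cup \vS_{l''} \ \subseteq\ \vD_0^+ - (\vD_0')^+ - \vS_0^+,
\]
since $O(\a)$ having positive $\d$-coefficient forces the coefficient of some $\a_j$ with $j \in \eta_0\setminus\{0\}$ to be nonzero, i.e.\ $\a \notin (\vD_0')^+$. Next one computes $|E^+|$ directly: for each $\ul\b \in \ul\vD_{0,l}$ there is exactly one $\a \in E^+$ with $O(\a) = \d - \ul\b$ when $\ve = 2$, and exactly two such $\a$ together with one $\a$ with $O(\a) = 2\d + \ul\b$ when $\ve = 3$; hence $|E^+| = |\ul\vD_{0,l}|$ or $3|\ul\vD_{0,l}|$. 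Finally one checks, case by case using the numbers $|\vD_0^+|$, $|(\vD_0')^+|$, $|\ul\vD_{0,l}|$ and the explicit $|\vS_0^+|$ from (5.3.4)--(5.3.8), that
\[
|E^+| \ = \ |\vD_0^+| - |(\vD_0')^+| - |\vS_0^+|,
\]
forcing both inclusions in the chain to be equalities. This gives (i), and since $(\vD_0')^+ = \vS_s^+ \sqcup \vS_l^+$, (ii) follows. Your ``safety net'' of checking the tables is in fact the whole proof; the paper just packages it as a cardinality comparison rather than listing roots.
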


\begin{proof}
We know  $O(\a_0) = \d - \ul\th$ by (5.2.1) , and $O(\th) = 2\d + \ul\th$ 
(for the case $\ve = 3$) by (5.2.2). Since any long root in 
$\ul\vD_0$ is conjugate to $\ul\th$ under $\ul W_0$, we see that, 
for any $\a \in E^+$,  
$O(\a)$ can be written as $\d - \ul\b$ or $2\d + \ul\b$, 
where $\ul\b \in \ul\vD_{0,l}$.  
Hence $\a \in \vS_{l'} \scup \vS_{l''}$. 
On the other hand, in the case $\ve = 2$, if $\a \in \vD^+_0$ satisfies the condition that 
$O(\a) = \d - \ul\b$ with $\ul\b \in \ul\vD_{0,l}$, 
then the coefficient of $\s(\a_0)$ in $\a$ is non-zero, 
while in the case $\ve = 3$, if $O(\a) = \d - \ul\b$ or 
$O(\a) = 2\d  + \ul\b$ for some $\ul\b \in \ul\vD_0$, then the coefficient of 
$\s(\a_0)$ or $\s^2(\a_0)$ is non-zero. 
It follows that $\a \in \vD^+_0 - (\vD'_0)^+$. 
Hence we have 
\begin{equation*}
\tag{5.5.1}
E^+ \subseteq \vS_{l'} \cup \vS_{l''} 
            \subseteq \vD_0^+ - (\vD'_0)^+ - \vS^+_0. 
\end{equation*}   
Assume that $\ve = 2$. By (5.2.1),  
for each $\ul\b \in \ul\vD_{0,l}$ one can find 
a unique $\a \in E^+$ such that $O(\a) = \d - \ul\b$.  
Hence $|E^+| =  |\ul\vD_{0,l}|$. 
On the other hand, assume that $\ve = 3$.  By (5.2.1) and (5.2.2), 
for each $\ul\b$, one can find 
two $\a \in E^+$ such that $O(\a) = \d - \ul\b$, and a unique $\a \in E^+$ such that
$O(\a) = 2\d + \ul\b$. Hence $|E^+| = 3|\ul\vD_{0,l}|$. 
It follows that 
\begin{equation*}
\tag{5.5.2}
c|\ul\vD_{0,l}| =  |E^+| \le |\vD^+_0| - |(\vD'_0)^+| - |\vS^+_0|,
\end{equation*} 
where $c = 1$ (resp. $c = 3$) if $\ve = 2$ (resp. $\ve = 3$). 
We note that
\par\medskip\noindent
(5.5.3) \  The equality holds for (5.5.2).
\par\medskip
This assertion can be verified by the case by case computation as follows. 
\par\medskip
Case (1) : $\vD_0$ : type $D_{2n}$, $\vD'_0$ : type $A_{2n-1}$, 
and $\ul\vD_0$ : type $B_n$.  Hence,  
$|\vD^+_0| = 2n(2n-1)$, $|(\vD'_0)^+| = n(2n-1)$ and $|\ul\vD_{0,l}| = 2n(n-1)$.
By (5.3.4), $|\vS^+_0| = n$. Thus the equality holds. 
\par\medskip
Case (2) : $\vD_0$ : type $D_n$, $\vD'_0$ : type $D_{n-1}$, 
$\ul\vD_0$ : type $C_{n-2}$.  Hence
$|\vD^+_0| = n(n-1)$, $|(\vD'_0)^+| = (n-1)(n-2)$, and 
$|\ul\vD_{0,l}| = 2(n-2)$.
By (5.3.5), $|\vS^+_0| = 2$. Thus the equality holds. 
\par\medskip
Case (3) : $\vD_0$ : type $E_7$, $\vD'_0$ : type $E_6$, 
$\ul\vD_0$ : type $F_4$.  Hence 
$|\vD^+_0| = 63$, $(\vD'_0)^+| = 36$ and $|\ul\vD_{0,l}| = 24$.
Moreover, $|\vS^+_0| = 3$ by (5.3.6).
Thus the equality holds.    
\par\medskip
Case (4) : $\vD_0$ : type $E_6$, $\vD'_0$ : type $D_4$, 
$\ul\vD_0$ : type $G_2$.  Hence 
$|\vD^+_0| = 36$, $|(\vD'_0)^+| = 12$, $|\ul\vD_{0,l}| = 6$.  
Moreover, $|\vS^+_0| = 6$ by (5.3.8).  Thus the equality  holds.    
\par\medskip
By (5.5.1) and (5.5.3), we have

\begin{equation*}
\tag{5.5.4}
E^+ = \vS_{l'} \cup \vS_{l''} = \vD_0^+ - (\vD'_0)^+ - \vD^+_0.
\end{equation*}
(i) follows from this. 
Since $(\vD'_0)^+ = \vS^+_{l} \scup \vS^+_s$, (ii) also follows.
The lemma is proved. 
\end{proof}

\para{5.6.}
For each $m \ge 0$, and for $x \in \{ s, l, l',l''\}$, 
we define subsets $\vS_x(m), \vS'_x(m)$ of $\vD^+$ as 
\begin{align*}
\tag{5.6.1}
\vS_x(m) &= \bigcup_i\s^i \{ m\d + \a \mid \a \in \vS^+_x \}, \\  
\vS'_x(m) &= \bigcup_i\s^i\{ m\d - \a \mid \a \in \vS^+_x \}.
\end{align*}
(Here and below, we assume $m \ge 1$ for $\vS'_x(m)$ or $\ul\vS'_x(m)$.)
We denote by 
$\wh\vS_x(m)$ the set of $\s$-orbits in $\vS_x(m)$, and 
define $\wh\vS'_x(m)$ similarly. 
For $x = 0$ and $m \ge 0$, we put
\begin{equation*}
\tag{5.6.2}
\vS_0(m) = \{ m\d + \a \mid \a \in \vS^+_0 \}, \qquad 
\vS'_0(m) = \{ m \d - \a \mid \a \in \vS^+_0 \}. 
\end{equation*}
In the case where $\ve = 2$, we have
$\s(\vS_0(m)) = \vS'_0(m + 1)$. 
While in the case where $\ve = 3$, the following holds.
Recall that $\vS^+_0 = A_1 \scup A_2 \scup A_3$ in the notation of 
(5.3.8).
Assume that $\b = m\d + \a \in \vS_0(m)$.   
If $\a \in A_1$, then $\s(\b) \in \vS_0(m), \s^2(\b) \in \vS'_0(m +1)$. 
If $\a \in A_2$, then $\s(\b) \in \vS'_0(m +1), \s^2(\b) \in \vS_0(m )$. 
If $\a \in A_3$, then $\s(\b) \in \vS'_0(m +1), \s^2(\b) \in \vS'_0(m + 1)$. 
\par 
Also for each $m \ge 0$, we define subsets of $\ul\vD^{\re,+}_{>}$ by 

\begin{equation*}
\tag{5.6.3}
\begin{aligned}
\ul\vS_{s}(m) &= \{ m\d + \a \mid \a \in \ul\vD^+_{0,s} \}, 
       &\quad \ul\vS'_s(m) &= \{ m\d - \a \mid \a \in \ul\vD^+_{0,s} \}  \\
\ul\vS_{l}(m) &= \{ m\d + \a \mid \a \in \ul\vD^+_{0,l} \},
       &\quad \ul\vS'_l(m) &= \{ m\d - \a \mid \a \in \ul\vD^+_{0,l} \}.
\end{aligned}
\end{equation*}

We have the following result.

\begin{prop}  
Under the identification $Q^{\s} \simeq \ul Q$, the following holds. 
\begin{enumerate}
\item
For each $\b \in \vD^+$, $O(\b) \in \ul\vD^+$. 
We define a map $f : \vD^+ \to \ul\vD^+$ by $\b \mapsto O(\b)$. 
\item
Assume that $\ve = 2$.  For each $m \ge 0$ $($resp. $m \ge 1$ $)$ for 
$\wh\vS_x(m)$ 
$($resp. $\wh\vS'_x(m))$, $f$ induces a bijection 
\begin{equation*}
\begin{aligned}
\wh\vS_s(m) &\isom  \ul\vS_s(m),  &\quad 
\wh\vS'_s(m) &\isom  \ul\vS'_s(m),  \\
\wh\vS_l(m) &\isom  \ul\vS_l(2m),  &\quad 
\wh\vS'_l(m) &\isom  \ul\vS'_l(2m),   \\ 
\wh\vS_{l'}(m) &\isom  \ul\vS_l(2m + 1), &\quad  
\wh\vS_{l'}(m) &\isom  \ul\vS_l(2m - 1).
\end{aligned}
\end{equation*}
\item
Assume that $\ve = 3$.  For each $m \ge 0$ $($resp. $m \ge 1$ $)$ for 
$\wh\vS_x(m)$ 
$($resp. $\wh\vS'_x(m))$, $f$ induces a bijection 
\begin{equation*}
\begin{aligned}
\wh\vS_s(m) &\isom  \ul\vS_s(m), &\quad 
\wh\vS'_s(m) &\isom  \ul\vS'_s(m), \\
\wh\vS_l(m) &\isom  \vS_l(3m), &\quad 
\wh\vS'_l(m) &\isom  \vS'_l(3m),  \\
\wh\vS_{l'}(m) &\isom  \ul\vS_{l}(3m +1), &\quad 
\wh\vS'_{l'}(m) &\isom  \ul\vS'_{l}(3m -1), \\
\wh\vS_{l''}(m) &\isom  \ul\vS_{l}(3m + 2) &\quad 
\wh\vS'_{l''}(m) &\isom  \ul\vS'_{l}(3m - 2).
\end{aligned}
\end{equation*}
\end{enumerate}
\end{prop}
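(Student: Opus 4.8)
The plan is to reduce the entire statement to the explicit description of the $\s$-action on $\vD^+_0$ assembled in 5.2--5.6, using only the elementary fact that $\s$ fixes $\d$ (a diagram automorphism preserves the marks of the affine diagram). For $\a\in\vD_0$ write $k(\a)$ for the size of the $\s$-orbit of $\a$; since $\ve\in\{2,3\}$ is prime, $k(\a)\in\{1,\ve\}$, with $k(\a)=1$ exactly when $\a$ is $\s$-fixed. Because $\s$ fixes $\d$, for every $m$ the $\s$-orbit of $m\d+\a$ has the same size $k(\a)$, and summing over it yields
\begin{equation*}
O(m\d+\a)=k(\a)\,m\d+O(\a),\qquad O(m\d-\a)=k(\a)\,m\d-O(\a).
\end{equation*}
By 5.4, $\s$ acts trivially on $\vS^+_s$, so $k(\a)=1$ there; while $k(\a)=\ve$ for $\a$ in any of $\vS^+_l,\vS_{l'},\vS_{l''},\vS^+_0$, since a $\s$-fixed such $\a$ would have $O(\a)=\a\in\vD^+_0$, contradicting (5.2.1), (5.2.2) and 5.3--5.4, according to which $O(\a)$ lies in $\ul\vD^+_{0,l}$, in $\d+\ul\vD_{0,l}$, in $2\d+\ul\vD_{0,l}$, or in $\BZ_{>0}\d$ (with $O(\a)=c\d$, $1\le c\le\ve-1$, in the last case).

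For part (i): every $\b\in\vD^+$ is either imaginary, $\b=m\d$ (and $O(\b)=m\d\in\ul\vD^{\im,+}$), or real, $\b=m\d\pm\a$ with $\a\in\vD^+_0$ and $m\ge0$ (resp. $m\ge1$ in the minus case). By Lemma 5.5(ii), $\a$ belongs to exactly one of $\vS^+_s,\vS^+_l,\vS_{l'},\vS_{l''},\vS^+_0$; plugging the corresponding value of $O(\a)$ into the displayed identity gives, in each of finitely many cases, an element of $\ul\vD^{\re,+}\scup\ul\vD^{\im,+}=\ul\vD^+$. The only points that need checking are that the minus-sign cases stay positive (true because $\ve m-1\ge1$ and $\ve m-c\ge1$ when $m\ge1$), and that for $\a\in\vS_{l'}$ or $\vS_{l''}$ the value $O(\a)$ may be of the form $\d\pm(\text{long})$ or $2\d\pm(\text{long})$ --- either sign still gives a root of $\ul\vD$ after the shift by $\ve m\d$. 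This defines the map $f:\vD^+\to\ul\vD^+$.

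For parts (ii) and (iii): each asserted bijection is the map induced by $f$ on $\s$-orbits, restricted to the indicated family, and the computation of $O$ above pins down the target; for these families the pieces $\vS_{l'},\vS_{l''}$ are taken in their ``$+$'' versions $\vS^+_{l'},\vS^+_{l''}$, so that $O(\a)\in\d+\ul\vD^+_{0,l}$ (resp. $\in 2\d+\ul\vD^+_{0,l}$), which is why the shift by $\ve m\d$ lands in $\ul\vS_l(\ve m+1)$ (resp. $\ul\vS_l(\ve m+2)$), and similarly with primes and minus signs. Surjectivity is a cardinality count: $|\wh\vS_s(m)|=|\vS^+_s|=|\ul\vD^+_{0,s}|$ since $\s$ is trivial on $\vS^+_s$; $|\wh\vS_l(m)|=|\vS^+_l|/\ve=|\ul\vD^+_{0,l}|$ since all $\s$-orbits on $\vS^+_l$ have size $\ve$; and $|\wh\vS_{l'}(m)|,|\wh\vS_{l''}(m)|$ are obtained from $|\vS^+_{l'}|,|\vS^+_{l''}|$ (computed in 5.3) divided by the number of $\s$-conjugates of a root in $\vS^+_{l'}$, resp. $\vS^+_{l''}$, that again lie in $\vD^+_0$ --- by 5.3 and 5.6 these quotients again equal $|\ul\vD^+_{0,l}|$, the size of the target. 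Injectivity holds because $m\d\pm\a$ and $m\d\pm\a'$ in one family have the same $f$-image only if $O(\a)=O(\a')$, and within one family this forces $\a'$ to be a $\s$-conjugate of $\a$, by the explicit descriptions in 5.3--5.4.

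The step I expect to be the main obstacle is the bookkeeping in (ii)--(iii) for the families coming from $\vS_{l'}$, $\vS_{l''}$ and from the auxiliary set $\vS^+_0$: one must determine precisely which $\s$-translates $\s^j\a$ of a root $\a\in\vD^+_0$ return to $\vD^+_0$ --- equivalently, control the $\d$-component of $\s^j\a$, which is governed by the coefficients of $\a$ at the vertices of $\eta_0\cap I_0$ --- and, when $\ve=3$, use the refined partition $\vS^+_0=A_1\scup A_2\scup A_3$ of 5.6 to split the orbits correctly between the families $\wh\vS_x(m)$ and $\wh\vS'_x(m)$. Everything else reduces to the formula for $O(m\d\pm\a)$ and the data already collected in 5.2--5.6.
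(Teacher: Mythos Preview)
Your argument is correct and follows essentially the same route as the paper: both proofs reduce (i) to the partition of $\vD_0^+$ in Lemma~5.5(ii), and handle (ii)--(iii) by the same case analysis on the families $\vS^+_s,\vS^+_l,\vS^+_{l'},\vS^+_{l''},\vS^+_0$. The paper phrases the bijections by describing the fibres $f^{-1}(\ul\b)$ directly, whereas you compute $O(m\d\pm\a)=k(\a)\,m\d\pm O(\a)$ forward; these are dual bookkeepings of the same computation. Your identification of the ``main obstacle'' is accurate, and your $\d$-coefficient count (e.g.\ for $\ve=3$, exactly two of $\a,\s(\a),\s^2(\a)$ lie in $\vD_0^+$ when $\a\in\vS^+_{l'}$, and exactly one when $\a\in\vS^+_{l''}$) is precisely what makes the cardinalities match --- the paper leaves this implicit in its description of the fibres.
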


\begin{proof}
For the proof of (i), it is enough to check this for 
$\b \in \vD^+_0$. But this is clear from Lemma 5.5 (ii). 
(ii) and (iii) also follow from Lemma 5.5.
In the following, we only consider the case $\wh\vS_x(m)$.  The case 
$\wh\vS'_x(m)$ is similar. 
The assertion is clear for $\wh\vS_s(m) = \vS_s(m)$. 
Now assume that $\ve = 2$. If $\ul\b = 2m\d + \ul\a \in \ul\vS_l(2m)$, 
then $f\iv(\ul\b) = \{ m\d + \a, m\d + \a' \}$, where 
$\a \in \vD^+_0$ is such that $\ul \a = O(\a) = \a + \a' \in \ul\vD^+_{0,l}$.  
Thus $f\iv(\b)$ gives an orbit in $\vS_{l}(m)$. 
If $\ul\b = (2m+1)\d + \ul\a \in \ul\vS_{l}(2m+1)$, 
then $f\iv(\ul\b) = \{ m\d + \a, (m+1)\d + \a'\}$, where 
$\a \in \vD^+_0$ is such that $\ul\a = O(\a) = \a + \a' \in \d + \ul\vD^+_{0,l}$.
Thus $f\iv(\ul \b)$ gives an orbit in $\vS_{l'}(m)$.  
Next assume that $\ve = 3$. 
If $\ul\b = 3m\d + \ul\a \in \ul\vS_l(3m)$, 
then $f\iv(\ul\b) = \{ m\d + \a, m\d + \a', m\d + \a''\}$, where 
$\ul \a = O(\a) = \a + \a' + \a'' \in \ul\vS_{l}(3m)$.  
If $\ul\b = (3m+1)\d + \ul\a \in \ul\vS_l(3m + 1)$, 
then $f\iv(\ul\b) = \{ m\d + \a, m\d + \a', (m+1)\d + \a''\}$, 
where $\ul\a = O(\a) = \a + \a' + \a'' \in \ul\vS_{l}(3m+1)$. 
If $\ul\b = (3m+2)\d + \a \in \vS_l(3m+2)$,  
then $f\iv(\ul\b) = \{ m\d + \a, (m +1)\d + \a', (m +1)\d + \a''\}$, 
where $\ul\a = O(\a) = \a + \a' + \a'' \in \ul\vS_l(3m+2)$. 
This gives the required bijection. 
\end{proof}

\par\bigskip
\section{ The correspondence for PBW-bases}

\para{6.1.}
Let $\SX_{\Bh}$ be the PBW-basis of $\BU_q^-$ associated to $\Bh$, and 
$\ul\SX_{\ul\Bh}$ the PBW -basis of $\ul\BU_q^-$ associated to $\ul\Bh$ 
as given in Section 1. 
In the case where $\s$ preserves $I_0$ it was shown in [SZ] 
that 
$\s$ acts on $\SX_{\Bh}$ as a permutation, and 
the subset of $\s$-fixed elements in  $\SX_{\Bh}$ is in bijection 
with $\ul\SX_{\ul\Bh}$ through the surjective homomorphism 
$\f : {}_{\BA'}\BU_q^{-,\s} \to {}_{\BA'}\ul\BU_q^-$.  
We want to extend this result to the case where 
$\s$ does not preserve $I_0$. 
However, in this case, the PBW-basis $\SX_{\Bh}$ does not
fit well with respect to the $\s$-action.  In this section, 
we shall construct a new type of PBW-basis $\SX_{\lv}$ of $\BU_q^-$ associated to 
a certain convex order $\lv$ on $\vD^{\min}_+$ by applying 
the discussion in Section 4, and consider 
the relationship between its $\s$-fixed part and a PBW-basis $\ul\SX_{\ul\Bh}$  
of $\ul\BU_q^-$. 
\para{6.2.}
We consider the doubly infinite sequence 
$\ul\Bh = (\dots, \eta_{-1}, \eta_0, \eta_1, \dots)$
for $\ul W$ associated to $\ul\xi \in \ul P_{\cl}$ as defined in 1.5, 
applied for $\ul\BU_q^-$.  
We define a sequence 
$\Bh' = (\dots, i_{-1}, i_0, i_1, \dots)$ by replacing $\eta = \eta_i$
by the corresponding subset $\{ j_1, \dots, j_{|\eta|}\}$ of $I$, namely, 
\begin{equation*}
\tag{6.2.1}
\Bh'     = (\dots, \underbrace{i_{-|\eta_{-1}|}, \dots, i_{-1}}_{\eta_{-1}}
                 \underbrace{i_0, \dots, i_{|\eta_0|}}_{\eta_{0}}
                \underbrace{i_{|\eta_0|+1}, \dots, i_{|\eta_0| + |\eta_1|}}_{\eta_1}, 
               \dots ).
\end{equation*}
For any $k < \ell$, $\ul w = s_{\eta_k} \cdots s_{\eta_{\ell}}$ is a reduced expression
for $\ul w \in \ul W$.  We define $w \in W$ by 
\begin{equation*}
\tag{6.2.2}
w = \biggl(\prod_{i \in \eta_k}s_i\biggr)\cdots 
                  \biggl(\prod_{i \in \eta_{\ell}}s_i\biggr).
\end{equation*}
In view of Proposition 5.7, we see that 
\begin{equation*}
\vD^+ \cap w\iv(-\vD^+) = f\iv(\ul\vD^+ \cap \ul w\iv (-\ul \vD^+)). 
\end{equation*}   
It follows that  
(6.2.2) is a reduced expression of $w \in W$,  
and $\Bh'$ gives an infinite reduced word for $W$ as in 4.4. 
Note that $\ul\b_k \in \ul\vD^+$ is defined as in (1.5.1) by using $\ul\Bh$. 
We define  $\b_k \in \vD^+$ for $k \in \BZ$ as in 4.4 by using $\Bh'$.  
By (1.5.2), we know  that 
\begin{equation*}
\tag{6.2.3}
\{ \ul\b_k \mid k \le 0 \} = \ul\vD^+_{>}, \quad 
\{ \ul\b_k \mid k > 0\}   = \ul\vD^+_{<}.
\end{equation*}

Recall $\vS_x(m), \vS'_x(m)$ and $\vS_0(m), \vS'_0(m)$ in (5.6.1) 
and (5.6.2). 
We define $\vD^{(0)}_{>}, \vD^{(0)}_{<}$ by 
\begin{equation*}
\tag{6.2.4}
\vD^{(0)}_{>} = \bigsqcup_{m \ge 0}\bigsqcup_{x \in \{ s, l, l', l''\}}\vS_x(m), 
\quad
\vD^{(0)}_{<} = \bigsqcup_{m \ge 1}\bigsqcup_{x \in \{ s,l,l',l''\}}\vS'_x(m).
\end{equation*}
Also put
\begin{equation*}
\tag{6.2.5}
\vD^{(1)}_{>} = \bigsqcup_{m \ge 0}\vS_0(m), \quad
\vD^{(1)}_{<} = \bigsqcup_{m \ge 1}\vS'_0(m). 
\end{equation*}
Then $\vD^{(0)}_{>}$ and $\vD^{(0)}_{<}$ are $\s$-stable, and in the case where
$\ve  = 2$, $\s$ permutes $\vD^{(1)}_{>}$ and $\vD^{(1)}_{<}$ 
(in the case where $\ve = 3$, $\vD^{(1)}_{>}\scup \vD^{(1)}_{<}$ is $\s$-stable).  
\par
By Proposition 5.7, we have
\begin{align*}
\tag{6.2.6}
\{ \b_k \mid k \le 0\} = \vD^{(0)}_{>}, \quad
\{ \b_k \mid k > 0 \} = \vD^{(0)}_{<}. 
\end{align*}
Moreover, we have 
$\vD^+ = \vD^{(0)}_{>} \scup \vD^{(1)}_{>} \scup \vD^{(1)}_{<} \scup \vD^{(0)}_{<}$. 

\para{6.3.}
We define a total order $\lv$ on $\vD^{(0)}_{>}$ by 
$\b_0 \lv \b_{-1} \lv \b_{-2} \cdots$, and a total order $\lv$ on 
$\vD^{(0)}_{<}$ by 
$\cdots \lv \b_2 \lv \b_1$.  Then by Lemma 4.3, $\lv$ gives a convex
order on $\vD^{(0)}_{>}$ and a reverse convex order on $\vD^{(0)}_{<}$. 
This implies that
\par\medskip\noindent
(6.3.1) \  $\vD^{(0)}_{>} \lv \d \lv \vD^{(0)}_{<}$ satisfies the condition 
(4.2.1).
\par\medskip
We shall define a total order on $\vD^{(1)}_{>}$ and $\vD^{(1)}_{<}$.  
First assume that $\ve = 2$.  We write 
$\vS^+_0$ as $\{ \g_1, \dots, \g_t \}$ in any order. 
We define a total order on $\vD^{(1)}_{>}$ by 
the condition that $m\d + \g_i \lv m\d + \g_j$ for $i < j$, and 
$m\d + \g_i \lv m'\d + \g_j$ for $m < m'$ and for any $i,j$. 
Similarly, we define a total order on $\vD^{(1)}_{<}$ by the condition that 
$m\d - \g_i \lv m\d - \g_j$ for $i > j$, and $m\d - \g_i \lv m'\d - \g_j$ 
for $m > m'$ and for any $i,j$.
 
\par
Next assume that $\ve = 3$.
Then $\vS^+_0$ is given by (5.3.8). 
$\vS^+_0 = X_1 \scup X_2$ in the notation there.  We write them as  
$X_1 = \{ \g_1, \g_2, \g_3\}, X_2 = \{ \g_1', \g_2', \g_3'\}$, where 
$\g_3 = \g_1 + \g_2, \g'_3 = \g'_1 + \g'_2$, and 
they give positive system of type $A_2$.  
Let $\vD^{(1)}$ be the subsystem of $\vD$ spanned by $\g_1, \g_2, \g_0 = \d - \th_1$, 
and $\g_1', \g_2', \g_0' = \d - \th_1'$ 
where $\th_1 = \g_3, \th_1' = \g'_3$ are the highest roots in $X_1, X_2$. 
Thus $\vD^{(1)}$ is the affine root system of type $A_2^{(1)} \times A_2^{(1)}$.  
We denote by 
$W_1$ the Weyl group of $\vD^{(1)}$.  We consider the infinite reduced 
word $\Bh^{(1)} = (\dots, i_{-1}, i_0, i_1, \dots)$ of $W_1$ defined by [BN] as in 1.5, 
and define $\b'_k \in (\vD^{(1)})^{\re,+}$ for $k \in \BZ$ as in (1.5.1).  Then  
one can check that 
\begin{equation*}
\tag{6.3.2}
\{ \b'_k \mid k \le 0\} = \vD^{(1)}_{>}, \quad 
\{ \b'_k \mid k > 0 \} = \vD^{(1)}_{<}.
\end{equation*} 
We define a total order on $\vD^{(1)}_{>}$ by 
$\b_0' \lv \b'_{-1} \lv \b'_{-2} \lv \cdots$, and a total order on 
$\vD^{(1)}_{<}$ by $\cdots \lv \b'_2 \lv \b'_1$.  
\par
For the case either $\ve = 2$ or $\ve = 3$, we have 
\par\medskip\noindent
(6.3.3) \ $\vD^{(1)}_{>} \lv \d \lv \vD^{(1)}_{<}$ satisfies 
the condition (4.2.1).
\par\medskip\noindent
In fact, in the case $\ve = 3$, (6.3.3) follows from Lemma 4.3.
The definition of the orders on 
$\vD^{(1)}_{>}, \vD^{(1)}_{<}$ as in the case where $\ve = 3$ is also
available for the case $\ve = 2$. Thus (6.3.3) holds also for $\ve = 2$. 
(In the case where $\ve = 2$, this can be checked directly as follows.
It follows from the result in 5.3 that $(\g_i, \g_j) = 0$ for $i \ne j$.  
Since $\vD_0$ is simply laced, $\g_i + \g_j \notin \vD_0$ for any $i \ne j$.  
This implies that $\b +  \b'$ is not contained in $\vD^{\re}$ for any 
$\b, \b' \in \vD^{(1)}_{>} \scup \vD^{(1)}_{<}$.  Thus (6.3.3) holds.) 
\par
We now define a total order $\lv$ on $\vD^{\min}_+$ by extending the total order
on $\vD^{(i)}_{>}, \vD^{(i)}_{<}$ for $i =1,2$, under the condition that 
\begin{equation*}
\tag{6.3.4}
\vD^{(0)}_{>} \lv \vD^{(1)}_{>} \lv \d \lv \vD^{(1)}_{<} \lv \vD^{(0)}_{<}.
\end{equation*}

\begin{lem}  
$\lv$ gives a convex order of the standard type on $\vD^{\min}_+$. 
\end{lem}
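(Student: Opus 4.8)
The plan is to verify that the total order $\lv$ just constructed satisfies the two defining properties of a convex order on $\vD^{\min}_+$: the ``betweenness'' condition (4.2.1), and then to check that the coarse type is standard, i.e. that the set $Z$ of (4.8.1) equals $\vD_0^+$. Since $\vD^{\min}_+$ is the disjoint union $\vD^{(0)}_{>} \scup \vD^{(1)}_{>} \scup \{\d\} \scup \vD^{(1)}_{<} \scup \vD^{(0)}_{<}$, and the order respects the blocks via (6.3.4), I would verify (4.2.1) by splitting into cases according to which blocks $\a$, $\b$, and $\a+\b$ lie in. Within a single block, (4.2.1) is already recorded: for $\vD^{(0)}_{>}$ and $\vD^{(0)}_{<}$ it is (6.3.1), following from Lemma 4.3; for $\vD^{(1)}_{>}$ and $\vD^{(1)}_{<}$ it is (6.3.3). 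So the real work is the ``mixed'' cases where $\a$, $\b$, $\a+\b$ are spread across two (or three) of the blocks.

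First I would dispose of cases involving $\d$: if $\a + \b = \d$ with $\a,\b$ real, then one of $\a,\b$ has positive finite part and the other negative, so one lies in $\vD^{(i)}_{>}$ and the other in $\vD^{(i)}_{<}$ (same $i$, since $\ol\a$ and $-\ol\b$ must be equal up to the block structure given by Proposition 5.7 and Lemma 5.5), and $\d$ sits between them by (6.3.4); the cases $\a = \d$ or $\b = \d$ cannot produce $\a+\b \in \vD^{\min}_+$. The essential mixed cases are then: (a) $\a, \b$ both in the ``$>$'' half with $\a + \b$ also in the ``$>$'' half but in a different block, and the mirror situation for the ``$<$'' half; (b) $\a$ in a ``$>$'' block and $\b$ in a ``$<$'' block (or vice versa) with $\a+\b$ real. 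For (a), the key point is that $\vD^{(0)}_{>}$ consists of roots $m\d + \a$ with $\ol\a \in \vS^+_s \scup \vS^+_l \scup \vS_{l'} \scup \vS_{l''}$ while $\vD^{(1)}_{>}$ consists of $m\d + \a$ with $\ol\a \in \vS^+_0$, and I claim that if two roots from these blocks add to a third root, the block structure forces the sum to be ``in between'' — this follows from applying $f = O(\cdot)$ and using that the image order on $\ul\vD^{\min}_+$ is the convex order $\lv_0$ of [BN] (via Proposition 5.7 and the construction of $\Bh'$), combined with the fact that $f$ is additive and sends $\vD^{(0)}$-roots to $\ul\vD^{\re}$-roots and $\vD^{(1)}$-roots to multiples of $\d$. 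For (b), after applying $f$ the sum lands in $\ul\vD^{\re}$ and the betweenness in $\ul\vD^{\min}_+$ (where $\lv_0$ is already known convex) pulls back; the only subtlety is when two roots in $\vD^{(1)}$ from opposite sides sum to a root still in $\vD^{(1)}$ — here I use that the order on $\vD^{(1)}_{>} \scup \{\d\} \scup \vD^{(1)}_{<}$ is itself (a piece of) a [BN]-type convex order for the sub-root-system $\vD^{(1)}$ (of type $A_2^{(1)}$, resp. $A_2^{(1)} \times A_2^{(1)}$ when $\ve = 3$), as set up in 6.3, so (4.2.1) restricted there is again Lemma 4.3.

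For the coarse-type claim, I compute $Z = \{\ol\b \mid \b \lv \d\}$. By (6.3.4), $\b \lv \d$ iff $\b \in \vD^{(0)}_{>} \scup \vD^{(1)}_{>}$, so $Z = \{\ol\b \mid \b \in \vD^{(0)}_{>} \scup \vD^{(1)}_{>}\}$. By the definitions (6.2.4), (6.2.5) and (5.6.1), (5.6.2), the projections $\ol\b$ of elements of $\vD^{(0)}_{>}$ range over $\bigcup_i \s^i(\vS^+_s \scup \vS^+_l \scup \vS^+_{l'} \scup \vS^+_{l''})$ (note the projection kills the $\d$-shift, but one must also pick up the negatives coming from $\vS_{l'}, \vS_{l''}$ where $O(\a) = m\d - \ul\b$; tracing through Proposition 5.7 these still contribute $+\vS^+$-roots on the $\vD^+$ side since the blocks $\vS_x(m)$ are defined with $+\a$), while $\ol\b$ for $\b \in \vD^{(1)}_{>}$ ranges over $\vS^+_0$. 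By Lemma 5.5(ii), the union of all these is exactly $\vD_0^+$, so $Z = \vD_0^+$ and hence $\ol w = \ol e$: the coarse type is standard.

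\textbf{Main obstacle.} The delicate part is the mixed betweenness cases in (4.2.1), specifically when a root from $\vD^{(1)}$ (an imaginary-part block) interacts with a root from $\vD^{(0)}$ (a real-root block) or when the interaction is genuinely across the $\d$-divider; one has to be careful that applying $f = O(\cdot)$ faithfully transports the order relations, which relies on reading off from Proposition 5.7 that $f$ is an order-compatible fibration of $\vD^{\min}_+$ (with $\d$-fibers over $\ul\d$'s multiples) onto $\ul\vD^{\min}_+$ equipped with its $\lv_0$ order, and then invoking that $\lv_0$ is already a convex order by (4.4.2)/Lemma 4.3. I expect the $\ve = 3$ case to require the extra bookkeeping with $\vD^{(1)}$ of type $A_2^{(1)} \times A_2^{(1)}$ and the splitting $\vS^+_0 = X_1 \scup X_2$, but no new ideas beyond the $\ve = 2$ argument.
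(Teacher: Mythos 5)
Your handling of the convexity condition (4.2.1) is essentially the paper's argument: the within-block cases are exactly (6.3.1) and (6.3.3), and the only substantive case is a root of $\vD^{(0)}$ added to a root of $\vD^{(1)}$, which both you and the paper settle by applying the orbit-sum map $O(\cdot)$ (the paper computes $\g+\s(\g)+\cdots$ to place $\g=\b+\b'$ in $\vD^{(0)}$ and excludes the wrong half by a short coordinate argument; your appeal to the monotonicity of $f=O(\cdot)$ with respect to the [BN] order on $\ul\vD^{\min}_+$ is a legitimate variant, and it also supplies the assertion $\b\lv\g$ that the paper leaves to condition (4.2.2) of Lemma 4.3). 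One small slip: the case $\a=\d$, $\b$ real does produce $\a+\b=\d+\b\in\vD^{\min}_+$, but it is absorbed into (6.3.1)/(6.3.3) since $\d$ is included in those statements, so nothing is lost.

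The standard-type part of your argument, however, has a genuine gap, located exactly at the parenthetical you flag. The classical projections $\ol\b$ of the elements of $\vD^{(0)}_{>}$ do \emph{not} all lie in $\vD_0^+$: the blocks $\vS_x(m)=\bigcup_i\s^i\{m\d+\a\mid\a\in\vS^+_x\}$ are obtained by applying $\s^i$, and $\s$ does not preserve $\vD_0$, so $\s^i(m\d+\a)$ may acquire a $\d$-shift together with a \emph{negative} classical part. Concretely, the simple root $\a_{\s(0)}=\s(\a_0)$ of $\vD_0$ satisfies $O(\a_{\s(0)})=O(\a_0)=\d-\ul\th\in\ul\vD^{\re,+}_{<}$ by (5.2.1), so by Proposition 5.7 it lies in $\vD^{(0)}_{<}$, i.e. $\d\lv\a_{\s(0)}$; the dichotomy of 4.8 then forces $\d-\a_{\s(0)}\lv\d$, hence $-\a_{\s(0)}\in Z$ and $Z\ne\vD_0^+$. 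So your computation of $Z$ does not close, and the error is precisely the claim that the twisted blocks "still contribute $+\vS^+$-roots." (The paper's own proof dismisses this point as clear from (6.3.4); the order is standard only in the sense that $f(\b)\in\ul\vD^{\re,+}_{>}\scup\BZ_{>0}\d$ for every $\b\lv\d$, i.e. relative to the quotient datum $\ul\vD$ rather than to $\vD_0$ as in the literal definition (4.8.1), and an honest verification has to confront this discrepancy rather than assert that the projections stay positive.)
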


\begin{proof}
We show that $\lv$ gives a convex order on $\vS^{\min}_+$.  Then it is clear
from (6.3.4) that it is of the standard type. 
Take  $\b \in \vD^{(0)}_{>}$ (resp. $\b \in \vD^{(0)}_{<}$), 
$\b' \in \vD^{(1)}_{>} \scup \vD^{(1)}_{<}$, and assume that 
$\g = \b + \b' \in \vD^+$. We show that $\g \in \vD^{(0)}_{>}$
and $\b \lv \g$ (resp. $\g \in \vD^{(0)}_{<}$ and $\g \lv \b$). 
In fact take $\b \in \vD^{(0)}_{>}$. If $\ve = 2$, we have 
$\g + \s(\g) = (\b + \s(\b)) + (\b' + \s(\b')) \notin \BZ_{>0}\d$ 
since $(\b + \s(\b)) \notin \BZ_{>0}\d$, $(\b' + \s(\b')) \in \BZ_{>0}\d$. 
Thus $\g \in \vD^{(0)} \scup \vD^{(0)}_{<}$.  
This is shown also for the case where $\ve = 3$, by using a similar argument
as above. 
If $\b' \in  \vD^{(1)}_{>}$, 
we must have $\g \in \vD^{(0)}_{>}$ and $\b \lv \g$.  So, consider the case where  
$\b' \in \vD^{(1)}_{<}$.  Suppose that $\g \in \vD^{(0)}_{<}$. 
We can write as $\b = m\d + \a, \b' = m'\d - \a'$ and $\b + \b' = m''\d -\a''$,
where $\a, \a'' \in \vS^+_{x}$ with $x \ne 0$, and $\a' \in \vS^+_0$. 
Then we have $\a + \a'' = \a'$.  This contradicts to $\a' \in \vS^+_0$. 
Hence $\g \in \vD^{(0)}_{>}$ and $\b \lv \g$.  The case where $\b \in \vD^{(0)}_{<}$
is proved similarly.    
\par
The lemma now follows from  (6.3.1) and (6.3.3). 
\end{proof}

\remark{6.5.}
The convex order on $\vD^{\min}_+$ defined above 
is the two-rows convex order in the terminology of [I2]. 

\para{6.6.}
From now on we fix the convex order $\lv$ on $\vD^{\min}_+$ given in 
Lemma 6.4.  
We consider the PBW-basis $\SX_{\lv} = \{ L(\Bc, \lv) \mid \Bc \in \SC \}$ of $\BU_q^-$ 
associated to $\lv$ as defined in (4.19.1). Note that since $\lv$ is 
standard type, $S_{\Bc_0}^{\lv}$ coincides with $S_{\Bc_0}$. 
For each real root $\b \in \vD^+$, we denote by $f_{\b}^{\lv}$ the 
corresponding root vector. 
Now assume that $\b \in \vD^{(0)}_{<}$ or $\b \in \vD^{(0)}_{>}$.
We define a root vector $f^{\Bh'}_{\b}$ by a similar formula as (4.9.1) for 
the infinite reduced word $\Bh'$ given in (6.2.1). 
(But note that here we consider the total order $\vD^{(0)}_{>} \lv \d \lv \vD^{(0)}_{<}$,
which is not a one-row order on $\vD^{\min}_+$.)  

\begin{lem}  
For $\b \in \vD^{(0)}_{>}$ or $\b \in \vD^{(0)}_{<}$, we have 
$f_{\b}^{\lv} = f^{\Bh'}_{\b}$. 
\end{lem}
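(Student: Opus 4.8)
The plan is to reduce the asserted identity to Corollary 4.13 by iterating the rotation operation on convex orders from 4.7. I would first treat $\b=\b_k$ with $k\le 0$; the case $k>0$ is symmetric, with the left ends of the orders and the operators $T_i$ replaced by the right ends and $T_i\iv$. By (6.3.4) and the definition in 6.3 of the order on $\vD^{(0)}_{>}$, the convex order $\lv$ has $\vD^{(0)}_{>}=\{\b_j\mid j\le 0\}$ as an initial segment, enumerated as $\b_0\lv\b_{-1}\lv\b_{-2}\lv\cdots$; in particular its minimum is $\b_0=\a_{i_0}$, which is a simple root. For $k\le j\le 0$ set $w_j=s_{i_0}s_{i_{-1}}\cdots s_{i_{j+1}}\in W$, with $w_0$ the identity. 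By 4.7 applied repeatedly, each $\lv^{w_j}$ is a convex order; and since the smallest elements of $\lv^{w_j}$ are the $w_j\iv(\b_l)$ for $l\le j$ and $w_j\iv(\b_j)=\a_{i_j}$, the minimum of $\lv^{w_j}$ is the simple root $\a_{i_j}$. This is exactly the mechanism by which the rotation of 4.7 is iterated.

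Next I would establish the compatibility of root vectors with rotation: if $\lv$ is any convex order whose minimum is a simple root $\a_i$ and $\g\in\vD^{\re,+}$ with $\g\ne\a_i$, then
\begin{equation*}
f_\g^{\lv}=T_i\bigl(f_{s_i(\g)}^{\lv^{s_i}}\bigr).
\end{equation*}
For one-row orders this is the $c=1$ case of the identification in 4.19 of $L(\Bc,p)$ with $L(\Bc,\lv_0^w)$, combined with (1.5.12). For a general convex order one chooses, by Lemma 4.11, a one-row order $\lv'$ that agrees with $\lv$ on all pairs of $M_\g\cup M_{s_i(\g)}$ and still has $\a_i$ as its minimum — possible because $\a_i$, being simple, is not a sum of two positive roots and is the global minimum of $\lv$ — applies the one-row case to $\lv'$, and invokes Proposition 4.10 for $\lv$ against $\lv'$ and for $\lv^{s_i}$ against $(\lv')^{s_i}$ (the latter two agree on $M_{s_i(\g)}$ by the definition of the rotated order).

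Iterating this identity along $w_k$ — legitimate since at each stage the minimum is the simple root $\a_{i_j}$ by the first paragraph — and using $w_k\iv(\b_k)=\a_{i_k}$, one gets
\begin{equation*}
f_{\b_k}^{\lv}=T_{i_0}T_{i_{-1}}\cdots T_{i_{k+1}}\bigl(f_{\a_{i_k}}^{\lv^{w_k}}\bigr).
\end{equation*}
By Corollary 4.13, $f_{\a_{i_k}}^{\lv^{w_k}}=f_{i_k}$, so $f_{\b_k}^{\lv}=T_{i_0}T_{i_{-1}}\cdots T_{i_{k+1}}(f_{i_k})$, which is precisely $f_{\b_k}^{\Bh'}$ as defined by the analogue of (4.9.1) for the word $\Bh'$. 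The case $k>0$ yields $f_{\b_k}^{\lv}=T_{i_1}\iv\cdots T_{i_{k-1}}\iv(f_{i_k})=f_{\b_k}^{\Bh'}$ in the same way.

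I expect the main obstacle to be the rotation-compatibility identity for a genuinely two-rows order $\lv$: one must verify that the one-row order furnished by Lemma 4.11 can be taken with $\a_i$ still minimal, and that passing from $\lv,\lv'$ to $\lv^{s_i},(\lv')^{s_i}$ preserves agreement on the pairs relevant to $s_i(\g)$, so that Proposition 4.10 applies on both sides. An alternative to the whole argument, which instead pushes the difficulty into Section 5, is to show directly that for $\b\in\vD^{(0)}_{>}\scup\vD^{(0)}_{<}$ every pair in $M_\b$ consists of roots lying in $\vD^{(0)}_{>}\scup\{\d\}\scup\vD^{(0)}_{<}$ — excluding $\vD^{(1)}$-roots by applying the orbit-sum map $f$ of Proposition 5.7 together with the explicit data of Lemma 5.5 — and then to observe that $\lv$ and the order read off from $\Bh'$ coincide on $\vD^{(0)}_{>}\scup\{\d\}\scup\vD^{(0)}_{<}$ by (6.3.4), so that by Lemma 4.11 there is a one-row order extending the relevant finite segment of $\Bh'$ and agreeing with $\lv$ on $M_\b$, whence the claim follows from Proposition 4.10.
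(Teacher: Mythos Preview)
Your main approach via iterated rotation and Corollary 4.13 is correct, but it is not what the paper does. The paper's argument is more direct and exploits a specific feature of the two-rows order $\lv$: since $\b\in\vD^{(0)}_{>}$ lies in the initial row, it is finitely far from the minimum, so the initial segment $\{\g:\g\lve\b\}=\{\b_0,\dots,\b_N\}$ is finite and coincides with the first $N{+}1$ roots produced by $\Bh'$. The paper then takes a finite set $M$ containing $\b_0,\dots,\b_N$ together with every pair summing to some $\b_i$, invokes Lemma 4.11 to get a one-row order agreeing with $\lv$ on $M$, and concludes directly that the resulting root vector at $\b$ is $f_\b^{\Bh'}$. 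Your rotation argument in effect reproves this by induction on the distance from $\b_0$, at the cost of the extra compatibility identity $f_\g^{\lv}=T_i\bigl(f_{s_i(\g)}^{\lv^{s_i}}\bigr)$ for general convex orders; the paper avoids that machinery by using finiteness of the initial segment in one shot.

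Your alternative approach, however, has a genuine gap: it is \emph{false} that every pair in $M_\b$ lies in $\vD^{(0)}_{>}\scup\{\d\}\scup\vD^{(0)}_{<}$. In Case~(2) of 5.1 ($D_n^{(1)}\to C_{n-2}^{(1)}$) one has $\th=\ve_1+\ve_2\in\vD^{(0)}_{>}$ by (5.2.2), yet $\th=(\ve_2+\ve_n)+(\ve_1-\ve_n)$ with $\ve_1-\ve_n\in\vS_0^+\subset\vD^{(1)}_{>}$. The proof of Lemma 6.4 in fact treats precisely such mixed decompositions $\vD^{(0)}+\vD^{(1)}$, so they do occur. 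The paper's argument sidesteps this because it never needs $M_\b\subset\vD^{(0)}$: for any $(\g,\g')\in M_\b$, convexity of $\lv$ forces exactly one of $\g,\g'$ into the finite set $\{\b_0,\dots,\b_{N-1}\}$, and this membership alone pins down the relative order of the pair --- regardless of whether the other member sits in $\vD^{(0)}$ or $\vD^{(1)}$.
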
  

\begin{proof}
Assume that $\b \in \vD^{(0)}_{>}$.  Let $\b_0$ be the minimal element for 
the order $\lv$.  Since $\b$ is finitely far from $\b_0$, the order $\lv$ 
is given by $\b_0 \lv \cdots \lv \b_N = \b \lv \cdots$ for 
$\b_0, \dots, \b_N \in \vD^{(0)}_{>}$.  Thus one can find a finite set $M$ 
containing $\b_0, \dots, \b_N$ and containing $\g, \g'$ such that $\g + \g' = \b_i$
for all $i$. We can find a one-row convex order whose restriction on $M$ coincides with
$\lv$, and define $f_{\b}^{\lv}$ by using this one-row order.  Thus 
$f_{\b}^{\lv}$ coincides with $f^{\Bh'}_{\b}$.  The case where $\b \in \vD^{(0)}_{<}$ is
similar.    
\end{proof}

\para{6.8}
We consider the action of $\s$ on root vectors.  
Recall that $\s$ preserves $\vD^{(0)}_{>}$ and $\vD^{(0)}_{<}$, respectively. 
Thus $\s$ permutes $\vD^{(1)}_{>} \scup \vD^{(1)}_{<}$, but 
$\vD^{(1)}_{>}, \vD^{(1)}_{<}$ are not necessarily $\s$-stable.  
In fact, in the case where $\ve = 2$, $\s$ permutes the sets 
$\vD^{(1)}_{>}$ and $\vD^{(1)}_{<}$. We have the following result. 

\begin{cor}  
Assume that $\b \in \vD^{(0)}_{>}$ or $\b \in \vD^{(0)}_{<}$.
Then $\s(f_{\b}^{\lv}) = f_{\s(\b)}^{\lv}$. 
\end{cor}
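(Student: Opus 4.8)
The plan is to reduce the statement, via Lemma 6.7, to root vectors attached to the infinite reduced word $\Bh'$, and then to exploit the $\s$-equivariance of the braid group action together with the observation that $\s$ acts on $\Bh'$ merely by commutation moves inside the orbit blocks.

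By Lemma 6.7, for $\b \in \vD^{(0)}_> \cup \vD^{(0)}_<$ we have $f^\lv_\b = f^{\Bh'}_\b$, the root vector attached by the formula (4.9.1) to the infinite reduced word $\Bh'$ of (6.2.1); recall also from 6.8 that $\s$ preserves each of $\vD^{(0)}_>$ and $\vD^{(0)}_<$, so that $f^{\Bh'}_{\s(\b)}$ makes sense. Thus it suffices to show $\s\bigl(f^{\Bh'}_\b\bigr) = f^{\Bh'}_{\s(\b)}$. As $\s$ is a diagram automorphism it commutes with the braid operators, $\s\circ T_i = T_{\s(i)}\circ\s$ and $\s\circ T_i\iv = T_{\s(i)}\iv\circ\s$ for all $i \in I$, and $\s(f^{(c)}_i) = f^{(c)}_{\s(i)}$. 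Applying $\s$ to (4.9.1) shows that $\s(f^{\Bh'}_{\b_k})$ equals the root vector attached, by the same formula, to the infinite reduced word $\s\Bh' = (\dots, \s(i_{-1}), \s(i_0), \s(i_1), \dots)$ (again an infinite reduced word, since $\s$ permutes the simple reflections) at the root $\s(\b_k)$.

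Since $\Bh'$ is the block expansion (6.2.1) of $\ul\Bh$ and $\s$ permutes each orbit $\eta \in \ul I$, the word $\s\Bh'$ is obtained from $\Bh'$ by permuting, within each orbit block, the entries of that block. Any permutation of the consecutive entries of a single block $\eta$ is a product of transpositions of adjacent entries, which then both lie in $\eta$; and since $\s$ is admissible, distinct elements of $\eta$ are orthogonal, so each such transposition is a commutation move of the infinite reduced word. A commutation move exchanging the entries at positions $\ell$ and $\ell+1$ leaves $\b_m$ unchanged for $m \ne \ell, \ell+1$ (the relevant prefixes change only by $s_js_{j'} = s_{j'}s_j$), exchanges the two roots $\b_\ell$ and $\b_{\ell+1}$, and correspondingly exchanges the two root vectors (using in addition $T_jT_{j'} = T_{j'}T_j$ and $T_j(f_{j'}) = f_{j'}$ for orthogonal $j, j'$); hence it leaves the assignment ``root $\mapsto$ root vector'' unchanged. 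Composing the moves carrying $\Bh'$ to $\s\Bh'$, we get $f^{\s\Bh'}_\g = f^{\Bh'}_\g$ for every root $\g$ in the common range $\vD^{(0)}_> \cup \vD^{(0)}_<$. Together with the previous paragraph this yields $\s(f^\lv_\b) = \s(f^{\Bh'}_\b) = f^{\s\Bh'}_{\s(\b)} = f^{\Bh'}_{\s(\b)} = f^\lv_{\s(\b)}$, and the identical computation with $f^{(c)}_\b$ in place of $f_\b$ handles divided powers, $\s$ being an algebra automorphism.

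The point requiring care is the claim that a commutation move of the infinite reduced word preserves the root-vector assignment: from (4.9.1) one must check that only the roots at the two exchanged positions, and their root vectors, are affected, all others and the assignment itself being untouched — this is the standard compatibility of Lusztig's braid group action with ``commutation'' braid moves, transposed to the infinite setting, combined with the bookkeeping around the block structure (6.2.1) and the enumeration (6.2.6). Everything else is routine.
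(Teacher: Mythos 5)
Your proof is correct and follows essentially the same route as the paper: reduce via Lemma 6.7 to the root vectors $f^{\Bh'}_{\b}$, use $\s$-equivariance of the braid operators to identify $\s(f^{\Bh'}_{\b})$ with $f^{\s\Bh'}_{\s(\b)}$, and observe that $\s\Bh'$ differs from $\Bh'$ only by permutations within the orbit blocks, which do not change the root-vector assignment. The paper states this last step without justification, whereas you supply the commutation-move argument (using admissibility to get orthogonality within each orbit) that makes it precise.
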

\begin{proof}
Take $\b \in \vD^{(0)}_{>}$. 
Since $\vD^{(0)}_{>}$ is $\s$-invariant, $\s(\b) \in \vD^{(0)}_{>}$. 
Let $\Bh'$ be as in (6.2.1) obtained from $\ul\Bh$. .  
Then $\s(\Bh')$ also
corresponds to $\ul\Bh$, and $\s(\Bh')$ is obtained from $\Bh'$ by the permutations 
of each factor $\eta_k$ in (6.2.1).  We see that $\s(f^{\Bh'}_{\b}) = f^{\Bh'}_{\s(\b)}$ 
for $\b \in \vD^{(0)}_{>}$. This holds also for $\b \in \vD^{(0)}_{<}$.  
The corollary then follows from Lemma 6.8.
\end{proof}

\para{6.10.}
We consider the action of $\s$ on $S_{\Bc_0}$.  Since $I_0$ is not $\s$-stable, 
$S_{\Bc_0}$ is not $\s$-stable.  But if we consider the subset $I_0'$ of $I_0$, 
then $I_0'$ is $\s$-stable.  
It is clear from the discussion in 1.5 that 
$\s(\wt P_{i, k}) = \wt P_{\s(i), k}$ for $i \in I_0'$, where 
$\wt P_{i,k}$ is defined in (1.5.7).  Note that since $\BU_q^-$ is untwisted, 
$d_i = 1$ in this case. 
Recall the $I_0$-tuple of partitions 
$\Bc_0 =(\r^{(i)})_{i \in I_0}$ as in 1.5.  We regard 
the $I_0'$-tuple of partitions $\Bc_0 = (\r^{(i)})_{i \in I_0'}$ as 
$I_0$-tuple of partitions  
by putting $\r^{(i)}$ empty partition for $i \notin I_0'$. 
Then $\s$ acts on the set of $I_0'$-tuple of partitions by
$\s(\Bc_0) = \Bc_0'$, where $\Bc_0' = (\r^{(\s(i))})_{i \in I_0'}$. 
The following is immediate from the definition.

\begin{lem}  
Assume that $\Bc_0$ is an $I_0'$-tuple of partitions.
Then we have $\s(S_{\Bc_0}) = S_{\Bc_0'}$, where $\Bc_0' = \s(\Bc_0)$ 
is defined as above.  
\end{lem}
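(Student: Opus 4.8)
The plan is to deduce the identity directly from the fact, recorded in 6.10, that $\s(\wt P_{i,k}) = \wt P_{\s(i),k}$ for $i \in I_0'$ and all $k \ge 1$, together with the observations that $\s$ is an algebra automorphism of $\BU_q^-$ and that the passage to Schur polynomials is a purely formal (universal) operation on the elementary symmetric ``variables''.

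First I would unwind the definition of $S_{\Bc_0}$. Since $\ul X$ is untwisted, $\BU_q^-$ is of untwisted type, so $d_i = 1$ for every $i$, and the determinant formula $(1.5.8)$ reads $S_{\r^{(i)}} = \det\bigl(\wt P_{i,\r'_k - k + m}\bigr)_{1 \le k,m \le t}$; in other words $S_{\r^{(i)}}$ is a fixed polynomial expression — the Jacobi–Trudi form of the Schur polynomial attached to the partition $\r^{(i)}$ — evaluated on the sequence $\wt P_{i,1},\wt P_{i,2},\dots$. By $(1.5.9)$, $S_{\Bc_0} = \prod_{i \in I_0}S_{\r^{(i)}}$; because $\r^{(i)}$ is the empty partition for $i \in I_0 \setminus I_0'$, and $S_{\emptyset} = 1$, this product collapses to $\prod_{i \in I_0'}S_{\r^{(i)}}$.

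Next I would apply $\s$ factor by factor. Since $\s$ is an algebra homomorphism it commutes with the polynomial expression defining $S_{\r^{(i)}}$, so by 6.10 the element $\s(S_{\r^{(i)}})$ is the Schur polynomial of the same partition $\r^{(i)}$, but now evaluated on $\wt P_{\s(i),1},\wt P_{\s(i),2},\dots$. Multiplying over the $\s$-stable set $I_0'$ and re-indexing the product, one reads off precisely $\prod_{i \in I_0'}$ of the Schur polynomials attached to the $I_0'$-tuple of partitions $\Bc_0' = \s(\Bc_0)$ of 6.10, i.e. $\s(S_{\Bc_0}) = S_{\Bc_0'}$. The only thing requiring attention — and this is the main obstacle, though it is bookkeeping rather than a genuine difficulty — is to check that the relabelling of indices in this last step matches the convention for the $\s$-action on $I_0'$-tuples of partitions fixed in 6.10; once that convention is spelled out, the equality is immediate, as asserted.
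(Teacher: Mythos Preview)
Your proposal is correct and is exactly what the paper intends: the paper gives no proof beyond the sentence ``The following is immediate from the definition,'' and what you have written is precisely the unwinding of that claim --- use $\s(\wt P_{i,k}) = \wt P_{\s(i),k}$ from 6.10, push $\s$ through the Jacobi--Trudi determinant (1.5.8) and the product (1.5.9), and re-index over the $\s$-stable set $I_0'$. Your closing remark about matching the relabelling with the convention $\Bc_0' = (\r^{(\s(i))})_{i \in I_0'}$ is apt; it is the only thing to check, and it is pure bookkeeping.
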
   

\para{6.12.}
We now consider the action of $\s$ on the PBW-basis $\SX_{\lv}$. 
Although $\s$ does not preserve $\SX_{\lv}$, we can determine 
the subset $\SX^{\s}_{\lv}$ of $\s$-fixed elements in $\SX_{\lv}$. 
Take $\Bc = (\Bc_+, \Bc_0, \Bc_-) \in \SC$ with $\Bc_+ = (c_{\b})_{\b \lv \d}$, 
$\Bc_- = (c_{\g})_{\d \lv \g}$.  
Let $\SC_+$ be the set of $\Bc_+ $ such that $c_{\b} = 0$ 
for $\b \in \vD^{(1)}_{>}$, and $\SC_-$ the set of $\Bc_- $ 
such that $c_{\g} = 0$ for $\g \in \vD^{(1)}_{<}$.  
Then $\s$ acts on $\SC_+$, and on $\SC_-$ as permutations, and we denote 
by $\SC_+^{\s}, \SC_-^{\s}$ the subset of $\s$-fixed elements.  We also
denote by $\SC_0$ the set of $I_0'$-tuple of partitions as in 6.10, 
and $\SC_0^{\s}$ the set of $\s$-fixed elements in $\SC_0$.  We define 
a subset $\SC^{\s}$ as the set of $\Bc = (\Bc_+, \Bc_0, \Bc_-) \in \SC$ 
such that $\Bc_+ \in \SC_+^{\s}, \Bc_0 \in \SC_0^{\s}, \Bc_- \in \SC_-^{\s}$. 
\par
We also consider the set 
$\ul\SX_{\ul\Bh} = \{ L(\ul\Bc, \ul 0) \mid \ul\Bc \in \ul\SC\}$
of PBW-basis of $\ul\BU_q^-$ (the case where $\ul p = \ul 0$).  Then as in 
[SZ], we have a natural bijection $\SC^{\s} \simeq \ul\SC$. 
We have the following result.

\begin{thm}  
$\SX^{\s}_{\lv} = \{ L(\Bc, \lv) \mid \Bc \in \SC^{\s} \}$. 
\end{thm}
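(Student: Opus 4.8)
The plan is to prove the two inclusions by rather different methods. Using the five zones of $\vD^{\min}_+$ in (6.3.4), write
$L(\Bc,\lv)=A_+\,B_+\,S_{\Bc_0}\,B_-\,A_-$, where $A_+$ (resp. $B_+$) is the $\lv$-ordered product of the divided powers $f_\b^{\lv,(c_\b)}$ over $\b\in\vD^{(0)}_{>}$ (resp. $\b\in\vD^{(1)}_{>}$) and $A_-,B_-$ are the corresponding products over $\vD^{(0)}_{<},\vD^{(1)}_{<}$; recall that $S^{\lv}_{\Bc_0}=S_{\Bc_0}$ since $\lv$ is of standard type.

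For the inclusion $\supseteq$ I would compute directly. Let $\Bc\in\SC^{\s}$. Then the exponents $c_\b$ vanish on $\vD^{(1)}_{>}\cup\vD^{(1)}_{<}$ by definition of $\SC_\pm^{\s}$, so $B_\pm=1$ and $L(\Bc,\lv)=A_+\,S_{\Bc_0}\,A_-$; as $\s$ is a $\BQ(q)$-algebra automorphism it is enough to check $\s(A_\pm)=A_\pm$ and $\s(S_{\Bc_0})=S_{\Bc_0}$. The latter is Lemma 6.11 together with $\Bc_0\in\SC_0^{\s}$. For $A_+$: by Corollary 6.9, $\s(f_\b^{\lv})=f_{\s(\b)}^{\lv}$ for $\b\in\vD^{(0)}_{>}$, and the $\s$-orbits inside $\vD^{(0)}_{>}$ are precisely the families of roots produced by the individual blocks of $\Bh'$, which by Proposition 5.7 are the fibres of $f$, hence are intervals for $\lv$; moreover, since $\s$ is admissible, the roots in each such orbit are pairwise orthogonal (equivalently, their orbit sum has the expected length $2|\eta|$), so the corresponding root vectors commute. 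As $\Bc_+$ is $\s$-fixed, the exponents $c_\b$ are constant along each orbit; thus applying $\s$ merely permutes, within each orbit, a run of mutually commuting factors carrying equal exponents, while fixing each orbit setwise, so $\s(A_+)=A_+$, and likewise $\s(A_-)=A_-$. Hence $L(\Bc,\lv)\in\SX_{\lv}^{\s}$.

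For the inclusion $\subseteq$ I would route through the canonical basis. By Proposition 4.21 and $\bB_{\lv}=\bB$ (see 4.24), the transition matrix from $\bB_{\lv}=\{b(\Bc,\lv)\}$ to $\SX_{\lv}$ is unitriangular for the order $<$ on $\SC$ with off-diagonal entries in $q\BZ[q]$; inverting it gives $L(\Bc,\lv)=b(\Bc,\lv)+\sum_{\Bc<\Bd}u_{\Bc,\Bd}\,b(\Bd,\lv)$ with $u_{\Bc,\Bd}\in q\BZ[q]$. Since the construction of $\bB$ in 1.20 is $\s$-equivariant, $\s$ permutes $\bB_{\lv}$; write $\s(b(\Bd,\lv))=b(\tau(\Bd),\lv)$. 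If $\s(L(\Bc,\lv))=L(\Bc,\lv)$, comparing coordinates in the basis $\bB_{\lv}$ and using that the $u_{\Bc,\Bd}$ $(\Bd>\Bc)$ have no constant term forces $\tau(\Bc)=\Bc$, i.e. $b(\Bc,\lv)\in\bB^{\s}:=\bB\cap\wt\bB^{\s}$. It then suffices to prove $\{\Bc\in\SC\mid b(\Bc,\lv)\in\bB^{\s}\}=\SC^{\s}$. The inclusion $\supseteq$ here follows from the part already proved together with the same comparison. For the reverse inclusion I would count weight by weight: $\Bc\mapsto b(\Bc,\lv)$ is a weight-preserving bijection $\SC\isom\bB$, so for $\nu\in Q^{\s}$ the left-hand set has, in weight $\nu$, cardinality $\#\bB^{\s}(\nu)=\frac{1}{2}|\wt\bB^{\s}(\nu)|=\dim(\ul\BU_q^-)_{\ul\nu}$ by Proposition 2.13 (using $\wt\bB^{\s}=\bB^{\s}\scup(-\bB^{\s})$), while $\dim(\ul\BU_q^-)_{\ul\nu}=\#\ul\SC(\ul\nu)=\#\SC^{\s}(\nu)$ by Theorem 3.3 and the weight-preserving bijection $\SC^{\s}\simeq\ul\SC$ of 6.12. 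Since the two finite sets have equal cardinality in each weight and one contains the other, they coincide; hence $b(\Bc,\lv)\in\bB^{\s}$ implies $\Bc\in\SC^{\s}$, which gives $\subseteq$.

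The hard part is the inclusion $\subseteq$. Unlike the situation of [SZ], where $\s$ preserves $I_0$ and hence stabilizes the PBW-basis, here $\s$ moves the root vectors attached to $\vD^{(1)}_{>}$ into $\vD^{(1)}_{<}$ (for $\ve=2$) and sends $S_{\Bc_0}$ to $S_{\s(\Bc_0)}$, and there is no analogue of Corollary 6.9 presenting $\s(L(\Bc,\lv))$ as another PBW monomial; so a direct computation of $\s(L(\Bc,\lv))$ for general $\Bc$ is unavailable, and one is forced onto the indirect route via $\bB$, Proposition 2.13, and the dimension count for $\ul\BU_q^-$. Verifying that the combinatorial bijection $\SC^{\s}\simeq\ul\SC$ of 6.12 is weight-preserving and meshes with all of the above is the main technical point to be nailed down.
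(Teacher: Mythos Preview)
Your proof is correct and follows essentially the same strategy as the paper: the inclusion $\supseteq$ via Corollary~6.9 and Lemma~6.11, then the key implication ``$L(\Bc,\lv)$ is $\s$-fixed $\Rightarrow$ $b(\Bc,\lv)$ is $\s$-fixed'' (the paper's (6.13.1)), followed by a weight-by-weight count using Proposition~2.13 and the bijection $\SC^{\s}\simeq\ul\SC$. The only differences are cosmetic: you prove the implication by inverting the transition matrix and comparing constant terms (the paper instead writes $b(\Bc,\lv)=L(\Bc,\lv)+\sum_{\Bc<\Bc'}a_{\Bc,\Bc'}b(\Bc',\lv)$ and argues that $b(\Bc,\lv)-\s(b(\Bc,\lv))$ lies in the $q\BZ[q]$-span of canonical basis elements, hence vanishes), and you package the count as the equality $\{\Bc\mid b(\Bc,\lv)\in\bB^{\s}\}=\SC^{\s}$ rather than directly bounding $|\SX^{\s}_{\lv}(\nu)|$.
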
 

\begin{proof}
By Corollary 6.9 and Lemma 6.11, we see that $\{L(\Bc, \lv) \mid \Bc \in \SC^{\s}\}$ 
is contained in $\SX_{\lv}^{\s}$. We show the equality holds. 
Let $b(\Bc, \lv)$ the canonical basis associated to $L(\Bc, \lv)$. 
We first note that 
\par\medskip\noindent
(6.13.1) \ If $L(\Bc, \lv)$ is $\s$-invariant, then $b(\Bc, \lv)$ is 
also $\s$-invariant.
\par\medskip
In fact, assume that $L(\Bc, \lv)$ is $\s$-invariant. One can write as 
\begin{equation*}
b(\Bc, \lv) = L(\Bc, \lv) + \sum_{\Bc < \Bc'}a_{\Bc, \Bc'}b(\Bc',\lv)
\end{equation*}
with $a_{\Bc, \Bc'} \in q\BZ[q]$. Then we have
\begin{equation*}
\s(b(\Bc, \lv)) = L(\Bc, \lv) + \sum_{\Bc < \Bc'}a_{\Bc, \Bc'}\s(b(\Bc',\lv)). 
\end{equation*}
This implies that $b(\Bc, \lv) - \s(b(\Bc, \lv))$ 
is a linear combination of $b(\Bc', \lv) - \s(b(\Bc', \lv))$ with coefficients 
$a_{\Bc,\Bc'} \in q\BZ[q]$. Since $\s$ commutes with the bar-involution, 
this implies that $a_{\Bc, \Bc'} = 0$ for any $\Bc'$.  Thus (6.13.1) holds. 
\par
For $\nu \in Q^{\s}$ we consider the subset $\wt\bB^{\s}(\nu)$ of $\wt\bB$ 
as in 2.11.  We can also define a subset $\SX_{\lv}^{\s}(\nu)$ of $\SX_{\lv}$ 
consisting of $\s$-fixed elements with weight $\nu$.  Then (6.13.1) implies that 
\begin{equation*}
\tag{6.13.2}
|\SX^{\s}_{\lv}(\nu)| \le |\wt\bB^{\s}(\nu)|/2.
\end{equation*}
Let  
$\ul\SX_{\ul\Bh}(\ul\nu)$ be the subset of $\ul\SX_{\ul\Bh}$ consisting of 
elements with weight $\ul\nu \in \ul Q$. 
It is known  that $|\ul\SX_{\ul\Bh}(\ul\nu)| = \dim (\ul\BU_q^-)_{\ul\nu}$. 
By Proposition 2.12, under the identification $Q^{\s} = \ul Q, \nu \lra \ul\nu$, 
we have $|\wt\bB^{\s}(\nu)|/2 = \dim (\ul\BU_q^-)_{\ul\nu}$.  Hence 
\begin{equation*}
\tag{6.13.3}
|\ul\SX_{\ul\Bh}(\ul\nu)| = |\wt\bB^{\s}(\nu)|/2 
\end{equation*}
On the other hand, by a similar discussion in [SZ], we see that 
\begin{equation*}
\tag{6.13.4}
\sharp\{ L(\Bc, \lv) \in (\BU_q^-)_{\nu} \mid \Bc \in \SC^{\s} \} = 
        |\ul\SX_{\ul\Bh}(\ul\nu)|.
\end{equation*}
Combining (6.13.2), (6.13.3) and (6.13.4), we have, for any $\nu \in Q^{\s}$,  
\begin{equation*}
\tag{6.13.5}
\sharp\{ L(\Bc, \lv) \in (\BU_q^-)_{\nu} \mid \Bc \in \SC^{\s} \} = 
                  |\SX^{\s}_{\lv}(\nu)|. 
\end{equation*}
The theorem follows from (6.13.5). 
\end{proof}

\para{6.14.}
Each $L(\Bc, \lv) \in \SX^{\s}_{\lv}$ gives an element in ${}_{\BA'}\BU_q^{-,\s}$, 
hence we can consider the image $\pi(L(\Bc, \lv)) \in V_q$. 
On the other hand, for any $L(\ul\Bc, \ul 0) \in \ul\SX_{\ul\Bh}$, we can consider the image 
$\Phi(L(\ul\Bc, \ul 0)) \in V_q$.  The following result is a generalization of 
[SZ, Theorem 2.10, (i)], and is proved in a similar way by using [SZ, Lemma 1.13].

\begin{thm}  
$\Phi(L(\ul\Bc, \ul 0)) = \pi(L(\Bc, \lv))$ under the correspondence 
$\ul\SC \simeq \SC^{\s}, \ul\Bc \lra \Bc$. 
\end{thm}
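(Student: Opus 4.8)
The statement asserts that under the isomorphism $\Phi : {}_{\BA'}\ul\BU_q^- \isom \BV_q$ of Theorem 2.4, the PBW-basis element $L(\ul\Bc, \ul 0)$ of $\ul\BU_q^-$ is sent to $\pi(L(\Bc,\lv))$, where $\ul\Bc \lra \Bc$ under the bijection $\ul\SC \simeq \SC^{\s}$. Since $\Phi$ is the algebra isomorphism determined by $\ul f_{\eta}^{(a)} \mapsto g_{\eta}^{(a)} = \pi(\wt f_{\eta}^{(a)})$, the plan is to compare the factored expressions of $L(\ul\Bc,\ul 0)$ and $L(\Bc,\lv)$ factor by factor, using the compatibility of braid-group operators with $\s$, and to show that each factor on the $\ul\BU_q^-$ side maps under $\Phi$ to the $\pi$-image of the corresponding (orbit-product) factor on the $\BU_q^-$ side. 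The reference to [SZ, Lemma 1.13] and [SZ, Theorem 2.10(i)] signals that the core mechanism is already available: one needs a lemma saying that if $x \in {}_{\BA}\BU_q^{-,\s}$ and $y \in {}_{\BA}\ul\BU_q^-$ satisfy $\pi(x) = \Phi(\vf(y))$, then applying $T_i$ for $i$ in an orbit $\eta$ (in the product form $\wt T_{\eta} = \prod_{i \in \eta}T_i$) to $x$ corresponds, after passing to $\BV_q$, to applying $\ul T_{\eta}$ to $y$.

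First I would recall the explicit factorization. By (4.19.1), since $\lv$ is of standard type, $L(\Bc,\lv) = \bigl(\prod_{\b \in \vD^{(0)}_{>}}f_{\b}^{\lv,(c_\b)}\bigr)\, S_{\Bc_0}\, \bigl(\prod_{\g \in \vD^{(0)}_{<}}f_{\g}^{\lv,(c_\g)}\bigr)$ when $\Bc \in \SC^{\s}$, because then $c_\b = c_\g = 0$ for $\b \in \vD^{(1)}_{>}, \g \in \vD^{(1)}_{<}$ (this is exactly the content of the definition of $\SC^{\s}$ in 6.12). For the corresponding $\ul\Bc$, $L(\ul\Bc,\ul 0)$ has the analogous factorization over $\ul\vD^{\re,+}_{>}$, $\ul\vD^{\re,+}_{<}$ and $\ul S_{\ul\Bc_0}$. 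By Lemma 6.7 the root vectors $f_{\b}^{\lv}$ for $\b$ real in $\vD^{(0)}_{>} \cup \vD^{(0)}_{<}$ coincide with $f^{\Bh'}_{\b}$, defined via the infinite reduced word $\Bh'$ of (6.2.1) built from $\ul\Bh$; and by Corollary 6.9, $\s(f_{\b}^{\lv}) = f_{\s(\b)}^{\lv}$. So for a $\s$-orbit $\{\b, \s(\b), \dots\}$ in $\vD^{(0)}_{>}$ (or $\vD^{(0)}_{<}$), the product $\prod_i f^{\lv,(c)}_{\s^i(\b)}$ is $\s$-fixed, and by the relation between $\Bh'$ and $\ul\Bh$ together with Proposition 5.7, this product is $\wt T_{\eta_{k_1}}\cdots \wt T_{\eta_{k_m}}(\wt f_{\eta}^{(c)})$ for the appropriate sequence of orbits, where $\wt T_{\eta} = \prod_{i \in \eta}T_i$. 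Applying $\Phi$ to the corresponding $\ul T_{\ul\Bh}$-product of $\ul f_{\eta}^{(c)}$ and using that $\Phi$ intertwines $\ul T_{\eta}$ with (the $\BV_q$-descent of) $\wt T_{\eta}$ — this is the substance of [SZ, Lemma 1.13], applied to the untwisted setting via the orbit-products — gives the match for the real-root factors.

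For the imaginary-root factor, I would invoke that $\lv$ is standard type, so $S_{\Bc_0}^{\lv} = S_{\Bc_0}$ (noted in 6.6), and that here $\Bc_0$ is an $I_0'$-tuple of partitions, so by Lemma 6.11 the element $S_{\Bc_0}$ is $\s$-fixed precisely when $\Bc_0 \in \SC_0^{\s}$. The correspondence $\SC_0^{\s} \simeq \ul\SC_0$ (from 6.12, and as in [SZ]) together with Lemma 5.5, which identifies the $\s$-orbit structure on long roots with the long roots of $\ul\vD_0$, shows that the polynomial $S_{\Bc_0}$ is built from the orbit-products of the $\wt P_{i,k}$ over $i \in \eta \cap I_0'$, while $\ul S_{\ul\Bc_0}$ is built from the $\ul{\wt P}_{\eta,k}$; applying $\Phi$ and $\pi$ and using that $\pi(\wt P_{i,k}\cdot\s(\wt P_{i,k})\cdots)$ matches $\Phi(\ul{\wt P}_{\eta,k})$ (the $\eta \subset I_0'$ case of the identification underlying $\Phi$) yields the imaginary-root match. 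Assembling the real-root, imaginary-root, and again real-root pieces as a product — and using that $\Phi$ and $\pi$ are algebra maps so they respect these products — gives $\Phi(L(\ul\Bc,\ul 0)) = \pi(L(\Bc,\lv))$.

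**Main obstacle.** The delicate point is the intertwining of braid operators under $\Phi$: $\Phi$ and $\pi$ are only defined in terms of generators $\ul f_{\eta}^{(a)}, g_{\eta}^{(a)}$, and the braid operators $T_i$ do not obviously descend to $\BV_q$ in a way compatible with $\ul T_{\eta}$, because $T_i$ for $i \in \eta$ need not preserve ${}_{\BA}\BU_q^{-,\s}$ individually — only the orbit-product $\wt T_{\eta}$ behaves well, and only modulo the ideal $J$. The correct way to handle this, following [SZ, Lemma 1.13], is to argue inductively on the length of the word: one shows that at each stage the relevant element lies in ${}_{\BA}\BU_q^{-,\s}$, and that the relation $\pi(\wt T_{\eta}(x)) = \Phi(\ul T_{\eta}(\vf(y)))$ is preserved when passing from one orbit to the next, exploiting that the $L(\Bc,\lv)$ and the $\ul L(\ul\Bc, \ul 0)$ are genuine $\BA$-integral (Proposition 4.20, Theorem 1.6) and that $\SX_{\lv}$ is an $\BA$-basis of ${}_{\BA}\BU_q^-$ (Proposition 4.23). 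The bookkeeping of which orbits $\eta_{k_1}, \dots$ appear, and in what order, for a root $\b \in \vD^{(0)}_{>}$ — governed by Proposition 5.7's bijections $\wh\vS_x(m) \isom \ul\vS_\bullet(\cdots)$ — is the main combinatorial labor, but it is routine given Section 5; the conceptual obstacle is genuinely the braid-operator descent, which is why the proof leans on the already-established machinery of [SZ].
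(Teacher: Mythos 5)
Your proposal matches the paper's (very brief) argument: the paper proves Theorem 6.15 simply by declaring it "a generalization of [SZ, Theorem 2.10, (i)] ... proved in a similar way by using [SZ, Lemma 1.13]," and your factor-by-factor comparison of the two PBW expressions — real-root factors via Lemma 6.7, Corollary 6.9 and Proposition 5.7, the imaginary factor via Lemma 6.11, with the braid-operator intertwining of [SZ, Lemma 1.13] as the engine — is exactly the intended route. Your identification of the descent of $\wt T_{\eta}=\prod_{i\in\eta}T_i$ to $\BV_q$ as the one genuinely delicate point is also consistent with why the paper leans on that lemma rather than arguing from scratch.
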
 

By making use of the parametrization of canonical basis in terms of the
PBW-basis, the bijection $\wt\bB^{\s} \simeq \wt{\ul \bB}$ can be 
described explicitly as follows.

\begin{thm}  
We have $\wt\bB^{\s} = \{ \pm b(\Bc, \lv) \mid \Bc \in \SC^{\s} \}$ 
and $\wt{\ul\bB} = \{ \pm b(\ul\Bc, \ul 0) \mid \ul\Bc \in \ul\SC \}$.  
The correspondence $\f : \wt\bB^{\s} \isom  \wt{\ul\bB}$ in Theorem 3.18 
is given by the bijection $\SC^{\s} \isom \ul\SC, \Bc \mapsto \ul\Bc$. 
\end{thm}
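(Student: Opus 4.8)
The plan is to combine the three theorems already established: Theorem 3.18, which gives the bijection $\f : \wt\bB^{\s} \isom \wt{\ul\bB}$ as the composite of $\pi$ and $\Phi\iv$; Theorem 6.13, which identifies $\SX^{\s}_{\lv}$ with $\{ L(\Bc, \lv) \mid \Bc \in \SC^{\s} \}$; and Theorem 6.14, which says $\Phi(L(\ul\Bc, \ul 0)) = \pi(L(\Bc, \lv))$ under $\ul\SC \simeq \SC^{\s}$. First I would observe that by Proposition 4.23 (applied to $\BU_q^-$ with the convex order $\lv$), the signed canonical basis $\wt\bB$ is characterized via the PBW-basis $\SX_{\lv}$: for each $\Bc \in \SC$ there is a unique $b(\Bc, \lv)$ with $\ol{b(\Bc,\lv)} = b(\Bc,\lv)$ and $b(\Bc, \lv) = L(\Bc, \lv) + \sum_{\Bc < \Bc'} a_{\Bc,\Bc'}L(\Bc',\lv)$ with $a_{\Bc,\Bc'} \in q\BZ[q]$, and $\wt\bB = \{ \pm b(\Bc, \lv) \mid \Bc \in \SC \}$. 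By (6.13.1) in the proof of Theorem 6.13, if $\Bc \in \SC^{\s}$ then $L(\Bc, \lv)$ is $\s$-invariant, hence $b(\Bc, \lv)$ is $\s$-invariant; conversely a counting argument (as in (6.13.2)--(6.13.5)) shows that these are exactly the $\s$-fixed elements, so $\wt\bB^{\s} = \{ \pm b(\Bc, \lv) \mid \Bc \in \SC^{\s}\}$.

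Next I would do the same on the $\ul\BU_q^-$ side. Since by Theorem 3.3 the set $\ul\SX_{\ul\Bh} = \ul\SX_{\ul\Bh, \ul 0}$ gives an $\BA$-basis of ${}_{\BA}\ul\BU_q^-$, the construction of 1.9 applies verbatim: there is a canonical basis $\ul\bB_{\ul\Bh, \ul 0} = \{ b(\ul\Bc, \ul 0) \mid \ul\Bc \in \ul\SC\}$ characterized by $\ol{b(\ul\Bc,\ul 0)} = b(\ul\Bc,\ul 0)$ and $b(\ul\Bc,\ul 0) = L(\ul\Bc,\ul 0) + \sum_{\ul\Bc <_{\ul 0} \ul\Bd} a_{\ul\Bc,\ul\Bd}L(\ul\Bd, \ul 0)$ with $a_{\ul\Bc,\ul\Bd} \in q\BZ[q]$, and by Proposition 1.11, $\wt{\ul\bB} = \ul\bB_{\ul\Bh,\ul 0} \cup -\ul\bB_{\ul\Bh, \ul 0} = \{ \pm b(\ul\Bc, \ul 0) \mid \ul\Bc \in \ul\SC \}$, independent of the choices.

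Finally I would match the two parametrizations through $\f$. Applying $\pi$ to the expansion $b(\Bc,\lv) = L(\Bc,\lv) + \sum_{\Bc < \Bc'} a_{\Bc,\Bc'}L(\Bc',\lv)$ for $\Bc \in \SC^{\s}$, and using that $\pi$ kills the non-$\s$-fixed $L(\Bc',\lv)$'s (their $\s$-orbit sums lie in $J$, with the precise bookkeeping of which terms survive coming from the block-triangular structure already exploited in Theorem 6.13), gives $\pi(b(\Bc,\lv)) = \pi(L(\Bc,\lv)) + \sum a_{\Bc,\Bc'}\pi(L(\Bc',\lv))$, a bar-invariant element with the correct unitriangular PBW-expansion in $\{ \pi(L(\Bc'',\lv)) \mid \Bc'' \in \SC^{\s}\}$. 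By Theorem 6.14 this equals $\Phi$ applied to the element $\ul b$ of ${}_{\BA'}\ul\BU_q^-$ with the same unitriangular expansion in $\{ L(\ul\Bc'', \ul 0)\}$; since $\Phi$ commutes with the bar involution (Theorem 2.4 / 3.1) and is an isomorphism, $\ul b$ is bar-invariant with unitriangular PBW-expansion, hence $\ul b = b(\ul\Bc, \ul 0)$ modulo sign. Thus $\f(b(\Bc,\lv)) = \pm b(\ul\Bc, \ul 0)$ under $\Bc \mapsto \ul\Bc$, which is the assertion. The main obstacle I anticipate is the careful sign/coefficient tracking when passing the PBW-expansion through $\pi$ — specifically verifying that after discarding the $J$-part the surviving coefficients still lie in $q\BZ[q]$ reduced mod $\ve$ and that no cancellation destroys unitriangularity — but this is exactly the kind of block-triangular argument already carried out in Lemma 3.2 and the proof of Theorem 6.13, so it should go through by the same reasoning rather than requiring a new idea.
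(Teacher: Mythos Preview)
Your overall strategy---combine Theorems 3.18, 6.13, and 6.15 with the bar-invariance characterization of canonical bases---is correct, and it is exactly what the paper intends: the paper gives no detailed proof beyond the sentence preceding the statement. Your arguments for $\wt\bB^{\s}=\{\pm b(\Bc,\lv)\mid\Bc\in\SC^{\s}\}$ (via (6.13.1) and the count (6.13.2)--(6.13.5)) and for $\wt{\ul\bB}=\{\pm b(\ul\Bc,\ul 0)\mid\ul\Bc\in\ul\SC\}$ (via Theorem 3.3 and 1.9) are fine.

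There is one step in your final paragraph, however, that would not go through as written. You claim that when the expansion $b(\Bc,\lv)=L(\Bc,\lv)+\sum_{\Bc<\Bc'}a_{\Bc,\Bc'}L(\Bc',\lv)$ is pushed through $\pi$, ``$\pi$ kills the non-$\s$-fixed $L(\Bc',\lv)$'s (their $\s$-orbit sums lie in $J$)''. But $\s$ does \emph{not} permute $\SX_{\lv}$ (this is emphasized in 6.12 and in the introduction): for $\Bc'$ involving root vectors from $\vD^{(1)}_{>}\cup\vD^{(1)}_{<}$, or with $\Bc'_0$ not supported on $I_0'$, the element $\s(L(\Bc',\lv))$ is in general not a PBW-basis element at all. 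So there is no orbit-sum cancellation available on the PBW side, and the ``block-triangular'' picture you invoke does not apply to these terms.

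The fix is to route the argument through the canonical basis $\wt\bB$, which $\s$ \emph{does} permute. Since $b(\Bc,\lv)-L(\Bc,\lv)\in q\SL_{\BZ}(\infty)$ and both terms are $\s$-fixed, the difference lies in $q\,\SL_{\BZ}(\infty)^{\s}$; this $\BZ[q]$-module is spanned by $\bB^{\s}$ together with orbit sums $\sum_{i}\s^i(b)$ of non-fixed $b\in\bB$, and the latter land in $J$ after reducing mod~$\ve$. Hence $\pi(b(\Bc,\lv))\equiv\pi(L(\Bc,\lv))=\Phi(L(\ul\Bc,\ul 0))\equiv\Phi(b(\ul\Bc,\ul 0))\pmod{q\,\ul\SL_{\BZ}(\infty)}$, with $\ul\SL_{\BZ}(\infty)$ as in (3.18.2). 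Both outer terms are bar-invariant, so the usual uniqueness gives $\pi(b(\Bc,\lv))=\Phi(b(\ul\Bc,\ul 0))$, i.e.\ $\f(b(\Bc,\lv))=\pm b(\ul\Bc,\ul 0)$. This is precisely the conclusion you are after; the passage through $\wt\bB$ rather than $\SX_{\lv}$ is what makes the step through $\pi$ legitimate.
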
 

The following result is a generalization of [SZ, Theorem 0.10], 
and is proved by using Theorem 2.4, Theorem 6.15 and Theorem 6.16.
(Note that in [SZ], the expansion of canonical bases in terms of 
PBW-bases was considered. However, such a formula does not hold 
in the general setting since $\s$ permutes $\wt\bB$, 
but does not permute $\SX_{\lv}$.) 

\begin{cor}  
Let $L(\ul\Bc, \ul 0)$ be the PBW-basis of $\ul\BU_q^-$, and 
$L(\Bc, \lv)$ be the corresponding $\s$-stable PBW-basis of $\BU_q^-$ with 
$\Bc \in \SC^{\s}, \ul\Bc \in \ul\SC$ under the bijection in Theorem 6.15.
We write them as 

\begin{align*}
L(\ul\Bc, \ul 0) &= b(\ul\Bc, \ul 0) + \sum_{\ul \Bc < \ul\Bd}a_{\ul\Bd}
                                             b(\ul\Bd, \ul 0), \\
L(\Bc, \lv)      &= b(\Bc, \lv) + \sum_{\Bc < \Bd'}a'_{\Bd'}b(\Bd', \lv). 
\end{align*}
If $b(\Bd',\lv)$ is the $\s$-stable canonical basis of $\BU_q^-$ corresponding to
$\ul\Bd$, then we have

\begin{equation*}
a'_{\Bd'} \equiv a_{\ul\Bd} \pmod \ve.
\end{equation*}

\end{cor}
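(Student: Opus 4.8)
The plan is to apply the surjective $\BA'$-algebra homomorphism $\f = \Phi\iv\circ\pi : {}_{\BA'}\BU_q^{-,\s} \to {}_{\BA'}\ul\BU_q^-$ to the canonical-basis expansion of $L(\Bc,\lv)$, and to compare the outcome with the canonical-basis expansion of $L(\ul\Bc,\ul 0)$ inside ${}_{\BA}\ul\BU_q^-$. First I would note that, for $\Bc\in\SC^{\s}$, Theorem 6.13 gives $L(\Bc,\lv)\in\SX^{\s}_{\lv}$, hence by (6.13.1) also $b(\Bc,\lv)$ is $\s$-stable; so both elements lie in ${}_{\BA}\BU_q^{-,\s}$ and their reductions modulo $\ve$ are legitimate arguments of $\pi$ and $\f$. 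The coefficients $a'_{\Bd'}$ in $L(\Bc,\lv) = b(\Bc,\lv) + \sum_{\Bc < \Bd'} a'_{\Bd'}\,b(\Bd',\lv)$ lie in $q\BZ[q]$, being the entries of the inverse of the unitriangular transition matrix between $\bB_{\lv}$ and $\SX_{\lv}$ (Proposition 4.23).

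The heart of the argument is to see which terms $b(\Bd',\lv)$ survive after reduction mod $\ve$ and application of $\f$. Since $\s$ preserves $\bB = \bB_{\lv}$ — it intertwines the Kashiwara operators via $\s F_i\s\iv = F_{\s(i)}$ and fixes $1$, so it respects the construction of $\bB$ in 1.20 — it permutes $\{ b(\Bd',\lv) \mid \Bd'\in\SC \}$, inducing a permutation of $\SC$ which I also denote by $\s$. Applying $\s$ to the $\s$-invariant element $L(\Bc,\lv)$ and using uniqueness of the canonical expansion shows that the coefficients $a'_{\Bd'}$ are constant along $\s$-orbits of $\SC$. By Theorem 6.16 the $\s$-fixed points of $\SC$ are precisely $\SC^{\s}$, and every other $\s$-orbit $O$ has cardinality $\ve$ (since $\s^{\ve}=\id$ and $\ve\in\{2,3\}$ is prime); for such an orbit $\sum_{\Bd'\in O} b(\Bd',\lv) = \sum_{0\le i<\ve}\s^i(b(\Bd'_0,\lv))\in J$ for any $\Bd'_0\in O$, so it is $\s$-invariant and lies in $\Ker\pi$. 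Grouping the expansion of $L(\Bc,\lv)$ according to $\s$-orbits and applying $\f$ therefore annihilates the contribution of every orbit of size $\ve$, and gives
\begin{equation*}
\f(L(\Bc,\lv)) = \sum_{\Bd'\in\SC^{\s},\ \Bd'\ge\Bc} \ol{a'_{\Bd'}}\,\f(b(\Bd',\lv)),
\end{equation*}
where $\ol{\,\cdot\,}$ denotes reduction mod $\ve$.

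To conclude, I would invoke Theorem 6.15, which gives $\f(L(\Bc,\lv)) = L(\ul\Bc,\ul 0)$ in ${}_{\BA'}\ul\BU_q^-$, and Theorem 6.16, which gives $\f(b(\Bd',\lv)) = \pm\,b(\ul\Bd',\ul 0)$ under the bijection $\SC^{\s}\isom\ul\SC$; a short sign check — $\f(b(\Bd',\lv))$ is bar-invariant and congruent to $\f(L(\Bd',\lv)) = L(\ul\Bd',\ul 0)\equiv b(\ul\Bd',\ul 0)$ modulo the $q$-lattice $q\ul\SL_{\BZ}(\infty)$ of Section 3, while $b(\ul\Bd',\ul 0)$ is the representative with $+$ leading PBW coefficient — forces the sign to be $+1$. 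Substituting and comparing with the direct expansion $L(\ul\Bc,\ul 0) = b(\ul\Bc,\ul 0) + \sum_{\ul\Bc<\ul\Bd} a_{\ul\Bd}\,b(\ul\Bd,\ul 0)$, reduced mod $\ve$, yields an equality of two expressions in the set $\{ b(\ul\Bd,\ul 0) \}$; since $\ul\SX_{\ul\Bh}$, hence the canonical basis, is an $\BA$-basis of ${}_{\BA}\ul\BU_q^-$ by Theorem 3.3, its reduction modulo $\ve$ is an $\BA'$-basis of ${}_{\BA'}\ul\BU_q^-$, and comparing coefficients gives $\ol{a'_{\Bd'}} = \ol{a_{\ul\Bd}}$, i.e. $a'_{\Bd'}\equiv a_{\ul\Bd}\pmod{\ve}$ whenever $b(\Bd',\lv)$ corresponds to $\ul\Bd$.

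I expect the main obstacle to be the bookkeeping over the characteristic-$\ve$ ground ring: making precise that $\f$ carries $q\SL_{\BZ}(\infty)$ into the lattice $q\ul\SL_{\BZ}(\infty)$ of Section 3 (so that the defining property of $b(\ul\Bd',\ul 0)$ applies to $\f(b(\Bd',\lv))$ and the sign is pinned down), and that the orbit sums genuinely lie in $J$ after passing mod $\ve$. Both are routine given Theorems 2.4, 6.15 and 6.16, but must be handled carefully, since the uniqueness in the canonical-basis characterization is more delicate over $\BF$, as already seen in the proof of Lemma 3.2.
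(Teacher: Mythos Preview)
Your proof is correct and follows precisely the route the paper indicates: the paper gives no explicit argument for this corollary, only noting that it ``is proved by using Theorem 2.4, Theorem 6.15 and Theorem 6.16,'' and your proposal is a careful unpacking of that reference. The key steps --- grouping the canonical-basis expansion of $L(\Bc,\lv)$ into $\s$-orbits, killing the nontrivial orbit sums via $J$, transporting along $\f=\Phi^{-1}\circ\pi$ using Theorems 6.15 and 6.16, and comparing against the $\BA'$-basis of ${}_{\BA'}\ul\BU_q^-$ --- are exactly what the paper's cited ingredients are designed to supply, and your handling of the sign (trivial for $\ve=2$, forced for $\ve=3$ via the $q$-lattice congruence) and of the lattice compatibility under $\f$ is sound.
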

\par\bigskip

\par\vspace{1.5cm}
\noindent
T. Shoji \\
School of Mathematica Sciences, Tongji University \\ 
1239 Siping Road, Shanghai 200092, P.R. China  \\
E-mail: \verb|shoji@tongji.edu.cn|

\par\vspace{0.5cm}
\noindent
Z. Zhou \\
School of Mathematical Sciences, Tongji University \\ 
1239 Siping Road, Shanghai 200092, P.R. China  \\
E-mail: \verb|forza2p2h0u@163.com|


\begin{thebibliography}{[DJMu]}
\bibitem [Be] {Be}  J. Beck; Convex bases of PBW-type for quantum affine algebras,
Comm. Math. Phys. {\bf 165} (1994), 193-199.
\par
\bibitem [Bo] {Bo} N. Bourbaki; Groupes et alg\`ebres de Lie, Chapitres 4,5 et 6, 
\'El\'ements de Math\'ematique, Dunod 1981. 

\bibitem [BN] {BN} J. Beck and H. Nakajima; Crystal bases and two-sided cells 
of quantum affine algebras; Duke Math. J. {\bf 123} (2004), 335 - 402. 
\par 
\bibitem [D] {D} I. Damiani; The $R$-matrix for (twisted) affine algebras, 
in Representations and quantizations, (Shagnhai 1998), 89-114, China High Educ. Press,
Beijing, 2000. 
\par
\bibitem [GL] {GL} I. Grojnowski and G. Lusztig; A comparison of bases 
of quantized enveloping algebras, In ``Linear algebraic groups and their 
representations, (Los Angeles, CA, 1992)'', Contemp. Math. {\bf 153}, 
pp. 11 - 19, Amer. Math. Soc. Providence, RI, 1993.  
\par
\bibitem [I1] {I1} K. Ito; Parametrization of infinite biconvex sets 
in affine root systems, arXiv: math.QA/9911214.
\par
\bibitem [I2] {I2} K. Ito; The classification of convex orders on affine 
root systems, Comm. in Algebra, {\bf 29}, Issue 12, 2001, arXiv: math.QA/9912020 
\par
\bibitem [Ka] {Ka} V.G. Kac; Infinite dimensional Lie algebras (3rd Ed.), 
Cambridge University Press, 1990. 
\par
\bibitem [K1] {K1} M. Kashiwara; On crystal bases of the $q$-analogue of universal 
enveloping algebras, Duke Math. J. {\bf 63} (1991), 465-516.  
\par
\bibitem [K2] {K2}  M. Kashiwara; On level-zero representation of quantized 
affine algebras, Duke Math. J. {\bf 112} (2002), 117-175. 
\par
\bibitem [L1] {L1} G. Lusztig; Canonical bases arising from quantized enveloping 
algebras, J. Amer. Math. Soc. {\bf 3} (1990), 447-498. 
\par
\bibitem [L2] {L2} G. Lusztig; Quivers, perverse sheaves and quantized enveloping 
algebras, J. Amer. Math. Soc. {\bf 4} (1991), 365-421. 
\par
\bibitem [L3] {L4} G. Lusztig; Introduction to quantum groups, Progress in Math.
Vol. {\bf 110} Birkhauser, Boston/Basel/Berlin, 1993.
\par
\bibitem [L4] {L4} G. Lusztig; Braid group action and canonical bases, Adv. Math.
{\bf 122} (1996), 237-261. 
\par
\bibitem [MT] {MT}  D. Muthiah and P. Tingley; Affine PBW-bases and 
Affine MV polytopes, preprint, arXiv.1609.06613v1. 
\par
\bibitem [NS] {NS} S. Naito and D. Sagaki;
Crystal base elements of an extremal weight module fixed by a diagram 
automorphism, Alg. Rep. Theory {\bf 8} (2005), 689-707.
\par
\bibitem [S] {S} A Savage; A geometric construction of crystal graph using 
quiver varieties: extension to the non-simply laced case, Contemp. Math. {\bf 392}
(2005), 133-154. 
\par
\bibitem [SZ] {SZ} T. Shoji and Z. Zhou; Diagram automorphisms and quantum groups, 
to appear in J. Math. Soc. Japan, arXiv: 1810.04378v2.  

\end{thebibliography}
\end{document}